\documentclass[12pt]{article}
\usepackage[T1]{fontenc}
\usepackage[english]{babel}
\usepackage{lmodern}
\usepackage{microtype}

\usepackage{mathtools}
\mathtoolsset{showonlyrefs}
\usepackage{amsfonts}
\usepackage{amssymb}
\usepackage{amsthm}
\usepackage{mathrsfs}
\usepackage{esint}

\usepackage[a4paper,left=1.5cm,right=1.5cm,top=2.5cm,bottom=2.5cm]{geometry}
\usepackage{xcolor}
\definecolor{myurlcolor}{rgb}{0,0,0.4}
\definecolor{mycitecolor}{rgb}{0,0.5,0}
\definecolor{myrefcolor}{rgb}{0.5,0,0}
\usepackage{graphicx}
\usepackage{tikz}
\usepackage{tikz-cd}
\usepackage{caption}

\usepackage{csquotes}

\usepackage{url}
\usepackage{etaremune}
\usepackage[shortlabels]{enumitem} 
\usepackage[normalem]{ulem}
  
\usepackage{comment}
\usepackage{varwidth}
\usepackage{makeidx}
\usepackage{aliascnt}

\newcommand{\T}{\mathrm{T}}
\DeclareMathOperator\id{id}
\DeclareMathOperator\ad{ad}
\DeclareMathOperator\Ad{Ad}

\DeclareMathOperator\GL{GL}
\DeclareMathOperator\SL{SL}

\DeclareMathOperator{\Tr}{Tr}
\newcommand{\FR}{\mathrm{FR}}

\DeclareMathOperator{\Tor}{Tor}
\DeclareMathOperator{\im}{Im}

\newcommand{\sa}{sa}
\newcommand{\dd}{\mathrm{d}}
\newcommand{\E}{\mathcal{E}}
\newcommand{\A}{\mathcal{A}}

\usepackage[
backend=biber,
sorting=nyt, 
sortcites=true, 
giveninits=true, 
url=false,
backref=true,
maxbibnames=99 
]{biblatex}
\DefineBibliographyStrings{english}{%
	backrefpage = {$\downarrow$\,p.},
	backrefpages = {$\downarrow$\,pp.}
}
\AtEveryBibitem{\clearfield{note}}
\AtEveryBibitem{\clearfield{isbn}}
\AtEveryBibitem{\clearfield{issn}}
\AtEveryBibitem{%
  \clearfield{urlyear}%
  \clearfield{urlmonth}%
  \clearfield{urlday}%
  \clearfield{urldate}%
}
\AtEveryBibitem{\clearfield{abstract}}
\AtEveryBibitem{\clearfield{keywords}}

\newtheorem*{proof*}{Proof}
\newtheoremstyle{maybestyle}
  {12pt}                
  {12pt}                
  {\itshape}         
  {0pt}                 
  {\normalfont\bfseries}           
  {.}                   
  {6pt}                 
  {}                    

\theoremstyle{maybestyle}

\newtheorem{theorem}{Theorem}[section]
\newaliascnt{proposition}{theorem}
\newtheorem{proposition}[proposition]{Proposition}
\aliascntresetthe{proposition}
\newaliascnt{lemma}{theorem}
\newtheorem{lemma}[lemma]{Lemma}
\aliascntresetthe{lemma}

\newtheoremstyle{mystyle}
  {12pt}                
  {12pt}                
  {\normalfont}         
  {0pt}                 
  {\bfseries}           
  {.}                   
  {6pt}                 
  {}                    

\newif\ifqedenvironment
\newcount\qedplaced
\newcommand{\exampleqed}{\leavevmode\unskip\nobreak\hfill$\circ$}
\newcommand{\placeexampleqed}{\ifnum\qedplaced=0 \exampleqed\global\qedplaced=1 \fi}
\AtBeginEnvironment{example}{\qedenvironmenttrue\global\qedplaced=0 }
\AtBeginEnvironment{remark}{\qedenvironmenttrue\global\qedplaced=0 }
\AtBeginEnvironment{definition}{\qedenvironmenttrue\global\qedplaced=0 }
\AtEndEnvironment{enumerate}{\ifqedenvironment\placeexampleqed\fi}
\AtEndEnvironment{itemize}{\ifqedenvironment\placeexampleqed\fi}
\AtEndEnvironment{description}{\ifqedenvironment\placeexampleqed\fi}
\AtEndEnvironment{example}{\placeexampleqed\qedenvironmentfalse}  
\AtEndEnvironment{remark}{\placeexampleqed\qedenvironmentfalse}
\AtEndEnvironment{definition}{\placeexampleqed\qedenvironmentfalse}
\theoremstyle{mystyle}
\newtheorem{example}{Example}[section]
\newtheorem{remark}{Remark}[section]
\newtheorem{definition}{Definition}[section]

\usepackage{hyperref}
\hypersetup{colorlinks,linkcolor=myrefcolor,citecolor=mycitecolor,urlcolor=myurlcolor}
\usepackage[capitalize]{cleveref}

\usepackage{parskip}
\title{Potential functions in information geometry via bi-forms}

\author{F. M. Ciaglia$^{1,6}$ \href{https://orcid.org/0000-0002-8987-1181}{\includegraphics[scale=0.7]{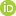}}, G. Marmo$^{2,3,7}$ \href{https://orcid.org/0000-0003-2662-2193}{\includegraphics[scale=0.7]{ORCID.png}}, M. Pacelli$^{2,4,5,8}$ \href{https://orcid.org/0009-0008-4437-6970}{\includegraphics[scale=0.7]{ORCID.png}}, \\
L. Schiavone$^{5,9}$  \href{https://orcid.org/0000-0002-1817-5752}{\includegraphics[scale=0.7]{ORCID.png}}, 
A. Zampini$^{2,4,5,10}$ \href{https://orcid.org/0000-0003-0980-6003}{\includegraphics[scale=0.7]{ORCID.png}}}

\begin{document}

\maketitle 

\noindent
{\footnotesize{$^{1}$   Universidad Carlos III de Madrid, 
Departamento de Matemáticas, 
Leganés (Madrid), Spain\\
$^{2}$ INFN-Sezione di Napoli, Naples, Italy  \\
$^{3}$ Dipartimento di Fisica ``E. Pancini'', Università degli Studi di Napoli Federico II,  Naples, Italy  \\
$^{4}$ Scuola Superiore Meridionale,  Naples, Italy  \\
$^{5}$ Dipartimento di Matematica e Applicazioni ``R. Caccioppoli'', Università degli Studi di Napoli Federico II, Naples, Italy

\noindent
$^{6}$\texttt{fciaglia[at]math.uc3m.es} \quad 
$^{7}$\texttt{marmo[at]na.infn.it} \quad 
$^{8}$\texttt{marco.pacelli-ssm[at]unina.it}  \\
$^{9}$\texttt{luca.schiavone[at]unina.it}  \quad
$^{10}$\texttt{alessandro.zampini[at]unina.it}}

\begin{abstract}
\fontsize{8}{10}\selectfont 
\noindent In this paper we develop a general framework for potentials on Lauritzen manifolds, namely smooth manifolds equipped with a pseudo-Riemannian metric and a pair of conjugate affine connections that may have non-vanishing torsion. We show how the theory of bi-forms accommodates torsion-full statistical structures and unifies contrast and pre-contrast functions in a cohomological framework. 
Within this formalism, we construct a canonical contrast bi-form on dually curvature-free Lauritzen manifolds and establish its principal structural properties. Several illustrative examples are analysed.
\end{abstract}

{\footnotesize
\tableofcontents
}
 
\section{Introduction}\label{Sec: introduction}
Contrast functions, or divergence functions, are central notions in statistics and information theory, providing a means to quantify the distinguishability between probability distributions and forming the basis for parameter estimation, hypothesis testing, and model selection. When viewed through the lens of geometry, contrast functions appear as suitable real-valued functions on the Cartesian square of a smooth manifold, whose mixed second order derivatives, when restricted to the diagonal submanifold, define a symmetric and non-degenerate $2$-covariant tensor endowing the manifold with a pseudo-Riemannian metric. A further action of partial derivations then gives rise to a pair of conjugate, torsion-free affine connections \cite{Eguchi1985}. In this sense, a contrast function acts as a potential that equips the underlying smooth manifold with the structure of a statistical manifold \cite{Lauritzen-1987}.

While contrast functions describe 
the metric-affine structures underlying classical parametric models, which motivated Lauritzen's definition of statistical manifold, 
they are not suited to describe more general information-geometric structures in which one of the conjugate affine connections, or both,
may have non vanishing torsion: we refer for example to the space of faithful quantum states endowed with a quantum monotone metric tensor and the connection encoding its natural convexity \cite{Jencova-2001}. 
This observation motivated the study of statistical manifolds admitting torsion (SMAT), which generalize statistical manifolds by allowing a non-vanishing torsion for either the connection or its dual one. Such a generalization naturally raises the question of describing  suitable potentials. Henmi and Matsuzoe addressed this issue by introducing the notion of pre-contrast functions, namely asymmetric potentials defined on the Cartesian product of a smooth manifold with its tangent bundle, fiberwise linear in the tangent component \cite{H-M-2011}. Following the same line of reasoning, Khan and Zhang later introduced the notion of super-contrast functions as suitable fiberwise bilinear functions on the Cartesian square of the tangent bundle, to describe torsion-full statistical structures \cite{Z-K-2020}. Manifolds endowed with such data, namely a pseudo-Riemannian metric and a pair of conjugate affine connections, are referred to in the  literature with several names, including that of dualistic geometries \cite{Z-K-2020} and of Norden-Sen geometries \cite{Kostecki2023}
. In this paper we use the term \underline{Lauritzen manifold} for this general setting, motivated by  the viewpoint underlying Lauritzen's formulation of statistical manifolds. Classes of motivating examples comprise teleparallel \cite{C-DC-I-M-2023, GSI} or pseudo-Weitzenb\"ock geometries \cite{Z-K-2019}, as well as Lie groups endowed with Cartan-Schouten connections \cite{CartanSchouten, D-M-S-2024}.

The present study provides a systematic development of the bi-form approach to the generation problem in information geometry. The two-point formalism underlying this viewpoint was introduced in the framework of absolute partial differential calculus \cite{Ruse1931}, and was later applied to general relativity \cite{Einstein1944, EinsteinBargmann1944, Synge1960, DeWittBrehme1960}, to the theory of harmonic forms \cite{NickersonSpencerSteenrod1959}, and to the theory of currents \cite{deRham1955}. Within information geometry, a formulation of the extraction procedure for geometric data from contrast functions in terms of bi-forms was given in \cite{MankoMarmoVentrigliaVitale2017}. In \cite{GSI}, this viewpoint was further developed by interpreting suitable bi-forms as generalized potentials for the generation problem of Lauritzen manifolds, leading to the notion of \underline{contrast bi-form} $\varpi$. It was proved there that the torsion properties of the induced connections are described in terms of the action of two commuting cohomological operators, the left and right exterior derivatives $\mathrm d^L$ and $\mathrm d^R$: the (so-called) primal connection, respectively the dual connection, is torsion-free precisely when $\mathrm d^L\varpi$, respectively $\mathrm d^R\varpi$, vanishes along the diagonal submanifold. A cohomological characterization theorem for pre-contrast and contrast functions was also stated there, but its proof was omitted due to space constraints. A full and detailed treatment of these constructions is provided here, together with complete proofs and further examples.

The analysis carried out here proceeds through the construction of a Cartan-like calculus for bi-forms. Built in parallel with the bi-differential calculus associated to the product structure of the Cartesian square of a manifold, this calculus rests on the identification of bi-forms with differential forms of mixed type on the Cartesian square. The cohomological characterization of pre-contrast and contrast functions then requires a detailed study of the homotopy operators associated to the left and right exterior derivatives $\mathrm d^L$ and $\mathrm d^R$. Their construction depends on a careful analysis of the local geometry close to the diagonal. In the classical setting of differential forms, such analyses rely on tubular neighbourhoods. In the present setting, the involved geometric structures suggest a different approach. Strongly convex neighbourhoods in the sense of Moretti \cite{Moretti-2021} are therefore considered, namely open neighbourhoods of the entire diagonal submanifold in the Cartesian square of the underlying manifold equipped with the unique smooth geodesic interpolation map used to define the left and right homotopy operators. Within these neighbourhoods, \underline{left} and \underline{right statistical homotopy operators} are constructed. These operators decompose any bi-form into left-exact and left-antiexact components, and likewise into right-exact and right-antiexact components, respectively. When the torsion of the primal connection vanishes, its left-antiexact component becomes statistically irrelevant, so that the geometric information encoded by $\varpi$ is completely captured by its left-exact component. The fiberwise realization of this component corresponds to a pre-contrast function, thereby embedding pre-contrast functions naturally as left-exact contrast bi-forms. Similarly, bi-exact contrast bi-forms, that is, left-exact bi-forms which are also right-exact, realize contrast functions. 

Does every Lauritzen structure arise from a contrast bi-form? We leave a complete answer to such a question to a future paper, and present contrast bi-forms for a paradigmatic class of Lauritzen manifolds, namely those that are \underline{dually curvature-free}: both connections have vanishing curvature, possibly non-vanishing torsion. Such structures are ubiquitous in information geometry. Classical parametric models, with the probability simplex as the fundamental example, provide standard instances when equipped with mixture and exponential connections \cite{A-N-2000}. Partially flat structures have been employed in estimation theory by Henmi and Matsuzoe \cite{H-M-2011}, and appear on the space of smooth probability densities \cite{A-S-2025}. Teleparallel geometries \cite{C-DC-I-M-2023, GSI, Z-K-2019}, as well as semisimple Lie groups equipped with the Killing metric and either the left or the right Cartan connection \cite{Postnikov-2001, CartanSchouten}, provide further examples of dually curvature-free Lauritzen manifolds. In this context, we construct a contrast bi-form associated to any dually curvature-free Lauritzen manifold, which exhibits several remarkable properties. In the partially flat setting, the bi-form is globally left-exact, and its potential reproduces the pre-contrast function introduced by Henmi and Matsuzoe \cite{H-M-2019}. In the dually flat case, the bi-form is bi-exact and generated by the canonical divergence of Ay and Amari \cite{A-A-2015}. In this sense, the bi-form can be considered canonical. The solution bi-form also admits additional structural properties: it can be expressed in terms of parallel transport, and on semisimple Lie groups it becomes diagonally left-invariant for the right Cartan connections and left-invariant for left Cartan connections. 

\subsection{Lauritzen manifolds}
Let $M$ be a finite dimensional, paracompact smooth manifold, which we assume throughout this paper to be connected. We 
primarily consider two specific geometric structures on $M$, namely a pseudo-Riemannian metric $g$ and an affine connection $\nabla$.

Recall that a pseudo-Riemannian metric $g$ on $M$ can be viewed either as a smooth, symmetric, nondegenerate $2$-covariant tensor on $M$, or equivalently as a nondegenerate, symmetric, $C^\infty(M)$-bilinear map:
\begin{equation}
g\colon \mathfrak X(M)\times \mathfrak X(M)\longrightarrow C^\infty(M)\,,
\end{equation}
where $C^\infty(M)$ denotes the ring of smooth real-valued functions on $M$, and $\mathfrak X(M)$ is the $C^\infty(M)$-module of vector fields on $M$. In local coordinates $(U, q)$, the metric tensor $g$ reads\footnote{Here we adopt Einstein summation convention, as in the rest of the paper unless otherwise stated.} $g=g_{ij}\, \mathrm dq^i\otimes \mathrm dq^j$. Being non-degenerate, a pseudo-Riemannian metric tensor $g$ 
 on $M$ induces a pair of (mutually inverse) $C^\infty(M)$-isomorphisms between the $C^\infty(M)$-module $\mathfrak X(M)$ and the $C^\infty(M)$-module of $1$-forms on $M$ (that we  denote by $\Omega^1(M)$), that is 
\begin{equation}
     {}^\flat\colon \mathfrak X(M)\longrightarrow \Omega^1(M)\,, \qquad {}^\sharp\colon\Omega^1(M)\longrightarrow\mathfrak X(M)\,.
\end{equation}
Such correspondences, also known as the \emph{musical isomorphisms} (associated to $g$), are implicitly defined by
\begin{equation}
\begin{split}
    \alpha(Z)=g\left(\alpha^\sharp,Z\right)\,, \\ X^\flat(Z)=g(X,Z)\,,
    \end{split}
\end{equation}
where $X,Z\in \mathfrak X(M)$, and $\alpha\in \Omega^1(M)$. In local coordinates $(U,q)$, their action is written as 
\begin{align}
    \alpha^\sharp=g^{ij}\,\alpha_i\,\partial_{q^j}\,, &\qquad \text{for }   \alpha=\alpha_i\, \mathrm dq^i\,,\\
    X^\flat=g_{ij}\,X^j\,\mathrm dq^i\,, & \qquad \text{ for } X=X^i\, \partial_{q^i}\,,
\end{align}
with $g^{ij}g_{jk}=\delta^i_k$. 

Given a (smooth) vector bundle $\tau\colon E\to M$ and therefore a $C^{\infty}(M)$-module of (smooth) sections $\Gamma^{\infty}_\tau$,
an affine connection is a map 
\begin{equation}
\nabla\colon\Gamma_\tau^\infty\longrightarrow \Omega^1(M)\,\otimes_{C^\infty(M)}\Gamma^\infty_\tau
\end{equation}
such that, for any $\eta,\tilde\eta\in\Gamma^\infty_\tau$ and any $f\in C^{\infty}(M)$, one has 
\begin{align}
&\nabla(\eta+\tilde\eta)=\nabla(\eta)+\nabla(\tilde\eta)\,, \\ &\nabla(f\eta)=\dd f\otimes\eta+f\nabla(\eta)\,.
\end{align}
If $Z\in\mathfrak{X}(M)$ is a smooth vector field on the basis manifold $M$ of the given vector bundle, the covariant derivative $\nabla_Z\colon \Gamma^\infty_\tau\to\Gamma^\infty_\tau$ along the vector field $Z$ comes upon contracting the 1-form tensorial leg of $\nabla\eta$ along $Z$. For such a map, one has that a \emph{Leibniz rule} holds, namely 
\begin{equation}
\nabla_Z\left(f\,\eta\right)=\left(\mathcal L_Zf\right)\,\eta+f\,\nabla_Z\eta\,.
\end{equation}
When the vector bundle we consider on $M$ is given by the tangent bundle $\tau\colon \T M\to M$, so that $\Gamma^\infty_\tau=\mathfrak{X}(M)$, we usually say  that $\nabla$ is an affine connection \emph{on} $M$. In local coordinates $(U,q)$, 
an affine connection on $M$ is determined as follows:
\begin{equation}
\begin{split}
    &\nabla(\partial_{q^i})=\Gamma^k_{ij}\,\dd q^j\otimes\partial_{q^k}, \\
    &\nabla_{\partial_{q^i}}\partial_{q^j}=\Gamma_{ij}^k\,\partial_{q^k}\,, \qquad i,j\in \{1,\dots,\dim M\}\,.
\end{split}
\end{equation}
We denote by $\Tor^\nabla$ and $R^\nabla$ the torsion tensor and curvature tensor corresponding to $\nabla$, defined by 
\begin{align}
    \Tor^\nabla(X,Y)&=\nabla_XY-\nabla_YX-[X,Y]\,, \label{Eq: torsion tensor}\\
      R^\nabla(X,Y)Z &= \nabla_X\nabla_YZ - \nabla_Y\nabla_XZ - \nabla_{[X,Y]}Z\,, \label{Eq: curvature tensor}
\end{align}
with $X,Y,Z\in\mathfrak{X}(M)$. In local coordinates $(U,q)$ we have 
\begin{align}
\Tor^\nabla&=\left(\Gamma_{ij}^k-\Gamma_{ji}^k\right)\, \left(\mathrm dq^i\otimes \mathrm dq^j\right)\otimes \partial_{q^k}\,,\\R^\nabla
&= \left(\frac{\partial\Gamma^k_{jl}}{\partial q^i} - \frac{\partial\Gamma^k_{il}}{\partial q^j} 
+ \Gamma^m_{jl}\, \Gamma^k_{im} - \Gamma^m_{il}\, \Gamma^k_{jm} \right)
\, \mathrm d q^i \otimes \mathrm dq^j \otimes \mathrm dq^l \otimes \partial_{q^k}\,.
\end{align}
We define  $\nabla$ to be  \underline{torsion-free} if and only if the torsion tensor of $\nabla$ vanishes. Similarly, $\nabla$ is called \underline{curvature-free} if and only if the curvature tensor of $\nabla$ vanishes. Lastly, we say that $\nabla$ is \underline{flat} if and only if it is both torsion-free and curvature-free. 

Any affine connection $\nabla$ extends to a connection $\nabla^\ast$ on sections of the cotangent bundle, i.e. on 1-forms on $M$, 
(see \cite{Lee-2018}) by 
\begin{equation}
\label{Eq: dual connection}
\mathcal{L}_Z(\alpha(X))=(\nabla^*_Z\alpha)(X)+\alpha(\nabla_ZX), 
\end{equation}
with $\alpha\in \Omega^1(M)$ and $X,Z\in \mathfrak X(M)$. When $M$ has a pseudo-Riemannian metric $g$, the musical 
isomorphisms introduced above allow to define (see \cite{NomizuSimon1992}) another affine connection (that we denote  by $\nabla^\dag$) by 
\begin{equation}
\label{Eq: nabla dag}
    \nabla_Z^\dag (Y) =\left(\nabla_Z^\ast (Y^\flat)\right)^\sharp\,.
\end{equation}
We refer to $\nabla^\dag$ as the $g$-\underline{conjugate affine connection} of $\nabla$. In local coordinates $(U,q)$, the $g$-conjugate affine connection of $\nabla$ is determined by:
\begin{equation}
    \nabla^\dag_{\partial_{q^k}}(\partial_{q^j})=\left(\Gamma^\dag\right)_{kj}^r\,\partial_{q^r}\,, \qquad \left(\Gamma^\dag\right)_{kj}^r=g^{ir}\left(\frac{\partial g_{ij}}{\partial q^k}-g_{\ell j}\Gamma^\ell_{ki}\right)
\end{equation}
By \eqref{Eq: dual connection}, we obtain the equality 
\begin{equation}\label{Eq: compatibility condition}
    \mathcal L_Z\left(g(X,Y)\right)= g\left(\nabla_Z X,Y\right)+ g\left(X,\nabla_Z^\dag Y\right)\,,
\end{equation}
where $X,Y,Z \in \mathfrak X(M)$. This relation shows that $(\nabla^\dag)^\dag=\nabla$. 
We refer to \eqref{Eq: compatibility condition} as the $g$-\underline{compatibility condition} of $\nabla$ and $\nabla^\dag$. We define an affine connection $\nabla$ to be $g$-compatible if and only if it is $g$-self-conjugate, i.e. $\nabla = \nabla^\dag$. The fundamental theorem of Riemannian geometry asserts the existence of a unique $g$-compatible affine connection with given torsion tensor \cite{GHV}: the unique torsion-free $g$-compatible affine connection is called the Levi-Civita affine connection of $g$, and is denoted by $\nabla^g$.

We recall the notions of statistical manifolds \cite{Lauritzen-1987} and of SMATs \cite{H-M-2011}.
\begin{definition}
\label{Def: Lauritzen manifolds}
Let $M$ be a smooth manifold. When it is equipped with a pseudo-Riemannian metric tensor $g$ and an affine connection $\nabla$, we refer to it as a \underline{Lauritzen manifold}. In particular, we say that $(M,g,\nabla)$ gives a 
\begin{enumerate}
\item a \underline{statistical manifold admitting torsion} -- or simply \underline{SMAT} -- if and only if $\nabla$ is torsion-free;
    \item a \underline{statistical manifold} if and only if both $\nabla$ and $\nabla^\dag$ are torsion-free.
\end{enumerate}
\end{definition}
According to such a definition, the set of statistical manifolds is strictly contained in the set of SMAT, which in turn is contained in the set of Lauritzen manifolds, for which no requirement on the torsion corresponding to the connections $\nabla, \nabla^\dag$ is assumed.

\begin{remark}\label{Remark: other names}
As we already noticed, the above terminology is not uniform in the literature. The same metric-affine setting is often described in terms of a pseudo-Riemannian metric and a pair of conjugate affine connections, and appears  under different names, including that of dualistic geometries \cite{Z-K-2020} and of Norden-Sen geometries \cite{Kostecki2023}
. We use the term Lauritzen manifold for this setting, but encode it by the triple $(M,g,\nabla)$, rather than by the quadruple $(M,g,\nabla,\nabla^\dagger)$, since $\nabla^\dagger$ is determined by $g$ and $\nabla$. Notice that the same name is used differently in \cite{K2014a}, where it denotes triples $(M,g,C)$ with $C$ completely symmetric $3$-covariant tensor, corresponding to our statistical manifold case.   
\end{remark}

\begin{remark}\label{Remark: smat}
The previous  definition of statistical manifold admitting torsion differs from the definition given in \cite{H-M-2011}, where $\nabla^\dag$ is required to be torsion-free instead of $\nabla$. Since $(\nabla^\dag)^\dag = \nabla$, the two notions are equivalent up to switching the roles of $\nabla$ and $\nabla^\dag$.
\end{remark}

\begin{remark}\label{Remark: dually curvature-free affine connections}
Conjugate affine connections are either both curvature-free or both curvature-full. As proven  in \cite{NomizuSimon1992}, the curvature tensors of $\nabla$ and its conjugate $\nabla^\dag$ are related as follows:
\begin{equation}
g\left(R^{(\nabla^\dag)}(X,Y)Z, W\right) = -g\left(Z, R^\nabla(X,Y) W\right),
\end{equation}
for all vector fields $X,Y,Z,W$ on $M$. An immediate consequence is that $\nabla$ is curvature-free if and only if $\nabla^\dag$ is curvature-free. In contrast, the torsion tensors of $\nabla$ and $\nabla^\dag$ may vanish independently.
\end{remark}

It is well known that tensors and affine connections on a smooth manifold $M$ can be obtained in terms of suitable potential functions defined on the \emph{Cartesian square} manifold $M\times M$. In order to describe a differential geometric formulation of a Lauritzen manifold, we consider a natural differential bicomplex on $M\times M$. 

\subsection{A bicomplex on the Cartesian square of a smooth manifold}
\label{Subsec: geometric preliminaries}
For a given smooth manifold $M$, the maps
$\pi_L,\pi_R\colon M\times M\to M$
\begin{equation}
\label{Eq: left and right projs}
\begin{split}
    &\pi_L(m,n)=m\,, \\
    &\pi_R(m,n)=n\,
    \end{split}
\end{equation}
are globally defined smooth surjective submersions, and $\pi_{L,R}\colon M\times M\to M$ provide trivial fiber bundles, with typical fiber $M$ \cite{Saunders-1989}. These  fiber bundle structures  allow to decompose the tangent bundle  $\tau_{M\times M}\colon \T(M\times M)\to M\times M$ as the direct sum of vertical subbundles. We write such a decomposition as the direct sum of vector subbundles
\begin{equation}\label{Eq: canonical decomposition T(MxM)}
\T(M\times M)=\ker \pi_R'\oplus \ker \pi_L'\,
\end{equation}
\noindent where $\pi_L'$ and $\pi_R'$ denote the tangent maps of $\pi_L$ and $\pi_R$, respectively. The direct summands in \eqref{Eq: canonical decomposition T(MxM)} are the eigendistributions of two $(1,1)$-tensors $\mathfrak L$ and $\mathfrak R$ on $M\times M$, that we describe  as the projectors on $\T(M\times M)$ satisfying
\begin{equation}
\label{LRd}
    \begin{cases}
        \ker \mathfrak L=\ker \pi_L'\\
        \im \mathfrak L=\ker \pi_R'
    \end{cases} \qquad \begin{cases}
        \ker \mathfrak R=\ker \pi_R'\\
        \im \mathfrak R=\ker \pi_L'
    \end{cases}
\end{equation}
In a local product chart $(U\times V,x,y)$, they read:
 \begin{align}
\mathfrak L&=\mathrm dx^i\otimes \partial_{x^i}\,,\\
\mathfrak R&=\mathrm dy^j\otimes \partial_{y^j}\,.
\end{align}
Note that these tensors further satisfy:
 \begin{equation}
\label{Eq: LR properties}
\begin{split}
&\mathfrak L^2=\mathfrak L\,, \\ & \mathfrak R^2=\mathfrak R\,,\\
    &\mathfrak L+\mathfrak R=\mathbb I\,, \\  & \mathfrak L\mathfrak R=\mathfrak R\mathfrak L=\mathbb O\,,
\end{split}
\end{equation}
\noindent where $\mathbb I$ and $\mathbb O$ denote, respectively, the identity and the zero endomorphism of $\T(M\times M)$.

\noindent A pair  $(\mathfrak L,\mathfrak R)$ of $(1,1)$-tensors satisfying the above properties is known as an  \emph{almost product structure} \cite{YanoKon1984}.
The pair $(\mathfrak L,\mathfrak R)$ we are considering share indeed further properties.
Recall that the Fr\"olicher--Nijenhuis bracket of two $(1,1)$-tensors $A,B$ on a smooth manifold $Q$ is the $(1,2)$-tensor defined by
\begin{equation}\label{Eq: FN bracket}
\begin{split}
    [A,B]_{\mathrm{FN}}(X,Y)&=[AX,BY]+[BX,AY]+(AB+BA)[X,Y] \\ & \quad-A\big([X,BY]+[BX,Y]\big)
-B\big([X,AY]+[AX,Y]\big)
\end{split}
\end{equation}
for any vector fields $X,Y$ on $Q$ \cite{KolarMichorSlovak1993}. We say that $A$ and $B$ \underline{commute in the sense of Fr\"olicher--Nijenhuis} if and only if $[A,B]_{\mathrm{FN}}=0$. Furthermore, the \emph{Nijenhuis tensor} of $A$ is defined by
\begin{equation}
    N_A=\frac{1}{2}\,[A,A]_{\mathrm{FN}}\,:
\end{equation}
the tensor $A$ is called \emph{Nijenhuis-integrable} if and only if $N_A=0$.

\noindent Both projectors  $\mathfrak L,\mathfrak R$ define a generalized connection on the fiber bundles $\pi_{L,R}\colon M\times M\to M$ , i.e. a splitting of the tangent space $\T(M\times M)$ into a vertical (namely $\ker\,\pi_{L,R}'$) and a horizontal (namely  $\mathrm{Im}\,\mathfrak L, \,\mathrm{Im}\,\mathfrak R$)
distribution. For both generalized connections, one easily proves (see chapter 3 in \cite{Saunders-1989}, for example) that the horizontal distributions are involutive, and therefore such connections are flat. This gives that  $\mathfrak L,\mathfrak R$ are Nijenhuis-integrable. Moreover, since the Fr\"ohlicher-Nijenhuis bracket is bilinear, with $[\mathbb I,\cdot]_{\operatorname{FN}}=0$, one directly computes
 $$
[\mathfrak L,\mathfrak R]_{\operatorname{FN}}=[\mathfrak L,\mathbb I - \mathfrak L]_{\operatorname{FN}}=-[\mathfrak L,\mathfrak L]_{\operatorname{FN}}=-2N_{\mathfrak L}= 0.
$$
We summarise what we have proven as follows.
\begin{proposition} \label[proposition]{Prop: Nijenhuis}
    Let $M$ be a smooth manifold.
     \begin{enumerate}[(a)] \item \label{Item: LR Nijenhuis integrable} $\mathfrak L$ and $\mathfrak R$ are Nijenhuis-integrable;
        \item \label{Item: LR FN commute} $\mathfrak L$ and $\mathfrak R$ commute in the sense of Fr\"olicher--Nijenhuis.
    \end{enumerate}
\end{proposition}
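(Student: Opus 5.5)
The plan is to verify both statements directly, working in a local product chart $(U\times V,x,y)$ where $\mathfrak L=\mathrm dx^i\otimes\partial_{x^i}$ and $\mathfrak R=\mathrm dy^j\otimes\partial_{y^j}$. Both $N_{\mathfrak L}$ and $[\mathfrak L,\mathfrak R]_{\mathrm{FN}}$ are $(1,2)$-tensors, since the Fr\"olicher--Nijenhuis bracket of two $(1,1)$-tensors is $C^\infty$-bilinear. It therefore suffices to evaluate them on pairs of coordinate vector fields $\partial_{x^i},\partial_{y^j}$, which span $\T(M\times M)$ locally and pairwise commute.

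For item \ref{Item: LR Nijenhuis integrable}, substitute $A=B=\mathfrak L$ into \eqref{Eq: FN bracket}. This gives
\begin{equation}
N_{\mathfrak L}(X,Y)=[\mathfrak LX,\mathfrak LY]+\mathfrak L^2[X,Y]-\mathfrak L\big([X,\mathfrak LY]+[\mathfrak LX,Y]\big),
\end{equation}
and we use $\mathfrak L^2=\mathfrak L$ from \eqref{Eq: LR properties}. On coordinate fields, $\mathfrak L\partial_{x^i}=\partial_{x^i}$ and $\mathfrak L\partial_{y^j}=0$. Hence every bracket in the formula is a bracket of coordinate vector fields, or a bracket involving the zero field, and so it vanishes. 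Thus $N_{\mathfrak L}=0$. Since $\mathfrak R$ has the same coordinate form with the roles of $x$ and $y$ exchanged, the same argument gives $N_{\mathfrak R}=0$.

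An intrinsic version of this argument is also available, and I would mention it. $\mathfrak L$ is a projector, so $N_{\mathfrak L}=0$ is equivalent to both $\im\mathfrak L=\ker\pi_R'$ and $\ker\mathfrak L=\ker\pi_L'$ being involutive. Each of these is a vertical distribution of a submersion, and vertical distributions of submersions are always involutive. This matches the paper's remark about flatness of the horizontal distributions.

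For item \ref{Item: LR FN commute}, I would use bilinearity together with $\mathfrak R=\mathbb I-\mathfrak L$ from \eqref{Eq: LR properties}. The bracket $[\mathfrak L,\mathbb I]_{\mathrm{FN}}$ vanishes: substituting $B=\mathbb I$ into \eqref{Eq: FN bracket}, the terms cancel pairwise, $2\mathfrak L[X,Y]$ against $\mathfrak L(2[X,Y])$, and $[\mathfrak LX,Y]+[X,\mathfrak LY]$ against the same expression. Therefore
\begin{equation}
[\mathfrak L,\mathfrak R]_{\mathrm{FN}}=-[\mathfrak L,\mathfrak L]_{\mathrm{FN}}=-2N_{\mathfrak L}=0
\end{equation}
by item \ref{Item: LR Nijenhuis integrable}. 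Alternatively, the coordinate check works here too, because every term of the bracket is again a bracket of coordinate fields.

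No step is a genuine obstacle. The only point needing care is the justification that a chart computation suffices, namely the tensoriality of the Fr\"olicher--Nijenhuis bracket. I would verify it quickly by replacing $Y$ with $fY$: the terms involving derivatives of $f$ are $(\mathcal L_{AX}f)BY+(\mathcal L_{BX}f)AY+(AB+BA)(\mathcal L_Xf)Y-A(\mathcal L_Xf)BY-A(\mathcal L_{BX}f)Y-B(\mathcal L_Xf)AY-B(\mathcal L_{AX}f)Y$. These sum to zero, which yields $C^\infty$-linearity in $Y$; linearity in $X$ follows from the symmetry of the bracket in $X$ and $Y$.
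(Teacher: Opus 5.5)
Your proof is correct and follows the paper. Part (b) is exactly the paper's argument via $\mathfrak R=\mathbb I-\mathfrak L$ and $[\mathfrak L,\mathbb I]_{\mathrm{FN}}=0$. Your intrinsic version of (a) is the paper's involutivity argument, and you make explicit the criterion it uses tacitly: a projector $P$ has $N_P=0$ iff both $\ker P$ and $\im P$ are involutive. The chart computation is an equally valid hands-on check.

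One small wording fix: the Fr\"olicher--Nijenhuis bracket of two $(1,1)$-tensors is \emph{skew}-symmetric in $X,Y$, not symmetric. It is this skew-symmetry that transfers $C^\infty$-linearity from $Y$ to $X$.
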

\noindent The pair $(\mathfrak L,\mathfrak R)$ that we have introduced in \eqref{Eq: LR properties} gives therefore
a \emph{product structure} on $M\times M$.

The smooth fiber bundles $\pi_{L,R}\colon M\times M\to M$ also allow to define the left and right lifts of a vector field  $X\in\mathfrak{X}(M)$. Define $X^{L,R}\in\mathfrak{X}(M\times M)$ by requiring that, for any $f\in C^\infty(M)$, one has
 \begin{equation}
\label{21.04.1}
\begin{cases}
\mathcal L_{X^L}(\pi_L^*f)=\pi_L^*(\mathcal L_Xf) \\ \mathcal L_{X^L}(\pi^*_Rf)=0\,,
\end{cases}
\qquad\qquad
\begin{cases}
\mathcal L_{X^R}(\pi^*_Lf)=0 \\ \mathcal L_{X^R}(\pi^*_Rf)=\pi^*_R(\mathcal L_Xf)\,.
\end{cases}
\end{equation}
Via this definition, we see that both lifted vector fields are \emph{doubly-projectable}, namely one has
 $$
\pi_L'\circ X^L=X\circ\pi_L, \qquad \pi_R'\circ X^L=0
$$
for any $X\in\mathfrak{X}(M)$, and also
$$
\pi_R'\circ Y^R=Y\circ\pi_R, \qquad \pi_L'\circ Y^R=0
$$
for any $Y\in\mathfrak{X}(M)$. This is the reason by which we can adopt the shorthand notation
 $$
X^L+Y^R=(X,Y) \in\mathfrak{X}(M\times M)
$$
and can write the flow of such a vector field as
 \begin{equation}
    \phi^{(X,Y)}_t(m,n)=\big(\phi^X_t(m),\phi^Y_t(n)\big)\,,
\end{equation}
whenever the right-hand side is defined. From the above definitions and the properties of the flow of $(X,Y)$ it is immediate to prove the following commutation relation, for vector fields $X_1,X_2,Y_1,Y_2$ on $M$:
\begin{equation}\label{Eq: triviality}
    \left[(X_1,Y_1),(X_2,Y_2)\right]=\left([X_1,X_2],[Y_1,Y_2]\right)\,.
\end{equation}

Further natural maps within the setting of the Cartesian square $M\times M$ are given by
the diagonal embedding $\iota\colon M\to M\times M$,
\begin{equation}\label{Eq: diagonal embedding}
\iota(m)=(m,m)\,,
\end{equation}
and by the flip map $\dag\colon M\times M\to M\times M$:
\begin{equation}\label{Eq: swap map}
\dag(m,n)=(n,m)\,.
\end{equation}
The following result, which intertwines the left to the right lift of a vector field on $M$, is immediate.
\begin{lemma}
For any vector fields $X,Y,Z$ on $M$, it is
\begin{align}
\iota'\circ Z&=(Z,Z)\circ \iota\,,\label{Eq: (X,X) is iota-related to X}\\
\dag'\circ (X,Y)&=(Y,X)\circ \dag\,.\label{Eq: XL is dag related to XR}
\end{align}
\end{lemma}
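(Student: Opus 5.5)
Both identities are statements about tangent maps applied to the vector fields $X^L+Y^R$. Since a tangent vector at a point is determined by its action on germs of functions, and functions of the form $\pi_L^*f$, $\pi_R^*h$ with $f,h\in C^\infty(M)$ separate tangent vectors on $M\times M$, I will check each identity by testing both sides against such pullbacks. Equivalently, one may work in a local product chart $(U\times V,x,y)$, where the defining relations \eqref{21.04.1} give $X^L=X^i(x)\,\partial_{x^i}$ and $Y^R=Y^j(y)\,\partial_{y^j}$. The flow description $\phi^{(X,Y)}_t(m,n)=(\phi^X_t(m),\phi^Y_t(n))$ offers a third, equally short route. I will use it as the main argument and keep the function test as a check.

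For \eqref{Eq: (X,X) is iota-related to X}, take $m\in M$ and the integral curve $\gamma(t)=\phi^Z_t(m)$ of $Z$ through $m$. Then $\iota\circ\gamma(t)=(\phi^Z_t(m),\phi^Z_t(m))=\phi^{(Z,Z)}_t(m,m)$. Differentiating at $t=0$ gives
\[
\iota'(Z(m))=\frac{d}{dt}\Big|_{t=0}\iota(\gamma(t))=(Z,Z)(\iota(m)).
\]
As a check with functions, note that $\pi_L\circ\iota=\pi_R\circ\iota=\id_M$. Hence
\[
\iota'(Z)(\pi_L^*f)=Z(\iota^*\pi_L^*f)=Z f,
\]
while by \eqref{21.04.1}
\[
(Z,Z)(\pi_L^*f)\circ\iota=\big(\pi_L^*(Zf)+0\big)\circ\iota=Zf.
\]
The same computation works with $\pi_R$.

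For \eqref{Eq: XL is dag related to XR}, observe that $\dag\circ\phi^{(X,Y)}_t(m,n)=(\phi^Y_t(n),\phi^X_t(m))=\phi^{(Y,X)}_t(\dag(m,n))$. Differentiating at $t=0$ yields $\dag'\big((X,Y)(m,n)\big)=(Y,X)(\dag(m,n))$. As a function check, $\pi_L\circ\dag=\pi_R$ and $\pi_R\circ\dag=\pi_L$, so
\[
\dag'(X,Y)(\pi_L^*f)=(X,Y)(\pi_R^*f)=\pi_R^*(Yf),
\qquad
(Y,X)(\pi_L^*f)\circ\dag=\pi_L^*(Yf)\circ\dag=\pi_R^*(Yf).
\]
The $\pi_R$ case is symmetric.

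No step poses a genuine obstacle. The only point needing care is justifying that testing on pullbacks $\pi_{L,R}^*f$ (or on the product-chart coordinates) suffices, since these determine tangent vectors. A second, minor point is that flows may only be defined locally. This is harmless because we only differentiate at $t=0$.
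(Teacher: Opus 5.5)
Your proof is correct: the flow computation and the test on pullbacks $\pi_L^\ast f$, $\pi_R^\ast f$ via the defining relations of the lifts each establish both identities. Your remarks on locally defined flows and on pullbacks determining tangent vectors handle the only technical points. The paper gives no proof beyond calling the lemma immediate from the definitions of the lifts and the flow $\phi^{(X,Y)}_t$, so your argument is the routine verification it leaves implicit.
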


 Via the (canonical) product structure in $M\times M$, one can define a (canonical) bicomplex. In order to introduce it, we need to recall the notion of differential associated to a $(1,1)$-tensor $A$ on a smooth manifold $Q$ (see \cite[Section 2.4]{M-F-LV-M-R-1990}). Let  $\dd_A\colon \Omega^k(Q)\to\Omega^{k+1}(Q)$ be defined by
\begin{equation}\label{Eq: dA}
\begin{split}
(\mathrm d_A\omega)(Z_0,\dots,Z_k)&=\sum_{j=0}^k (-1)^j\, \mathcal L_{AZ_j}\left(\omega(Z_0,\dots,\check{Z_j},\dots,Z_k)\right)\\
&\quad+\sum_{0\le a<b\le k}(-1)^{a+b}\omega([AZ_a,Z_b]+[Z_a,AZ_b]-A[Z_a,Z_b],Z_0,\dots,\check{Z}_a,\dots,\check{Z}_b,\dots,Z_k)\,,
\end{split}
\end{equation}
where $Z_0,\dots,Z_k \in \mathfrak X (Q)$ and  $\check{Z_j}$ means that the vector field $Z_j$ is omitted. One proves that $\mathrm d_A$ is a cohomology operator, i.e. $\mathrm d_A\circ \mathrm d_A=0$, if and only if $A$ is Nijenhuis-integrable. Moreover, given another $(1,1)$-tensor $B$ on $Q$, $\mathrm d_A$ and $\mathrm d_B$ anticommute, i.e. $\mathrm d_A\circ \mathrm d_B=-\mathrm d_B\circ \mathrm d_A$, if and only if $A$ and $B$ commute in the sense of Fr\"olicher--Nijenhuis.

\noindent Apply this definition  to the product structure $(\mathfrak L,\mathfrak R)$ of $M\times M$. Since $\mathfrak L$ and $\mathfrak R$ are Nijenhuis-integrable (see \cref{Prop: Nijenhuis}~\ref{Item: LR Nijenhuis integrable}), $\mathrm d_{\mathfrak L}$ and $\mathrm d_{\mathfrak R}$ are cohomology operators on $M\times M$. Moreover, since they commute in the sense of Fr\"olicher--Nijenhuis (see \cref{Prop: Nijenhuis}\ref{Item: LR FN commute}), $\mathrm d_{\mathfrak L}$ and $\mathrm d_{\mathfrak R}$ anticommute. Lastly, the property $\mathfrak L+\mathfrak R=\mathbb I$ entails that
\begin{equation}\label{Eq: d=dL+dR}
    \mathrm d=\mathrm d_{\mathfrak L}+\mathrm d_{\mathfrak R}\,.
\end{equation}
We may summarize the above properties by saying that the triple
 \begin{equation}
\left(\Omega^\bullet(M\times M),\mathrm d_{\mathfrak L},\mathrm d_{\mathfrak R}\right)
\end{equation}
carries the structure of a differential bicomplex whose total differential is  the exterior Cartan differential on  $M\times M$.

\begin{example}
Let us represent the action of $\mathrm d_{\mathfrak L}$ and $\mathrm d_{\mathfrak R}$ in local coordinates. Let $\omega$ be a $k$-form on $M\times M$, with $k\in \mathbb N$. Let $(U\times V,x,y)$ be a local product chart, and write:
\begin{equation}
    \omega=
    \sum_{\substack{I,J\\ |I|+|J|=k}}\omega_{I,J}\,\mathrm dx^I\wedge\mathrm dy^J,
\end{equation}
\noindent where $I=\{i_1,\dots,i_{|I|}\}$ and $J=\{j_1,\dots,j_{|J|}\}$ are multi-indices; moreover, $\mathrm dx^I=\mathrm dx^{i_1}\wedge\dots\wedge\mathrm dx^{i_{|I|}}$ and $\mathrm dy^J=\mathrm dy^{j_1}\wedge\dots\wedge\mathrm dy^{j_{|J|}}$, with the convention $\mathrm dx^\varnothing=\mathrm dy^\varnothing=1$. The local expressions of $\mathrm d_{\mathfrak L}\omega$ and $\mathrm d_{\mathfrak R}\omega$ therefore read:
\begin{align}
    \mathrm d_{\mathfrak L}\omega
    &=\sum_{\substack{I,J\\ |I|+|J|=k}}\frac{\partial\omega_{I,J}}{\partial x^i}\,\mathrm dx^i\wedge\mathrm dx^I\wedge\mathrm dy^J , \\
    \mathrm d_{\mathfrak R}\omega
    &=\sum_{\substack{I,J\\ |I|+|J|=k}}\frac{\partial\omega_{I,J}}{\partial y^j}\,\mathrm dy^j\wedge\mathrm dx^I\wedge\mathrm dy^J\,.
\end{align}
\end{example}
We further notice that such a  decomposition realizes the exterior calculus on $M\times M$ as a sum of two anticommuting partial differential calculi along the left and right factors, and motivates the terminology of  \emph{absolute partial differential calculus} for the pair $(\mathrm d_{\mathfrak L},\mathrm d_{\mathfrak R})$ \cite{Ruse1931}.

 {\color{red}
}

\subsection{Statistical Potentials in Information Geometry}
 In the context of statistical manifolds, the construction problem for metrics and affine connections was first addressed and solved by Eguchi, who showed that suitable two-point functions suffice to define the associated geometric structures \cite{Eguchi1985,Eguchi1992}. The left-right splitting of the usual Cartan calculus on $M\times M$ described in the previous pages allows to set the action of such two-point functions within a geometrically invariant formalism.

 \begin{definition}\label{Def: contrast function}
 A \underline{contrast function} $F$ on a smooth manifold $M$ is a smooth function $F\colon M\times M\to \mathbb R$ satisfying, with $\iota\colon M\hookrightarrow M\times M$ the diagonal embedding map, the following conditions:
 \begin{enumerate}[(1)]
 \item it is $\iota^\ast F=0$,
 \item it is $\iota^\ast \left(\left(\mathrm d_{\mathfrak L}F\right)(X^L)\right)=0$, for any $X\in\mathfrak{X}(M)$,
 \item the tensor $g^F\in\Omega^1(M)\otimes_{C^\infty(M)}\Omega^1(M)$ whose action on $X,Y\in\mathfrak{X}(M)$ is defined by
  \begin{equation}
 g^F(X,Y)=\iota^\ast
 \left(\mathrm d_{\mathfrak L}\mathrm d_{\mathfrak R}F\left(X^L,Y^R\right)\right)
 \label{Eq: metric contrast E}
 \end{equation}
 is a pseudo--Riemannian metric on $M$.
 \end{enumerate}
 \end{definition}
\noindent With respect to local coordinates $(U,q)$ on $M$, the induced metric tensor $g^F$ is
 \begin{equation}
    g^F=\iota^\ast\left(\frac{\partial^2F}{\partial x^i \partial y^j}\right)\,\mathrm dq^i\otimes \mathrm dq^j=\left(\frac{\partial^2F}{\partial x^i \partial y^j}\right)\bigg|_{x=y=q}\,\mathrm dq^i\otimes \mathrm dq^j\,,
\end{equation}
\noindent where $(U\times U,x\times y)$ is the square chart induced by $(U,q)$. The first two defining conditions locally read
\begin{equation}
\begin{split}
&\iota^\ast F=F(q,q)=0\,, \\
&\iota^\ast \left((\mathrm d_{\mathfrak L}F)(X^L)\right)
= \iota^\ast \left(\pi_L^\ast (X^i\frac{\partial F}{\partial x^i}) \right)
= X^i(q)\left(\frac{\partial F}{\partial x^i}\right)\bigg|_{x=y=q}=0\,.
\end{split}
\end{equation}
\noindent Since this holds for every $X\in\mathfrak X(M)$, we get
\begin{equation}
    \left(\frac{\partial F}{\partial x^i}\right)\bigg|_{x=y=q}=0\,,\qquad i=1,\dots,\dim M\,.
\end{equation}
\noindent If $F$ is a contrast function, then the position
 \begin{equation}\label{Eq: connection contrast E} g^F\left(\nabla_Z^F X, Y\right)=\iota^\ast\left(\left(\mathcal L_{Z^L}(\mathrm d_{\mathfrak L}\mathrm d_{\mathfrak R}F)\right)\left(X^L,Y^R\right)\right)+g^F([Z,X],Y)
 \end{equation}
 \noindent implicitly defines  an affine connection $\nabla^F$ on $M$, where $X,Y,Z \in \mathfrak X(M)$ are arbitrary vector fields. To see that $\nabla^F$ and $(\nabla^F)^\dag$ are torsion-free, we observe that
 \begin{equation}
    \mathrm d_{\mathfrak L}\mathrm d_{\mathfrak R}F\left(X^L,Y^R\right)=\mathcal L_{X^L}\,\mathcal L_{Y^R}F\,:
\end{equation}
 \noindent from  the definition of torsion tensor (see \eqref{Eq: torsion tensor}), we have
  \begin{align}
     &g^F\left(\Tor^{\nabla^F}(X,Z),Y\right)=\iota^\ast\left(\left(\mathcal L_{X^L}\mathcal L_{Z^L}-\mathcal L_{Z^L}\mathcal L_{X^L}-\mathcal L_{[X,Z]^L}\right)\mathcal L_{Y^R}F\right)\,,\\
&g^F\left(X,\Tor^{\left(\nabla^F\right)^\dag}(Y,Z)\right)=\iota^\ast\left(\left(\mathcal L_{Y^R}\mathcal L_{Z^R}-\mathcal L_{Z^R}\mathcal L_{Y^R}-\mathcal L_{[Y,Z]^R}\right)\mathcal L_{X^L}F\right)\,.
 \end{align}
 By the commutation relation \eqref{Eq: triviality}, we see that the right-hand sides of the above equalities vanish, and, since $g^F$ is nondegenerate, we conclude that the torsion tensors of both $\nabla^F$ and $(\nabla^F)^\dag$ vanish.

\begin{remark}\label{Remark: sign convention}
Notice that different definitions of suitable potential functions for a statistical manifold are considered in the literature on this subject.  A \underline{yoke} is a smooth real-valued function on $M\times M$ that satisfies the conditions (2), (3) in \cref{Def: contrast function} (see \cite{BarndorffNielsenJupp1997}). A \underline{divergence function} is a smooth real-valued function  $D\colon M\times M\to\mathbb{R}$ that satisfies the conditions set in \cref{Def: contrast function} and furthermore entails $D(m,n)\>\geq0$ with $D(m,n)=0$ if and only if $m=n$ (see \cite{Eguchi1985,Eguchi1992}). In particular, in this paper we do not insert the minus sign in front of \eqref{Eq: metric contrast E}, as we treat pseudo--Riemannian metrics.
\end{remark}

\noindent  Henmi and Matsuzoe generalized contrast functions to allow for affine connections with a non-trivial torsion tensor \cite{H-M-2011}.
\begin{definition}\label{Def: pre-contrast function}
 A \underline{pre-contrast function} on a smooth manifold $M$ is a smooth map
\begin{equation} \rho\colon M\times \T M \longrightarrow \mathbb R
 \end{equation}
 such that the following conditions hold.
 \begin{enumerate}[(1)]
 \item For any $Y \in \mathfrak X(M)$, the function $\rho_{\,\mid  \,Y}\colon M\times M\to \mathbb R$, defined by $\rho_{\,\mid  \,Y}(m,n)=\rho(m,Y(n))$, satisfies $\iota^\ast\rho_{\,\mid  \,Y}=0$;
 \item \label{Cond: pre-contrast fiberwise linearity} for any $m \in M$, the map  $\rho(m,\cdot)\colon \T M \to \mathbb R$ is fiberwise linear;
 \item the tensor $g^\rho\in\Omega^1(M)\otimes_{C^\infty(M)}\Omega^1(M)$ whose action on vector fields $X,Y$ on $M$ is given by
  \begin{equation} \label{Eq: metric pre-contrast HM}
 g^\rho(X,Y)=\iota^\ast \left((\mathrm d_{\mathfrak L} \rho_{\mid Y})(X^L)\right)\,
 \end{equation}
 is a pseudo--Riemannian metric on $M$.
 \end{enumerate}
 \end{definition}
\noindent In local coordinates $(x,y,\dot y)$ on $M\times \T M$, a pre-contrast function $\rho$ can be represented as
\begin{equation}
     \rho=\rho_j(x,y)\,\dot y^j.
\end{equation}
\noindent The corresponding metric tensor is written, along a $(U,q)$ local chart on $M$, as
\begin{equation}
     g^\rho=\iota^\ast\left(\frac{\partial \rho_{j}}{\partial x^i}\right)\, \mathrm dq^i\otimes \mathrm dq^j=\left(\frac{\partial \rho_{j}}{\partial x^i}\right)\bigg|_{x=y=q}\, \mathrm dq^i\otimes \mathrm dq^j\,,
 \end{equation}
where $(U\times U,x,y)$ is the square chart induced by $(U,q)$.

\begin{remark}
In \cite{H-M-2011,H-M-2019}, pre-contrast functions are defined in accordance with the convention that the conjugate connection $\nabla^\dag$ is torsion-free.
Since in the present paper we adopt the opposite convention (cf. \cref{Remark: smat}), our definition of pre-contrast functions is modified accordingly.
\end{remark}

\noindent If $\rho$ is a pre-contrast function, then the position (with $X,Y,Z\in \mathfrak X(M)$)
 \begin{equation} \label{Eq: connection pre-contrast HM} g^\rho\left(\nabla^\rho_Z X, Y\right)=\iota^\ast\left(\left(\mathcal L_{Z^L}(\mathrm d_{\mathfrak L}\rho_{\,\mid \,Y})\right)\left(X^L\right)\right)+g^\rho([Z,X],Y)
 \end{equation}
implicitly defines an affine connection $\nabla^\rho$ on $M$. Since
 $\mathrm d_{\mathfrak L}\rho_{\,\mid  \,Y}\left(X^L\right)=\mathcal L_{X^L}\rho_{\mid Y}$,
we have
 \begin{equation}
    g^\rho\left(\Tor^{\nabla^\rho}(X,Z),Y\right)=\iota^\ast\left(\left(\mathcal L_{X^L}\mathcal L_{Z^L}-\mathcal L_{Z^L}\mathcal L_{X^L}-\mathcal L_{[X,Z]^L}\right)\rho_{\,\mid  \,Y}\right)\,.
\end{equation}
 Again by the commutation relation in \eqref{Eq: triviality}, the right-hand side of the above equality vanishes: the non-degeneracy of $g^\rho$ then implies  that $\nabla^\rho$ is torsion-free. Thus, we have that $(M, g^\rho, \nabla^\rho)$ is a SMAT. Furthermore, if $F$ is a contrast function, then the map  $\rho^F\colon M \times \T M \to \mathbb R$ defined by \begin{equation}
 \rho^F(m, (n,v)) = (\mathrm d_{\mathfrak R}F)\left(Y^R\right)(m,n)\,,
 \end{equation}
 where $Y$ is any vector field on $M$ such that $Y_n = v$, turns out to be a pre-contrast function. This shows that  the theory of contrast functions is a particular case of the theory of pre-contrast functions, just as statistical manifolds are particular cases of SMAT.

\noindent Khan and Zhang further generalized pre-contrast functions to the general case \cite{Z-K-2020}.
 \begin{definition}\label{def:scf}
 A \underline{super-contrast function} on a smooth manifold $M$ is a smooth map  \begin{equation}
 K\colon \T M\times \T M\longrightarrow \mathbb R
 \end{equation}
 satisfying the following conditions:
 \begin{enumerate}[(1)]
 \item \label{Cond: super-contrast right fiberwise linearity} for any $(m,v)\in \T M$, the map $K(m,v,\cdot)\colon \T M\to \mathbb R$ is fiberwise linear;
 \item \label{Cond: super-contrast left fiberwise linearity} for any $(n,w)\in \T M$, the map $K(\cdot,n,w)\colon \T M\to \mathbb R$ is fiberwise linear;
 \item the tensor  $g^K\in\Omega^1(M)\otimes\Omega^1(M)$ whose action on vector fields $X,Y$ on $M$ is defined by
 \begin{equation} \label{Eq: metric super-contrast}
 g^K(X,Y)=\iota^\ast K_{X\mid Y}
 \end{equation}
 is a pseudo--Riemannian metric on $M$. Here, $K_{X\mid Y}\colon M\times M\to \mathbb R$ is the function defined by $K_{X\mid Y}(m,n)= K(X(m),Y(n))$.
 \end{enumerate}
 \end{definition}
\noindent In local coordinates $(x,\dot x,y,\dot y)$ on $\T M\times \T M$, a  super-contrast function $K$ reads
  \begin{equation}
     K=K_{i\mid j}(x,y)\,\dot x^i\,\dot y^j\,,
 \end{equation}
If $(U,q)$ is a local chart of $M$, the induced metric tensor $g^K$ is
 \begin{equation}
     g^K=(\iota^\ast K_{i\mid j})\, \mathrm dq^i\otimes \mathrm dq^j=K_{i\mid j}(q,q)\,\dd q^i\otimes\dd q^j\,.
 \end{equation}
Given a super-contrast function $K$, the position (with $X,Y,Z\in\mathfrak{X}(M)$)
 \begin{equation} \label{Eq: connection super-contrast}
 g^K\left(\nabla^K_Z X, Y\right)= \iota^\ast(\mathcal L_{Z^L}(K_{X\mid Y}))
 \end{equation}
 implicitly defines  an affine connection on $M$. The theory of pre-contrast functions embeds into that of super-contrast functions as follows. If $\rho$ is a pre-contrast function on $M$, then the map $K^\rho\colon \T M\times \T M\to \mathbb R$ defined by 
 \begin{equation}
 K^\rho((m,v),(n,w)) = \left((\mathrm d_{\mathfrak L}\rho_{\,\mid \,Y})(X^L)\right)(m,n)
 \end{equation}
 where $X$ is any vector field on $M$ such that $X_m=v$, and $Y$ is any vector field on $M$ such that $Y_n=w$, is a super-contrast function.

The previous lines have fixed the terminology and the geometric framework used throughout the paper. We now explain how the language of bi-forms enters the construction of statistical potentials.

The condition \hyperref[Cond: pre-contrast fiberwise linearity]{(2)} in \cref{Def: pre-contrast function} as well as the conditions \hyperref[Cond: super-contrast right fiberwise linearity]{(1)}--\hyperref[Cond: super-contrast left fiberwise linearity]{(2)} in \cref{def:scf} show that a metric on $M$ can be obtained via suitable fiberwise multilinear maps. In order to give an alternative analysis of such potentials, consider the cotangent bundle $\tau^*_M\colon \T^*M\to M$, and the bundles induced via the submersion  $\pi_{L,R}\colon M\times M\to M$, namely
\begin{align}
&\pi_L^\ast \T^* M=\{(m,n;c,\theta)\in M\times M\times \T^* M\mid \pi_L(m,n)=\tau^*_M(c,\theta)\}\xrightarrow{\tilde\pi_L}\,M\times M \label{sez1} \\
&\pi_R^\ast \T^* M=\{(m,n;c,\xi)\in M\times M\times \T^* M\mid \pi_R(m,n)=\tau^*_M(c,\xi)\}\,\xrightarrow{\tilde\pi_R}\,M\times M, \label{sez2}
\end{align}
with the corresponding $C^{\infty}(M\times M)$-modules of (smooth) sections, that we respectively denote by
\begin{align}
&\Gamma^\infty_{\tilde\pi_L} =\{\sigma\colon M\times M\longrightarrow \pi^*_L\T^*M\mid \widetilde\pi_L\circ\sigma=\id_{M\times M}\}, \label{sez3} \\
&\Gamma^\infty_{\tilde\pi_R} =\{\sigma\colon M\times M\longrightarrow \pi^*_R\T^*M\mid \widetilde\pi_R\circ\sigma=\id_{M\times M}\}.
\label{sez4}
\end{align}
The term \underline{bi-form} arises within such a setting. We say that elements in  $\Gamma^\infty_{\tilde\pi_L}$ are $(1,0)$-bi-forms on $M$, while elements in  $\Gamma^{\infty}_{\tilde\pi_R}$ are $(0,1)$-bi-forms on $M$. Furthermore, we can consider the tensor product of the above modules of sections over $C^\infty(M\times M)$: we say that elements in $\Gamma^\infty_{\tilde\pi_L}\otimes_{C^\infty(M\times M)}\Gamma^\infty_{\tilde\pi_R}$ are $(1,1)$-bi-forms on $M$.
In terms of such definitions, we see that a pre-contrast function $\rho$ reads as a suitable $(0,1)$-bi-form, while a super-contrast function $K$ reads as a suitable $(1,1)$-bi-form on $M$. 

This observation motivates the bi-form approach developed in the rest of the
paper. The aim of this paper is to study the geometry of bi-forms and to show
that suitable cohomological properties of bi-forms encode the existence of
potential functions in information geometry, recovering metric tensors and affine
connections both in the torsion-free and in the torsion-full settings.

\paragraph{Outline.}
The paper is organised as follows. In \cref{Sec: Geometry of bi-forms}, we describe the geometry of bi-forms and their equivalent formulations, providing a Cartan-like calculus composed of interior products, exterior derivatives, and Lie derivatives, distinguished into left and right types. We then study the cohomology of the left and right exterior derivatives in detail in \cref{Sec: cohomology}. In \cref{Section: Contrast bi-forms in Information Geometry}, the formalism of bi-forms is applied to information geometry, and the cohomological framework is related to the passage from contrast bi-forms to pre-contrast and contrast functions. In \cref{Sec: dually curvaturefree LM}, we introduce dually curvature-free Lauritzen manifolds and construct a canonical contrast bi-form within this setting. Finally, in \cref{Sec: examples} and \cref{Subsec: Lie group}, we illustrate the framework with two classes of examples: faithful states on finite-dimensional $C^\ast$-algebras (recovering the standard dually flat structure of the probability simplex, and SMAT structures on the manifold of faithful quantum states on a finite-dimensional Hilbert space associated to quantum monotone metric tensors), and semisimple Lie groups equipped with the Killing metric and left and right Cartan connections.

\section{The geometry of bi-forms}\label{Sec: Geometry of bi-forms} 
The first aim of this section is to refine and generalise the above definition of a bi-form as a section of a suitable  vector bundle over the Cartesian square manifold $M\times M$, as well as to reinterpret it both as a blockwise fiberwise multilinear map and as differential forms of mixed type on $M \times M$. We then define the diagonal restriction and dual operator, and  introduce the Cartan bi-calculus with its left and right interior products, exterior derivatives, and Lie derivatives. 

\subsection{Bi-forms on a smooth manifold}
\label{subs:b}
\begin{definition}
\label{25.04.d}
Consider a  smooth manifold $M$, with  $\pi_{L,R}\colon M\times M\to M$  the smooth projections introduced in \eqref{Eq: left and right projs}. Let\footnote{We denote as $\mathbb N_0$  the set of natural numbers and as $\mathbb N = \mathbb N_0 \setminus \{0 \}$.}  $(p,q) \in \mathbb{N}_0 \times \mathbb{N}_0$. A $(p,q)$-\underline{bi-form} on $M$ is a smooth section of the vector bundle 
\begin{equation}
    \bigwedge^{p,q}(M \mid M) = \bigwedge^p \pi_L^\ast \T^\ast M \otimes_{M \times M} \bigwedge^q \pi_R^\ast \T^\ast M\longrightarrow M\times M.
\end{equation}
\end{definition}
\noindent The fibers of such a vector bundle are  
\begin{equation}
    \left( \bigwedge^{p,q}(M \mid M) \right)_{(m,n)} = \bigwedge^p \T_m^\ast M \otimes \bigwedge^q \T_n^\ast M\,
\end{equation}
at each base point $(m,n)\in M\times M$, and we denote by  $\Omega^{p,q}(M \mid M)$ the $C^\infty(M \times M)$-module of sections of this bundle. Given $\varpi\in \Omega^{p,q}(M\mid M)$, we say that $p$ is the \underline{left degree} of $\varpi$, and that $q$ is the \underline{right degree} of $\varpi$. We refer to the pair $(p,q)$ as the \underline{bi-degree} of $\varpi$.

\medskip

\noindent One can consider bi-forms as fiberwise blockwise-multilinear maps, as much as like one can consider exterior  differential forms as multilinear alternating real-valued functions. For a $p$-form $\omega \in \Omega^p(M)$, define $\dot \omega\colon (\T M)^{\oplus p}\to \mathbb R$ by
\begin{equation} 
\dot\omega((m,u_1), \dots, (m,u_p))=\omega_m(u_1, \dots, u_p)\,,
\end{equation}
where $(\mathrm{T} M)^{\oplus p}$ denotes the $p$-fold Whitney sum of the tangent bundle over $M$. This extends to $(p,q)$-bi-forms as follows. For any $\varpi \in \Omega^{p,q}(M \mid M)$, we define $\dot \varpi\colon (\mathrm{T} M)^{\oplus p}\times (\mathrm{T} M)^{\oplus q}\to \mathbb R$ by
\begin{align} 
\dot{\varpi}((m,u_1), \dots, (m,u_p)\mid (n,v_1), \dots, (n,v_q)) =\varpi_{(m,n)}(u_1, \dots, u_p \mid v_1, \dots, v_q)\,.
\label{Eq: dot bi-form}
\end{align}  
  This highlights the $(p,q)$-blockwise alternating fiberwise multilinear structure of $(p,q)$-bi-forms.
  
\medskip

\noindent Bi-forms on $M$ pair to vector fields on $M$ as follows. Let $(p,q)\in \mathbb N_0\times \mathbb N_0$, $\varpi$ be a $(p,q)$-bi-form on $M$, and consider vector fields $\{X_i\}_{i=1}^p$ and $\{Y_j\}_{j=1}^q$ on $M$. Then, for any $(m,n)$, set 
\begin{equation}\label{Eq: pairing} 
\varpi(X_1,\dots,X_p\mid Y_1,\dots,Y_q)(m,n)=\varpi_{(m,n)}((X_1)_m,\dots,(X_p)_m\mid (Y_1)_n,\dots,(Y_q)_n)\,. 
\end{equation} 
 
It is immediate to see that the following properties hold.
\begin{enumerate}[1)] 
\item The pairing associated to $\varpi$ is $(p,q)$-\underline{blockwise alternating}, that is, for any permutations $\sigma$ of $\{1,\dots,p\}$ and $\rho$ of $\{1,\dots,q\}$, we have
\begin{equation} 
\varpi(X_{\sigma(1)},\dots,X_{\sigma(p)}\mid Y_{\rho(1)},\dots,Y_{\rho(q)})=(-1)^\sigma\,(-1)^\rho\,\varpi(X_1,\dots, X_p\mid Y_1,\dots,Y_q)\,.
\end{equation} 
\item The pairing associated to $\varpi$ is both \underline{left} and \underline{right tensorial}, that is, for any $f,h\in C^\infty(M)$, we have 
\begin{equation}\label{Eq: tensorial property of bi-form} 
\varpi(f\,X_1,X_2,\dots,X_p\mid h\,Y_1,Y_2,\dots,Y_q)=(\pi_L^\ast f)\, (\pi_R^\ast h)\,\varpi(X_1,\dots,X_p\mid Y_1,\dots,Y_q)\,.
\end{equation} 
\end{enumerate} 
Conversely, any pairing satisfying such properties uniquely determines a $(p,q)$-bi-form on $M$. For this reason we identify the two notions, and use  the same symbol both for representing the bi-form and the induced pairing.

\medskip

\noindent We begin to analyse the relations between \underline{bi-forms} on $M$, i.e. elements $\varpi\in\Omega^{p,q}(M\mid M)$, and exterior forms on $M\times M$, i.e. elements in $\Omega(M\times M)$. As a first step, we consider the following definition.

\begin{definition}
Let $M$ be a smooth manifold, and $(p,q)\in \mathbb N_0\times \mathbb N_0$. A $(p,q)$-bi-form $\varpi$ is called \underline{decomposable} if and only if there are  $\alpha \in \Omega^p(M)$ and $\beta \in \Omega^q(M)$ such that:
\begin{equation}
    \varpi= \pi_L^\ast(\alpha) \otimes \pi_R^\ast(\beta)\,.
\end{equation}
 If a bi-form $\varpi$ turns to be decomposable as above, we denote it as $\varpi=\alpha\boxtimes \beta$. 
\end{definition}
\noindent If $(U, q)$ and $(V, z)$ are local  charts on $M$, then a general $(p,q)$-bi-form $\varpi$ can be written on $U \times V$ as:
\begin{equation}\label{Eq: local decomposition bi-forms}
    \varpi = \varpi_{I\mid J} \, \left(\mathrm dq^{I}\boxtimes \mathrm d z^{J}\right)\,,
\end{equation}
where $I$ and $J$ are multi-indices with $|I|=p$ and $|J|=q$, and $\varpi_{I \mid J} \in C^\infty(U \times V)$. 
It is also possible to prove (see \cite[Proposition 2.3.10]{Saunders-1989}) that, given $\varpi\in \Omega^{p,q}(M\mid M)$, there exist finite families $\{\alpha^i\}_{i=1}^{n_\alpha}\subset \Omega^p(M),\,\{\beta^j\}_{j=1}^{n_\beta}\subset \Omega^q(M)$ and functions $\{F_{ij}\}\subset C^\infty(M\times M)$ such that
\begin{equation}\label{Eq: global decomposition of bi-forms}
\varpi= F_{ij}\, \alpha^i\boxtimes \beta^j\,.
\end{equation}

Consider the $C^\infty(M\times M)$-module homomorphism $\Phi^{p,q}\colon \Omega^{p,q}(M\mid M)\to \Omega^{p+q}(M\times M)$
defined, for each $\alpha\in \Omega^p(M)$ and $\beta\in \Omega^{q}(M)$ by
    \begin{equation}
    \label{26.04.1}
        \Phi^{p,q}(\alpha\boxtimes \beta)=\pi_L^\ast \alpha\wedge\pi_R^\ast \beta\,.
    \end{equation}
Since the tensor product in the \cref{25.04.d} is given with respect to  the $C^\infty(M\times M)$ algebra, one sees that $\Phi^{p,q}$ is injective, and therefore, recalling \eqref{Eq: global decomposition of bi-forms}, provides an embedding of $C^\infty(M \times M)$-module of bi-forms on $M$ into a $C^\infty(M \times M)$-submodule of differential forms on the Cartesian product $M \times M$, that is we can look at $(p,q)$-bi-forms on $M$ as suitable differential $(p+q)$-forms on $M\times M$. 
The pairing of $\varpi$ relates to the one of the differential forms $\Phi(\varpi)$ as follows.
\begin{equation}\label{Eq: pairing with forms} 
\varpi(X_1,\dots,X_p\mid Y_1,\dots,Y_q)=\Phi^{p,q}(\varpi)\left(X_1^L,\dots,X_p^L,Y_1^R,\dots,Y_q^R\right)\,. 
\end{equation}
By a direct computation, one proves that
  \begin{equation}\label{Eq: pq decomposition}
      \Omega^k(M\times M)=\bigoplus_{p=0}^k \im \Phi^{p,k-p}\,,
\end{equation}
i.e. that any exterior form on $M\times M$ can be decomposed as a sum of images of bi-forms under the action of the $\Phi$ maps. 
For each $(p,q)\in \mathbb N_0\times \mathbb N_0$ with $p+q=k$, we denote by
\begin{equation}\label{Eq: pq projector}
      \pi_{p,q}\colon \Omega^{p+q}(M\times M)\longrightarrow \Omega^{p+q}(M\times M)\,
  \end{equation}
  the projectors of $\Omega^k(M\times M)$, with
  \begin{equation}
      \ker \pi_{p,q}=\bigoplus_{\substack{a=0\\ a\ne p}}^k\im \Phi^{a,k-a}\,, \qquad \im \pi_{p,q}=\im \Phi^{p,q}\,.
  \end{equation}  
In local product coordinates $(U\times V,x,y)$, the action of the  projectors $\pi_{p,q}$ read:
\begin{equation}
    \pi_{p,q}\left(\sum_{\substack{I,J\\|I|+|J|=k}}\varpi_{ I\mid J}\,\mathrm dx^I\wedge \mathrm dy^J\right)= \sum_{|I|=p, |J|=q}\varpi_{I\mid J}\,\mathrm dx^I\wedge \mathrm dy^J.
\end{equation}

\medskip

\noindent By means of the diagonal embedding $\iota$ (cf. \eqref{Eq: diagonal embedding} ), any $(p,q)$-bi-form induces a $(p,q)$-blockwise alternating $(p+q)$-covariant tensor on $M$, namely a smooth section of the vector bundle 
  \begin{equation} 
  \bigwedge^p\T^\ast M\otimes_M \bigwedge^q\T^\ast M\longrightarrow M. 
  \end{equation}
In a local chart $(U,q)$ on $M$,   any such section $G$ is represented as
  \begin{equation}
  G = G_{IJ}\,\mathrm{d}q^I \otimes \mathrm{d}q^J \,,
  \end{equation}
  where $I$ and $J$ are multi-indices with $|I|=p$ and $|J|=q$,  and $G_{IJ}\in C^\infty(U)$.

  \begin{definition}\label{Def: diagonal restriction} Let $(p,q)\in \mathbb N_0\times \mathbb N_0$, and $\varpi$ be a $(p,q)$-bi-form on $M$. The \underline{diagonal restriction} of $\varpi$ is the $(p+q)$-covariant tensor on $M$ defined by
  \begin{equation}
  \label{def:dr}
  \varpi_{\restriction\Delta_M}(Z_1,\dots,Z_{p+q})= \iota^\ast\big(\varpi(Z_1,\dots,Z_p\mid Z_{p+1},\dots,Z_{p+q})\big)\,,
  \end{equation}
  where $\{Z_i\}_{i=1}^{p+q}$ is a set of vector fields on $M$.
  \end{definition} 
\noindent  In a local chart $(U, q)$ on $M$, one has 
  \begin{equation}
  \varpi_{\restriction \Delta_M} = (\iota^\ast\varpi_{I\mid J})\, \mathrm{d}q^I \otimes \mathrm{d}q^J \,,
  \end{equation}
  where $\varpi=\varpi_{I\mid J}\, \left(\mathrm dq^I\boxtimes \mathrm dq^J\right)$ is the local expression of $\varpi$ in the square chart of $M\times M$ associated to $(U,q)$.
Notice that $\varpi_{\restriction \Delta_M}$ is a $(p,q)$-blockwise alternating $(p+q)$-covariant tensor on $M$ because the pairing associated to $\varpi$ is $(p,q)$-blockwise alternating and left and right tensorial\footnote{In \cite{GSI} we used the notation $\iota^\ast\varpi$ for this restriction. We adopt the current notation to avoid potential confusion with $\iota^\ast\Phi(\varpi)$, which generally differs from $\Phi(\iota^\ast\varpi)$ (see \cref{Lemma: initial condition}).}. 

\noindent The restriction operator to the diagonal can be defined more generally for local bi-forms defined on an open neighbourhood of the diagonal in the Cartesian square. We use the symbol $\Omega^{p,q}_{\Delta_M}(M\mid M)$ to denote the collection of such local bi-forms defined on neighbourhoods of the diagonal $\Delta_M$ in $M\times M$. In the following, we simply refer to them as \underline{local bi-forms}.

\noindent   There is also a natural dual correspondence on bi-forms given by the swap map $\dag$ (cf. \eqref{Eq: swap map}). It provides a systematic way to interchange left and right structures. 
  
  \begin{definition}
  Let $(p,q)\in \mathbb N_0\times \mathbb N_0$, $\varpi \in \Omega^{p,q}(M \mid M)$. We define the bi-form $\varpi^\dag \in \Omega^{q,p}(M \mid M)$ by the action 
  \begin{equation} 
  \varpi^\dag (X_1,\dots,X_q \mid Y_1,\dots,Y_p) = \dag^\ast \big( \varpi(Y_1,\dots,Y_p \mid X_1,\dots,X_q) \big) \,, 
  \end{equation}
for vector fields $\{X_i\}_{i=1}^q,\,\{Y_j\}_{j=1}^p$ on $M$. We say that $\varpi^\dag$ is the bi-form \underline{dual} to  $\varpi$.
  \end{definition}
\noindent   In local coordinates, if $(U,q)$ and $(V,z)$ are charts on $M$, the dual bi-form of $\varpi$ reads
  \begin{equation}
      \varpi^\dag=\dag^\ast \varpi_{I\mid J}\,\left(\mathrm dz^J\boxtimes\mathrm dq^I\right)\,,
  \end{equation}
  where $\varpi=\varpi_{I\mid J}\,\left(\mathrm dq^I\boxtimes \mathrm dz^J\right)$ is the local expression of $\varpi$ on the product chart associated to $(U,q)$ and $(V,z)$.
  Equivalently, $\varpi^\dag$ can be defined by the identity
  \begin{equation}\label{Eq: sast on forms}
      \Phi^{q,p}(\varpi^\dag)=(-1)^{p\,q}\,\dag^\ast\left(\Phi^{p,q}(\varpi)\right)\,.
  \end{equation}
  
  Note that the relation $\dag \circ \iota=\iota$ implies
  \begin{equation}\label{Eq: sast iotaast}
  (\varpi^\dag)_{\restriction\Delta_M}(Z_1,\dots,Z_q,Z_{q+1},\dots,Z_{q+p})=\varpi_{\restriction\Delta_M}(Z_{q+1},\dots,Z_{q+p},Z_1,\dots,Z_q)
  \end{equation}
for vector fields $\{Z_k\}_{k=1}^{p+q}$ on $M$.

\subsection{Cartan calculus on bi-forms}
\label{subs:Cc}
We can now introduce a suitable left and right Cartan calculus on bi-forms on
$$
\Omega^{\bullet,\bullet}(M \mid M)=\bigoplus_{(p,q)\in \mathbb Z\times \mathbb Z}\Omega^{p,q}(M\mid M)\,,
$$
where we assume  $\Omega^{p,q}(M\mid M)=0$ whenever $p<0$ or $q<0$. The action of the operators we introduce below resemble (in a way which we shall soon clarify) those providing  the usual Cartan calculus.

\begin{definition}
Let $M$ be a smooth manifold, with $Z\in\mathfrak{X}(M)$. 
\begin{enumerate}[1)]
\item The map $\mathbf{i}_{Z\mid}\colon \Omega^{p,q}(M\mid M)\to\Omega^{p-1,q}(M\mid M)$ defined by 
\begin{equation}
\label{26.04.d1}
(\mathbf{i}_{Z\mid} \varpi)(X_2,\dots,X_p \mid Y_1,\dots,Y_q) = \varpi(Z, X_2, \dots, X_p \mid Y_1, \dots, Y_q)
\end{equation}
is the \underline{left interior product} of $\varpi$ with  $Z$, or the left interior \underline{contraction} of $\varpi$ along $Z$. Analogously:
\item the map $\mathbf{i}_{\mid Z}\colon \Omega^{p,q}(M\mid M)\to\Omega^{p,q-1}(M\mid M)$ defined by 
\begin{equation}
\label{26.04.d2}
(\mathbf{i}_{\mid Z} \varpi)(X_1,\dots,X_p \mid Y_2,\dots,Y_q) = \varpi(X_1, \dots, X_p \mid Z, Y_2, \dots, Y_q)
\end{equation}
is the \underline{right interior product} of $\varpi$ with $Z$, or the right interior \underline{contraction} of $\varpi$ along $Z$.
\end{enumerate}

\end{definition}
\noindent It is immediate to verify that the map $\Phi^{p,q}$ defined above in \eqref{26.04.1} intertwines the action of the left (resp. right)  interior contraction along $Z$  on bi-forms with the action of the standard Cartan interior contraction of an exterior form on $M\times M$ along the left (resp. right) lift of $Z$.  One has that:
\begin{equation}
\label{26.04.2}
\begin{split} 
\Phi^{p-1,q}\circ \mathbf i_{Z\mid }
&=\mathbf i_{Z^L}\circ \Phi^{p,q}\,,\\
\Phi^{p,q-1}\circ \mathbf i_{\mid Z}
&=(-1)^p\,\mathbf i_{Z^R}\circ \Phi^{p,q}\,.
\end{split}
\end{equation}
\noindent The following definition comes upon recalling the definition (see \eqref{21.04.1}) of left and right lift $Z^{L,R}\in\mathfrak{X}(M\times M)$ of a vector field $Z\in\mathfrak{X}(M)$. 
\begin{definition} 
Let $M$ be a smooth manifold, with $Z\in\mathfrak{X}(M)$. 
\begin{enumerate}[1)]
\item The map $\mathcal{L}_{Z\mid}\colon \Omega^{p,q}(M\mid M)\to\Omega^{p,q}(M\mid M)$ defined by 
\begin{equation}
\label{Eq:left_Lie_derivative}
\begin{split}
(\mathcal{L}_{Z\mid} \varpi)(X_1,\dots,X_p \mid Y_1,\dots,Y_q)
&= \mathcal{L}_{Z^L} \left(\varpi(X_1,\dots,X_p \mid Y_1,\dots,Y_q)\right) \notag \\
&\quad + \sum_{i=1}^p (-1)^i\, \varpi([Z,X_i],X_1,\dots,\check{X}_i,\dots,X_p \mid Y_1,\dots,Y_q) \,,
\end{split}
\end{equation}
(where $\check{X}$ means that the vector field $X$ is omitted) is the \underline{left Lie derivative} of $\varpi$ along $Z$. Analogously:
\item the map $\mathcal{L}_{\mid Z}\colon \Omega^{p,q}(M\mid M)\to\Omega^{p,q}(M\mid M)$ defined by 
\begin{equation}
\label{Eq:right_Lie_derivative}
\begin{split}
(\mathcal{L}_{\mid Z} \varpi)(X_1,\dots,X_p \mid Y_1,\dots,Y_q)
&= \mathcal{L}_{Z^R} \left(\varpi(X_1,\dots,X_p \mid Y_1,\dots,Y_q)\right) \notag \\
&\quad + \sum_{j=1}^q (-1)^j\, \varpi(X_1,\dots,X_p \mid [Z,Y_j],Y_1,\dots,\check{Y}_j,\dots,Y_q) \,
\end{split}
\end{equation}
is the \underline{right Lie derivative} of $\varpi$ along $Z$. 
\end{enumerate}
\end{definition}
\noindent For such operators, the action of the intertwining map (compare it with \eqref{26.04.2}) results in
\begin{equation}
\label{26.04.4}
\begin{split}  
\Phi^{p,q}\circ \mathcal L_{Z\mid}&=\mathcal L_{Z^L}\circ \Phi^{p,q}\,,\\
\Phi^{p,q}\circ \mathcal L_{\mid Z}&=\mathcal L_{Z^R}\circ \Phi^{p,q}\,.
\end{split}
\end{equation}

\begin{definition}
\label{def:lred}
Let $M$ be a smooth manifold. 
\begin{enumerate}[1)]
\item The \underline{left exterior derivative} is the map $\dd^L\colon \Omega^{p,q}(M\mid M)\to\Omega^{p+1,q}(M\mid M)$ defined by 
\begin{equation}
\label{Eq:dL}
\begin{split}
(\mathrm{d}^L \varpi)(X_0,\dots,X_p \mid Y_1,\dots,Y_q)
&= \sum_{i=0}^p (-1)^i\, \mathcal{L}_{X_i \mid} \varpi(X_0,\dots,\check{X}_i,\dots,X_p \mid Y_1,\dots,Y_q)  \\
&\quad + \sum_{0 \le a < b \le p} (-1)^{a+b} \, \varpi([X_a,X_b], X_0,\dots,\check{X}_a,\dots,\check{X}_b,\dots,X_p \mid Y_1,\dots,Y_q) \,.
\end{split}
\end{equation}
\item The \underline{right exterior derivative} is the map $\dd^R\colon \Omega^{p,q}(M\mid M)\to\Omega^{p,q+1}(M\mid M)$ defined by  
\begin{equation}
\label{Eq:dR}
\begin{split}
(\mathrm{d}^R \varpi)(X_1,\dots,X_p \mid Y_0,\dots,Y_q)
&=\sum_{j=0}^q (-1)^j\, \mathcal{L}_{\mid Y_j} \varpi(X_1,\dots,X_p \mid Y_0,\dots,\check{Y}_j,\dots,Y_q) \\
&\quad +\sum_{0 \le a < b \le q} (-1)^{a+b} \, \varpi(X_1,\dots,X_p \mid [Y_a,Y_b], Y_0,\dots,\check{Y}_a,\dots,\check{Y}_b,\dots,Y_q) \,,
\end{split}
\end{equation}
where again  $\check{X}_i$ (resp. $\check{Y}_j$) means that the vector field $X_i$ (resp. $Y_j$) is omitted.
\end{enumerate}
\end{definition}
\noindent As for the previous operators we have introduced, one easily proves that 
\begin{align}
   \Phi^{p+1,q}\circ \mathrm d^L&=\mathrm d_{\mathfrak L}\circ \Phi^{p,q}\,,\label{Eq: d_L and d^L}\\
    \Phi^{p,q+1}\circ \mathrm d^R&=(-1)^p\,\mathrm d_{\mathfrak R}\circ \Phi^{p,q}\,.\label{Eq: d_R and d^R}
\end{align}
\noindent Moreover, the 
 left and right operators are related by the duality operator as follows:
\begin{align}
\mathbf i_{\mid Z}\varpi &= \left(\mathbf i_{Z\mid}(\varpi^\dag)\right)^\dag\,,\label{Eq: duality interior product} \\
\mathrm{d}^R \varpi &= \left(\mathrm{d}^L\varpi^\dag\right)^\dag \,, \label{Eq: duality exterior derivative}\\
\mathcal{L}_{\mid Z} \varpi &= \left(\mathcal{L}_{Z\mid} \varpi^\dag\right)^\dag \,.\label{Eq: duality Lie derivative}
\end{align}

\noindent The following proposition addresses the compatibility of the diagonal restriction with the left/right Cartan calculus on bi-forms, describing the induced identities for Lie derivatives and exterior differentials.
\begin{proposition}
Let $M$ be a smooth manifold, and $(p,q)\in \mathbb N_0\times \mathbb N_0$. Consider on $M$ bi-forms $\varpi\in \Omega^{p,q}_{\Delta_M}(M\mid M)$, $T\in \Omega^{p,0}_{\Delta_M}(M\mid M)$ and $S\in \Omega^{0,q}(M\mid M)$.
\begin{enumerate}[(a)]
    \item For any vector field $Z$ on $M$:
    \begin{equation}
               \mathcal L_{Z}(\varpi_{\restriction \Delta_M})=(\mathcal L_{Z\mid }\varpi)_{\restriction\Delta_M}+ (\mathcal L_{\mid Z}\varpi)_{\restriction\Delta_M}\label{Eq: iota ast Lie derivative}\,.
    \end{equation}
    \item  For any pair of families of vector fields $\{X_i\}_{i=0}^p$ and $\{Y_j\}_{j=0}^q$ on $M$:
    \begin{align}
        \left(\mathrm d(T_{\restriction \Delta_M})-(\mathrm d^LT)_{\restriction \Delta_M}\right)(X_0,\dots,X_p)&=\sum_{i=0}^p(-1)^i\, (\mathrm d^RT)_{\restriction \Delta_M}(X_0,\dots,\check{X_i},\dots,X_p\mid X_i) \,,\label{Eq: dL and iota ast do not commute}\\
        \left(\mathrm d(S_{\restriction \Delta_M})-(\mathrm d^RS)_{\restriction \Delta_M}\right)(Y_0,\dots,Y_q)&=\sum_{j=0}^q(-1)^j\, (\mathrm d^LS)_{\restriction \Delta_M}(Y_j\mid Y_0,\dots,\check{Y_j},\dots,Y_q)\,.\label{Eq: dR and iota ast do not commute}
    \end{align}
    \end{enumerate}
\end{proposition}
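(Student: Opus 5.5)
The plan is to reduce everything to pointwise identities along the diagonal using the relation $\iota'\circ Z=(Z,Z)\circ\iota$ from \eqref{Eq: (X,X) is iota-related to X}, that is, $Z$ is $\iota$-related to $Z^L+Z^R$. For any smooth function $f$ defined on a neighbourhood of $\Delta_M$ this gives
\begin{equation}
\mathcal L_Z(\iota^\ast f)=\iota^\ast\left(\mathcal L_{Z^L}f\right)+\iota^\ast\left(\mathcal L_{Z^R}f\right).
\end{equation}
This is the single analytic input; everything else is bookkeeping with the defining formulas of $\mathcal L_{Z\mid}$, $\mathcal L_{\mid Z}$, $\mathrm d^L$, $\mathrm d^R$. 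Since all statements are tensorial identities on $M$ and only involve values along $\Delta_M$, it does not matter whether the bi-forms are global or only local near the diagonal.

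For (a), I evaluate $\mathcal L_Z(\varpi_{\restriction\Delta_M})$ on $Z_1,\dots,Z_{p+q}$ by the usual Leibniz formula for Lie derivatives of covariant tensors:
\begin{equation}
\mathcal L_Z\big(\iota^\ast\varpi(Z_1,\dots,Z_p\mid Z_{p+1},\dots,Z_{p+q})\big)-\sum_k \varpi_{\restriction\Delta_M}(Z_1,\dots,[Z,Z_k],\dots,Z_{p+q}).
\end{equation}
I apply the displayed identity to the first term, splitting it into $\iota^\ast\mathcal L_{Z^L}(\cdots)+\iota^\ast\mathcal L_{Z^R}(\cdots)$. I then distribute the bracket terms: those with $k\le p$ are exactly the correction terms of $\mathcal L_{Z\mid}$ (after moving $[Z,X_i]$ to the front, which produces the sign $(-1)^{i}$ up to the indexing convention in \eqref{Eq:left_Lie_derivative}), and those with $k>p$ are the corrections of $\mathcal L_{\mid Z}$. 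I will check that the sign in the defining formula indeed matches the standard $-\varpi(\dots,[Z,X_i],\dots)$; if the displayed convention has an off-by-one sign, I read it as the standard one, consistent with \eqref{26.04.4}.

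For (b), I treat only \eqref{Eq: dL and iota ast do not commute}; \eqref{Eq: dR and iota ast do not commute} then follows by applying it to $S^\dag\in\Omega^{q,0}$, using \eqref{Eq: duality exterior derivative} and \eqref{Eq: sast iotaast}, or by repeating the argument symmetrically. I write $\mathrm d(T_{\restriction\Delta_M})$ via the invariant Palais formula
\begin{equation}
\sum_i(-1)^i\mathcal L_{X_i}\big(\iota^\ast T(X_0,\dots,\check X_i,\dots,X_p)\big)+\sum_{a<b}(-1)^{a+b}\,\iota^\ast T([X_a,X_b],\dots),
\end{equation}
and split each $\mathcal L_{X_i}\iota^\ast$ into its $X_i^L$ and $X_i^R$ parts. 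The $X_i^L$ parts together with the bracket sum reproduce exactly $(\mathrm d^LT)_{\restriction\Delta_M}(X_0,\dots,X_p)$, since for a $(p,0)$-bi-form $\mathcal L_{X_i\mid}$ applied to a function is $\mathcal L_{X_i^L}$. The remaining terms are $\sum_i(-1)^i\iota^\ast\mathcal L_{X_i^R}T(X_0,\dots,\check X_i,\dots,X_p)$. Because $T$ has right degree $0$, $\mathrm d^RT(X_0,\dots,\check X_i,\dots\mid X_i)=\mathcal L_{\mid X_i}(T(\dots))=\mathcal L_{X_i^R}T(\dots)$, so these terms are precisely the right-hand side.

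The only delicate point is sign and convention matching: the sign conventions in the definitions of $\mathcal L_{Z\mid}$ and $\mathrm d^L$, and the fact that in $T(X_0,\dots,\check X_i,\dots,X_p)$ the left lift acts through pairing with vector fields evaluated at the first factor. Concretely, $\mathcal L_{X^R}$ of the function $T(X_0,\dots)$ does not differentiate the $X_j$, which are pulled back from the left. I expect this to be the main, though routine, obstacle. To handle it I would verify the identities once in square coordinates, where $\mathcal L_{X^L}=X^i(x)\partial_{x^i}$ and $\mathcal L_{X^R}=X^j(y)\partial_{y^j}$, restricted to $x=y$.
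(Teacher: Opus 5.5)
Your proposal is correct and follows essentially the same route as the paper: apply the invariant (global) formulas for the Lie derivative and the exterior derivative to $\varpi_{\restriction\Delta_M}$, and use that $Z$ is $\iota$-related to $(Z,Z)=Z^L+Z^R$. Your duality argument via $S^\dag$ for the second identity in (b) is a valid alternative to repeating the computation. Note also that the sign $(-1)^i$ in the definition of $\mathcal L_{Z\mid}$ already agrees with the standard $-\varpi(\dots,[Z,X_i],\dots)$, since $[Z,X_i]$ is moved from slot $i\ge 1$ to the front; so no reinterpretation of the convention is needed.
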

\begin{proof}
The equality \eqref{Eq: iota ast Lie derivative} follows from applying the global formula for the Lie derivative \cite[Proposition 7.4.11, (i)]{A-A-2015} to $\varpi_{\restriction\Delta_M}$ and using the relation \eqref{Eq: (X,X) is iota-related to X}. Similarly, the identities \eqref{Eq: dL and iota ast do not commute} and \eqref{Eq: dR and iota ast do not commute} follow by applying the global formula for the exterior derivative \cite[Proposition 7.4.11, (ii)]{A-A-2015}.
\end{proof}

\section{Cohomology of bi-forms}\label{Sec: cohomology}
In this section we analyze the cohomology of the exterior derivatives by constructing abstract homotopy operators and their associated projectors onto (anti-)exact components. We then realize these operators explicitly using an affine connection $\nabla$ and the associated geodesic interpolation within strongly $\nabla$-convex neighbourhoods. We conclude by analysing  properties which turn to be essential for applications to information geometry, introducing the notions of statistical left and right homotopy operators.

\noindent Recall from \cref{Subsec: geometric preliminaries} that $(\mathrm d_{\mathfrak L},\mathrm d_{\mathfrak R})$ are anticommuting cohomology operators on differential forms on $M\times M$. In particular, relations \eqref{Eq: d_L and d^L} and \eqref{Eq: d_R and d^R} imply, via the identification map $\Phi$, that $\mathrm d^L$ and $\mathrm d^R$ are cohomology operators on bi-forms:
\begin{align}
    \mathrm d^L\circ \mathrm d^L&=0\,,\\
    \mathrm d^R\circ \mathrm d^R&=0\,.
\end{align}
Moreover, they commute. Indeed, if $\varpi$ is a $(p,q)$-bi-form on $M$, with $(p,q)\in \mathbb N_0\times \mathbb N_0$, then:
  \begin{align}
    \Phi^{p+1,q+1}(\mathrm d^L\mathrm d^R\varpi)&=(-1)^p\,\mathrm d_{\mathfrak L}\,\mathrm d_{\mathfrak R}\Phi^{p,q}(\varpi)\,,\\
    \Phi^{p+1,q+1}(\mathrm d^R\mathrm d^L\varpi)&=(-1)^{p+1}\,\mathrm d_{\mathfrak R}\,\mathrm d_{\mathfrak L}\Phi^{p,q}(\varpi)\,.
\end{align} 
Since $\mathrm d_{\mathfrak L}$ and $\mathrm d_{\mathfrak R}$ anticommute, the two expressions coincide. We now analyze the homotopic invertibility of the left and right exterior derivatives in an open neighbourhood of the diagonal submanifold.

\subsection{Left and right homotopy operators}
\label{subs:LR}
In analogy to the notion of homotopy operators for differential forms on a smooth manifold (see \cite{Edelen-1985-AEC}), we introduce left and right homotopy operators.
  \begin{definition} \label{Def: left homotopy operators}
 Let $M$ be a smooth manifold. 
A \underline{system of left homotopy operators} is a family of $\mathbb R$-linear operators 
           \begin{equation}
        J_L^{p,q}\colon \Omega^{p,q}_{\Delta_M}(M\mid M) \longrightarrow \Omega^{p-1,q}_{\Delta_M}(M\mid M)\,, \qquad (p,q)\in \mathbb N_0\times \mathbb N_0\,,
    \end{equation} 
that satisfy the following conditions: 
  \begin{itemize}
  \item it is $J_L^{p-1,q}\circ J_L^{p,q}=0$ ; 
  \item for any $\varpi\in \Omega^{p,q}_{\Delta_M}(M\mid M)$ there is an open neighbourhood $\mathscr U$ of $\Delta_M$ in $M\times M$, contained in the domain of $\varpi$, such that the relations
    \begin{equation}\label{Eq: homotopy formula bi-forms L}
    \begin{cases}
\begin{aligned}
\mathrm d^L\,J_L^{p,q}\,\varpi+J_L^{p+1,q}\,\mathrm d^L \varpi
&= \varpi\,,
&& (p,q)\in \mathbb N\times \mathbb N_0\,, \\
J_L^{1,q}\,\mathrm d^L \varpi
&= \varpi - 1\boxtimes \varpi_{\restriction \Delta_M}\,,
&& \textup{otherwise}\,
\end{aligned}
\end{cases}
\end{equation}
hold on $\mathscr U$.
\end{itemize}
\end{definition}
We shall often denote the system collectively by $J_L$ and suppress the bi-degree $(p,q)$ whenever no ambiguity arises.
Before explicitly constructing them, we analyse the algebraic properties that stem from the homotopy formula \eqref{Eq: homotopy formula bi-forms L}.
We begin by defining the linear operators in $\Omega^{p,q}_{\Delta_M}(M\mid M)$
  \begin{equation}
\label{defEA}
\begin{split}
&\E_L^{p,q}\varpi=\dd^L(J^{p,q}_L\varpi)\,, \\
&\A_L^{p,q}\varpi=J_L^{p+1,q}\dd^L\varpi\,.
\end{split}
\end{equation} 
One has:
\begin{itemize}
\item For $p=0$, one has trivially $\mathcal E_L^{p,0}=0$, and so $\mathcal E_L^{0,q}\mathcal E_L^{0,q}=\mathcal E_L^{0,q}$.  For $p\geq1$, one has the following chain of equalities, which come by recalling the homotopy identity \eqref{Eq: homotopy formula bi-forms L}   and the nilpotence of $J_L$:
\begin{equation}
\label{Epro}
\mathcal E_L^{p,q}\,(\mathcal E_L^{p,q}\varpi)=\mathrm d^L\left(J_L^{p,q}\left(\mathrm d^LJ_L^{p,q}\varpi\right)\right)=\mathrm d^L\left(J_L^{p,q}\left(\varpi-J_L^{p+1,q}\mathrm d^L\varpi\right)\right)=\mathrm d^L(J_L^{p,q}\varpi)=\mathcal E_L^{p,q}\varpi\,.
\end{equation} 
\item For $p=0$, one has
\begin{equation}
\mathcal A_L^{0,q}\varpi=\varpi-1\boxtimes \varpi_{\restriction \Delta_M}\,
\end{equation}  
from which one deduces $\mathcal A_L^{0,q}\mathcal A_L^{0,q}=\mathcal A_L^{0,q}$. For $p\geq1$, the proof of the identity $\A_L^{p,q}\left(\A_L^{p,q}\varpi\right)=\A_L^{p,q}\varpi$  is analogue to the previous \eqref{Epro}, in terms of  the nilpotence of $\mathrm d^L$ instead.
\item From   $\dd^L\circ\dd^L=0$ and $J_L^{p,q}\circ J_L^{p+1,q}=0$, it is also immediate to see that, for any $\varpi\in\Omega^{p,q}_{\Delta_M}(M\mid M)$,  one has
  \begin{equation}
\label{EAc}
\E^{p,q}_L\left(\A_L^{p,q}\varpi\right)=\A_L^{p,q}\left(\E_L^{p,q}\varpi\right)=0\,.
\end{equation} 
\item For $p\geq1$, the relation \eqref{Eq: homotopy formula bi-forms L} allow to write 
  \begin{equation}
\label{udec}
\E_L^{p,q}\varpi+\A_L^{p,q}\varpi=\varpi\,.
\end{equation} 
\end{itemize}
  Thus both $\E_L^{p,q}$ and $\A_L^{p,q}$ are $\mathbb{R}$-linear projectors on each $\Omega^{p,q}_{\Delta_M}(M\mid M)$, and for $p\geq1$, they provide a resolution of the identity. It is therefore standard linear algebra  to see that one can write, for $p\geq1$:
  \begin{equation}
\label{splEAL}
\begin{split}
&\im\A_L^{p,q}\oplus\ker\A_L^{p,q}=\Omega^{p,q}_{\Delta_M}(M\mid M)=\im\E_L^{p,q}\oplus\ker\E_L^{p,q}\,,\\
&\im\A_L^{p,q}=\ker\E_L^{p,q}\,,\\ &\im\E_L^{p,q}=\ker\A_L^{p,q}\,
\end{split}
\end{equation} 
Together with the above lines, one proves the following result.
\begin{theorem}\label{Thm: left projectors}
    Let $M$ be a smooth manifold, and consider a system of left homotopy operators $J_L$.
    \begin{enumerate}[(a)]

        \item \label{Item: left projectors exact part}  For any $(p,q)\in \mathbb N_0\times \mathbb N_0$, we have:
       \begin{align}
           \mathrm d^L\im \mathcal A^{p,q}_L&=\im\mathcal E^{p+1,q}_L \label{Eq: dL A=E}\,.
       \end{align}
       \item \label{Item: left projectors decomposition}  For each $(p,q)\in \mathbb N_0\times \mathbb N_0$, we have the vector space decomposition:
       \begin{equation}\label{Eq: decomposition left exact and left antiexact}
           \Omega^{p+1,q}_{\Delta_M}(M\mid M)=
          \mathrm d^L\im \mathcal A_L^{p,q}\oplus \im\mathcal A_L^{p+1,q}\,.
       \end{equation}

    \end{enumerate}
\end{theorem}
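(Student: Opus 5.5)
Both parts are linear algebra built on the projector identities \eqref{Epro}--\eqref{splEAL}, the homotopy formula \eqref{Eq: homotopy formula bi-forms L}, and the nilpotence of $\mathrm d^L$ and $J_L$. Since $p+1\geq 1$, the splitting \eqref{splEAL} applies in bi-degree $(p+1,q)$. All identities are understood on a common neighbourhood $\mathscr U$ of the diagonal, as in \cref{Def: left homotopy operators}.

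For part (a) we prove two inclusions. First take $\varpi\in\im\mathcal A^{p,q}_L$, so $\varpi=J_L^{p+1,q}\mathrm d^L\eta$ for some $\eta$. Then
\[
\mathrm d^L\varpi=\mathrm d^L J_L^{p+1,q}(\mathrm d^L\eta)=\mathcal E^{p+1,q}_L(\mathrm d^L\eta)\in\im\mathcal E^{p+1,q}_L .
\]
Conversely, take $\mathcal E^{p+1,q}_L\theta=\mathrm d^L(J_L^{p+1,q}\theta)$ and set $\varpi=J_L^{p+1,q}\theta\in\Omega^{p,q}$. We decompose $\varpi$ according to the degree.

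If $p\geq1$, write $\varpi=\mathcal E^{p,q}_L\varpi+\mathcal A^{p,q}_L\varpi$ by \eqref{udec}. The exact part is killed by $\mathrm d^L$, so $\mathrm d^L\varpi=\mathrm d^L(\mathcal A^{p,q}_L\varpi)\in\mathrm d^L\im\mathcal A^{p,q}_L$.

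If $p=0$, use $\mathcal A^{0,q}_L\varpi=\varpi-1\boxtimes\varpi_{\restriction\Delta_M}$. The key observation is that $\mathrm d^L(1\boxtimes S)=0$ for any tensor $S$ on $M$ pulled back from the right factor, since its coefficients depend only on the right variable. Hence $\mathrm d^L\varpi=\mathrm d^L\mathcal A^{0,q}_L\varpi$ again. This proves \eqref{Eq: dL A=E}.

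For part (b), combine (a) with \eqref{splEAL} in bi-degree $(p+1,q)$:
\[
\Omega^{p+1,q}_{\Delta_M}=\im\mathcal E^{p+1,q}_L\oplus\ker\mathcal E^{p+1,q}_L=\im\mathcal E^{p+1,q}_L\oplus\im\mathcal A^{p+1,q}_L .
\]
Replacing $\im\mathcal E^{p+1,q}_L$ by $\mathrm d^L\im\mathcal A^{p,q}_L$ via (a) gives \eqref{Eq: decomposition left exact and left antiexact}.

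The only delicate points are the degree-zero case of (a), which needs $\mathrm d^L(1\boxtimes\varpi_{\restriction\Delta_M})=0$, and the bookkeeping of domains of local bi-forms. Both are routine: one checks the first in local coordinates using the $\mathrm d_{\mathfrak L}$ formula and \eqref{Eq: d_L and d^L}, and handles the second by intersecting the finitely many neighbourhoods involved.
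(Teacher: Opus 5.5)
Your proof is correct. Part (b) is the same as the paper's argument. The difference lies in how you get the substantive inclusion $\im\mathcal E^{p+1,q}_L\subseteq\mathrm d^L\im\mathcal A^{p,q}_L$ of part (a).

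The paper shows that the canonical primitive $J_L^{p+1,q}\theta$ is itself left-antiexact, by computing
\[
\mathcal A^{p,q}_L\,J^{p+1,q}_L\theta=J^{p+1,q}_L\,\mathrm d^L J^{p+1,q}_L\theta=J^{p+1,q}_L\bigl(\theta-J^{p+2,q}_L\mathrm d^L\theta\bigr)=J^{p+1,q}_L\theta\,.
\]
This uses $J_L\circ J_L=0$ and the homotopy formula only in left degree $p+1\geq 1$, so no case distinction at $p=0$ is needed.

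You instead pass from an arbitrary primitive to its antiexact component, using $\mathrm d^L\mathcal A^{p,q}_L\varpi=\mathrm d^L\varpi$ for every $\varpi$. This forces the split $p\geq 1$ versus $p=0$ and the extra fact $\mathrm d^L(1\boxtimes\varpi_{\restriction\Delta_M})=0$. In exchange, your part (a) never uses the nilpotence of $J_L$. It also makes explicit that (a) says every left-exact bi-form admits a left-antiexact primitive, namely the antiexact part of any primitive, whereas the paper singles out one specific primitive.

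Your other inclusion, $\mathrm d^L\mathcal A^{p,q}_L=\mathcal E^{p+1,q}_L\mathrm d^L$, is left implicit in the paper. Its ``reverse inclusion'' sentence in fact re-derives the first inclusion via $\ker\mathcal E^{p,q}_L=\im\mathcal A^{p,q}_L$. Stating it explicitly, as you do, makes the argument complete.
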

\begin{proof}
Let $\varpi\in \im \mathcal E_L^{p+1,q}$. By \eqref{Eq: homotopy formula bi-forms L} and \eqref{splEAL}, it is  $\varpi=\mathrm d^L\,J_L^{p+1,q}\varpi$. Via the same identities one has 
  \begin{equation}
\mathcal A_L^{p,q}\,J_L^{p+1,q}\varpi=J_L^{p+1,q}\left(\mathrm d^L\,\left(J_L^{p+1,q}\varpi\right)\right)=J_L^{p+1,q}\left(\varpi-J_L^{p+2,q}\mathrm d^L\varpi\right)=J_L^{p+1,q}\varpi\,,
\end{equation} 
which reads $\im \mathcal E_L^{p+1,q}\subseteq\mathrm d^L\im \mathcal A_L^{p,q}$. For the reverse inclusion, one directly notices that 
  \begin{equation}
\varpi=\dd^L(J_L^{p+1,q}\varpi)\,, \qquad \mathcal E_L^{p,q}J_L^{p+1,q}\varpi=0\,, 
\end{equation} 
so that $J_L^{p+1,q}\varpi\in \im \mathcal A_L^{p,q}$. Together with \eqref{splEAL}, the above relation \hyperref[Item: left projectors exact part]{(a)}  gives the relation   \hyperref[Item: left projectors decomposition]{(b)} , that is \eqref{Eq: decomposition left exact and left antiexact}.
\end{proof}
 Based on \cref{Thm: left projectors}, we introduce the following terminology.
  \begin{definition}
     Let $M$ be a smooth manifold, and $(p,q)\in \mathbb N\times \mathbb N_0$. Consider a system of left homotopy operators $J_L$, and $\varpi\in \Omega^{p,q}_{\Delta_M}(M\mid M)$. The bi-form $\mathcal E_L^{p,q}\varpi$ is called the $J_L$-\underline{left-exact component} of $\varpi$, and $\mathcal A_L^{p,q}\varpi$ the $J_L$-\underline{left-antiexact component} of $\varpi$. 
\end{definition}
Note that if a bi-form $\varpi$ has zero $J_L$-left-antiexact component, then the $J_L'$-left-antiexact component of $\varpi$ is zero for any other system of left homotopy operators $J_L'$. 

Mimicking the definition of left homotopy operators, we introduce right homotopy operators. Via the properties of the swap map $\dag$, one proves the   \underline{right-homotopy} analogue of the above identities, as follows. 

  \begin{definition} \label{Def: right homotopy operators}
 Let $M$ be a smooth manifold. 
A \underline{system of right homotopy operators} is a family of $\mathbb R$-linear operators 
           \begin{equation}
        J_R^{p,q}\colon \Omega^{p,q}_{\Delta_M}(M\mid M) \longrightarrow \Omega^{p,q-1}_{\Delta_M}(M\mid M)\,, \qquad (p,q)\in \mathbb N_0\times \mathbb N_0\,,
    \end{equation} 
that satisfy the following conditions: 
  \begin{itemize}
  \item it is $J_R^{p,q-1}\circ J_R^{p,q}=0$ ; 
  \item  for any $\varpi\in \Omega^{p,q}_{\Delta_M}(M\mid M)$ there is an open neighbourhood $\mathscr U$ of $\Delta_M$ in $M\times M$, contained in the domain of $\varpi$, such that the relations
    \begin{equation}\label{Eq: homotopy formula bi-forms R}
    \begin{cases}
\begin{aligned}
\mathrm d^R\,J_R^{p,q}\,\varpi+J_R^{p,q+1}\,\mathrm d^R \varpi
&= \varpi\,,
&& (p,q)\in \mathbb N_0\times \mathbb N\,, \\
J_R^{p,1}\,\mathrm d^R \varpi
&= \varpi - \varpi_{\restriction \Delta_M}\boxtimes 1\,,
&& \textup{otherwise}\,
\end{aligned}
\end{cases}
\end{equation}
hold on $\mathscr U$.
\end{itemize}
\end{definition}
\begin{theorem}\label{Thm: right projectors}    
Let $M$ be a smooth manifold, and consider a system of right homotopy operators $J_R$. For any $(p,q)\in\mathbb{N}_0\times\mathbb{N}_0$, the operators $\mathcal E_R^{p,q}=\mathrm d^RJ_R^{p,q}$ and $\mathcal A_R^{p,q}=J_R^{p,q+1}\mathrm d^R$ are $\mathbb R$-linear projectors in $\Omega_{\Delta_M}^{p,q}(M\mid M)$. Given such operators, the following claims are true.
\begin{enumerate}[(a)]
        \item For $q\geq1$, it is 
  \begin{align}
\label{Eq: splitting e/a R}
&\E_R^{p,q}\varpi+\A_R^{p,q}\varpi=\varpi\,,\\
&\im\A_R^{p,q}=\ker\E_R^{p,q}\,,\\ &\im\E_R^{p,q}=\ker\A_R^{p,q}\\
&\im\A_R^{p,q}\oplus\ker\A_R^{p,q}=\Omega^{p,q}_{\Delta_M}(M\mid M)=\im\E_R^{p,q}\oplus\ker\E_R^{p,q}\,,
\end{align} 
\item For each $(p,q)\in \mathbb N_0\times \mathbb N_0$, one has:
  \begin{align}
&\dd^R\im\A_R^{p,q}=\im\E_R^{p,q+1} \label{nonso}\\
\label{Eq: decomposition right exact and right antiexact}
&\Omega^{p,q+1}_{\Delta_M}(M\mid M)=\mathrm d^R\im \mathcal A_R^{p,q}\oplus \im\mathcal A_R^{p,q+1}\,.
\end{align} 
\end{enumerate}
\end{theorem}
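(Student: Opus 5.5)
The most economical route is to mirror, step by step, the algebraic argument given for the left projectors and \cref{Thm: left projectors}. Only the homotopy identity \eqref{Eq: homotopy formula bi-forms R}, the nilpotence $J_R\circ J_R=0$ and $\mathrm d^R\circ\mathrm d^R=0$ are used. An alternative is to transport the left statement through the duality $\dag$. One would set $J_L\varpi:=(J_R\varpi^\dag)^\dag$ and use \eqref{Eq: duality exterior derivative} together with \eqref{Eq: sast iotaast}, which gives $(\varpi_{\restriction\Delta_M}\boxtimes 1)^\dag=1\boxtimes(\varpi^\dag)_{\restriction\Delta_M}$ up to the reordering of arguments. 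This shows that $J_L$ is a system of left homotopy operators, and \cref{Thm: left projectors} would then yield the claims after dualizing. I would instead do the direct computation, since it avoids checking how $\dag$ interacts with the domains $\mathscr U$ and with the degree-zero boundary term.

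\emph{Projectors.} For $q=0$, $\mathcal E_R^{p,0}=0$ because $J_R^{p,0}$ lands in $\Omega^{p,-1}=0$, so it is trivially idempotent. Also $\mathcal A_R^{p,0}\varpi=\varpi-\varpi_{\restriction\Delta_M}\boxtimes 1$. The bi-form $\varpi_{\restriction\Delta_M}\boxtimes 1$ has diagonal restriction $\varpi_{\restriction\Delta_M}$, so $\mathcal A_R^{p,0}\varpi$ has vanishing diagonal restriction, and idempotence of $\mathcal A_R^{p,0}$ follows. For $q\ge1$ I would compute
\[
\mathcal E_R\mathcal E_R\varpi=\mathrm d^RJ_R\bigl(\varpi-J_R\mathrm d^R\varpi\bigr)=\mathcal E_R\varpi
\]
using $J_RJ_R=0$, and symmetrically
\[
\mathcal A_R\mathcal A_R\varpi=J_R\mathrm d^R\bigl(\varpi-\mathrm d^RJ_R\varpi\bigr)=\mathcal A_R\varpi
\]
using $\mathrm d^R\mathrm d^R=0$. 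The same two nilpotences give $\mathcal E_R\mathcal A_R=\mathcal A_R\mathcal E_R=0$. For $q\ge1$ the homotopy formula is literally $\mathcal E_R+\mathcal A_R=\mathrm{id}$. Complementary idempotents then give $\im\mathcal A_R=\ker\mathcal E_R$, $\im\mathcal E_R=\ker\mathcal A_R$ and the two direct sum decompositions, which is part (a).

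\emph{Part (b).} I would prove $\mathrm d^R\im\mathcal A_R^{p,q}=\im\mathcal E_R^{p,q+1}$ by double inclusion, exactly as in \cref{Thm: left projectors}.

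For ``$\supseteq$'', take $\varpi\in\im\mathcal E_R^{p,q+1}$. Then $\varpi=\mathrm d^R(J_R\varpi)$, and the homotopy formula together with $J_RJ_R=0$ gives
\[
\mathcal A_R^{p,q}J_R\varpi=J_R\bigl(\varpi-J_R\mathrm d^R\varpi\bigr)=J_R\varpi.
\]
Hence $J_R\varpi\in\im\mathcal A_R^{p,q}$.

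For ``$\subseteq$'', take $\eta=\mathcal A_R\eta$. Then
\[
\mathrm d^R\eta=\mathrm d^RJ_R\mathrm d^R\eta=\mathcal E_R^{p,q+1}(\mathrm d^R\eta).
\]
Note that $q+1\ge1$, so this inclusion requires no case distinction even when $q=0$.

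Combining (b) with the decomposition $\Omega^{p,q+1}=\im\mathcal E_R^{p,q+1}\oplus\im\mathcal A_R^{p,q+1}$ from part (a), valid since $q+1\ge1$, gives \eqref{Eq: decomposition right exact and right antiexact}.

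The main point requiring care is bookkeeping rather than substance. The homotopy identities hold only on some neighbourhood $\mathscr U$ of $\Delta_M$ depending on the bi-form, so every equality above must be read in $\Omega_{\Delta_M}$, i.e. as equality of germs or on a common smaller neighbourhood. One must also check that the composites $J_R\mathrm d^R J_R$ and $J_R J_R\mathrm d^R$ are still covered by the homotopy relation on such a neighbourhood. I would handle this by intersecting the finitely many neighbourhoods involved in each identity.
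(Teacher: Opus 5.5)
Your argument is correct, and it differs from the paper only in how it is organized. The paper writes no separate proof of this theorem. It obtains the statement from the left-hand results ``via the properties of the swap map $\dag$'', which is exactly the route you set aside: $J_L\varpi:=(J_R\varpi^\dag)^\dag$ is a system of left homotopy operators, so $\mathcal E_R\varpi=(\mathcal E_L\varpi^\dag)^\dag$ and $\mathcal A_R\varpi=(\mathcal A_L\varpi^\dag)^\dag$. The bijection $\dag\colon\Omega^{p,q}_{\Delta_M}(M\mid M)\to\Omega^{q,p}_{\Delta_M}(M\mid M)$ then transports everything from the left-hand computations and \cref{Thm: left projectors}.

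The two checks you wanted to avoid are immediate. Since $\dag$ fixes $\Delta_M$ pointwise, it maps neighbourhoods of the diagonal to neighbourhoods of the diagonal. For the boundary term, $(\alpha\boxtimes 1)^\dag=1\boxtimes\alpha$, and no reordering arises when one block is empty.

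The duality route is shorter, but it imports the left-hand proof as written. In that proof, both halves of the argument for \eqref{Eq: dL A=E} actually establish the same inclusion $\im\mathcal E_L^{p+1,q}\subseteq\mathrm d^L\im\mathcal A_L^{p,q}$. Your direct version supplies the converse explicitly: $\mathrm d^R\eta=\mathrm d^RJ_R\mathrm d^R\eta=\mathcal E_R^{p,q+1}(\mathrm d^R\eta)$ for $\eta\in\im\mathcal A_R^{p,q}$. It also checks the $q=0$ idempotence of $\mathcal A_R^{p,0}$ rather than asserting it.

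On the bookkeeping, intersecting neighbourhoods is enough only if $J_R$ is compatible with restriction to smaller neighbourhoods of $\Delta_M$, i.e.\ acts on germs. This reading is implicit in calling $\mathcal E_R,\mathcal A_R$ projectors on $\Omega^{p,q}_{\Delta_M}(M\mid M)$, and it holds for $J_R^\nabla$.
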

   \begin{definition}
      Let $M$ be a smooth manifold, and $(p,q)\in \mathbb N_0\times \mathbb N$. Consider a system of right homotopy operators $J_R$, and $\varpi\in \Omega^{p,q}_{\Delta_M}(M\mid M)$. The bi-form $\mathcal E_R^{p,q}\varpi$ is called the $J_R$-\underline{right-exact component} of $\varpi$, and $\mathcal A_R^{p,q}\varpi$ the $J_R$-\underline{right-antiexact component} of $\varpi$.
\end{definition}
Note that if a bi-form $\varpi$ has zero $J_R$-left-antiexact component, then the $J_R'$-left-antiexact component of $\varpi$ is zero for any other system of left homotopy operators $J_R'$.

  \subsection{Explicit construction of left and right homotopy operators}
The path that leads to a definition of left and right homotopy operators for the differential bimodule of exterior forms starts by recalling the standard local definition on a star-shaped open subset $\widehat U\subseteq \mathbb R^d$ around a point $q_0=(q^1_0,\dots,q^d_0)$. For each $t\in[0,1]$, consider the maps $H_t\colon \widehat U\to \widehat U$ given by 
$H_t(q)=q_0+t\,(q-q_0).$ 
Note that, since $\widehat U$ is star-shaped with respect to $q_0$, these maps are well-defined and, for $t>0$, they are diffeomorphisms onto their images. Define the radial vector field $D\in\mathfrak X(\widehat U)$ by
\begin{equation}\label{Eq: local difference vector field}
D=(q^i-q^i_0)\,\partial_{q^i}\,.
\end{equation}
 
For every $k\in\mathbb N_0$, define the operator $J\colon \Omega^k(\widehat U)\to\Omega^{k-1}(\widehat U)$ by setting, for every $\omega\in\Omega^k(\widehat U)$,
\begin{equation}\label{Eq: standard homotopy operator J}
J\omega = \mathbf i_D\int_0^1 \frac{1}{t}\, H_t^\ast \omega\, \mathrm dt\,.
\end{equation}
This is proven to satisfy (see, for example, \cite[Theorem 5.3.1]{Edelen-1985-AEC}) the relation $J\circ J=0$, as well as
\begin{equation}
\mathrm dJ\omega+J\mathrm d\omega=\begin{cases}
    \omega-H_0^\ast \omega & k=0\\
       \omega & k>0
\end{cases}
\end{equation}
for all $\omega\in\Omega^k(\widehat U)$.

Notice that the radial vector field can equivalently be written as
\begin{equation}
    D_q=\frac{\mathrm d}{\mathrm dr}\Big|_{r=0}
    \big(q_0+e^r(q-q_0)\big)\,.
\end{equation}
This relation expresses  $D$ in terms of the geodesic interpolation associated to the Euclidean connection on $\widehat U$, so one is led to look  for analogous homotopy operators on forms defined on a neighbourhood of $\Delta_M$ in $M\times M$ by using the geodesic interpolation corresponding to an affine connection. For this generalisation to be well-defined, homotopy operators have to defined on domains on which existence and uniqueness of the geodesic segment joining each pair of points must be suitably satisfied. This is the point we aim to  address in the following lines.

 \subsubsection{Strongly convex neighbourhoods}

 Let $M$ be a smooth manifold endowed with an affine connection $\nabla$. Denote by $\exp^\nabla$ the exponential map corresponding to $\nabla$, and by $\mathscr V_0\subseteq \T M$ its domain. Recall that $(m,v)\in\mathscr V_0$ if and only if the unique $\nabla$-geodesic $\gamma_{(m,v)}$ with initial conditions $\gamma_{(m,v)}(0)=m$ and $\dot\gamma_{(m,v)}(0)=v$ is defined on an open neighbourhood of $[0,1]\subset\mathbb R$, in which case one has 
\begin{equation}
    \exp^\nabla(m,v)=\gamma_{(m,v)}(1)\,.
\end{equation}

 Let $c$ be a point in $M$, and  $U$  an open neighbourhood of $c$ in $M$. We say that $U$ is $\nabla$-\emph{normal} at $c$ if there exists a star-shaped open subset $V\subseteq \T_cM$ containing the zero tangent vector $\mathbf 0_c\in \T_cM$ such that:
\begin{equation}
    U=\exp^\nabla_c(V)\,,
\end{equation}
where $\exp^\nabla_c$ denotes the restriction of $\exp^\nabla$ to 
$\mathscr V_c=\mathscr V_0\cap(\{c\}\times \T_cM)$. 
Equivalently, $U$ is $\nabla$-normal at $c$ if and only if for every 
$n\in U$ there exists a unique $\nabla$-geodesic segment within $U$, denoted 
$\gamma_{n\leftarrow c}^U$, joining $c$ to $n$, that is
\begin{equation}
\gamma_{n\leftarrow c}^U(0)=c,
\qquad
\gamma_{n\leftarrow c}^U(1)=n.
\end{equation}

We say that an open subset $C\subseteq M$ is $\nabla$-\emph{convex} if it is 
$\nabla$-normal at each of its points. In particular, $C$ is $\nabla$-convex if and only if for every $m,n\in C$ there exists a unique $\nabla$-geodesic segment within $C$ joining $m$ with $n$.

The intersection of two $\nabla$-convex open subsets $C_1,C_2\subset M$ is not $\nabla$-convex in general. The intersection $C_1\cap C_2$ may even be disconnected (see \cite{ONeill-1983}): but if  $C_1$ and $C_2$ are both contained in a common $\nabla$-convex open subset $C$, then $C_1\cap C_2$ is indeed 
$\nabla$-convex. To see this, let $m,n\in C_1\cap C_2$. For each $i\in \{1,2\}$, let
  $\gamma_{n\leftarrow m}^{C_i}\colon [0,1] \to C_i$ denote the unique $\nabla$-geodesic segment within $C_i$ 
joining $m$ to $n$. Since $C_i \subseteq C$, each $\gamma_{n\leftarrow m}^{C_i}$ 
is also a $\nabla$-geodesic segment in $C$ joining $m$ to $n$. By the $\nabla$-convexity of $C$, such a segment is unique, hence
\begin{equation}
\gamma_{n\leftarrow m}^{C_1} = \gamma_{n\leftarrow m}^{C_2}\,:
\end{equation}
this common segment  is  contained in $C_1\cap C_2$, therefore proving that $C_1\cap C_2$ is $\nabla$-convex.

\begin{definition}
Let $M$ be a smooth manifold equipped with an affine connection $\nabla$.   
A \underline{strongly $\nabla$-convex covering} of $M$ is an open covering $\mathcal C$ of $M$ such that:
\begin{enumerate}[(1)]
    \item each $C\in \mathcal C$ is $\nabla$-convex;
    \item for any $C,C'\in \mathcal C$, either $C$ and $C'$ are disjoint, or $C\cap C'$ is $\nabla$-convex.
\end{enumerate}
\end{definition}

Given a strongly $\nabla$-convex covering $\mathcal C$, we define:
\begin{equation}
    \mathscr U_{\mathcal C}=\bigcup_{C\in\mathcal C}(C\times C)\,,
\end{equation}
which is an open neighbourhood of the diagonal submanifold $\Delta_M$ in $M\times M$.  
Notice that the open $\mathscr{U}_\mathcal{C}$ is stable under the action of the swap map $\dag$ (see \eqref{Eq: swap map}),  that is one has  $\dag(\mathscr U_{\mathcal C})=\mathscr U_{\mathcal C}$.

\begin{definition}
Let $M$ be a smooth manifold equipped with an affine connection $\nabla$, and let $\mathscr U$ be an open neighbourhood of $\Delta_M$ in $M\times M$.  
We say that $\mathscr U$ is a \underline{strongly $\nabla$-convex neighbourhood} if and only if there exists a strongly $\nabla$-convex covering $\mathcal C$ such that $\mathscr U=\mathscr U_{\mathcal C}$.
\end{definition}

It turns out that $\nabla$-convex coverings are cofinal among open coverings, and that strongly $\nabla$-convex neighbourhoods form a fundamental system of neighbourhoods of the diagonal in $M\times M$. We describe such results as claims of the following theorem.

\begin{theorem}\label{Thm: moretti}
Let $M$ be a smooth manifold equipped with an affine connection $\nabla$. Every open covering $\mathcal A$ of $M$ admits a refinement which is a strongly $\nabla$-convex covering, namely, there exists a strongly $\nabla$-convex covering $\mathcal C$ of $M$ such that for every $C\in\mathcal C$ there exists $A\in\mathcal A$ with $C\subseteq A$. In particular, strongly $\nabla$-convex neighbourhoods exist and form a fundamental system of neighbourhoods of $\Delta_M$ in $M\times M$. 
\end{theorem}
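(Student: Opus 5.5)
The plan rests on the classical Whitehead-type result that every point of a manifold with an affine connection has arbitrarily small $\nabla$-convex neighbourhoods. This is proved via the exponential map and the implicit function theorem; see \cite{ONeill-1983} for the Levi-Civita case, and note that the argument only uses smoothness of the geodesic spray, so it applies to any affine connection. The two claims of the theorem are then proved in turn.

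\emph{The refinement.} First fix an auxiliary Riemannian metric $h$ on $M$, which exists by paracompactness. For each $c\in M$, choose a chart domain $W_c$ around $c$ that is contained in some $A\in\mathcal A$. Inside $W_c$, choose a $\nabla$-convex open set $D_c\ni c$. By Whitehead's theorem, $D_c$ can be taken as the image under $\exp_c$ of a small star-shaped set, together with the property that every point of $D_c$ is normal for it. Using this, pick for each $c$ a $\nabla$-convex $C_c\ni c$ with $C_c\subseteq D_c$. The natural candidate covering is $\mathcal C=\{C_c\}_{c\in M}$.

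Condition (2) is the delicate one: for $C_c\cap C_{c'}\neq\emptyset$ we need both sets inside a common $\nabla$-convex set. The argument shown just before the theorem then gives convexity of the intersection. To arrange this, I would use local finiteness of a locally finite refinement together with a Lebesgue-number-type argument. Replace the cover by a locally finite one $\{W_i\}$ with compact closures. For each $x$, the convexity radius (with respect to the $h$-distance, via Whitehead) is bounded below by a positive continuous function $r(x)$, obtained from continuity of the geodesic flow and compactness of $\overline{W_i}$.

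Choose the sets $C_c$ so that they are $\nabla$-convex and have $h$-diameter less than $r(c)/4$, and require in addition that $r$ varies slowly, for instance by replacing $r$ with a $1$-Lipschitz minorant. Then, whenever $C_c\cap C_{c'}\neq\emptyset$, both sets lie in the $h$-ball of radius about $r(c)/2$ around $c$. I would aim to choose that ball to be $\nabla$-convex itself, possibly after shrinking to a convex neighbourhood $D$ of $c$ with $B_h(c,r(c)/2)\subseteq D$; arranging this is the step that uses uniformity. Once both sets sit in a common convex $D$, the intersection $C_c\cap C_{c'}$ is $\nabla$-convex by the remark preceding the theorem. Finally, since each $C_c\subseteq W_c\subseteq A$, the covering $\mathcal C$ refines $\mathcal A$.

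\emph{Fundamental system.} Let $\mathscr W$ be any open neighbourhood of $\Delta_M$. For each $m$, choose an open $A_m\ni m$ with $A_m\times A_m\subseteq\mathscr W$; this is possible by the product topology. Apply the first part to $\mathcal A=\{A_m\}$ to obtain a strongly $\nabla$-convex refinement $\mathcal C$. Each $C\in\mathcal C$ lies in some $A_m$, so $C\times C\subseteq\mathscr W$, and hence $\mathscr U_{\mathcal C}\subseteq\mathscr W$.

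The main obstacle is the uniform step: producing the continuous positive convexity radius $r$, and ensuring that intersecting members lie in one common convex set. This requires a careful quantitative version of Whitehead's theorem (smooth dependence of the convexity radius on the base point) rather than only the pointwise statement. I would adapt Moretti's argument \cite{Moretti-2021}, which constructs exactly such coverings for arbitrary affine connections.
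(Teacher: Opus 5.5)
Your argument for the fundamental system of neighbourhoods is the paper's argument. The paper uses the cover by all open $U$ with $U\times U\subseteq\mathscr U$, and gets existence from the cover $\{M\}$. For the refinement, the paper simply invokes \cite[Proposition~9]{Moretti-2021}, which is where your proposal also ends up. As a proof by citation, yours coincides with the paper's.

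Your self-contained sketch of the refinement, however, does not close as written. Balls of the auxiliary metric $h$ are in general not $\nabla$-convex, and the continuous convexity radius is never actually produced. The constants are also off: with $C_c\subseteq B_h(c,r(c)/4)$ you get $C_c\cup C_{c'}\subseteq B_h(c,3r(c)/4)$ when $c$ is the centre with the larger $r$, not $B_h(c,r(c)/2)$.

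More importantly, the step you single out as the main obstacle is unnecessary. No quantitative Whitehead theorem, continuity, Lipschitz control or compactness is needed. Define
\[
r(x):=\min\bigl\{1,\ \sup\{s>0 : B_h(x,s)\subseteq D \text{ for some $\nabla$-convex } D\}\bigr\},
\]
which is positive by the pointwise Whitehead theorem alone. Pick $\nabla$-convex sets $C_c\ni c$ contained both in some member of $\mathcal A$ and in $B_h(c,r(c)/4)$. Suppose $C_c\cap C_{c'}\neq\emptyset$ with $r(c)\geq r(c')$. Then $d_h(c,c')<r(c)/2$, hence $C_c\cup C_{c'}\subseteq B_h(c,3r(c)/4)$. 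Since $3r(c)/4<r(c)$, this ball lies in some $\nabla$-convex $D$ by the very definition of $r(c)$. The remark on intersections preceding the theorem then shows that $C_c\cap C_{c'}$ is $\nabla$-convex.

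This gives an elementary, self-contained replacement for the citation. Any metric inducing the topology of $M$ works in place of $d_h$.
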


\begin{proof}
  The refinement property is proved in \cite[Proposition~9]{Moretti-2021}. Applying this result to the open cover $\mathcal A=\{M\}$ given by the whole manifold proves the existence of strongly $\nabla$-convex neighbourhoods.  If $\mathscr U$ is any open neighbourhood of $\Delta_M$ in $M\times M$, define the open cover $\mathcal A$ given by the open subsets $U$ of $M$ such that $U\times U\subseteq \mathscr U$. If $\mathcal C$ is a strongly $\nabla$-convex refinement of $\mathcal A$, then $\mathscr U_{\mathcal C}\subseteq \mathscr U$.
\end{proof}

Strongly $\nabla$-convex neighbourhoods allow for a well-defined geodesic interpolation map. Indeed, if $\mathscr U=\mathscr U_{\mathcal C}$ is a strongly $\nabla$-convex neighbourhood and $(m,n)\in\mathscr U$, there exists $C\in\mathcal C$ such that $m,n\in C$, and hence a unique $\nabla$-geodesic segment in $C$ joining $m$ and $n$. We denote it by
\begin{equation}
    \Gamma(m,n,t)=\gamma_{n\leftarrow m}(t)\,,
    \qquad t\in[0,1]\,.
\end{equation}
The definition is independent of the element $C$ of the covering: if $C'$ also contains $m$ and $n$, then $C\cap C'$ is $\nabla$-convex, and the two geodesic segments coincide with the unique segment contained in $C\cap C'$. Thus $\Gamma$ is well-defined on $\mathscr U\times[0,1]$ and is smooth by \cite[Theorem~10(a)]{Moretti-2021}. Moreover,   different strongly $\nabla$-convex coverings determine the same germ\footnote{If $X$ and $Y$ are topological spaces, $S\subseteq X$, and $f,h\colon X\to Y$, then $f$ and $h$ have the same germ along $S$ if there exists an open neighbourhood $U$ of $S$ in $X$ such that $f_{\restriction U}=h_{\restriction U}$.} of $\Gamma$ along $\Delta_M$.

The geodesic interpolation map allows one to introduce a local vector field on $M\times M$ that plays the role of the radial vector field used above for star-shaped domains.

  \begin{definition} \label{Def: left diff vector field}
Let $M$ be a smooth manifold equipped with an affine connection $\nabla$, and let $\mathscr U\subseteq M\times M$ be a strongly $\nabla$-convex neighbourhood.  
The \underline{leftward difference vector field} is the local vector field $\mathbb D$ on $M \times M$ defined for each $(m,n) \in \mathscr U$ by
\begin{equation}\label{Eq: leftward difference vector field}
    \mathbb D_{(m,n)} = \frac{\mathrm d}{\mathrm dr}\Big|_{r=0} \big(\Gamma(n,m,e^r),\, n\big)\,,
\end{equation}
where $\Gamma$ denotes the $\nabla$-geodesic interpolation map associated to $\mathscr U$. 
\end{definition}

Since  the germ of $\Gamma$ is uniquely determined by $\nabla$, the germ of $\mathbb D$ is also uniquely determined by $\nabla$.  From the affine reparametrisation property of $\nabla$-geodesic segments one has that, for $t_0,t_1\in[0,1]$,
\begin{equation}\label{Eq: affine reparametrisation property}
    \Gamma(n,\Gamma(n,m,t_0),t_1) = \Gamma(n,m,t_0\,t_1)\,.
\end{equation}
This reparametrisation property allows to write the local flow of $\mathbb D$ as follows
\begin{equation}
\phi^{\mathbb D}(m,n,r)= \phi^\mathbb D_r(m,n)= \big(\Gamma(n,m,e^r),n\big)\,.
\end{equation}
 The construction admits a right counterpart.
  \begin{definition} \label{Def: right difference vector field}
Let $M$ be a smooth manifold equipped with an affine connection $\nabla$, and let $\mathscr U\subseteq M\times M$ be a strongly $\nabla$-convex neighbourhood.  
The \underline{rightward difference vector field} is the local vector field $\mathbb D^\dag$ on $M\times M$ obtained from $\mathbb D$ via the pushforward of the flip map $\dag$. Its local flow is given by
  \begin{equation}
\phi^{\mathbb D^\dag}(m,n,r)=\phi^{\mathbb D^\dag}_r(m,n)=\big(m,\Gamma(m,n,e^r)\big)\,.
\end{equation} 
\end{definition}
\begin{example}\label{Eq: difference vector fields in the Euclidean setting}
In the case  $M=\mathbb R^d$ with the standard Euclidean connection $\nabla$ one easily sees that  $\mathscr U=M\times M$ is a strongly $\nabla$-convex neighbourhood, and for each $(x,y)\in \mathscr U$ and $t\in [0,1]$ one has
    \begin{equation}
        \Gamma(x,y,t)=x+t\,(y-x)\,
    \end{equation}
and therefore 
    \begin{equation}
        \mathbb D=(x^\ell-y^\ell)\, \partial_{x^\ell}\,, \qquad \mathbb D^\dag=(y^r-x^r)\, \partial_{y^r}\,,
    \end{equation}
which are to be compared to \eqref{Eq: local difference vector field}.
\end{example}

Leftward and rightward difference vector fields on $M\times M$ have interesting relations to the exponential map. To describe such relations, consider a smooth manifold $M$ with an affine connection $\nabla$ on it and let $\psi\colon \mathscr V_0 \to M \times M$
 \begin{equation}
    \psi(m,v) = (m, \exp^\nabla(m,v))\,,
\end{equation}
be defined in terms of the corresponding exponential map, with domain $\mathscr V_0\subseteq \T M$. Such map $\psi$ induces a  diffeomorphism between 
$\mathscr V=\psi^{-1}(\mathscr U)$ and $\mathscr U$, where $\mathscr U$ is a strongly $\nabla$-convex neighbourhood in $M\times M$. Let $\Delta$ be the Euler vector field on $\T M$, i.e. the infinitesimal generator of the linear fiberwise dilation on $\T M$, whose flow is given by
\begin{equation}
    \phi^\Delta(m,v,r) = \phi^\Delta_r(m,v)=(m, e^r v)\,.
\end{equation} 
For any $(m,n) \in \mathscr U$ we can write 
  \begin{equation}
\begin{split}
    \big((\psi_{\restriction \mathscr V})_\ast \Delta\big)_{(m,n)}
    &=  \frac{\mathrm d}{\mathrm dr}\Big|_{r=0} 
       \big(\psi_{\restriction \mathscr V} \circ \phi^\Delta_r \circ 
       (\psi_{\restriction \mathscr V})^{-1}\big)(m,n) \\
    &= \frac{\mathrm d}{\mathrm dr}\Big|_{r=0} 
       \psi_{\restriction \mathscr V}\big(m, e^r (\exp^\nabla_m)^{-1}(n)\big)=
     \frac{\mathrm d}{\mathrm dr}\Big|_{r=0} 
       \big(m, \exp^\nabla(m, e^r\, (\exp^\nabla_m)^{-1}(n))\big) 
    = \frac{\mathrm d}{\mathrm dr}\Big|_{r=0} 
       (m, \Gamma(m,n,e^r))\,,
\end{split}
\end{equation} 
which shows that $\mathbb D^\dag = (\psi_{\restriction \mathscr V})_\ast \Delta$ and therefore also that $\mathbb D=\dag_*(\psi_{\restriction \mathscr V})_\ast \Delta$.

Through the above results,  we have identified both an appropriate analogue of the star-shaped neighbourhood $\widehat U$ and an appropriate analogue of the radial vector field $D$ defined in \eqref{Eq: local difference vector field}. These two ingredients provide the local geometric framework for constructing, for differential forms on $M\times M$ defined only in a neighbourhood of the diagonal, an analogue of the standard homotopy operator $J$ defined in \eqref{Eq: standard homotopy operator J} for the ordinary exterior differential.

\subsubsection{A generalized Poincar\'e lemma}

We are now in the position to introduce suitable homotopy operators on $\Omega^{\bullet}_{\Delta_M}(M\times M)$, mimicking the usual form of the Poincar\'e lemma for the exterior algebra $\Omega^{\bullet}(M)$  (see for example the one given in \cite[II$.7.10$]{KolarMichorSlovak1993}).  
In the following, for each $k\in \mathbb N_0$, we denote by $\Omega^k_{\Delta_M}(M\times M)$ the vector space of differential exterior $k$-forms that are defined in an open neighbourhood of the diagonal submanifold in $M\times M$. We begin with the following definition.
\begin{definition}
\label{defJl}
Let $M$ be a smooth manifold endowed with an affine connection $\nabla$. Let $\mathscr U$ be a strongly $\nabla$-convex neighbourhood and let $k\geq1$. With respect to the leftward difference vector field $\mathbb D$, define the $\mathbb R$-linear map
\begin{equation}
    J^\nabla\colon \Omega^k_{\Delta_M}(M\times M)
    \longrightarrow 
    \Omega^{k-1}_{\Delta_M}(M\times M)\,,
\end{equation} 
whose action on $\omega\in\Omega^k_{\Delta_M}(M\times M)$ is given by
 \begin{equation}\label{Eq: Jnabla}
       J^\nabla\omega=\mathbf i_{\mathbb D}\int_0^1 \frac{1}{t}\,H_t^\ast \omega\,\mathrm dt\,
   \end{equation}
  where, for each $t\in (0,1]$, $H_t$ is the diffeomorphism determined by the flow of $\mathbb D$ at fixed time $\log t$, that is:
      \begin{equation}\label{Eq: H and flow}
        H_t(m,n)=\phi_{\log t}^{\mathbb D}(m,n)=\big(\Gamma(n,m,t),n\big).
    \end{equation} 
\end{definition}    
Notice that $H_t(\mathscr U)\subseteq \mathscr U$ because for each $(m,n)\in \mathscr U$ there is a $\nabla$-convex open subset $C$ of $M$ such that $(m,n)\in C\times C\subseteq \mathscr U$, and so $\Gamma(n,m,t)\in C$ for any $t\in [0,1]$. 
 \begin{theorem}\label{Thm: generalised Poincar lemma}
   Let $M$ be a smooth manifold endowed with an affine connection $\nabla$. The family of $\mathbb R$-linear operators $J^\nabla$ defined above satisfies the following conditions.
\begin{enumerate}[(a)]
\item It is $J^\nabla (J^\nabla\omega)=0$ for any $\omega\in\Omega^k_{\Delta_M}(M\times M)$;
\item for every $\omega\in \Omega^k_{\Delta_M}(M\times M)$ 
there exists an open neighbourhood $\mathscr U$ of $\Delta_M$ in $M\times M$, 
contained in the domain of $\omega$, such that
\begin{equation}\label{Eq: homotopy formula forms L}
    J^\nabla \mathrm d\omega+\mathrm d J^\nabla \omega=\omega-(\iota\circ \pi_R)^\ast \omega
\end{equation}
holds on $\mathscr U$.
\end{enumerate}
\end{theorem}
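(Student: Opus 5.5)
The plan is to reproduce the classical proof of the Poincaré lemma for star-shaped domains, with the Euclidean dilations replaced by the maps $H_t$ of \eqref{Eq: H and flow}. Two facts about these maps are needed. First, for $t\in(0,1]$ one has $H_t=\phi^{\mathbb D}_{\log t}$, so $\frac{\mathrm d}{\mathrm dt}H_t^\ast\omega=\frac1t H_t^\ast\mathcal L_{\mathbb D}\omega$. This follows from the flow property, which is exactly the reparametrisation identity \eqref{Eq: affine reparametrisation property}: $H_{t_1}\circ H_{t_0}=H_{t_0t_1}$. Second, the family $t\mapsto H_t$ extends smoothly to $t\in[0,1]$, because $\Gamma$ is smooth on $\mathscr U\times[0,1]$. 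At $t=0$ we get $H_0(m,n)=(\Gamma(n,m,0),n)=(n,n)$, i.e. $H_0=\iota\circ\pi_R$, and $H_1=\id$.

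\textbf{Homotopy formula.} Working on a strongly $\nabla$-convex $\mathscr U$ inside the domain of $\omega$ (which exists by \cref{Thm: moretti}), I will use Cartan's formula $\mathcal L_{\mathbb D}=\mathrm d\,\mathbf i_{\mathbb D}+\mathbf i_{\mathbb D}\mathrm d$ together with the naturality $H_t^\ast\mathrm d=\mathrm d H_t^\ast$. Writing $K\omega=\int_0^1\frac1t H_t^\ast\omega\,\mathrm dt$, the formula $J^\nabla=\mathbf i_{\mathbb D}K$ needs $\mathbf i_{\mathbb D}$ to commute with $H_t^\ast$. This holds because $H_t$ is a flow map of $\mathbb D$, so $(H_t)_\ast\mathbb D=\mathbb D$ on the image and hence $H_t^\ast\mathbf i_{\mathbb D}=\mathbf i_{\mathbb D}H_t^\ast$. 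Combining these gives
\[
\mathrm dJ^\nabla\omega+J^\nabla\mathrm d\omega=\int_0^1\frac1t H_t^\ast\mathcal L_{\mathbb D}\omega\,\mathrm dt=\int_0^1\frac{\mathrm d}{\mathrm dt}H_t^\ast\omega\,\mathrm dt=H_1^\ast\omega-H_0^\ast\omega,
\]
which is $\omega-(\iota\circ\pi_R)^\ast\omega$.

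\textbf{Main obstacle.} The delicate step is the singular factor $1/t$ at $t=0$. It must be shown that $\frac1t\mathbf i_{\mathbb D}H_t^\ast\omega$ extends smoothly to $t=0$, and that the fundamental theorem of calculus may be applied on $[\epsilon,1]$ and then passed to the limit $\epsilon\to0$. The plan is to exploit that $\mathbb D$ vanishes on the diagonal, and more precisely that $\mathbb D\circ H_t$ is divisible by $t$. By the flow identity, $\mathbb D_{H_t(m,n)}=\frac{\mathrm d}{\mathrm ds}\big|_{s=t}\,s\,(\Gamma(n,m,s),n)$, which equals $t\,\partial_s\Gamma(n,m,s)|_{s=t}$ in the left slot. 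This derivative is smooth up to $s=0$, where it equals the initial velocity of the geodesic. Hence $\frac1t\,(\mathbf i_{\mathbb D}H_t^\ast\omega)=\mathbf i_{\widetilde{\mathbb D}_t}(\cdots)$, where the vector field $\widetilde{\mathbb D}_t$ is smooth in $t\in[0,1]$. This makes $J^\nabla\omega$ a smooth form, and it makes the identity $\frac{\mathrm d}{\mathrm dt}H_t^\ast\omega=\frac1t\mathbf i_{\mathbb D}H_t^\ast\mathrm d\omega+\mathrm d(\frac1t\mathbf i_{\mathbb D}H_t^\ast\omega)$ valid up to $t=0$ by continuity. I will also check independence of the choice of strongly convex neighbourhood: since the germ of $\Gamma$ is determined by $\nabla$, so is $J^\nabla\omega$. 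For $k=0$, $J^\nabla$ vanishes and the formula reduces to the fundamental theorem of calculus.

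\textbf{Nilpotency (a).} This I expect to be routine. Using the commutation of $\mathbf i_{\mathbb D}$ with $H_s^\ast$ established above,
\[
J^\nabla J^\nabla\omega=\int_0^1\!\!\int_0^1\frac{1}{st}\,\mathbf i_{\mathbb D}H_s^\ast\,\mathbf i_{\mathbb D}H_t^\ast\omega\,\mathrm dt\,\mathrm ds=\iint\frac{1}{st}\,\mathbf i_{\mathbb D}\mathbf i_{\mathbb D}H_{st}^\ast\omega=0,
\]
since $\mathbf i_{\mathbb D}\mathbf i_{\mathbb D}=0$. The same smoothness argument as in the main obstacle justifies the manipulation of the integrals.
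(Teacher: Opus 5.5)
Your proof is correct and is essentially the paper's argument. Both use three ingredients: $\mathbb D$ is $H_t$-related to itself, so $\mathbf i_{\mathbb D}$ commutes with $H_t^\ast$; Cartan's formula together with $\frac{\mathrm d}{\mathrm dt}H_t^\ast\omega=\frac1t H_t^\ast\mathcal L_{\mathbb D}\omega$, integrated between $H_0=\iota\circ\pi_R$ and $H_1=\id$; and nilpotency from $\mathbf i_{\mathbb D}\mathbf i_{\mathbb D}=0$. Your treatment of the endpoint $t=0$ is more careful than the paper's: you use that $\frac1t\,\mathbb D\circ H_t$ is smooth up to $t=0$ and integrate $\mathrm d\bigl(\frac1t\mathbf i_{\mathbb D}H_t^\ast\omega\bigr)+\frac1t\mathbf i_{\mathbb D}H_t^\ast\mathrm d\omega$ directly, whereas the paper's intermediate expression $\mathcal L_{\mathbb D}\int_0^1\frac1t H_t^\ast\omega\,\mathrm dt$ contains an integral that need not converge (e.g.\ for $\omega=\pi_R^\ast\beta$, since $\pi_R\circ H_t=\pi_R$).
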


\begin{proof}
 We start by noticing that, for any  admissible value of the parameter $r$, the vector field $\mathbb D$ is $\phi^{\mathbb D}_r$-related to itself, with $\phi_r^{\mathbb D}$ denoting the flow of $\mathbb D$, as above. It follows that $\mathbb D$ is also $H_t$-related to itself for every $t \in (0,1)$ (cf. \eqref{Eq: H and flow}). Therefore, $J^\nabla \omega$ admits the alternative representation
\begin{equation}\label{Eq: alternative expression JNabla}
    J^\nabla \omega = \int_0^1 \frac{1}{t}\, H_t^\ast \mathbf i_{\mathbb D} \omega \,\mathrm dt\,.
\end{equation}  
With respect to the claims we aim to prove, we can therefore write:
    \begin{enumerate}[(a)]
    \item If $k\leq 1$, the claim is trivially satisfied. If $k\geq2$, one has (see the 
    previous 
    \eqref{Eq: alternative expression JNabla})
  $$    
J^\nabla(J^\nabla\omega)=\int_0^1\dd t\int_0^1\,\frac{1}{st}H^*_t(H^*_s\mathbf i_\mathbb D\mathbf i_\mathbb D\omega)\,\dd s =0
$$ 
since $\mathbf i_{\mathbb D}$ is nihilpotent. 

\item If we recall the Cartan identity 
$\mathcal L_{\mathbb D}=\mathrm d\,\mathbf i_{\mathbb D}+\mathbf i_{\mathbb D}\,\mathrm d$ and the fact that exterior derivative $\mathrm d$ commutes with  $H_t^\ast$ for any $t\in (0,1)$, we have 
\begin{equation}\label{Eq: intermediate homotopy formula}
    \mathrm d(J^\nabla \omega)
    +
    J^\nabla (\mathrm d\omega)
    =
    \mathcal L_{\mathbb D}
    \int_0^1 \frac{1}{t}\,H_t^\ast \omega\,\mathrm dt\,.
\end{equation}
     From the chain rule and the Lie derivative theorem (\cite[Theorem 6.4.1]{A-A-2015}), we compute
     \begin{equation} 
   \begin{split}
        \frac{\mathrm d}{\mathrm dr}\bigg|_{r=t}H_t^\ast \omega&=\frac{\mathrm d}{\mathrm dr}\bigg|_{r=t}\phi_{\log r}^\ast \omega \\
        &=\frac{1}{t}\,\frac{\mathrm d}{\mathrm ds}\bigg|_{s=\log t}\phi_s^\ast \omega =
        \frac{1}{t} \,\phi_{\log t}^\ast \mathcal L_{\mathbb D}\omega =
        \frac{1}{t}\,H_t^\ast \mathcal L_{\mathbb D}\omega\,.
\end{split}
    \end{equation} 
    From the fundamental theorem of calculus, if we integrate  over an interval $[t_0, t_1] \subseteq (0,1)$, we have 
    \begin{equation}
        \int_{t_0}^{t_1} \frac{1}{t} H_t^\ast \mathcal L_{\mathbb D} \omega \, \mathrm dt = H_{t_1}^\ast \omega - H_{t_0}^\ast \omega\,:
    \end{equation}
for  $t_0 \to 0^+$ and $t_1 \to 1^-$, the continuity of $\omega$ and the properties of $H_t$ yield
    \begin{equation}\label{Eq: almost cartan identity}
        \int_0^1 \frac{1}{t} H_t^\ast \mathcal L_{\mathbb D} \omega \, \mathrm dt = \omega - (\iota \circ \pi_R)^\ast \omega\,.
    \end{equation}
Notice that we have considered  that $H_1=\id$ on $\mathscr U$, and
  $$
\lim_{t_0\to 0^+}H_{t_0}(m,n)=\lim_{t_0\to 0^+}\left(\Gamma(n,m ,e^{\log t_0}),n\right)=(n,n)
$$ 
The final step is to pass the Lie derivative under the integral sign. Since $\mathcal L_{\mathbb D}$ commutes with each pull-back $H_t^\ast$, we deduce the homotopy formula
      \begin{equation}
        \mathrm dJ^\nabla \omega+J^\nabla \mathrm d \omega=\mathcal L_{\mathbb D}\int_0^1 \frac{1}{t}H_t^\ast \omega\,\mathrm dt
        =\int_0^1 \frac{1}{t}H_t^\ast \mathcal L_{\mathbb D}\omega\,\mathrm dt=\omega - (\iota \circ \pi_R)^\ast \omega\,.
    \end{equation} 
    \end{enumerate}
\end{proof}

\begin{example}
\label{Example: Jnabla}
  Let $M=\mathbb R^d$, and let $\nabla$ be the standard Euclidean connection on $\mathbb R^d$, as in \cref{Eq: difference vector fields in the Euclidean setting}, so that $\mathscr U=M\times M$ is a strongly $\nabla$-convex neighbourhood of $\Delta_M$ in $M\times M$. Let
      \begin{equation}\label{Eq: omega example}
        \omega=\mathrm dx^i\wedge \mathrm dx^j\wedge \mathrm dy^k\in\Omega^3_{\Delta_M}(M\times M),
    \end{equation} 
    where $i,j,k\in \{1,\dots,d\}$ are fixed indices with $i\ne j$, and $(x,y)$ are the Cartesian global coordinates on $M\times M$. Since for each $(m,n)\in \mathscr U$ and $t\in [0,1]$,
    \begin{equation}
        H_t(x,y)=(y+t\,(x-y),y)\,,
    \end{equation}
    we have 
    \begin{equation}
        H_t^\ast \mathrm dx^i
        = t\,\mathrm dx^i+(1-t)\,\mathrm dy^i,
        \qquad 
        H_t^\ast \mathrm dy^k=\mathrm dy^k\,
    \end{equation}
and hence
    \begin{align}
       H_t^\ast\left(\mathrm dx^i\wedge \mathrm dx^j\wedge \mathrm dy^k\right)
       &= \left(t\,\mathrm dx^i+(1-t)\,\mathrm dy^i\right)
          \wedge
          \left(t\,\mathrm dx^j+(1-t)\,\mathrm dy^j\right)
          \wedge \mathrm dy^k \notag\\
       &\quad = \left(t^2\,\mathrm dx^i\wedge \mathrm dx^j
          +t(1-t)\,\mathrm dx^i\wedge \mathrm dy^j
          +t(1-t)\,\mathrm dy^i\wedge \mathrm dx^j
          +(1-t)^2\,\mathrm dy^i\wedge \mathrm dy^j\right)
          \wedge \mathrm dy^k\,.
    \end{align}
The contraction of this term along  the leftward difference vector field $\mathbb D=(x^\ell-y^\ell)\,\partial_{x^\ell}$, which is a purely left vector field on $M\times M$, gives
    \begin{equation}\label{Eq: JNabla example}
       J^\nabla \omega
       =\frac{1}{2}(x^i-y^i)\,\mathrm dx^j\wedge \mathrm dy^k
        -\frac{1}{2}(x^j-y^j)\,\mathrm dx^i\wedge \mathrm dy^k
        +\frac{1}{2}(x^i-y^i)\,\mathrm dy^j\wedge \mathrm dy^k
        -\frac{1}{2}(x^j-y^j)\,\mathrm dy^i\wedge \mathrm dy^k\,.
    \end{equation}
\end{example}

The interest of the above   example is not only that it shows the action of the homotopy operators $J^\nabla$ for the Euclidean connection on $M=\mathbb R^d$: it shows indeed that $J^\nabla$ does not preserve the decomposition of $\Omega^3_{\Delta_M}(M\times M)$ into the direct sum of subspaces $\im\Phi^{p,q}$ with $p+q=3$. For instance, in the notations of \cref{Example: Jnabla}, 
the previous \eqref{Eq: JNabla example} allows to write
\begin{equation}
\im J^\nabla_{\restriction \im\pi_{2,1}}
    \subseteq \im\pi_{1,1}\oplus \im\pi_{0,2}.
\end{equation}
The following result clarifies that such behaviour is far from being an accident, related to the specific example.

\begin{lemma}\label{Lemma: bidegrees}
    Let $M$ be a smooth manifold equipped with an affine connection $\nabla$, and $(p,q)\in \mathbb N_0\times \mathbb N$. Then for each $a\in \{0,\dots,q-1\}$:
    \begin{equation}\label{Eq: bidegree}
        \pi_{p+a,q-1-a}\circ J^\nabla\circ \pi_{p,q}=0
    \end{equation}
\end{lemma}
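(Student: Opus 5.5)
The plan is to track how the two ingredients of $J^\nabla$, the pullbacks $H_t^\ast$ and the contraction $\mathbf i_{\mathbb D}$, change the left and right degrees. By \eqref{Eq: alternative expression JNabla},
\begin{equation}
J^\nabla\omega=\int_0^1\frac1t\,H_t^\ast\,\mathbf i_{\mathbb D}\,\omega\,\mathrm dt ,
\end{equation}
so it suffices to show two things. First, $\mathbf i_{\mathbb D}$ maps $\im\pi_{p,q}$ into $\im\pi_{p-1,q}$. Second, each $H_t^\ast$ maps $\im\pi_{a,b}$ into $\bigoplus_{c\le a}\im\pi_{c,a+b-c}$: it never raises the left degree, and it can only trade left legs for right legs. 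Granting these, $J^\nabla\pi_{p,q}\omega$ has components only in bi-degrees $(c,p+q-1-c)$ with $c\le p-1$. Every bi-degree $(p+a,q-1-a)$ with $a\ge0$ has left degree at least $p>p-1$, so \eqref{Eq: bidegree} follows.

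For the first claim, I would show that $\mathbb D$ lies in $\ker\pi_R'=\im\mathfrak L$, i.e. that $\mathbb D$ is purely left. Its flow $\phi^{\mathbb D}_r(m,n)=(\Gamma(n,m,e^r),n)$ fixes the second factor, so $\pi_R'\mathbb D=0$. In a local product chart this gives $\mathbb D=\mathbb D^i(x,y)\,\partial_{x^i}$. Contracting $\mathrm dx^I\wedge\mathrm dy^J$ with such a vector field kills one $\mathrm dx$ leg and leaves the $\mathrm dy$ legs untouched. This is pointwise linear algebra, and it also follows from \eqref{26.04.2} once $\mathbb D$ is written locally as a $C^\infty$-combination of left lifts.

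For the second claim, I would use the triangular form of $H_t$. Since $H_t(m,n)=(\Gamma(n,m,t),n)$, we have $\pi_R\circ H_t=\pi_R$, and therefore $H_t^\ast\pi_R^\ast\beta=\pi_R^\ast\beta$. This shows $H_t^\ast\mathrm dy^j=\mathrm dy^j$. In local product coordinates,
\begin{equation}
H_t^\ast\mathrm dx^i=\frac{\partial\Gamma^i}{\partial x^k}\,\mathrm dx^k+\frac{\partial\Gamma^i}{\partial y^k}\,\mathrm dy^k ,
\end{equation}
so $H_t^\ast\mathrm dx^i$ has components only in bi-degrees $(1,0)$ and $(0,1)$. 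Wedging these together, $H_t^\ast(f\,\mathrm dx^I\wedge\mathrm dy^J)$ lies in $\bigoplus_{c\le|I|}\im\pi_{c,|I|+|J|-c}$, as required. \cref{Example: Jnabla} is exactly the Euclidean instance of this computation.

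The step needing the most care is justifying the exchange of the projectors $\pi_{c,\cdot}$ with the $t$-integral, together with the behaviour near $t=0$. The projectors act pointwise and $C^\infty$-linearly on coefficients, so they commute with integration in $t$. The factor $1/t$ is harmless: $\mathbb D$ vanishes on $\Delta_M$, and $H_t^\ast\mathbf i_{\mathbb D}\omega$ carries a factor $t$ coming from $\mathbb D_{H_t(m,n)}$. In any case, this integrability is already implicit in \cref{Thm: generalised Poincar lemma}, so the only genuinely new content is the degree bookkeeping above.
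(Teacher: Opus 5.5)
Your proposal is correct and is essentially the paper's argument: both rest on $\pi_R\circ H_t=\pi_R$, so $H_t^\ast$ fixes right legs and can only turn left legs into mixed ones, and on $\mathbb D$ being purely left, so $\mathbf i_{\mathbb D}$ lowers the left degree by one (this is the paper's identity $\mathbf i_{\mathbb D}\circ\pi_{p+1+a,q-1-a}=\pi_{p+a,q-1-a}\circ\mathbf i_{\mathbb D}$). The only difference is cosmetic: the paper pulls back first and then contracts, checking that the right degree stays at least $q$, whereas you contract first via \eqref{Eq: alternative expression JNabla} and check that the left degree stays at most $p-1$.
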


\begin{proof}
It suffices to test the claim on exterior forms which can be written as  $\omega=F\,\pi_L^\ast \alpha\wedge\pi_R^\ast \beta$, with $F\in \Omega^{0,0}_{\Delta_M}(M\mid M)$, $\alpha\in \Omega^p(M)$ and $\beta\in \Omega^q(M)$. Using the  distributive law of the pullback over the wedge product and the relation $(\pi_R)_{\restriction\mathscr U}\circ H_t=(\pi_R)_{\restriction\mathscr U}$ for each $t\in (0,1)$, one has
\begin{equation}
    H_t^\ast \omega=\left(H_t^\ast \pi_L^\ast \alpha\right)\wedge\pi_R^\ast \beta\,.
\end{equation}
Therefore, for each $a\in \{0,\dots,q-1\}$, it is 
\begin{equation}
    \pi_{p+1+a,q-1-a}\int_0^1 \frac{1}{t}\,H_t^\ast \omega\,\mathrm dt=0\,.
\end{equation}
Finally, the identity
    $\mathbf i_{\mathbb D}\circ \pi_{p+1+a,q-1-a}=\pi_{p+a,q-1-a}\circ \mathbf i_{\mathbb D}$
yields 
  \begin{equation}
    \pi_{p+a,q-1-a}J^\nabla (\pi_{p,q}\omega)=\pi_{p+a,q-1-a}\mathbf i_{\mathbb D}\int_0^1 \frac{1}{t}\, H_t^\ast \omega \,\mathrm dt=
    \mathbf i_{\mathbb D}\left(\pi_{p+1+a,q-1-a}\int_0^1 \frac{1}{t}H_t^\ast \omega\,\mathrm dt\right)=0\,.
\end{equation} 
\end{proof}

\subsubsection{Homotopy operators on bi-forms} 

The previous lemma suggests that left homotopy operators on bi-forms, which means a system of operators which enjoys the properties  in the  \cref{Def: left homotopy operators}, can be introduced upon projecting the action of $J^\nabla$ onto the  $\pi_{p,q}$-component (cf.~\eqref{Eq: pq projector}). This is what we indeed set.

\begin{definition}
\label{def:JnL}
  Let $M$ be a smooth manifold endowed with an affine connection $\nabla$. We define implicitly a system of operators
\begin{equation*}
J^\nabla_L\colon \Omega^{p,q}_{\Delta_M}(M\mid M)
    \longrightarrow \Omega^{p-1,q}_{\Delta_M}(M\mid M)\,,
    \qquad (p,q)\in\mathbb N\times\mathbb N_0\,,
\end{equation*}
by requiring that, for every $\varpi\in\Omega^{p,q}_{\Delta_M}(M\mid M)$,
\begin{equation}\label{Eq: definition JLNabla}
  \Phi^{p-1,q}\left(J_L^\nabla \varpi\right)=\pi_{p-1,q}J^\nabla \Phi^{p,q}(\varpi)\,.
\end{equation}
Here $\Phi^{p,q}(\varpi)$ denotes the local exterior $(p+q)$-form on $M\times M$ corresponding to $\varpi$ via the action of 
the map defined in \eqref{26.04.1}. 
\end{definition}
The following result proves not only that the previous definition provides a system of left homotopy operators, but provides also an explicit representation for its action.
\begin{theorem}\label{Thm: left homotopy op exists}
    Let $M$ be a smooth manifold equipped with an affine connection $\nabla$. One has that:
    \begin{enumerate}[(a)]
    \item The operators $J_L^\nabla$ defined by \eqref{Eq: definition JLNabla} give a system of left homotopy operators on $M$. 
    \item For any $\varpi\in \Omega^{p,q}_{\Delta_M}(M\mid M)$ with $(p,q)\in \mathbb N\times \mathbb N_0$, there exists a strongly $\nabla$-convex neighbourhood $\mathscr U$ contained in the domain of $\varpi$, such that for all $(m,n)\in \mathscr U$ and for families of vector fields $\{X_i\}_{i=2}^p$ and $\{Y_j\}_{j=1}^q$ on $M$, it is 
\begin{multline}
    \left(J^\nabla_L\varpi\right)(X_2,\dots,X_p\mid Y_1,\dots,Y_q)(m,n)
    \\
    =\int_0^1 
    \varpi_{(\gamma_{m\leftarrow n}(t),n)}\!
    \left(
        \dot \gamma_{m\leftarrow n}(t),
        (X_2)_{\gamma_{m\leftarrow n}(t)},\dots, (X_p)_{\gamma_{m\leftarrow n}(t)}
        \mid (Y_1)_n,\dots,(Y_q)_n
    \right)\mathrm dt\,,\label{Eq: JL explicit}
\end{multline}
where we have denoted $\gamma_{m\leftarrow n}(t)=\Gamma(n,m,t)$.
\end{enumerate}
\end{theorem}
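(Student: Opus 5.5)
The plan is to transfer the generalised Poincar\'e lemma (\cref{Thm: generalised Poincar lemma}) to bi-forms through $\Phi$, using \cref{Lemma: bidegrees} to control the bidegrees, and then to compute $J^\nabla_L$ explicitly.

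\textbf{Nilpotence.} Take $\varpi\in\Omega^{p,q}_{\Delta_M}(M\mid M)$ and set $\omega=\Phi^{p,q}(\varpi)$. By \cref{Lemma: bidegrees}, $J^\nabla\omega$ has no components of bidegree $(p+a,q-1-a)$ for $a\ge0$. Hence
\begin{equation}
J^\nabla\omega=\pi_{p-1,q}J^\nabla\omega+\sum_{b\ge 2}\pi_{p-b,q+b-1}J^\nabla\omega.
\end{equation}
So $J^\nabla$ only lowers the left degree. Now write $J^\nabla_LJ^\nabla_L\varpi$ as $\pi_{p-2,q}J^\nabla\pi_{p-1,q}J^\nabla\omega$. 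I will check that the other components $\pi_{p-b,q+b-1}J^\nabla\omega$, for $b\ge2$, cannot contribute to the $(p-2,q)$ part after another application of $J^\nabla$. By the lemma, $J^\nabla$ applied to a form of bidegree $(p-b,q+b-1)$ lands in left degree at most $p-b-1<p-2$. Therefore
\begin{equation}
J^\nabla_LJ^\nabla_L\varpi=\Phi^{-1}\pi_{p-2,q}J^\nabla J^\nabla\omega=0
\end{equation}
by \cref{Thm: generalised Poincar lemma}(a).

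\textbf{Homotopy formula.} Apply $\pi_{p,q}$ to \eqref{Eq: homotopy formula forms L} with $\omega=\Phi^{p,q}(\varpi)$ and decompose $\mathrm d=\mathrm d_{\mathfrak L}+\mathrm d_{\mathfrak R}$.

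For the term $\pi_{p,q}\,\mathrm dJ^\nabla\omega$: the operator $\mathrm d_{\mathfrak L}$ raises the left degree and $\mathrm d_{\mathfrak R}$ raises the right degree. The only components of $J^\nabla\omega$ that can reach bidegree $(p,q)$ are the $(p-1,q)$ component via $\mathrm d_{\mathfrak L}$ and the $(p,q-1)$ component via $\mathrm d_{\mathfrak R}$. The latter vanishes by the lemma, so this term equals $\mathrm d_{\mathfrak L}\pi_{p-1,q}J^\nabla\omega$.

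For the term $\pi_{p,q}J^\nabla\mathrm d\omega$: here $\mathrm d\omega=\mathrm d_{\mathfrak L}\omega+\mathrm d_{\mathfrak R}\omega$, of bidegrees $(p+1,q)$ and $(p,q+1)$. Applying $J^\nabla$ to the $(p,q+1)$ piece produces no $(p,q)$ component, by the lemma with $a=0$. The $(p+1,q)$ piece gives exactly $\Phi(J^\nabla_L\mathrm d^L\varpi)$, using \eqref{Eq: d_L and d^L}.

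Finally, $(\iota\circ\pi_R)^\ast\omega$ is a pullback under a map factoring through $\pi_R$, so it is purely of right type. Its $(p,q)$ component vanishes when $p\ge1$. When $p=0$ it equals $\pi_R^\ast\iota^\ast\omega$, which is $\Phi(1\boxtimes\varpi_{\restriction\Delta_M})$, because the left lift of a vector field is killed by $\pi_R'$ and $\iota'$ sends $Z$ to $(Z,Z)$. In that case the term $\mathrm dJ^\nabla\omega$ is absent in the relevant component, since $J^\nabla_L$ on left degree $0$ is zero. This yields \eqref{Eq: homotopy formula bi-forms L}. I expect this bidegree bookkeeping, together with the $p=0$ identification, to be the main obstacle.

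\textbf{Explicit formula (b).} Use \eqref{Eq: Jnabla} and \eqref{Eq: pairing with forms}. Evaluate $\pi_{p-1,q}J^\nabla\omega$ on $X_2^L,\dots,X_p^L,Y_1^R,\dots,Y_q^R$. Since $\mathbb D$ is purely left, this is
\begin{equation}
\int_0^1 t^{-1}(H_t^\ast\omega)(\mathbb D,X_2^L,\dots,Y_q^R)\,\mathrm dt.
\end{equation}
The map $H_t$ acts only on the left factor, via $m\mapsto\Gamma(n,m,t)$, and fixes $n$, so the right arguments pass through unchanged. For the left arguments, $H_t'(\mathbb D)=t\,\dot\gamma_{m\leftarrow n}(t)$ by the reparametrisation property \eqref{Eq: affine reparametrisation property}, and this cancels the factor $1/t$. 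The left-type pushforwards of $X_i^L$ also carry a right-type component, through the dependence of $\Gamma(n,m,t)$ on $n$. Because the right slots of $\omega$ are already saturated, only $\partial_m\Gamma$ acting on $X_i$ contributes.

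Here I expect a discrepancy with the stated formula: \eqref{Eq: JL explicit} evaluates $X_i$ at $\gamma(t)$, whereas the computation above gives the transported vectors $\partial_m\Gamma(n,m,t)\,X_i$. I would try to resolve this by noting that the pairing is tensorial in the left slots. Taking $X_i$ to be the $\partial_m\Gamma$-transported fields, or reading the formula pointwise with the $X_i$ replaced by these vectors, should make the two expressions agree; I will verify this in a chart against \cref{Example: Jnabla}.
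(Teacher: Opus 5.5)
Your part (a) follows the paper's own argument. Nilpotence comes from $J^\nabla\circ J^\nabla=0$, with \cref{Lemma: bidegrees} used to discard the components of left degree below $p-1$. The homotopy identity comes from applying $\pi_{p,q}$ to \eqref{Eq: homotopy formula forms L}. There the term $(\iota\circ\pi_R)^\ast\omega$, of bidegree $(0,p+q)$, survives only for $p=0$, as $\Phi^{0,q}(1\boxtimes\varpi_{\restriction\Delta_M})$. That part is fine.

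For (b), your computation is correct and the discrepancy you found is real. What fails is your proposed reconciliation: formula \eqref{Eq: JL explicit} as stated is false for $p\ge2$, and no argument can prove it. Left tensoriality only says that the left-hand side depends on $(X_i)_m$ alone. The stated right-hand side depends on $X_i$ along the whole segment $\gamma_{m\leftarrow n}$, so it is not left-tensorial, and no choice of extension makes the two agree for all vector fields.

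The paper's own \cref{Example: JL} is a counterexample. Take $\varpi=(\mathrm dq^i\wedge\mathrm dq^j)\boxtimes\mathrm dq^k$ on Euclidean $\mathbb R^d$, with constant $X_2=X$ and $Y_1=Y$. The stated formula gives $\big((x^i-y^i)X^j-(x^j-y^j)X^i\big)Y^k$. The example, computed from the definition, gives half of this. Your version uses $H_t'X^L=tX_m$ and produces exactly the factor $\int_0^1t\,\mathrm dt=\tfrac12$. The paper's proof of (b) rests on the identity $(\pi_L\circ H_t)'_{(m,n)}(X^L_{(m,n)})=X_{\gamma_{m\leftarrow n}(t)}$, which is wrong for this reason.

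So your planned chart check will not confirm the statement. What your argument does establish is the corrected formula
\begin{multline*}
(J^\nabla_L\varpi)(X_2,\dots,X_p\mid Y_1,\dots,Y_q)(m,n)\\
=\int_0^1\varpi_{(\gamma_{m\leftarrow n}(t),n)}\big(\dot\gamma_{m\leftarrow n}(t),V_2(t),\dots,V_p(t)\mid (Y_1)_n,\dots,(Y_q)_n\big)\,\mathrm dt\,.
\end{multline*}
Here $V_i(t)=\tfrac{\mathrm d}{\mathrm ds}\big|_{s=0}\Gamma(n,c_i(s),t)$ for a curve $c_i$ with $c_i(0)=m$ and $\dot c_i(0)=(X_i)_m$. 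This is the variation field of the geodesics issuing from $n$; it vanishes at $t=0$ and equals $(X_i)_m$ at $t=1$.

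For $p=1$ no $X_i$ appear and the two formulas coincide. That is the only case the paper uses later, for the potentials $S$ and $D$. The general formulas \eqref{Eq: JR explicit}, \eqref{Eq: JLJR formula mixed} and \eqref{Eq: JLJR formula} carry the same error in their extra slots.

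There is also a small slip in your write-up. $H_t'X_i^L$ stays purely left. It is $H_t'Y_j^R=\big(\partial_n\Gamma\,(Y_j)_n,(Y_j)_n\big)$ that picks up a left component. That component drops out because the $p$ left slots of $\omega$ are already filled by $H_t'\mathbb D$ and the $H_t'X_i^L$, not because the right slots are saturated.
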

\begin{proof}
    Let $\varpi\in \Omega^{p,q}_{\Delta_M}(M\mid M)$, with $(p,q)\in \mathbb N_0\times \mathbb N$, and consider the local differential $(p+q)$-form $\Phi^{p,q}(\varpi)$. Let $\mathscr U$ be a strongly $\nabla$-convex neighbourhood of $\Delta_M$ in $M\times M$ as in \cref{Thm: generalised Poincar lemma}. 
    \begin{itemize}
    \item  We first prove the nilpotency in claim (a).  If $p\leq 1$, then $J_L^\nabla J_L^\nabla\varpi=0$ follows by the definition of $J_L^\nabla$. Assume therefore $p>1$. Iterating the definition of $J_L^{\nabla}$, we have
\begin{equation}\label{Eq: JLJL}
    \Phi^{p-2,q}\left(J_L^{\nabla}(J_L^{\nabla} \varpi)\right) = \pi_{p-2,q} J^{\nabla} \, (\pi_{p-1,q} J^{\nabla} \Phi^{p,q}(\varpi))\,.
\end{equation}
If we recall that the map $\Phi$ is surjective, with the decomposition \eqref{Eq: pq decomposition}, and the lemma (see \eqref{Eq: bidegree})
  \begin{equation}
\begin{split}
\pi_{p-2,q} J^{\nabla} \big(J^{\nabla} \Phi^{p,q}(\varpi)\big)
&= \pi_{p-2,q} J^{\nabla}
           \left(\sum_{a=0}^{p-1} \pi_{p-1-a,q+a} J^{\nabla} \Phi^{p,q}(\varpi)\right)\\
        &=\pi_{p-2,q} J^{\nabla} (\pi_{p-1,q} J^{\nabla} \Phi^{p,q}(\varpi)) + \sum_{a=1}^{p-1} \pi_{p-2,q} J^{\nabla}
           ( \pi_{p-1-a,q+a} J^{\nabla} \Phi^{p,q}(\varpi))\\
        &= \pi_{p-2,q} J^{\nabla} \, \pi_{p-1,q} J^{\nabla} \Phi^{p,q}(\varpi)= \Phi^{p-2,q}\big(J_L^{\nabla} J_L^{\nabla} \varpi\big)\,,
\end{split}
\end{equation} 
where the last equality comes from the definition.
By \cref{Thm: generalised Poincar lemma}, the left-hand side vanishes, and so we conclude 
\begin{equation}
J_L^{\nabla} J_L^{\nabla} \varpi = 0\,.
\end{equation}

    \item We now prove the homotopy identity in claim (a). We begin by considering the term $\Phi^{p,q}(J_L^\nabla \mathrm d^L\varpi)$. Since $\mathrm d^L\varpi$ has bi-degree $(p+1,q)$, we have, recalling the action of the left and right exterior differential on bi-forms (see \eqref{Eq: d_L and d^L})
  $$
\Phi^{p,q}(J_L^\nabla \mathrm d^L\varpi)=\pi_{p,q}J^\nabla (\Phi^{p+1,q}(\mathrm d^L\varpi))=
    \pi_{p,q}J^\nabla (\mathrm d_{\mathfrak L}\Phi^{p,q}(\varpi))\,.
$$ 
From the decomposition of the exterior differential $\dd$ on $M\times M$ as the sum of a left and a right term (see \eqref{Eq: d=dL+dR}), together with  the identity \eqref{Eq: d_R and d^R}, the expression above reads
  $$
\pi_{p,q}J^\nabla (\mathrm d_{\mathfrak L}\Phi^{p,q}(\varpi))=\pi_{p,q}J^\nabla \mathrm d\Phi^{p,q}(\varpi)-\pi_{p,q}J^\nabla \mathrm d_{\mathfrak R}\Phi^{p,q}(\varpi)=
    \pi_{p,q}J^\nabla \mathrm d\Phi^{p,q}(\varpi)-\pi_{p,q}J^\nabla \Phi^{p,q+1}(\mathrm d^R\varpi)\,.
$$ 
The second term on the right-hand side vanishes by \cref{Lemma: bidegrees}, since $\mathrm d^R\varpi$ has bi-degree $(p,q+1)$, so one has 
\begin{equation}
    \Phi^{p,q}(J_L^\nabla \mathrm d^L\varpi)=\pi_{p,q}J^\nabla (\mathrm d\Phi^{p,q}(\varpi))\,.
\end{equation}
If we consider the homotopy formula \eqref{Eq: homotopy formula forms L}, the r.h.s. of the previous expression reads
\begin{align}
\Phi^{p,q}(J_L^\nabla \mathrm d^L\varpi)
    &=\pi_{p,q}J^\nabla \mathrm d\Phi^{p,q}(\varpi)\\
    &=\pi_{p,q}\Phi^{p,q}(\varpi)-\pi_{p,q}(\iota\circ \pi_R)^\ast \Phi^{p,q}(\varpi)
      -\pi_{p,q}\mathrm dJ^\nabla \Phi^{p,q}(\varpi)\,.
\end{align}
Since $(\iota \circ \pi_R)^\ast \Phi^{p,q}(\varpi)\in \im \pi_{0,p+q}$, one has that
\begin{align}
   \pi_{p,q}\!\left(\Phi^{p,q}(\varpi)- (\iota \circ \pi_R)^\ast \Phi^{p,q}(\varpi)\right)
   &=\begin{cases}
       \Phi^{p,q}(\varpi)-(\iota \circ \pi_R)^\ast \Phi^{p,q}(\varpi) = \Phi^{p,q}\!\left(\varpi-(1\boxtimes \varpi_{\restriction\Delta_M})\right)& (p,q)=(0,q),\\[4pt]
       \Phi^{p,q}(\varpi) & \textup{otherwise.}
   \end{cases}
\end{align}
Therefore, one has 
\begin{equation}
    \Phi^{p,q}(J_L^\nabla \mathrm d^L\varpi)+\pi_{p,q}\mathrm d(J^\nabla \Phi^{p,q}(\varpi))=
    \begin{cases}
       \Phi^{p,q}\!\left(\varpi-(1\boxtimes \varpi_{\restriction\Delta_M})_{\restriction\mathscr U}\right) & (p,q)=(0,q),\\[4pt]
       \Phi^{p,q}(\varpi) & \textup{otherwise}.
   \end{cases}
\end{equation}

Furthermore, analogously to what we have already described, we compute 
  \begin{align}
\pi_{p,q}\mathrm d(J^\nabla\Phi^{p,q}(\varpi))
&=\mathrm d_{\mathfrak L}(\pi_{p-1,q}J^\nabla \Phi^{p,q}(\varpi))
      +\mathrm d_{\mathfrak R}(\pi_{p,q-1}J^\nabla \Phi^{p,q}(\varpi))\\
    &=\mathrm d_{\mathfrak L}(\Phi^{p-1,q}(J_L^\nabla\varpi))
      +\mathrm d_{\mathfrak R}(\pi_{p,q-1}J^\nabla\Phi^{p,q}(\varpi))\\
    &=\Phi^{p,q}(\mathrm d^LJ_L^\nabla\varpi)
      +\mathrm d_{\mathfrak R}(\pi_{p,q-1}J^\nabla \Phi^{p,q}(\varpi))\,.
\end{align} 
The second term on the right-hand side vanishes by \cref{Lemma: bidegrees}. Recollecting the various terms, we have
\begin{equation}
    \Phi^{p,q}(J_L^\nabla \mathrm d^L\varpi)+\Phi^{p,q}(\mathrm d^LJ_L^\nabla\varpi)=\begin{cases}
       \Phi^{p,q}\!\left(\varpi-(1\boxtimes \varpi_{\restriction\Delta_M})\right) & (p,q)=(0,q),\\[4pt]
       \Phi^{p,q}(\varpi) & \textup{otherwise}.
   \end{cases}
\end{equation}
Since $\Phi$ is a $C^\infty(M\times M)$-module monomorphism, this equality is equivalent to the homotopy identity  \eqref{Eq: homotopy formula bi-forms L}, ending the proof of the claim $(a)$.
\item       
 It remains to prove claim (b).  Formula \eqref{Eq: JL explicit} may be verified on local bi-forms which are written as $\varpi = F\,\alpha \boxtimes \beta$, where 
$F\in \Omega^{0,0}_{\Delta_M}$, $\alpha\in \Omega^p(M)$, and $\beta\in \Omega^q(M)$. 
The relation \eqref{Eq: pairing with forms} together with the identities
\begin{equation}
\begin{split}
    &(\pi_L\circ H_t)'_{(m,n)} \left(X^L_{(m,n)}\right)= X_{\gamma_{m\leftarrow n}(t)}\,,
    \\
&    (\pi_R\circ H_t)'_{(m,n)} \left(Y^R_{(m,n)}\right)=Y_n\,,
\end{split}
\end{equation}
gives the claim by a direct computation, whose details we omit. 
\end{itemize}
\end{proof}

\begin{example}\label{Example: JL}
Let $M=\mathbb R^d$, and $\nabla$ be the standard Euclidean affine connection as in \cref{Eq: difference vector fields in the Euclidean setting} and \cref{Example: Jnabla}. Choose $\mathscr U=M\times M$ as a strongly $\nabla$-convex neighbourhood of $\Delta_M$ in $M\times M$, and consider the $(2,1)$-bi-form
    \begin{equation}\label{Eq: bi-form example}
        \varpi=\left(\mathrm dq^i\wedge \mathrm dq^j\right)\boxtimes \mathrm dq^k\,.
    \end{equation}
    Here $q$ are the Cartesian coordinates on $M$, and $x=\pi_L^\ast q$, and $y=\pi_R^\ast q$.
    Let us calculate $J_L^\nabla\varpi$ using the definition \eqref{Eq: definition JLNabla}.

    Since $\Phi^{2,1}(\varpi)$ is the $3$-form $\omega$ in \eqref{Eq: omega example}), using the explicit expression 
    given in \eqref{Eq: JNabla example}, we obtain
    \begin{align}
        J_L^\nabla \varpi&=\frac{1}{2}(x^i-y^i)\,\mathrm dq^j\boxtimes \mathrm dq^k-\frac{1}{2}(x^j-y^j)\,\mathrm dq^i\boxtimes \mathrm dq^k\,.
    \end{align}
\end{example}
Right homotopy operators can be constructed by duality, as follows. Let $J^\nabla_L$ be the left homotopy operator determined by a connection $\nabla$ on a smooth manifold $M$. Set 
    \begin{equation}\label{Eq: JRNabla definition}
        \left(J^\nabla_R\varpi\right)^\dag=J^\nabla_L\varpi^\dag\,.
    \end{equation}
A path which is analogue to the path leading to the proof of the previous results allows to see that  $J_R^\nabla$ is a right homotopy operator 
on $M$, and that the following theorem is valid.
\begin{theorem}
    Let $M$ be a smooth manifold. Any affine connection $\nabla$ on $M$ determines a system of right homotopy operators. Moreover, for any $\varpi\in \Omega^{p,q}_{\Delta_M}(M\mid M)$ with $(p,q)\in \mathbb N_0\times \mathbb N$, there exists a strongly $\nabla$-convex neighbourhood $\mathscr U$ contained in the domain of $\varpi$, such that for all $(m,n)\in \mathscr U$ and for families of vector fields $\{X_i\}_{i=1}^p$ and $\{Y_j\}_{j=2}^{q}$ on $M$, one has 
\begin{multline}  \left(J^\nabla_R\varpi\right)(X_1,\dots,X_p\mid Y_2,\dots,Y_q)(m,n) \\ =\int_0^1  \varpi_{(m,\gamma_{n\leftarrow m}(t))}\!\left((X_1)_m,\dots,(X_p)_m\mid  \dot \gamma_{n\leftarrow m}(t), (Y_2)_{\gamma_{n\leftarrow m}(t)},\dots, (Y_q)_{\gamma_{n\leftarrow m}(t)}\right)\mathrm dt\,.\label{Eq: JR explicit}
\end{multline}
\end{theorem}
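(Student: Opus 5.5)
The plan is to deduce everything from \cref{Thm: left homotopy op exists} by conjugating with the duality operator $\dag$, as in the definition \eqref{Eq: JRNabla definition}. First I would check that $J^\nabla_R$ is well defined with the correct bi-degrees. If $\varpi\in\Omega^{p,q}_{\Delta_M}(M\mid M)$, then $\varpi^\dag\in\Omega^{q,p}_{\Delta_M}(M\mid M)$; this uses that a neighbourhood of $\Delta_M$ is carried to a neighbourhood of $\Delta_M$ by $\dag$, since $\dag\circ\iota=\iota$. Hence $J^\nabla_L\varpi^\dag$ has bi-degree $(q-1,p)$, and applying $\dag$ once more gives a bi-form of bi-degree $(p,q-1)$. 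Since $\dag$ is an involution on bi-forms, \eqref{Eq: JRNabla definition} determines $J^\nabla_R\varpi=(J^\nabla_L\varpi^\dag)^\dag$ uniquely.

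Next I would verify the axioms of \cref{Def: right homotopy operators}. Nilpotency is immediate: $J_R^\nabla J_R^\nabla\varpi=(J_L^\nabla J_L^\nabla\varpi^\dag)^\dag=0$. For the homotopy formula I would use \eqref{Eq: duality exterior derivative} in the form $(\mathrm d^R\varpi)^\dag=\mathrm d^L\varpi^\dag$. This gives
\begin{equation}
\mathrm d^RJ^\nabla_R\varpi+J^\nabla_R\mathrm d^R\varpi=\left(\mathrm d^LJ^\nabla_L\varpi^\dag+J^\nabla_L\mathrm d^L\varpi^\dag\right)^\dag ,
\end{equation}
which equals $\varpi$ on $\dag(\mathscr U)$ whenever $q\geq1$, where $\mathscr U$ is the neighbourhood supplied by \cref{Thm: left homotopy op exists} for $\varpi^\dag$. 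In the degenerate case $q=0$ the same computation gives $\varpi-(1\boxtimes(\varpi^\dag)_{\restriction\Delta_M})^\dag$. By \eqref{Eq: sast iotaast} we have $(\varpi^\dag)_{\restriction\Delta_M}=\varpi_{\restriction\Delta_M}$ in this bi-degree, and directly from the definition of the dual, $(1\boxtimes\beta)^\dag=\beta\boxtimes1$. This yields exactly \eqref{Eq: homotopy formula bi-forms R}. Choosing $\mathscr U$ strongly $\nabla$-convex makes it $\dag$-stable, so $\dag(\mathscr U)=\mathscr U$.

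For the explicit formula I would evaluate the definition of the dual:
\begin{equation}
(J^\nabla_R\varpi)(X_1,\dots,X_p\mid Y_2,\dots,Y_q)(m,n)=(J^\nabla_L\varpi^\dag)(Y_2,\dots,Y_q\mid X_1,\dots,X_p)(n,m).
\end{equation}
I would then insert \eqref{Eq: JL explicit} at the point $(n,m)$. The curve appearing there is $\gamma_{n\leftarrow m}(t)=\Gamma(m,n,t)$, and the integrand becomes $\varpi^\dag_{(\gamma_{n\leftarrow m}(t),m)}(\dot\gamma_{n\leftarrow m}(t),Y_2,\dots\mid X_1,\dots)$. Unwinding $\varpi^\dag$ pointwise, with $\dag^\ast$ swapping the base point, turns this into $\varpi_{(m,\gamma_{n\leftarrow m}(t))}(X_1,\dots,X_p\mid\dot\gamma_{n\leftarrow m}(t),Y_2,\dots,Y_q)$, which is \eqref{Eq: JR explicit}.

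The main obstacle is sign and ordering bookkeeping. The definition of $\varpi^\dag$ is sign-free on pairings, and I must make sure that no sign $(-1)^{pq}$ enters; such a sign appears only in the $\Phi$-picture \eqref{Eq: sast on forms}. Working entirely at the level of pairings avoids it.
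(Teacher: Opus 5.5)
Your proposal is correct and takes essentially the paper's route: the paper defines $J^\nabla_R$ by \eqref{Eq: JRNabla definition} and only asserts that the right-hand statements follow from the left ones via the swap map, and transporting \cref{Thm: left homotopy op exists} along $\dag$ is exactly that argument. You also supply details the paper leaves implicit: the bi-degree bookkeeping, the $q=0$ case via $(1\boxtimes\beta)^\dag=\beta\boxtimes 1$, the sign-free pairing-level use of \eqref{Eq: duality exterior derivative}, and the $\dag$-stability of strongly $\nabla$-convex neighbourhoods needed for the explicit formula.
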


Notice that the system of right homotopy operators $J_R^\nabla$ may also be directly defined in analogy with \eqref{Eq: definition JLNabla} by 
$$
\Phi^{p,q-1}\left(J_R^\nabla\varpi\right)=(-1)^{pq}\,\pi_{p,q-1}\prescript{\nabla}{}{J}(\Phi^{p,q}(\varpi))\,
$$
where $\prescript{\nabla}{}{J}$ is constructed as in \cref{defJl},  using the rightward difference vector field $\mathbb D^\dag$ instead of the leftward difference vector field $\mathbb D$, that is 
 \begin{equation}\label{Eq: nablaJ}
       \prescript{\nabla}{}J\omega=\mathbf i_{\mathbb D^\dag}\int_0^1 \frac{1}{t}\,H_t^\ast \omega\,\mathrm dt\,.
   \end{equation}

  \subsection{Statistical homotopy operators}
 Beyond the standard homotopy relations, left and right homotopy operators associated to affine connections exhibit additional properties that are crucial within the setting of  information geometry.  
In particular, while the action of the left and right exterior derivatives  \underline{decrease}  the order of vanishing along the diagonal, the action of statistical left and right homotopy operators   \underline{increase} it.

Recall that, given an immersion between smooth manifolds $i\colon S \hookrightarrow Q$, an exterior form $\omega\in\Omega^p(Q)$ is said to \emph{vanish along} $S$ if and only if it vanishes at every point of $i(S)$ 
(see \cite[p.~287]{Lee-2013}), that is 
\begin{equation}
i^\ast\left(\omega(Z_1,\dots,Z_p)\right)=0\,
\end{equation}
for any set $\{Z_1, \dots, Z_p\}$ of vector fields on $Q$. We notice that such a condition is  stronger than requiring $i^\ast\omega=0$, since such  latter is equivalent to
\begin{equation}
i^\ast\big(\omega(Z_1,\dots,Z_p)\big)=0
\end{equation}
for all vector fields $Z_1,\dots,Z_p$ on $Q$ that are tangent to the range  $i(S)\hookrightarrow Q$.
More generally, for each $k \in \mathbb{N}$, we say that a differential form $\omega$ on $Q$   \emph{vanishes to order $k$ along} $S$ if it vanishes along $S$, and for every vector field $Z$ on $Q$, the Lie derivative $\mathcal{L}_Z \omega$ vanishes to order $(k-1)$ along $S$. This definition can be extended to bi-forms as follows  

  \begin{definition}
 Let $M$ be a smooth manifold, $k\in \mathbb N_0$, $(p,q)\in \mathbb N_0\times \mathbb N_0$, and let $\varpi\in \Omega^{p,q}_{\Delta_M}(M\mid M)$.  
We say that $\varpi$ \underline{vanishes to order $k$ along $\Delta_M$} if and only if $\Phi^{p,q}(\varpi)$ vanishes to order $k$ along $\Delta_M$.
\end{definition}

\begin{remark}
Notice that, if $\omega\in\Omega^k(Q)$ vanishes to order $r$ along $S\hookrightarrow Q$, it follows by induction on $r$ that the  Frölicher–Nijenhuis derivative   $\dd_A\omega\in\Omega^{k+1}(Q)$  (see the relation \eqref{Eq: dA}) with respect to a $(1,1)$-tensor $A$ on $Q$ vanishes to order $(r-1)$ along $S$.

This allows to prove that,  if $\varpi$ vanishes to order $k$ along $\Delta_M$, then its left (i.e. $\dd^L\varpi$) and right (i.e. $\dd^R\varpi$) exterior derivatives vanish to order $(k-1)$ along $\Delta_M$. 
\end{remark}

We can now prove the following result, about the vanishing of terms $J^\nabla\omega$ and ${}^\nabla\! J\omega$ (with respect to left and right homotopy operators defined in \eqref{Eq: Jnabla} and \eqref{Eq: nablaJ}) along the diagonal embedding $\iota\colon \Delta_M\hookrightarrow M\times M$.

\begin{proposition}\label{Prop: further properties JNabla}
Let $M$ be a smooth manifold endowed with an affine connection $\nabla$, and $\omega\in \Omega^k_{\Delta_M}(M\times M)$, with $k\in \mathbb N_0$. The exterior form  $J^\nabla\omega$ vanishes along $\Delta_M$. Moreover, if $\omega$ vanishes to order $r\in \mathbb N_0$ along $\Delta_M$, then both terms $J^\nabla\omega$ and ${}^\nabla\!J\omega$ vanish to order $r+1$ along $\Delta_M$.
\end{proposition}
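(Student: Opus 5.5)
The proof rests on two facts: the leftward difference vector field $\mathbb D$ vanishes along $\Delta_M$, and the homotopy $H_t$ fixes the diagonal pointwise. These give the first claim at once. The order-raising claim I would prove by induction on $r$, using the commutation of $H_t^\ast$ with Lie derivatives along lifts of the form $(Z,Z)$ and a local-coordinate description of vanishing order. The statement for ${}^\nabla\!J$ then follows by applying the flip $\dag$, which preserves $\Delta_M$ and the notion of vanishing order.

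\textbf{Step 1: vanishing along $\Delta_M$.} For $(m,m)\in\Delta_M$ the geodesic $\Gamma(m,m,t)$ is constant, so $\phi^{\mathbb D}_r(m,m)=(m,m)$ for all $r$, and hence $\mathbb D_{(m,m)}=0$. By the representation \eqref{Eq: alternative expression JNabla}, $(J^\nabla\omega)_{(m,m)}=\int_0^1 t^{-1}(H_t^\ast\mathbf i_{\mathbb D}\omega)_{(m,m)}\,\mathrm dt$, and $H_t(m,m)=(m,m)$. The tangent map $(H_t)'_{(m,m)}$ is linear and bounded uniformly for $t\in(0,1]$, since $\Gamma$ is smooth on $\mathscr U\times[0,1]$. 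The form $\mathbf i_{\mathbb D}\omega$ vanishes at $(m,m)$, so the integrand is zero and $J^\nabla\omega$ vanishes along $\Delta_M$. The same argument applies to ${}^\nabla\!J$, since $\mathbb D^\dag=\dag_\ast\mathbb D$ also vanishes on $\Delta_M$.

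\textbf{Step 2: local characterisation of order.} I would work in a square chart $(U\times U,x,y)$ and pass to coordinates $(u,y)$ with $u=x-y$, in which $\Delta_M=\{u=0\}$. In these coordinates, a form vanishing to order $k$ along $\Delta_M$ is equivalent to all of its coefficients vanishing to order $k$ in $u$, i.e.\ lying in the ideal $I^{k+1}$, where $I$ is generated by the $u^i$. I would justify this by induction on $k$ from the definition: Lie derivatives along coordinate fields differentiate coefficients, while derivatives of the vector fields only produce terms of the same order. The required statement then becomes the following: if the coefficients of $\omega$ lie in $I^{r+1}$, then the coefficients of $J^\nabla\omega$ lie in $I^{r+2}$.

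\textbf{Step 3: the integral formula.} Write $H_t(u,y)=(y+G(u,y,t),y)$, where $G=\Gamma(y,y+u,t)-y$. Then $G(0,y,t)=0$ and $G(u,y,0)=0$, and smoothness together with the affine reparametrisation property \eqref{Eq: affine reparametrisation property} gives the factorisation $G(u,y,t)=t\,\tilde G(tu,y)$. Here $\tilde G$ is smooth and satisfies $\tilde G(v,y)=v+O(v^2)$; this is essentially $\exp^\nabla_y$ written in coordinates. For the same reason $\mathbb D=\tilde D^i(u,y)\partial_{x^i}$ with $\tilde D\in I$.

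Now take a coefficient $c(u,y)\in I^{r+1}$ of $\mathbf i_{\mathbb D}\omega$; in fact $c\in I^{r+2}$, because $\mathbb D$ contributes one extra factor of $u$. Its pullback is $c(G(u,y,t),y)\cdot(\text{Jacobian factors})$. Since $G=O(t|u|)$, the pullback is $O(t^{r+2}|u|^{r+2})$ times factors that are bounded, or that carry additional powers of $t$ from $\partial G/\partial u\sim t$. The integrand divided by $t$ is therefore smooth in $t$, with $u$-order at least $r+2$, uniformly on $[0,1]$. Hadamard's lemma with parameter then gives $J^\nabla\omega\in I^{r+2}$, i.e.\ $J^\nabla\omega$ vanishes to order $r+1$.

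\textbf{Main obstacle.} The delicate point is Step 3: one has to control the $\mathrm dy$-components of $H_t^\ast\mathrm dx^i$, which are only of order $(1-t)$ rather than small in $u$. One must check that these factors never lower the order in $u$, and that dividing by $t$ is harmless. I expect the factorisation $G=t\tilde G(tu,y)$ to handle both issues. If it turns out to be awkward, a fallback is to avoid coordinates altogether: verify inductively that $\mathcal L_Z J^\nabla\omega-J^\nabla\mathcal L_{Z'}\omega$ vanishes along $\Delta_M$ for suitable lifts, combining Step 1 with the homotopy formula of \cref{Thm: generalised Poincar lemma}.
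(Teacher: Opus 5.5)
Your argument is correct in substance but takes a different route from the paper's proof, and one intermediate formula in Step 3 is false and needs replacing.

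\textbf{Comparison with the paper.} The paper stays intrinsic. It writes $J^\nabla\omega=\mathbf i_{\mathbb D}\mu$ with $\mu=\int_0^1 t^{-1}H_t^\ast\omega\,\mathrm dt$, and asserts that $\mu$ vanishes to order $r$ because each $H_t$ fixes $\Delta_M$. It then concludes with $\mathcal L_{\mathbb Z}(\mathbf i_{\mathbb D}\mu)=\mathbf i_{\mathbb D}\mathcal L_{\mathbb Z}\mu+\mathbf i_{[\mathbb Z,\mathbb D]}\mu$, both terms vanishing to order $r$. The case ${}^\nabla\!J$ is called analogous.

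That argument is shorter, but it tacitly uses two facts:
\begin{itemize}
\item contraction with the vanishing field $\mathbb D$ raises the order by one, which is exactly your ideal characterisation in Step 2;
\item the $t^{-1}$-weighted integral converges and preserves order, even though $H_0$ collapses $\mathscr U$ onto $\Delta_M$, which is exactly what your explicit factor $t^{r+2}$ controls.
\end{itemize}
Your coordinate route supplies these details. Your conjugation ${}^\nabla\!J=\dag^\ast\circ J^\nabla\circ\dag^\ast$, which follows from $\mathbb D^\dag=\dag_\ast\mathbb D$ and $\dag(\mathscr U)=\mathscr U$, handles the right operator more cleanly than ``analogous''.

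\textbf{The false factorisation.} The formula $G(u,y,t)=t\,\tilde G(tu,y)$ is false. At $t=1$ it forces $\tilde G(u,y)=G(u,y,1)=u$, hence $G=t^2u$, which contradicts even the Euclidean case $G=tu$. Geodesics are linear in normal coordinates, not in $u=x-y$. The correct structure is
\begin{equation*}
G(u,y,t)=E\bigl(t\,V(u,y),y\bigr),\qquad E(w,y)=\exp^\nabla_y(w)-y,\qquad V(u,y)=(\exp^\nabla_y)^{-1}(y+u).
\end{equation*}
You do not need this formula, though. From the two facts you already state, $G(u,y,0)=0$ and $G(0,y,t)=0$, Hadamard's lemma (first in $t$, then in $u$ with $(y,t)$ as parameters) gives $G=t\sum_i u^i\hat G_i(u,y,t)$ with $\hat G_i$ smooth up to $t\in[0,1]$. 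Hence $c(G,y)\in t^{r+2}I^{r+2}$ whenever $c\in I^{r+2}$, and the rest of Step 3 goes through.

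\textbf{The ``main obstacle''.} In the $(u,y)$ coordinates this issue disappears. One has $H_t^\ast\mathrm dy^i=\mathrm dy^i$ and $H_t^\ast\mathrm du^i=\mathrm dG^i$, and both have coefficients smooth in $(u,y,t)$. So the Jacobian factors cannot lower the $u$-order; the $(1-t)$ terms appear only in $(x,y)$ coordinates.

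\textbf{Overview remark.} Drop the sentence about commuting $H_t^\ast$ with $\mathcal L_{(Z,Z)}$. It holds only when $Z$ is an infinitesimal affine transformation of $\nabla$, and Steps 2--3 never use it.
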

\begin{proof}
The case $k=0$ is trivial, so we assume $k\geq1$. Notice that the $(k-1)$-form $J^\nabla \omega$ can be expressed as the interior contraction of a (local) $k$-form $\mu$ on $M\times M$ along  the leftward difference vector field $\mathbb D$, namely 
\begin{equation}
    J^\nabla\omega=\mathbf i_{\mathbb D}\mu\,,
    \qquad
    \mu=\int_0^1 \frac{1}{t}\, H_t^\ast \omega\, \mathrm dt\,.
\end{equation}
Since $\mathbb D$ vanishes at any point in $\Delta_M$, it follows that $J^\nabla \omega$ vanishes along $\Delta_M$.

Assume now that $\omega$ vanishes to order $r$ along $\Delta_M$. Since $H_t$ fixes $\Delta_M$ for any   $t\in [0,1]$ , the form $\mu$ also vanishes to order $r$ along $\Delta_M$. To prove that $J^\nabla\omega$ vanishes to order $(r+1)$ along $\Delta_M$, consider a vector field  $\mathbb Z$  on $M\times M$. From  the Cartan identity, we have 
\begin{equation}\label{Eq: Cartan identity forms}
    \mathcal L_{\mathbb Z}\left(J^\nabla \omega\right)=
    \mathbf i_{\mathbb D}(\mathcal L_{\mathbb Z}\mu)+\mathbf i_{[\mathbb Z,\mathbb D]}\mu\,.
\end{equation}
The first term on the right-hand side vanishes to order $r$ along $\Delta_M$ because $\mathbb D$ vanishes along $\Delta_M$ and $\mu$ vanishes to order $r$. The second term also vanishes to order $r$ along $\Delta_M$, since $\mu$ does.

The proof for the term ${}^\nabla\!J\omega$ is analogue.
 \end{proof}

This result clarifies the reasons for the definition we now set.

  \begin{definition} \label{Def: statistical homotopy operators}
Let $M$ be a smooth manifold. 
\begin{itemize}
\item A \underline{system of statistical left homotopy operators} on $M$ is a system of left homotopy operators on $M$ such that for each $\varpi \in \Omega^{\bullet,\bullet}_{\Delta_M}(M \mid M)$, the bi-form $J_L\varpi$ vanishes along $\Delta_M$. 
\item A \underline{system of statistical right homotopy operators} on $M$ is a system of right homotopy operators on $M$ such that for each $\varpi \in \Omega^{\bullet,\bullet}_{\Delta_M}(M \mid M)$, the bi-form $J_R\varpi$ vanishes along $\Delta_M$. 
\end{itemize}
\end{definition}

Moreover, the   \cref{Prop: further properties JNabla}  shows that if $\varpi$ vanishes to order $r\in \mathbb N_0$ along $\Delta_M$, then both $J^\nabla_L\varpi$ and $J^\nabla_R\varpi$ vanish to order $(r+1)$ along $\Delta_M$.
We now prove that a left homotopy system and a right homotopy system obtained from possibly different affine connections are automatically statistical.

\begin{theorem}\label{Thm: stat homotopy op exists}
Let $M$ be a smooth manifold equipped with affine connections $\nabla_1$ and $\nabla_2$. One has: 
\begin{enumerate}[(a)]
\item The operators  $J_L^{\nabla_1}$ give  a system of statistical left homotopy operators and the operators $J_R^{\nabla_2}$ give  a system of statistical right homotopy operators. 
\item For any $\varpi\in \Omega^{p,q}_{\Delta_M}(M\mid M)$, there exists a strongly $\nabla_1$-convex neighbourhood $\mathscr U$ that is also strongly $\nabla_2$-convex contained in the domain of $\varpi$, such that for all $(m,n)\in \mathscr U$ one has 
\begin{multline}\label{Eq: JLJR formula mixed}
    \left(J^{\nabla_2}_RJ_L^{\nabla_1}\varpi\right)(X_2,\dots,X_p\mid Y_2,\dots,Y_q)(m,n)
    \\
    =\iint_{[0,1]^2}
    \varpi_{(a_r(t),b(r))}
    \big(\dot a_r(t), (X_2)_{a_r(t)},\dots,(X_p)_{a_r(t)}\mid
    \dot b(r), (Y_2)_{b(r)},\dots,(Y_q)_{b(r)}\big)\,\mathrm dt\,\mathrm dr\,,
\end{multline}
(with $\{X_i\}_{i=2}^p$ and $\{Y_j\}_{j=2}^q$ in $\mathfrak{X}(M)$) where for any $(t,r)\in [0,1]^2=[0,1]\times[0,1]$ it is 
\begin{equation}
    b(r)=\gamma^{\nabla_2}_{n\leftarrow m}(r)\,, \qquad a_r(t)=\gamma^{\nabla_1}_{m\leftarrow b(r)}(t)\,.
\end{equation}
\item If $\nabla_1=\nabla_2=\nabla$, the previous expression \eqref{Eq: JLJR formula mixed} reduces to
\begin{multline}\label{Eq: JLJR formula}
    \left(J^{\nabla}_RJ_L^{\nabla}\varpi\right)(X_2,\dots,X_p\mid Y_2,\dots,Y_q)(m,n)
    \\
    =\iint_{[0,1]^2}\, r\,
    \varpi_{(\gamma_{m\leftarrow n}(1-r+t\,r),\gamma_{n\leftarrow m}(r))}\!
    \big(\dot\gamma_{m \leftarrow n}(1-r+t\,r), (X_2)_{\gamma_{m\leftarrow n}(1-r+t\,r)},\dots,(X_p)_{\gamma_{m\leftarrow n}(1-r+t\,r)}\mid\\
      \dot \gamma_{n\leftarrow m}(r),
        (Y_2)_{\gamma_{n\leftarrow m}(r)},\dots, (Y_q)_{\gamma_{n\leftarrow m}(r)}\big)\,\mathrm dt\,\mathrm dr\,.
\end{multline}
\end{enumerate}
\end{theorem}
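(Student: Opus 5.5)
For (a), I would combine \cref{Thm: left homotopy op exists} (and its right-hand counterpart) with \cref{Prop: further properties JNabla}. \cref{Thm: left homotopy op exists} already shows that $J_L^{\nabla_1}$ is a system of left homotopy operators, and that $J_R^{\nabla_2}$ is a system of right homotopy operators. It then remains to check vanishing along $\Delta_M$. By definition, $\Phi^{p-1,q}(J_L^{\nabla_1}\varpi)=\pi_{p-1,q}J^{\nabla_1}\Phi^{p,q}(\varpi)$. \cref{Prop: further properties JNabla} says that $J^{\nabla_1}\Phi^{p,q}(\varpi)$ vanishes along $\Delta_M$. The projector $\pi_{p-1,q}$ acts pointwise, since it is algebraic in the splitting $\T(M\times M)=\ker\pi_R'\oplus\ker\pi_L'$, so vanishing at points of $\Delta_M$ is preserved. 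For $J_R^{\nabla_2}$ I would use the alternative definition through ${}^{\nabla_2}\!J$, with the sign $(-1)^{pq}$ being harmless. Alternatively, one can use duality \eqref{Eq: JRNabla definition} together with the fact that $\dag$ fixes $\Delta_M$ and $\Phi(\varpi^\dag)=\pm\dag^\ast\Phi(\varpi)$.

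For (b), I would first pick a common neighbourhood. By \cref{Thm: moretti}, take a strongly $\nabla_1$-convex covering and refine it to a strongly $\nabla_2$-convex covering. The difficulty is that a $\nabla_2$-refinement need not stay strongly $\nabla_1$-convex, since intersections may fail to be $\nabla_1$-convex. The fix I would try is to choose each $C$ small and $\nabla_1$-convex inside a fixed $\nabla_1$-convex set, so that $\nabla_1$-convexity of intersections follows from the argument given before the definition. I expect securing this simultaneous strong convexity to be the main technical obstacle. Failing that, I would shrink to a neighbourhood on which both interpolation maps $\Gamma^{\nabla_1}$ and $\Gamma^{\nabla_2}$ are defined and stable under the relevant homotopies, which is all the formula actually uses.

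With $\mathscr U$ fixed, I would apply \eqref{Eq: JR explicit} to the bi-form $\eta=J_L^{\nabla_1}\varpi$ at $(m,n)$. This gives $\int_0^1\eta_{(m,b(r))}(\ldots\mid\dot b(r),\ldots)\,\mathrm dr$ with $b(r)=\gamma^{\nabla_2}_{n\leftarrow m}(r)$. Next I would substitute \eqref{Eq: JL explicit} for $\eta$ at the point $(m,b(r))$. This introduces the $\nabla_1$-geodesic $a_r(t)=\gamma^{\nabla_1}_{m\leftarrow b(r)}(t)$ from $b(r)$ to $m$, evaluated with right slot vectors at $b(r)$. The vectors $\dot b(r)$ and $(Y_j)_{b(r)}$ are genuine tangent vectors at $b(r)$ rather than values of global fields. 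To handle this, I would note that \eqref{Eq: JL explicit} is pointwise in the right slots, by right tensoriality, so the substitution is legitimate. Alternatively one can extend $\dot b(r)$ locally to a vector field. The double integral \eqref{Eq: JLJR formula mixed} follows, with smoothness of $\Gamma$ justifying Fubini.

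For (c), set $\nabla_1=\nabla_2=\nabla$. Then $b(r)=\gamma_{n\leftarrow m}(r)$, and the $\nabla$-geodesic from $b(r)$ to $m$ is a reparametrised sub-arc of $\gamma_{m\leftarrow n}$: by uniqueness within a convex set and \eqref{Eq: affine reparametrisation property}, $a_r(t)=\gamma_{m\leftarrow n}(1-r+tr)$. Differentiating gives $\dot a_r(t)=r\,\dot\gamma_{m\leftarrow n}(1-r+tr)$. Left linearity in the first slot then pulls out the factor $r$, which yields \eqref{Eq: JLJR formula}.
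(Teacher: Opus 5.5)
Your proposal is correct and follows the paper's proof essentially step by step: (a) combines the homotopy property of $J_L^{\nabla_1}$ and $J_R^{\nabla_2}$ with \cref{Prop: further properties JNabla} and the fact that the projector $\pi_{p-1,q}$ preserves vanishing along $\Delta_M$; (b) substitutes \eqref{Eq: JL explicit} at $(m,b(r))$ into \eqref{Eq: JR explicit} applied to $J_L^{\nabla_1}\varpi$; and (c) uses the reparametrisation $a_r(t)=\gamma_{m\leftarrow n}(1-r+t\,r)$, so that $\dot a_r(t)=r\,\dot\gamma_{m\leftarrow n}(1-r+t\,r)$. The paper simply assumes that a neighbourhood which is both strongly $\nabla_1$-convex and strongly $\nabla_2$-convex exists, so your discussion of how to construct one is extra care the paper omits rather than a departure from its argument; note, however, that your fallback neighbourhood would establish the integral formula but not the simultaneous strong convexity that (b) asserts.
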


\begin{proof} We prove the three assertions separately.
\begin{enumerate}[(a)]
\item The case $p=0$ is trivial, so we consider $p\geq 1$. Consider a common neighbourhood $\mathscr U$ of $\Delta_M$ in $M\times M$ which is both strongly $\nabla_1$-convex and strongly $\nabla_2$-convex.
By \cref{Prop: further properties JNabla}, if $\Phi^{p,q}(\varpi)$ vanishes to order $k$ along $\Delta_M$, then $J^{\nabla_1}\Phi^{p,q}(\varpi)$ vanishes to order $k+1$ along $\Delta_M$. Hence its $\pi_{p-1,q}$-component also vanishes to order $k+1$ along $\Delta_M$, and therefore $\Phi^{p-1,q}\!\big(J_L^{\nabla_1}\varpi\big)$ vanishes to order $k+1$ along $\Delta_M$ (cf.~\eqref{Eq: definition JLNabla}). By the definition of vanishing bi-forms, it follows that $J_L^{\nabla_1}\varpi$ vanishes to order $k+1$ along $\Delta_M$. The argument for $J_R^{\nabla_2}$ is analogous. 

\item In order to prove \eqref{Eq: JLJR formula mixed}, we consider the expression \eqref{Eq: JR explicit}. We have 
\begin{multline}
    \left(J_R^{\nabla_2} J_L^{\nabla_1} \varpi\right)(X_2,\dots,X_p\mid Y_2,\dots,Y_q)(m,n)
    \\
    =\int_0^1 \left(J_L^{\nabla_1} \varpi\right)_{(m,b(r))}
    \big((X_2)_m,\dots,(X_p)_m\mid  \dot b(r),(Y_2)_{b(r)},\dots,(Y_q)_{b(r)}\big)\,\mathrm dr\,,
\end{multline}
In order to apply the equality \eqref{Eq: JL explicit}, we must determine for each $(m,n)\in \mathscr U$ and $r\in [0,1]$, the $\nabla_1$-geodesic segment joining $b(r)$ to $m$ within $\mathscr U$, together with its associated velocity vector. By strong $\nabla_1$-convexity, there exists a unique such segment, which we denote by
  \begin{equation}
    a_r\colon [0,1]\longrightarrow M\,,
    \qquad
    a_r(0)=b(r)\,,
    \qquad
    a_r(1)=m\,.
\end{equation} 
If we consider the expression  \eqref{Eq: JL explicit} for the pair $(m,b(r))$, we have
\begin{multline}
    \left(J_L^{\nabla_1} \varpi\right)_{(m,b(r))}
    \big((X_2)_m,\dots,(X_p)_m\mid \dot b(r),(Y_2)_{b(r)},\dots,(Y_q)_{b(r)}\big)
    \\
    =
    \int_0^1
    \varpi_{(a_r(t),b(r))}
    \big(
        \dot a_r(t),(X_2)_{a_r(t)},\dots,(X_p)_{a_r(t)}
        \mid
        \dot b(r),(Y_2)_{b(r)},\dots,(Y_q)_{b(r)}
    \big)\,\mathrm dt\,.
\end{multline}
Inserting this expression into the previous equality yields \eqref{Eq: JLJR formula mixed}.

\item Finally, assume $\nabla_1=\nabla_2=\nabla$. In this case, for each fixed $r\in [0,1]$, the $\nabla$-geodesic segment joining $\gamma^{\nabla}_{n\leftarrow m}(r)$ to  $m$ is obtained from $\gamma^{\nabla}_{m\leftarrow n}$ by an affine reparametrization. Therefore, we have 
\begin{equation}
    a_r(t)=\gamma^{\nabla}_{m\leftarrow n}(1-r+t\,r)\,,
\end{equation}
and 
\begin{equation}
    \dot a_r(t)=r\,\dot\gamma^{\nabla}_{m\leftarrow n}(1-r+t\,r)\,.
\end{equation}
Inserting these expressions into \eqref{Eq: JLJR formula mixed} yields \eqref{Eq: JLJR formula}. \hfill\qedhere
 \end{enumerate}
\end{proof}

\section{Contrast bi-forms in Information Geometry}\label{Section: Contrast bi-forms in Information Geometry}
In this section we introduce the notion of a contrast bi-form, and show how it provides a statistical potential for Lauritzen manifolds. We describe  the equivalence between contrast bi-forms and super-contrast functions, showing how classical contrast functions and pre-contrast functions emerge as special cases distinguished by their cohomological properties when restricted  to the diagonal submanifold. More precisely, we show that,  when a contrast bi-form induces a SMAT, the geometric structure is encoded in its left-exact component; when it induces a statistical manifold, the geometry is captured by the right-exact component of its left-exact part.  
\subsection{Contrast bi-forms}
Every local bi-form on $M\times M$ determines, by restriction to the diagonal, a covariant tensor field on $M$. 
Since our aim is to generate pseudo-Riemannian metrics and affine connections, it is natural to ask which bi-degrees may yield a nondegenerate symmetric $2$-covariant tensor under diagonal restriction. The possible types are $(2,0)$, $(1,1)$, and $(0,2)$. However, the diagonal restriction of a local $(2,0)$- or $(0,2)$-bi-form is necessarily skew-symmetric. Consequently, only $(1,1)$-bi-forms can produce symmetric $2$-covariant tensors by restriction to the diagonal. This motivates the following definition.
  \begin{definition}
 Let $M$ be a smooth manifold, and $\varpi\in \Omega^{1,1}_{\Delta_M}(M\mid M)$. Following the definition of diagonal restriction of a bi-form given in \eqref {def:dr}, we define the tensor  
\begin{equation}\label{Eq: induced metric}
g^\varpi=\varpi_{\restriction \Delta_M}
\end{equation}
on $M$ and we  say that $\varpi$ is a \underline{contrast bi-form} if and only if $g^\varpi=\varpi_{\restriction \Delta_M}$ is a pseudo--Riemannian metric on $M$.
\end{definition}

In a local chart $(U,q)$, the induced metric $g^\varpi$ is represented as 
\begin{equation}
   g^{\varpi}=(\iota^\ast \varpi_{i\mid j})\, \mathrm dq^i\otimes \mathrm dq^j\,,
\end{equation}
where $\varpi_{i\mid j}=\varpi(\partial_{q^i}\mid \partial_{q^j})$.  
 Our next result shows that the symmetry of the diagonal restriction of a bi-form is equivalently written in terms of the vanishing along the diagonal of a suitable 2-form on $M\times M$. 
\begin{lemma}\label{Lemma: initial condition}
    Let $M$ be a manifold, and $\varpi\in \Omega^{1,1}_{\Delta_M}(M\mid M)$. The $2$-form $g^\varpi=\varpi_{\restriction\Delta_M}$ is symmetric if and only if
    \begin{equation}
        \iota^\ast \left(\Phi^{1,1}(\varpi)\right)=0\,.
    \end{equation}
\end{lemma}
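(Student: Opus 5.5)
The plan is to compute $\iota^\ast\Phi^{1,1}(\varpi)$ directly and compare it with the antisymmetric part of $g^\varpi$. The key observation is that $\iota^\ast\Phi^{1,1}(\varpi)$ is a $2$-form on $M$, evaluated on pairs of vector fields $X,Y\in\mathfrak X(M)$. By \eqref{Eq: (X,X) is iota-related to X}, $\iota'\circ X=(X,X)\circ\iota$ with $(X,X)=X^L+X^R$. Hence
\begin{equation}
\left(\iota^\ast\Phi^{1,1}(\varpi)\right)(X,Y)=\iota^\ast\left(\Phi^{1,1}(\varpi)\left(X^L+X^R,\,Y^L+Y^R\right)\right).
\end{equation}

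Next I would expand this bilinearly into four terms and use the bi-degree of $\Phi^{1,1}(\varpi)$. Since it lies in $\im\Phi^{1,1}$, it is annihilated on pairs of two left lifts or two right lifts. This is clear locally from the form $\varpi_{i\mid j}\,\mathrm dx^i\wedge\mathrm dy^j$, or globally from \eqref{Eq: global decomposition of bi-forms} together with $\pi_R'\circ X^L=0$ and $\pi_L'\circ Y^R=0$.

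Only the mixed terms survive. By \eqref{Eq: pairing with forms} and antisymmetry of the form,
\begin{equation}
\Phi^{1,1}(\varpi)(X^L,Y^R)+\Phi^{1,1}(\varpi)(X^R,Y^L)=\varpi(X\mid Y)-\varpi(Y\mid X).
\end{equation}
Pulling back along $\iota$ and using \cref{Def: diagonal restriction}, this gives
\begin{equation}
\left(\iota^\ast\Phi^{1,1}(\varpi)\right)(X,Y)=g^\varpi(X,Y)-g^\varpi(Y,X).
\end{equation}

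The equivalence then follows immediately: the left-hand side vanishes for all $X,Y$ if and only if $g^\varpi$ is symmetric. This works also for local bi-forms, since $\iota$ lands in any neighbourhood of the diagonal.

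The only delicate point is the sign bookkeeping in the mixed term $\Phi^{1,1}(\varpi)(X^R,Y^L)=-\varpi(Y\mid X)$, which depends on the convention $\Phi^{1,1}(\alpha\boxtimes\beta)=\pi_L^\ast\alpha\wedge\pi_R^\ast\beta$. I would check this sign on a decomposable bi-form, or in a square chart, where $\iota^\ast(\varpi_{i\mid j}\,\mathrm dx^i\wedge\mathrm dy^j)=\iota^\ast\varpi_{i\mid j}\,\mathrm dq^i\wedge\mathrm dq^j$. In coordinates the statement is then that this $2$-form vanishes exactly when $\iota^\ast\varpi_{i\mid j}$ is symmetric in $i,j$. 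This coordinate form could serve as an alternative one-line proof.
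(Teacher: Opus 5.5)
Your proof is correct and follows essentially the same route as the paper. Both evaluate $\iota^\ast\Phi^{1,1}(\varpi)$ on $X,Y$ through the $\iota$-related fields $X^L+X^R$ and $Y^L+Y^R$, discard the pure-type terms by bi-degree, and identify the surviving mixed terms with $g^\varpi(X,Y)-g^\varpi(Y,X)$. Your explicit sign check $\Phi^{1,1}(\varpi)(X^R,Y^L)=-\varpi(Y\mid X)$ is the step the paper uses without comment.
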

\begin{proof}
For $X,Y\in\mathfrak{X}(M)$, recalling  \eqref{Eq: (X,X) is iota-related to X} and \eqref{Eq: pairing with forms}, we can write
  \begin{equation}
g^\varpi(X,Y)-g^\varpi(Y,X)=\varpi_{\restriction \Delta_M}(X,Y)-\varpi_{\restriction \Delta_M}(Y,X)=\iota^*\left((\Phi^{1,1}(\varpi)(X^L,Y^R)-\Phi^{1,1}(\varpi)(Y^L,X^R)\right)\,,
\end{equation} 
and, again from \eqref{Eq: pairing with forms}, we can also write 
  \begin{equation}
\iota^*(\Phi^{1,1}(\varpi))(X,Y)=\iota^*\left(\Phi^{1,1}(\varpi)(X^L+X^R,Y^L+Y^R)\right)=\iota^*\left(\Phi^{1,1}(\varpi)(X^L,Y^R)-\Phi^{1,1}(\varpi)(Y^L,X^R)\right)\,.
\end{equation} 
The above lines show that 
$\varpi_{\restriction\Delta_M}$is symmetric if and only if $\iota^\ast\left(\Phi^{1,1}(\varpi)\right)=0$.
\end{proof}
A contrast bi-form $\varpi$ on $M$ induces implicitely an affine connection $\nabla^\varpi$ on $M$ by the position
\begin{equation}\label{Eq: induced connection}
    g^\varpi(\nabla_Z^\varpi X,Y)=\left(\mathcal L_{Z\mid}\varpi\right)_{\restriction\Delta_M}(X,Y)+g^\varpi([Z,X],Y)\,,
\end{equation}
with $X,Y,Z\in\mathfrak{X}(M)$. In local coordinates $(U,q)$, the Christoffel symbols for the above connection $\nabla^\varpi$ are
  \begin{equation}
    {}^{(\nabla^\varpi)}{\Gamma}_{ki}^\ell=(g^\varpi)^{\ell j}\frac{\partial \varpi_{i\mid j}}{\partial x^k} 
\end{equation} 
where $\varpi_{i\mid j}=\varpi\big(\partial_{q^i}\mid \partial_{q^j}\big)$, and $(U\times U,x,y)$ is the square chart associated to $(U,q)$.
\begin{proposition}
Let $M$ be a manifold, and $\varpi\in \Omega^{1,1}_{\Delta_M}(M\mid M)$. Then $\varpi$ is a contrast bi-form if and only if $\varpi^\dag$ is a contrast bi-form. In particular:
\begin{equation}
    \begin{cases}
        g^{(\varpi^\dag)}=g^\varpi\\
        \nabla^{(\varpi^\dag)}=(\nabla^\varpi)^\dag\label{Eq: duality bi-forms affine connections}
    \end{cases}
\end{equation}
\end{proposition}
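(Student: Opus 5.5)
The proof splits into two independent verifications, one for the metric and one for the connection.

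\emph{The metric.} By \eqref{Eq: sast iotaast} with $p=q=1$ we have
\begin{equation}
(\varpi^\dag)_{\restriction\Delta_M}(X,Y)=\varpi_{\restriction\Delta_M}(Y,X).
\end{equation}
Hence $g^{(\varpi^\dag)}$ is the transpose of $g^\varpi$. Transposition preserves smoothness and nondegeneracy and maps symmetric tensors to symmetric tensors. So $g^{\varpi^\dag}$ is a pseudo-Riemannian metric if and only if $g^\varpi$ is, and in that case $g^{\varpi^\dag}=g^\varpi$ by symmetry. Since $(\varpi^\dag)^\dag=\varpi$ (as $\dag\circ\dag=\id$), the equivalence holds in both directions.

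\emph{The connection.} Assume $\varpi$ is a contrast bi-form and write $g=g^\varpi$. I will show that $\nabla^{(\varpi^\dag)}$ satisfies the compatibility condition \eqref{Eq: compatibility condition} with $\nabla^\varpi$. By uniqueness of the $g$-conjugate (nondegeneracy of $g$ in \eqref{Eq: nabla dag}), this gives $\nabla^{(\varpi^\dag)}=(\nabla^\varpi)^\dag$.

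Apply the definition \eqref{Eq: induced connection} to $\varpi^\dag$ and use $g^{\varpi^\dag}=g$:
\begin{equation}
g(\nabla^{\varpi^\dag}_Z Y,X)=(\mathcal L_{Z\mid}\varpi^\dag)_{\restriction\Delta_M}(Y,X)+g([Z,Y],X).
\end{equation}
The next step is to relate the left Lie derivative of $\varpi^\dag$ to the right Lie derivative of $\varpi$. By \eqref{Eq: duality Lie derivative} applied to $\varpi^\dag$, we have $\mathcal L_{Z\mid}\varpi^\dag=(\mathcal L_{\mid Z}\varpi)^\dag$. Combining this with \eqref{Eq: sast iotaast} gives
\begin{equation}
(\mathcal L_{Z\mid}\varpi^\dag)_{\restriction\Delta_M}(Y,X)=(\mathcal L_{\mid Z}\varpi)_{\restriction\Delta_M}(X,Y).
\end{equation}
Now use \eqref{Eq: iota ast Lie derivative}, which splits the diagonal Lie derivative into left and right parts:
\begin{equation}
(\mathcal L_{\mid Z}\varpi)_{\restriction\Delta_M}(X,Y)=\big(\mathcal L_Z g\big)(X,Y)-(\mathcal L_{Z\mid}\varpi)_{\restriction\Delta_M}(X,Y).
\end{equation}
Expand $(\mathcal L_Zg)(X,Y)=Z(g(X,Y))-g([Z,X],Y)-g(X,[Z,Y])$, and substitute the defining relation of $\nabla^\varpi$:
\begin{equation}
(\mathcal L_{Z\mid}\varpi)_{\restriction\Delta_M}(X,Y)=g(\nabla^\varpi_ZX,Y)-g([Z,X],Y).
\end{equation}
The two $g([Z,X],Y)$ terms cancel. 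The term $-g(X,[Z,Y])$ cancels against $g([Z,Y],X)$ by symmetry of $g$. What remains is
\begin{equation}
g(\nabla^{\varpi^\dag}_ZY,X)=Z(g(X,Y))-g(\nabla^\varpi_ZX,Y),
\end{equation}
which is exactly \eqref{Eq: compatibility condition}. This is the required identity.

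The main obstacle is sign and argument-order bookkeeping. Three things need checking with care. First, the duality \eqref{Eq: duality Lie derivative} must be applied correctly when combined with the swap of arguments in \eqref{Eq: sast iotaast}. Second, \eqref{Eq: iota ast Lie derivative} holds for the blockwise tensor pairing with the correct arguments in each slot. Third, the sign convention in \eqref{Eq:left_Lie_derivative} must be right, since with $p=1$ the sum produces $-\varpi([Z,X]\mid Y)$. I would verify these identities in a square chart $(U\times U,x,y)$ using $\varpi_{i\mid j}$ and $(\varpi^\dag)_{i\mid j}=\dag^\ast\varpi_{j\mid i}$. There the Christoffel symbols of $\nabla^{\varpi^\dag}$ become $g^{\ell j}\,\iota^\ast\partial_{y^k}\varpi_{j\mid i}$. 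Adding $\iota^\ast\partial_{x^k}\varpi_{i\mid j}$ gives $\partial_k g_{ij}$, which reproduces the coordinate formula for $\Gamma^\dag$.
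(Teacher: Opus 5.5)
Your proposal is correct and follows essentially the paper's proof. Both arguments use the transpose identity for the diagonal restriction to handle the metric. For the connection, both combine the splitting $\mathcal L_Z(\varpi_{\restriction\Delta_M})=(\mathcal L_{Z\mid}\varpi)_{\restriction\Delta_M}+(\mathcal L_{\mid Z}\varpi)_{\restriction\Delta_M}$ with the duality between $\mathcal L_{\mid Z}$ and $\mathcal L_{Z\mid}$ to obtain the compatibility condition. The paper runs the same computation in the opposite direction: it starts from the compatibility condition for $(\nabla^\varpi)^\dag$ and arrives at the defining relation of $\nabla^{(\varpi^\dag)}$.

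There is one harmless index slip in your optional coordinate check. After lowering $\ell$ to $j$, the term to add is $\iota^\ast\partial_{x^k}\varpi_{j\mid i}$, not $\iota^\ast\partial_{x^k}\varpi_{i\mid j}$.
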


\begin{proof}
The equality \eqref{Eq: sast iotaast} implies that,  for any pair of vector fields $X,Y$ on $M$, one has 
\begin{equation}\label{Eq: g dual bi-form}
    g^{(\varpi^\dag)}(X,Y)=g^\varpi(Y,X)\,,
\end{equation}
from which it is clear that $\varpi$ is a contrast bi-form if and only if $\varpi^\dag$ is so. Furthermore, let 
 $X,Y,Z$ be vector fields on $M$. From the definition \eqref{Eq: induced metric} one writes
  \begin{equation}
\begin{split}
\mathcal L_Z(g^\varpi(X,Y))=\mathcal L_Z ((\varpi_{\restriction \Delta_M})(X,Y))&=
    (\mathcal L_Z(\varpi_{\restriction \Delta_M}))(X,Y)+(\varpi_{\restriction \Delta_M})([Z,X],Y)+(\varpi_{\restriction \Delta_M})(X,[Z,Y])\\ &=(\mathcal L_Z(\varpi_{\restriction \Delta_M}))(X,Y)+g^\varpi([Z,X],Y)+g^\varpi(X,[Z,Y])
\end{split}
\end{equation} 
From the identity \eqref{Eq: iota ast Lie derivative} for the two first  terms on the r.h.s., one can recast the previous relation as 
  \begin{equation}
\begin{split}
\mathcal L_Z(g^\varpi(X,Y))&=\left(\mathcal L_{Z\mid}\varpi\right)_{\restriction\Delta_M}(X,Y)+\left(\mathcal L_{\mid Z}\varpi\right)_{\restriction\Delta_M}(X, Y)+g^\varpi([Z,X],Y)+g^\varpi(X,[Z,Y])\\
    &\,=g^\varpi(\nabla_Z^\varpi X,Y)+\left(\mathcal L_{\mid Z}\varpi\right)_{\restriction\Delta_M}(X, Y)+g^\varpi(X,[Z,Y])\,, 
\end{split}
\end{equation} 
where the last equality comes from \eqref{Eq: induced connection}. The compatibility 
 condition \eqref{Eq: compatibility condition} therefore entails
\begin{equation}\label{Eq: alternative definition of dual}
    g^{\varpi}\big(X,(\nabla^{\varpi})^\dag_ZY\big)=\left(\mathcal L_{\mid Z}\varpi\right)_{\restriction\Delta_M}(X, Y)+g^\varpi(X,[Z,Y])\,.
\end{equation}
Since the definition of right Lie derivative along $Z$ together with  \eqref{Eq: sast iotaast} imply
$$
    \left(\mathcal L_{\mid Z}\varpi\right)_{\restriction\Delta_M}(X,Y)=\left(\mathcal L_{Z\mid}\left( \varpi^\dag\right)\right)_{\restriction\Delta_M}(Y, X)\,,
$$
we can write 
  \begin{equation}
\begin{split}
    g^{(\varpi^\dag)}\left((\nabla^\varpi)^\dag_ZY,X\right)&=\left(\mathcal L_{Z\mid}\varpi^\dag\right)_{\restriction\Delta_M}(Y,X)+g^{\varpi}(X,[Z,Y])\\
    &=\left(\mathcal L_{Z\mid}\varpi^\dag\right)_{\restriction\Delta_M}(Y,X)+g^{(\varpi^\dag)}([Z,Y],X) =
    g^{(\varpi^\dag)}\left(\nabla^{(\varpi^\dag)}_ZY,X\right)
\end{split}
\end{equation} 
Finally, the claim follows by the non-degeneracy of $g^{(\varpi^\dag)}$.
\end{proof}
\begin{remark}
The proposition above shows that a contrast bi-form $\varpi$ on $M$ generates a Lauritzen manifold  $(M,g^\varpi,\nabla^\varpi)$. Along this path one notices that, if $\varpi\in\Omega^{1,1}_{\Delta_M}(M\mid M)$ is a contrast bi-form on a smooth manifold $M$, then $\dot \varpi\colon \T M\times \T M\to \mathbb R$ is a super-contrast function in the sense of Zhang and Khan. Furthermore, the algorithm à la Zhang and Khan for extracting the metric and connection data from $\dot\varpi$ (cf. \eqref{Eq: metric super-contrast} and \eqref{Eq: connection super-contrast})  coincides with the ones derived from $\varpi$ (cf. \eqref{Eq: induced metric} and \eqref{Eq: induced connection}). Thus, contrast bi-forms provide an alternative statistical potential for Lauritzen manifold equivalent to super-contrast functions.
\end{remark}

Different contrast bi-forms on a smooth manifold may induce the same statistical structures.  
This motivates the following notion.

  \begin{definition}
     Let $\varpi_1,\varpi_2\in \Omega^{1,1}_{\Delta_M}(M\mid M)$ be contrast bi-forms on a smooth manifold $M$.  
    We say that $\varpi_1$ and $\varpi_2$ are \underline{statistically equivalent} if:
    \begin{equation}
        \begin{cases}
            g^{\varpi_1}=g^{\varpi_2}\,,\\
            \nabla^{\varpi_1}=\nabla^{\varpi_2}\,.
        \end{cases}
    \end{equation}
\end{definition}

The following result characterizes statistical equivalence in terms of the behaviour of the bi-form along the diagonal.

\begin{proposition}
\label{th:equi}
    Let $\varpi_1,\varpi_2\in \Omega^{1,1}_{\Delta_M}(M\mid M)$ be contrast bi-forms on a smooth manifold $M$.  
    Then $\varpi_1$ and $\varpi_2$ are statistically equivalent if and only if $\varpi_2-\varpi_1$ vanishes to first order along $\Delta_M$.
\end{proposition}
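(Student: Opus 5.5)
Set $\theta=\varpi_2-\varpi_1\in\Omega^{1,1}_{\Delta_M}(M\mid M)$. Both the metric and the connection defined by \eqref{Eq: induced metric} and \eqref{Eq: induced connection} are $\mathbb R$-linear in the bi-form, up to the nondegeneracy used to solve for $\nabla$. So I would first reduce statistical equivalence to two conditions on $\theta$:
\begin{equation}
\theta_{\restriction\Delta_M}=0\,,\qquad (\mathcal L_{Z\mid}\theta)_{\restriction\Delta_M}=0\quad\text{for all }Z\in\mathfrak X(M)\,.
\end{equation}
If $g^{\varpi_1}=g^{\varpi_2}=g$, then subtracting the two instances of \eqref{Eq: induced connection} gives
\begin{equation}
g\big(\nabla^{\varpi_2}_ZX-\nabla^{\varpi_1}_ZX,Y\big)=(\mathcal L_{Z\mid}\theta)_{\restriction\Delta_M}(X,Y)\,,
\end{equation}
because the bracket terms cancel. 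Nondegeneracy of $g$ then yields the equivalence.

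Next I would translate these two conditions into vanishing of $\Phi^{1,1}(\theta)$ to first order along $\Delta_M$. Work in a square chart, where $\theta=\theta_{i\mid j}\,\mathrm dq^i\boxtimes\mathrm dq^j$ and $\Phi^{1,1}(\theta)=\theta_{i\mid j}\,\mathrm dx^i\wedge\mathrm dy^j$. Vanishing along $\Delta_M$ means $\iota^\ast\theta_{i\mid j}=0$, which is exactly $\theta_{\restriction\Delta_M}=0$. Vanishing to order one further requires that $\mathcal L_{\mathbb Z}\Phi^{1,1}(\theta)$ vanish along $\Delta_M$ for every vector field $\mathbb Z$ on $M\times M$. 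Given the zeroth-order condition, the terms in which $\mathbb Z$ differentiates the coordinate forms still carry a factor $\theta_{i\mid j}$, so they drop out on the diagonal. The condition therefore becomes
\begin{equation}
\iota^\ast\Big(\frac{\partial\theta_{i\mid j}}{\partial x^k}\Big)=0\,,\qquad \iota^\ast\Big(\frac{\partial\theta_{i\mid j}}{\partial y^k}\Big)=0\,,
\end{equation}
i.e.\ all first partial derivatives of the components vanish on the diagonal.

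Now compare with the intrinsic conditions from the first step. Locally, using \eqref{26.04.4},
\begin{equation}
(\mathcal L_{\partial_{q^k}\mid}\theta)_{\restriction\Delta_M}=\iota^\ast\Big(\frac{\partial\theta_{i\mid j}}{\partial x^k}\Big)\,\mathrm dq^i\otimes\mathrm dq^j\,.
\end{equation}
For a general $Z$, tensoriality on the diagonal (the extra terms involve $\theta_{\restriction\Delta_M}$, which is zero) reduces the condition to the coordinate fields. So $\nabla^{\varpi_1}=\nabla^{\varpi_2}$ is equivalent to the vanishing of the $x$-derivatives on the diagonal.

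The $y$-derivatives are the main obstacle, since the connection condition only controls the $x$-derivatives. I would handle them with identity \eqref{Eq: iota ast Lie derivative}:
\begin{equation}
0=\mathcal L_Z(\theta_{\restriction\Delta_M})=(\mathcal L_{Z\mid}\theta)_{\restriction\Delta_M}+(\mathcal L_{\mid Z}\theta)_{\restriction\Delta_M}\,.
\end{equation}
Equivalently, in coordinates, $\partial_{q^k}(\iota^\ast\theta_{i\mid j})=\iota^\ast\big(\partial_{x^k}\theta_{i\mid j}+\partial_{y^k}\theta_{i\mid j}\big)$. Hence, once $\theta_{\restriction\Delta_M}=0$, vanishing of the $x$-derivatives on the diagonal is equivalent to vanishing of the $y$-derivatives there.

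Putting this together: if $\theta$ vanishes to first order, then $\theta_{\restriction\Delta_M}=0$ and all $x$-derivatives vanish on the diagonal, which gives equal metrics and, by the first step, equal connections. Conversely, statistical equivalence gives $\theta_{\restriction\Delta_M}=0$ and vanishing $x$-derivatives, the identity above supplies the $y$-derivatives, and so $\theta$ vanishes to first order along $\Delta_M$.
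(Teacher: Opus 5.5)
Your proof is correct and follows essentially the same route as the paper. Like the paper, you reduce statistical equivalence to $\theta_{\restriction\Delta_M}=0$ and $(\mathcal L_{Z\mid}\theta)_{\restriction\Delta_M}=0$, use \eqref{Eq: iota ast Lie derivative} to get the corresponding right Lie derivative condition, and identify these conditions with first-order vanishing of $\Phi^{1,1}(\theta)$; your coordinate computation just spells out that last equivalence, which the paper states without detail.
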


\begin{proof}
    By the above definition, the contrast bi-forms $\varpi_1$ and $\varpi_2$ are statistically equivalent if and only if for each $Z\in \mathfrak X(M)$, that is 
    \begin{equation}
        \begin{cases}
            (\varpi_2-\varpi_1)_{\restriction\Delta_M}=0\;,\\[4pt]
            \left(\mathcal L_{Z\mid }(\varpi_2- \varpi_1)\right)_{\restriction\Delta_M}=0\;.
        \end{cases}
    \end{equation}
    When these conditions are considered together with the identities \eqref{Eq: iota ast Lie derivative}, one has 
    \begin{equation}
        \left(\mathcal L_{\mid Z}(\varpi_2-\varpi_1)\right)_{\restriction\Delta_M}=0\;.
    \end{equation}
    Therefore, $\varpi_1$ and $\varpi_2$ are statistically equivalent precisely when the 2-form $\Phi^{1,1}(\varpi_2-\varpi_1)$ on $M\times M$  vanishes along $\Delta_M$, together with its Lie derivatives along any left or right lift of a vector field on $M$. This is equivalent to vanishing to first order along $\Delta_M$.
\end{proof}

\begin{remark}
In particular, we notice that the metric and connection determined by a contrast bi-form depend only on its first-order behaviour along the diagonal. Higher-order contributions therefore carry no statistical significance.
\end{remark}

The formalism of bi-forms allows us to analyse the torsion of the induced connections in terms of  the restriction of the left and the right exterior derivatives to the diagonal submanifold.
\begin{theorem}\label{Thm: torsion bi-form}
    Let $\varpi$ be a contrast bi-form on a smooth manifold $M$. One has that:
    \begin{enumerate}[(a)]
    \item the connection  $\nabla^\varpi$ is torsion-free if and only if $(\mathrm d^L\varpi)_{\restriction \Delta_M}=0$;
    \item the connection  $\nabla^{(\varpi^\dag)}$ is torsion-free if and only if $(\mathrm d^R\varpi)_{\restriction \Delta_M}=0$.
    \end{enumerate}
\end{theorem}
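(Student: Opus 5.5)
Part (a) comes from evaluating $\mathrm d^L\varpi$ on the diagonal and matching the result with the definition of $\nabla^\varpi$ in \eqref{Eq: induced connection}. Part (b) then follows from (a) by duality.

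For (a), I would take vector fields $X,Z,Y$ on $M$. Since $\varpi$ has bi-degree $(1,1)$, the definition \eqref{Eq:dL} gives
\begin{equation}
(\mathrm d^L\varpi)(Z,X\mid Y)=\mathcal L_{Z^L}\big(\varpi(X\mid Y)\big)-\mathcal L_{X^L}\big(\varpi(Z\mid Y)\big)-\varpi([Z,X]\mid Y)\,,
\end{equation}
because the left Lie derivative of a $(0,1)$-bi-form in the left slot reduces to $\mathcal L_{Z^L}$ applied to the pairing. The definition of $\mathcal L_{Z\mid}$ gives
\begin{equation}
(\mathcal L_{Z\mid}\varpi)(X\mid Y)=\mathcal L_{Z^L}(\varpi(X\mid Y))-\varpi([Z,X]\mid Y)\,.
\end{equation}
Restricting to the diagonal, \eqref{Eq: induced connection} then reads $g^\varpi(\nabla^\varpi_Z X,Y)=\iota^\ast\mathcal L_{Z^L}(\varpi(X\mid Y))$.

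Antisymmetrising in $Z,X$ gives
\begin{equation}
g^\varpi\big(\Tor^{\nabla^\varpi}(Z,X),Y\big)=\iota^\ast\Big(\mathcal L_{Z^L}\varpi(X\mid Y)-\mathcal L_{X^L}\varpi(Z\mid Y)\Big)-g^\varpi([Z,X],Y)=(\mathrm d^L\varpi)_{\restriction\Delta_M}(Z,X,Y)\,.
\end{equation}
The last equality uses $\iota^\ast(\varpi([Z,X]\mid Y))=g^\varpi([Z,X],Y)$ and Definition~\ref{Def: diagonal restriction}. The diagonal restriction is tensorial in all three slots, and $g^\varpi$ is nondegenerate. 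So $\Tor^{\nabla^\varpi}=0$ if and only if $(\mathrm d^L\varpi)_{\restriction\Delta_M}=0$. This mirrors the torsion computation already done for contrast and pre-contrast functions.

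For (b), I apply (a) to the contrast bi-form $\varpi^\dag$, which is a contrast bi-form by the preceding proposition. This gives: $\nabla^{(\varpi^\dag)}$ is torsion-free if and only if $(\mathrm d^L\varpi^\dag)_{\restriction\Delta_M}=0$. By \eqref{Eq: duality exterior derivative}, $\mathrm d^L\varpi^\dag=(\mathrm d^R\varpi)^\dag$. By \eqref{Eq: sast iotaast}, the diagonal restriction of $(\mathrm d^R\varpi)^\dag$ is $(\mathrm d^R\varpi)_{\restriction\Delta_M}$ with its slots permuted, so one vanishes exactly when the other does.

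The only delicate step is the first-step bookkeeping: getting the signs and the placement of the bracket term in \eqref{Eq:dL} right for bi-degree $(1,1)$. In particular, I need to check that the $\mathcal L_{X_i\mid}$ in \eqref{Eq:dL}, acting on the function obtained after pairing, indeed reduces to $\mathcal L_{X_i^L}$, so that the bracket terms combine into exactly one $-\varpi([Z,X]\mid Y)$. I would settle this by checking in a square chart, where
\begin{equation}
(\mathrm d^L\varpi)_{k i\mid j}=\partial_{x^k}\varpi_{i\mid j}-\partial_{x^i}\varpi_{k\mid j}\,.
\end{equation}
On the diagonal this equals $g_{\ell j}\big(\Gamma^\ell_{ki}-\Gamma^\ell_{ik}\big)$ by the Christoffel formula for $\nabla^\varpi$, which confirms the identity.
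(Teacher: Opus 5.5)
Your proposal is correct and follows the paper's proof. For (a), the paper likewise expands $\mathrm d^L\varpi$ in bi-degree $(1,1)$ and matches it against \eqref{Eq: induced connection} to get $(\mathrm d^L\varpi)_{\restriction\Delta_M}(X_1,X_2\mid Y)=g^\varpi(\Tor^{\nabla^\varpi}(X_1,X_2),Y)$; it adds and subtracts $\varpi([X_1,X_2]\mid Y)$ instead of first reducing the defining relation to $g^\varpi(\nabla^\varpi_ZX,Y)=\iota^\ast\mathcal L_{Z^L}(\varpi(X\mid Y))$, which is only cosmetic, and for (b) it uses your duality argument via $\mathrm d^R\varpi=(\mathrm d^L\varpi^\dag)^\dag$, the slot permutation \eqref{Eq: sast iotaast}, and part (a) applied to $\varpi^\dag$.
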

\begin{proof}
We analyse the conditions as follows.
\begin{enumerate}[(a)]
\item In order to prove the first condition, consider vector fields $X_1,X_2$ and $Y$ on $M$. From the \cref{def:lred}, one has  
$$ 
\mathrm d^L\varpi(X_1,X_2\mid Y)=(\mathcal L_{X_1\mid}\varpi)(X_2\mid Y)-(\mathcal L_{X_2\mid }\varpi)(X_1\mid Y)+\varpi([X_1,X_2],Y):   
$$
by adding and subtracting the term $\varpi([X_1,X_2]\mid Y)$, we have 
$$
\mathrm d^L\varpi(X_1,X_2\mid Y)=(\mathcal L_{X_1\mid}\varpi)(X_2\mid Y)+\varpi([X_1,X_2],Y)
        -(\mathcal L_{X_2\mid }\varpi)(X_1\mid Y)-\varpi([X_2,X_1],Y)-\varpi([X_1,X_2],Y)\,.
$$
If we recall the definition  \eqref{Eq: induced connection}, and consider the restriction to the diagonal of the above terms, we have 
    \begin{equation}
        \left(\mathrm d^L\varpi\right)_{\restriction\Delta_M}(X_1,X_2,Y)=g^\varpi(\nabla^\varpi_{X_1}X_2-\nabla^{\varpi}_{X_2}X_1-[X_1,X_2],Y)\,.
    \end{equation}
    which is equivalent to  (cf. \eqref{Eq: torsion tensor})
    \begin{equation}\label{Eq: dL and torsion}
      \left(\mathrm d^L\varpi\right)_{\restriction\Delta_M}(X_1,X_2\mid Y)=g^\varpi\left(\Tor^{(\nabla^\varpi)}(X_1,X_2),Y\right).
    \end{equation}
    The non-degeneracy of $g^\varpi$ then yields the first claim.

\item Analogously to the path described in the previous item, for vector fields $X,Y_1,Y_2$  on $M$ we write 
  \begin{equation}
\begin{split}
        \left(\mathrm d^R\varpi\right)_{\restriction\Delta_M}(X, Y_0,Y_1)=
        \left(\left(\mathrm d^L\varpi^\dag\right)^\dag\right)_{\restriction \Delta_M}(X,Y_0,Y_1)&=
\left(\mathrm d^L\varpi^\dag\right)_{\restriction\Delta_M}(Y_0,Y_1, X)\\
        &=g^{(\varpi^\dag)}\Big(\Tor^{\nabla^{(\varpi^\dag)}}(Y_0,Y_1),X\Big)=
g^{(\varpi^\dag)}\Big(\Tor^{(\nabla^\varpi)^\dag}(Y_0,Y_1),X\Big)\,.
    \end{split}
    \end{equation} 
Since  $g^{(\varpi^\dag)}$ is not degenerate, we have can conclude that the claim is valid. 
    \end{enumerate}
\end{proof}

The above \cref{Thm: torsion bi-form} suggests a cohomological characterization of contrast bi-forms inducing SMAT structures or statistical manifolds: bi-forms inducing SMAT structures are characterized by the vanishing of their left exterior derivative along the diagonal submanifold, whereas bi-forms inducing statistical manifolds are characterized by the vanishing of both their left and right exterior derivatives along the diagonal. We now examine these conditions in detail by considering the two cases separately.

\subsubsection{Left-exact contrast bi-forms}\label{subsubs:lec}
Given the commutative bi-complex $(\Omega^{\bullet, \bullet}_{\Delta_M}(M\mid M),\dd^L, \dd^R)$ introduced in \cref{Sec: Geometry of bi-forms}, we consider the following definition.
\begin{definition}
\label{def:leb}
A local bi-form $\varpi\in \Omega^{1,1}_{\Delta_M}(M\mid M)$ is   \underline{left-exact}  if and only if there is a local bi-form $S\in \Omega^{0,1}_{\Delta_M}(M\mid M)$ such that $\varpi=\mathrm d^LS$.
\end{definition}
Notice that for every system of (statistical) left homotopy operators, a left-exact bi-form $\varpi$ satisfies
\begin{equation}
\varpi=\mathcal E_L^{1,1}\varpi\,,\qquad \mathcal A_L^{1,1}\varpi=0\,.
\end{equation}
From \cref{Thm: torsion bi-form}, we see that left-exact contrast bi-forms on a manifod $M$ induce  SMAT structures, namely the corresponding connection  $\nabla^\varpi$ on $M$ has vanishing torsion.

Consider an element $S\in \Omega^{0,1}_{\Delta_M}(M\mid M)$ and write for it the identity \eqref{Eq: dR and iota ast do not commute}. For   $X,Y\in\mathfrak{X}(M)$, it is
  $$
\left(\mathrm d(S_{\restriction \Delta_M})-(\mathrm d^RS)_{\restriction \Delta_M}\right)(X,Y)=(\mathrm d^LS)_{\restriction \Delta_M}(X,Y)-(\mathrm d^LS)_{\restriction \Delta_M}(Y,X)\,. 
$$ 
This relation proves the following result.
\begin{proposition}\label{Prop: constraint pre-contrast function}
 Let $S\in \Omega^{0,1}_{\Delta_M}(M\mid M)$ on a smooth manifold $M$. If the element  $\varpi=\mathrm d^LS\in\Omega^{1,1}_{\Delta_M}(M\mid M)$ is a contrast bi-form, that is $g^\varpi=(\dd^LS)_{\restriction\Delta_M}$ is a pseudo-Riemannian metric on $M$, then 
    \begin{equation}\label{Eq: constraint pre-contrast function}
        \mathrm d(S_{\restriction \Delta_M})=\left(\mathrm d^RS\right)_{\restriction\Delta_M}\,.
    \end{equation}
\end{proposition}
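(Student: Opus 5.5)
The plan is to specialise identity \eqref{Eq: dR and iota ast do not commute} to the bi-degree $(0,1)$ and then use the symmetry of the induced metric.

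\emph{Step 1: evaluate the identity.} Take $S\in\Omega^{0,1}_{\Delta_M}(M\mid M)$, so $q=1$, and vector fields $Y_0=X$, $Y_1=Y$. The right-hand side of \eqref{Eq: dR and iota ast do not commute} has two terms, $j=0$ and $j=1$, and gives
\begin{equation}
\left(\mathrm d(S_{\restriction \Delta_M})-(\mathrm d^RS)_{\restriction \Delta_M}\right)(X,Y)=(\mathrm d^LS)_{\restriction \Delta_M}(X,Y)-(\mathrm d^LS)_{\restriction \Delta_M}(Y,X)\,,
\end{equation}
exactly as displayed just before the statement. Here $(\mathrm d^R S)_{\restriction\Delta_M}$ is the diagonal restriction of a $(0,2)$-bi-form, so it is a $2$-form on $M$. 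The left-hand side is therefore a difference of $2$-forms on $M$, and it is enough to show that it vanishes on arbitrary pairs $X,Y$.

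\emph{Step 2: use the contrast hypothesis.} By \eqref{Eq: induced metric}, $(\mathrm d^LS)_{\restriction\Delta_M}=g^\varpi$ with $\varpi=\mathrm d^LS$. By assumption $g^\varpi$ is a pseudo-Riemannian metric, so in particular it is symmetric. The right-hand side above is then $g^\varpi(X,Y)-g^\varpi(Y,X)=0$, which gives \eqref{Eq: constraint pre-contrast function}.

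Only symmetry of $(\mathrm d^LS)_{\restriction\Delta_M}$ is used, not non-degeneracy. Equivalently, by \cref{Lemma: initial condition}, the hypothesis only needs $\iota^\ast\Phi^{1,1}(\mathrm d^LS)=0$.

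The main point requiring care is Step 1: one has to check the signs and the ordering of the arguments when specialising the general formula to $q=1$. The paper has already displayed this specialisation, so there is no real obstacle.
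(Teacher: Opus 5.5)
Your proposal is correct and is essentially the paper's own argument: specialise \eqref{Eq: dR and iota ast do not commute} to $q=1$, so the right-hand side becomes $g^\varpi(X,Y)-g^\varpi(Y,X)$, which vanishes because the metric is symmetric. Your remark that only symmetry is used, equivalently $\iota^\ast\Phi^{1,1}(\mathrm d^LS)=0$ by \cref{Lemma: initial condition}, and not non-degeneracy, is also accurate.
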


\begin{remark}\label{rem:vS}
When dealing with left-exact contrast bi-forms $\varpi$, one may always choose a left potential that vanishes along the diagonal. Indeed, suppose $\varpi=\mathrm d^L S$ for a given  $(0,1)$-bi-form $S$. Define
  \begin{equation}
\label{12.05.3}
    \overline S=\mathcal A_R^{0,1}S=S-1\boxtimes S_{\restriction\Delta_M}\,.
\end{equation} 
Since the second term depends only on the right variable, it is $\mathrm d^L(1\boxtimes S_{\restriction\Delta_M})=0$. Consequently, one has
  \begin{equation}
 \mathrm d^L\overline S=\mathrm d^L S=\varpi\,.
\end{equation} 
Moreover, the bi-form   $\overline S=\mathcal A_R^{0,1}S$ vanishes along the diagonal, i.e. one directly sees that $\overline S_{\restriction\Delta_M}=0$. In terms of this sort of   \emph{normalized}  potential, the constraint \eqref{Eq: constraint pre-contrast function} takes the simpler form $\left(\mathrm d^R\overline S\right)_{\restriction\Delta_M}=0$.
\end{remark}
Even though left-exact contrast bi-forms yield SMATs, it is not generally true that a contrast bi-form $\varpi$ generating a SMAT is left-exact. We show, however, that $\varpi$ is always statistically equivalent to   \underline{any} of its left-exact part. 
\begin{theorem}\label{Thm: from contrast bi-form to pre-contrast}
    Let $\varpi\in \Omega^{1,1}_{\Delta_M}(M\mid M)$ be a contrast bi-form on a manifold $M$ such that $(M,g^\varpi,\nabla^\varpi)$ is a SMAT (that is   $\nabla^\varpi$ is torsion-free ). Then there exists $S \in \Omega_{\Delta_M}^{0,1}(M \mid M)$ with $S_{\restriction \Delta_M} = 0$ such that $\varpi$ is statistically equivalent to $\mathrm{d}^L S$. 
\end{theorem}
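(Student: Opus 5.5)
The plan is to take a system of statistical left homotopy operators, say $J_L=J_L^\nabla$ for some affine connection $\nabla$ on $M$; its existence is guaranteed by \cref{Thm: stat homotopy op exists}. We then split $\varpi=\mathcal E_L^{1,1}\varpi+\mathcal A_L^{1,1}\varpi$ using \eqref{udec}. The candidate potential is $S_0=J_L^{1,1}\varpi\in\Omega^{0,1}_{\Delta_M}(M\mid M)$, for which $\mathrm d^LS_0=\mathcal E_L^{1,1}\varpi$. Since $J_L$ is statistical, $S_0$ vanishes along $\Delta_M$, and in particular $(S_0)_{\restriction\Delta_M}=0$. If this were to fail for some other choice of operators, we would replace $S_0$ by its normalisation $\overline S_0=S_0-1\boxtimes (S_0)_{\restriction\Delta_M}$ as in \cref{rem:vS}. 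By \cref{th:equi}, it then suffices to show that $\mathcal A_L^{1,1}\varpi=J_L^{2,1}\mathrm d^L\varpi$ vanishes to first order along $\Delta_M$. Once this holds, $\mathrm d^LS_0$ has the same diagonal restriction $g^\varpi$, so it is itself a contrast bi-form and is statistically equivalent to $\varpi$.

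The key step is therefore a vanishing-order argument. By hypothesis $\nabla^\varpi$ is torsion-free, so \cref{Thm: torsion bi-form}(a) gives $(\mathrm d^L\varpi)_{\restriction\Delta_M}=0$. I want to upgrade this to the statement that $\Phi^{2,1}(\mathrm d^L\varpi)$ vanishes (to order $0$) along $\Delta_M$ as a form on $M\times M$. Pointwise at $(m,m)$, the form $\Phi^{2,1}(\mathrm d^L\varpi)$ is determined by its values on $(X_1^L,X_2^L,Y^R)$, because all its other components of pure bidegree are zero. These values are exactly $(\mathrm d^L\varpi)_{\restriction\Delta_M}(X_1,X_2\mid Y)$ by \eqref{Eq: pairing with forms} and \cref{Def: diagonal restriction}. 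So the diagonal restriction vanishing is equivalent to the form vanishing at every diagonal point. Then \cref{Prop: further properties JNabla} shows that $J^\nabla\Phi^{2,1}(\mathrm d^L\varpi)$ vanishes to order $1$ along $\Delta_M$. Its $\pi_{1,1}$-component also vanishes to order $1$: the projector $\pi_{1,1}$ is $C^\infty$-linear and commutes with Lie derivatives along left and right lifts, and these lifts span $\mathrm T(M\times M)$ locally over $C^\infty$, which suffices to control vanishing order. Therefore $\mathcal A_L^{1,1}\varpi$ vanishes to first order, as in the proof of \cref{Thm: stat homotopy op exists}(a).

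The main obstacle is this last bookkeeping step, namely checking that "vanishing to order $1$" is preserved by the projection $\pi_{1,1}$. The approach would be to use the equivalent local characterisation: in a square chart, a form vanishes to order $1$ along $\Delta_M$ iff its coefficients vanish on the diagonal together with all their first partial derivatives. This characterisation is stable under taking coefficient subsets, which is exactly what $\pi_{1,1}$ does. A secondary subtlety is the order-$0$ base case of the notion of vanishing, which is addressed by the pointwise argument in the second paragraph. Combining these steps, $\varpi-\mathrm d^LS_0$ vanishes to first order along $\Delta_M$, and \cref{th:equi} concludes the proof with $S=S_0$.
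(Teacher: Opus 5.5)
Your proposal is correct and follows essentially the same route as the paper. You take $S=J_L^{1,1}\varpi$ for a statistical left homotopy system, use \cref{Thm: torsion bi-form} to get that $\mathrm d^L\varpi$ vanishes along $\Delta_M$, invoke the order-raising property of the homotopy operator to make $\mathcal A_L^{1,1}\varpi=J_L^{2,1}\mathrm d^L\varpi$ vanish to first order, and conclude with \cref{th:equi}. Your extra bookkeeping fills in points the paper's proof leaves implicit: that $(\mathrm d^L\varpi)_{\restriction\Delta_M}=0$ really means $\Phi^{2,1}(\mathrm d^L\varpi)$ vanishes at diagonal points, that $\pi_{1,1}$ preserves vanishing order, and that $\mathrm d^L S$ is itself a contrast bi-form. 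Your choice $J_L=J_L^\nabla$ is also the right one, since \cref{Prop: further properties JNabla} proves the order-raising property for these operators, whereas \cref{Def: statistical homotopy operators} only guarantees vanishing along $\Delta_M$.
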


\begin{proof}
    Let $J_L$ be a system of statistical left homotopy operators on $M$. Decompose $\varpi$ into its left-exact and left-antiexact components with respect to $J_L$, namely write 
      \begin{equation}\label{Eq: decomposition contrast bi-form}
        \varpi = \mathrm{d}^L S + \kappa\,, \qquad\mathrm{with}\qquad  
        \begin{cases}
            S=J_L^{1,1} \varpi\,, \\
            \kappa = \mathcal A_L^{1,1} \varpi=J_L^{2,1}\dd^L\varpi.
        \end{cases}
    \end{equation} 
Since the  \cref{Def: statistical homotopy operators} applies, one has  $S_{\restriction \Delta_M} = (J_L\varpi)_{\restriction\Delta_M}= 0$. Furthermore,  by   \cref{Thm: torsion bi-form}  one has that  the term $\mathrm{d}^L \varpi$ vanishes along $\Delta_M$. Since the action of a statistical homotopy operator increases the vanishing order of a bi-form (see \cref{Prop: further properties JNabla}), the term  $\kappa=J_L\left(\mathrm d^L\varpi\right)$ vanishes to first order along $\Delta_M$. This yields that  $\varpi$ and $\mathrm{d}^L S$ are statistically equivalent.
\end{proof}
\begin{remark}\label{Remark: pre-contrast} 
Let $S$ be the local $(0,1)$-bi-form constructed along the lines of the proof of \cref{Thm: from contrast bi-form to pre-contrast} for a SMAT $(M,g^\varpi,\nabla^\varpi)$ out of a contrast bi-form $\varpi$ on $M$.
\begin{enumerate}[(1)]
\item Let $S'=J_L'\varpi$ be the local $(0,1)$-bi-form constructed from another system of statistical left homotopy operators. From the homotopy relations, one has
\begin{equation}
    \mathrm d^LS-\mathrm d^LS'=(J_L'-J_L)\,\mathrm d^L\varpi\,.
\end{equation}
Since $\mathrm d^L\varpi$ vanishes along $\Delta_M$ (again by \cref{Thm: torsion bi-form}), one has that  $\mathrm d^LS-\mathrm d^LS'$ vanishes to first order along $\Delta_M$, and therefore the contrast bi-form $\dd^LS'$ is statistically equivalent to $\dd^LS$. 

Notice that, if $\mathrm d^L\varpi=0$ on an open neighbourhood of  $\Delta_M$ in $M\times M$, then  $S$ and $S'$ have the same germ along $\Delta_M$. To prove this, note that, when $\dd^L\varpi=0$, the term  $S - S'$ is a left-closed $(0,1)$-bi-form on $\mathscr U$, and therefore the homotopy formula \eqref{Eq: homotopy formula bi-forms L} reads
\begin{equation}
    S-S'=1 \boxtimes (S - S')_{\restriction \Delta_M}\,.
\end{equation}
Since both $S$ and $S'$ vanish along $\Delta_M$, it follows that $S=S'$ on the intersection of their domains.
\item Let $\nabla$ be an affine connection on $M$, and consider the statistical left homotopy operator $J_L^\nabla$. Then, by \eqref{Eq: JL explicit}, if $(m,n)$ is a point in a strongly $\nabla$-convex neighbourhood contained in the domain of $\varpi$, one has 
  \begin{equation}\label{Eq: explicit pre-contrast}
S(\mid Y)(m,n) = J_L^\nabla\varpi(\mid Y)(m,n)=\int_0^1 \varpi_{(\gamma_{m \leftarrow n}(t),n)}\!\left(\dot{\gamma}_{m \leftarrow n}(t), Y_n\right) \,\mathrm{d}t\,,
\end{equation} 
for each $Y \in \mathfrak X(M)$. 

\item The fiberwise counterpart   $\dot{S}\colon M \times \T M \to \mathbb{R}$ is a pre-contrast function in the sense of Henmi and Matsuzoe. A direct inspection indeed proves that the algorithm of Henmi and Matsuzoe for extracting the metric and connection from $\dot{S}$ (cf.~\eqref{Eq: metric pre-contrast HM} and \eqref{Eq: connection pre-contrast HM}) coincides with that derived from $\mathrm{d}^L S$ (cf.~\eqref{Eq: induced metric} and \eqref{Eq: induced connection}). This shows that the formalism we are describing allows to consider pre-contrast functions as a special case of left-exact contrast bi-forms.
\end{enumerate}
\end{remark}

Having established \cref{Prop: constraint pre-contrast function} in the left-exact case, one may ask what remains true for a general contrast bi-form. The same homotopy construction still applies and yields a left-exact contrast bi-form inducing the same metric, but in general a different affine connection.

\begin{proposition}\label{Prop: further considerations pre-contrast}
    Let $\varpi\in \Omega^{1,1}_{\Delta_M}(M\mid M)$ be a contrast bi-form, and let $J_L$ be a system of statistical left homotopy operators. 
\begin{enumerate}[(a)]
\item The term $\varpi_S=\mathrm d^LS$, with $S=J_L\varpi$, is a contrast bi-form, and one has 
    \begin{equation}\label{Eq: same metric pre-contrast}
        g^{\varpi_S}=g^{\varpi}\,.
    \end{equation}
\item The difference tensor $C=\nabla^{\varpi}-\nabla^{\varpi_S}$ is implicitly defined by
      \begin{equation}\label{Eq: new affine connection pre-contrast}
        g^\varpi\left(C(Z,X),Y\right)=\left(\mathcal L_{Z\mid }(\mathcal A_L^{1,1}\varpi)\right)_{\restriction \Delta_M}(X,Y)\,,
    \end{equation} 
for  $X,Y,Z\in\mathfrak{X}(M)$, with  $\mathcal A_L^{1,1}\varpi=J_L\mathrm d^L\varpi$. The bi-forms $\varpi$ and $\varpi_S$ are statistically equivalent if and only if $(M,g^\varpi,\nabla^\varpi)$ is a SMAT.
\end{enumerate}
\end{proposition}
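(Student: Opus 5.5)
The plan is to begin from the decomposition given by the homotopy formula \eqref{Eq: homotopy formula bi-forms L} in bi-degree $(1,1)$. Near $\Delta_M$ we have $\varpi=\mathrm d^LS+\kappa$, with $S=J_L\varpi$ and $\kappa=\mathcal A_L^{1,1}\varpi=J_L\mathrm d^L\varpi$. Because $J_L$ is statistical (\cref{Def: statistical homotopy operators}), the bi-form $\kappa=J_L(\mathrm d^L\varpi)$ vanishes along $\Delta_M$. This gives $\kappa_{\restriction\Delta_M}=0$, and therefore
$$g^{\varpi_S}=(\mathrm d^LS)_{\restriction\Delta_M}=\varpi_{\restriction\Delta_M}-\kappa_{\restriction\Delta_M}=g^\varpi.$$
In particular $g^{\varpi_S}$ is a pseudo-Riemannian metric, so $\varpi_S$ is a contrast bi-form. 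This settles (a).

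For (b), I will subtract the defining relations \eqref{Eq: induced connection} for $\nabla^\varpi$ and $\nabla^{\varpi_S}$. The two metrics coincide, so the terms $g([Z,X],Y)$ cancel. Linearity of $\mathcal L_{Z\mid}$ and of the diagonal restriction then gives
$$g^\varpi\big(\nabla^\varpi_ZX-\nabla^{\varpi_S}_ZX,Y\big)=\big(\mathcal L_{Z\mid}(\varpi-\varpi_S)\big)_{\restriction\Delta_M}(X,Y)=\big(\mathcal L_{Z\mid}\kappa\big)_{\restriction\Delta_M}(X,Y),$$
which is \eqref{Eq: new affine connection pre-contrast}. Tensoriality of $C$ is automatic, since it is the difference of two connections. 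It can also be checked directly: because $\kappa_{\restriction\Delta_M}=0$, the left Lie derivative term is $C^\infty(M)$-linear in $Z$ and in $X$ after restriction.

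For the equivalence in (b), one direction is immediate. If $(M,g^\varpi,\nabla^\varpi)$ is a SMAT, \cref{Thm: from contrast bi-form to pre-contrast} (in fact its proof, applied to this same $J_L$) shows that $\varpi$ and $\mathrm d^LS$ are statistically equivalent. Conversely, suppose $\varpi$ and $\varpi_S$ are statistically equivalent. Then $\nabla^\varpi=\nabla^{\varpi_S}$. Since $\varpi_S=\mathrm d^LS$, we have $\mathrm d^L\varpi_S=0$, so \cref{Thm: torsion bi-form}(a) shows that $\nabla^{\varpi_S}$ is torsion-free. Hence $\nabla^\varpi$ is torsion-free and the structure is a SMAT. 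As an alternative route, one can use \eqref{Eq: dL and torsion}: since $\mathrm d^L\kappa=\mathrm d^L\varpi$, the restriction $(\mathrm d^L\kappa)_{\restriction\Delta_M}$ equals $g(\Tor,\cdot)$, and it also equals the antisymmetrization of $C$.

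The main point requiring care is that everything depends on $J_L$ being statistical. Only this guarantees $\kappa_{\restriction\Delta_M}=0$; a general homotopy system would not, and then both (a) and the clean formula for $C$ could fail. I will also check that all identities hold on a common neighbourhood $\mathscr U$ of $\Delta_M$ where the homotopy formula is valid. Since restrictions to $\Delta_M$ only depend on germs, this is harmless.
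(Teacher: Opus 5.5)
Your argument follows the paper's proof step for step. You use the same splitting $\varpi=\mathrm d^LS+\kappa$, the same use of statisticality to get $g^{\varpi_S}=g^\varpi$, and the same subtraction of \eqref{Eq: induced connection} (you even keep the bracket terms the paper's proof drops). You also make the same appeal to \cref{Thm: torsion bi-form} for ``equivalent $\Rightarrow$ SMAT''. All of that is fine.

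The step that does not hold as stated, in your proposal and equally in the paper, is ``SMAT $\Rightarrow$ equivalent'' for an \emph{arbitrary} statistical system $J_L$. This step needs $\kappa=J_L\mathrm d^L\varpi$ to vanish to \emph{first} order whenever $\mathrm d^L\varpi$ vanishes along $\Delta_M$. \cref{Def: statistical homotopy operators} only guarantees that outputs of $J_L$ vanish along $\Delta_M$. The order-raising property is \cref{Prop: further properties JNabla}, which is proved only for $J^\nabla$. The proof of \cref{Thm: from contrast bi-form to pre-contrast} that you cite uses it silently for a general $J_L$. Your own ``alternative route'' shows what the axioms actually give: $\kappa_{\restriction\Delta_M}=0$ together with $(\mathrm d^L\kappa)_{\restriction\Delta_M}=(\mathrm d^L\varpi)_{\restriction\Delta_M}=0$ forces only $C(Z,X)=C(X,Z)$, not $C=0$.

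This is not merely a missing citation; the implication can fail. On $M=\mathbb R^2$ with the Euclidean connection, set $\sigma=\tfrac12(x^1-y^1)^2\,(1\boxtimes\mathrm dq^1)$. Let $\ell$ be a linear functional on $\Omega^{1,1}_{\Delta_M}(M\mid M)$, and let $J_L'$ agree with $J_L^\nabla$ except
\[
J_L'\beta=J_L^\nabla\beta+\ell(J_L^\nabla\beta)\,\mathrm d^L\sigma\quad(\beta\in\Omega^{2,1}_{\Delta_M}(M\mid M)),\qquad J_L'\omega=J_L^\nabla\omega-\ell(J_L^\nabla\mathrm d^L\omega)\,\sigma\quad(\omega\in\Omega^{1,1}_{\Delta_M}(M\mid M)).
\]
Using $J_L^\nabla\mathrm d^L\sigma=\sigma$ and $J_L^\nabla\mathrm d^LJ_L^\nabla\beta=J_L^\nabla\beta$, one checks three things: $J_L'$ is nilpotent, it satisfies \eqref{Eq: homotopy formula bi-forms L}, and all its outputs vanish along $\Delta_M$. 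So $J_L'$ is a statistical system.

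Now take $\varpi=\delta_{ij}\,\mathrm dq^i\boxtimes\mathrm dq^j+(x^1-y^1)^2\,\mathrm dq^2\boxtimes\mathrm dq^1$. It induces the flat Euclidean structure, so it gives a SMAT. Choose $\ell$ with $\ell(J_L^\nabla\mathrm d^L\varpi)=1$; this is possible because $\mathrm d^LJ_L^\nabla\mathrm d^L\varpi=\mathrm d^L\varpi\neq0$. Then $\mathcal A_L'\varpi=J_L^\nabla\mathrm d^L\varpi+(x^1-y^1)\,\mathrm dq^1\boxtimes\mathrm dq^1$. The second term does not vanish to first order, and \eqref{Eq: new affine connection pre-contrast} gives $C(\partial_{q^1},\partial_{q^1})=\partial_{q^1}\neq0$. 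Hence $\nabla^{\varpi_S}\neq\nabla^\varpi$ although the structure is a SMAT.

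To repair the argument, restrict to $J_L=J_L^\nabla$, or add the hypothesis that $J_L$ raises the order of vanishing along $\Delta_M$. With $J_L=J_L^\nabla$, \cref{Prop: further properties JNabla} applies to $\Phi^{2,1}(\mathrm d^L\varpi)$ and its $\pi_{1,1}$-component. This gives first-order vanishing of $\kappa$, and your proof is then complete.
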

\begin{proof}
    Consider again  the decomposition \eqref{Eq: decomposition contrast bi-form}, so to have
    \begin{equation}
        \varpi = \mathrm d^LJ_L\varpi + J_L\mathrm d^L\varpi
        = \varpi_S + \mathcal A_L^{1,1}\varpi,
    \end{equation}
    where $S=J_L\varpi$ and $\mathcal A_L^{1,1}\varpi=J_L\left(\mathrm d^L\varpi\right)$. By the properties of statistical left homotopy operators, the term $\mathcal A_L^{1,1}\varpi$ vanishes along $\Delta_M$:  it then follows that $\varpi$ and $\varpi_S=\mathrm d^L S$ have the same restriction to $\Delta_M$, and therefore they induce the same metric on $M$. In particular, also $\varpi_S$ is  a contrast bi-form.

    To compute the difference tensor $C$, let $X,Y,Z\in \mathfrak X(M)$. From \eqref{Eq: induced connection}, one has
      \begin{equation}
    \begin{split}
        g^\varpi\left(\nabla^\varpi_Z X, Y\right)&=
        \left(\mathcal L_{Z\mid}\varpi\right)_{\restriction \Delta_M}\left(X,Y\right),
        \\
        g^{\varpi_S}\left(\nabla^{\varpi_S}_Z X, Y\right)
        &=
        \left(\mathcal L_{Z\mid}\varpi_S\right)_{\restriction \Delta_M}\left(X,Y\right)\,.
        \end{split}
    \end{equation} 
Since $g^\varpi=g^{\varpi_S}$, comparing the two expressions reads
      \begin{equation}
        g^\varpi\left(C(Z,X),Y\right)
        =g^\varpi\left(\nabla^\varpi_Z X, Y\right)-g^{\varpi_S}\left(\nabla^{\varpi_S}_Z X, Y\right)
        =\left(\mathcal L_{Z\mid}(\mathcal A_L^{1,1}\varpi)\right)_{\restriction \Delta_M}\left(X,Y\right)\,.
    \end{equation} 
When $(M,g^\varpi, \nabla^\varpi)$ is a SMAT one has $(\dd^L\varpi)_{\restriction\Delta_M}=0$, and therefore the bi-form $J_L\dd^L\varpi=\mathcal{A}_L^{1,1}\varpi$ vanishes to order 1, which gives that   $\left(\mathcal L_{Z\mid}(\mathcal A_L^{1,1}\varpi)\right)_{\restriction \Delta_M}(X,Y)=0$ for any $X,Y,Z$ on $M$. Since $g^\varpi$ is non-degenerate, it yields that $\nabla^\varpi=\nabla^{\varpi_S}$, which means that $\varpi$ is statistically equivalent to  $\varpi_S$.

If $\varpi$ is statistically equivalent to $\varpi_S$, the affine connection $\nabla^\varpi$ coincides with the one induced by the left-exact bi-form $\varpi_S=\mathrm d^LS$, which is torsion-free by \cref{Thm: torsion bi-form}. Therefore $(M,g^\varpi,\nabla^\varpi)$ is a SMAT.
\end{proof}
\subsubsection{Bi-exact contrast bi-forms} 
  \cref{def:leb} admits the following particular case.
\begin{definition}
\label{def:beb}
We say that a local bi-form $\varpi \in \Omega^{1,1}_{\Delta_M}(M \mid M)$ is \underline{bi-exact} if there exists $D \in \Omega^{0,0}_{\Delta_M}(M \mid M)$ such that $\varpi = \mathrm{d}^L \mathrm{d}^R D$. 
\end{definition}

Specialising the case of left-exact contrast bi-forms,   \cref{Thm: torsion bi-form}  implies that bi-exact contrast bi-forms on a manifold $M$ induce a statistical manifold structure, since for bi-exact contrast bi-form $\varpi$ on $M$ one has that both connections $\nabla^\varpi$ and $\nabla^{(\varpi^\dag)}$ have vanishing torsion.  For $\varpi=\dd^L\dd^RD$, the geometry of the induced statistical structure is encoded in the   local  $(0,0)$-bi-form $D$ on $M$, that is a smooth real-valued function defined on an open neighbourhood of the diagonal submanifold in $M\times M$. 

We begin such analysis by noticing that the  symmetry of $g^{\mathrm d^L\mathrm d^RD}$ imposes differential constraints into $\mathrm d^LD$ and $\mathrm d^RD$. In analogy to   \cref{Prop: constraint pre-contrast function} , one has the following result.  
\begin{proposition}\label{Prop: constraint contrast function}
    Let $\varpi\in\Omega^{1,1}_{\Delta_M}(M\mid M)$ be a bi-exact contrast bi-form on a manifold $M$, and let $D \in \Omega^{0,0}_{\Delta_M}(M \mid M)$ be such that $\varpi = \mathrm{d}^L \mathrm{d}^R D$. It is
    \begin{align}
        \mathrm{d} \left( \left( \mathrm{d}^L D \right)_{\restriction \Delta_M} \right) = 0\,, \qquad 
        \mathrm{d} \left( \left( \mathrm{d}^R D \right)_{\restriction \Delta_M} \right) = 0\,.
    \end{align}
\end{proposition}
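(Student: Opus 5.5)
The plan is to apply \cref{Prop: constraint pre-contrast function} twice, once in each direction, using the duality operator $\dag$ to transfer the left statement to the right one. The two closedness claims should both come from the symmetry of $g^\varpi$. Exactness in the bi-complex then kills the second term in each case.

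For the first identity, set $S=\mathrm d^R D\in\Omega^{0,1}_{\Delta_M}(M\mid M)$. Since $\mathrm d^L$ and $\mathrm d^R$ commute, $\varpi=\mathrm d^L\mathrm d^R D=\mathrm d^L S$, so $\varpi$ is a left-exact contrast bi-form. \cref{Prop: constraint pre-contrast function} then gives
\begin{equation}
\mathrm d\big((\mathrm d^R D)_{\restriction\Delta_M}\big)=\big(\mathrm d^R\mathrm d^R D\big)_{\restriction\Delta_M}=0,
\end{equation}
by nilpotency of $\mathrm d^R$. This settles the second claimed equality.

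For the other identity, consider $T=\mathrm d^L D\in\Omega^{1,0}_{\Delta_M}(M\mid M)$ and use \eqref{Eq: dL and iota ast do not commute} with $p=1$. It gives
\begin{equation}
\big(\mathrm d(T_{\restriction\Delta_M})-(\mathrm d^L T)_{\restriction\Delta_M}\big)(X_0,X_1)=(\mathrm d^R T)_{\restriction\Delta_M}(X_1\mid X_0)-(\mathrm d^R T)_{\restriction\Delta_M}(X_0\mid X_1).
\end{equation}
Here $\mathrm d^L T=\mathrm d^L\mathrm d^L D=0$, and $\mathrm d^R T=\mathrm d^R\mathrm d^L D=\varpi$. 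The right-hand side is therefore $g^\varpi(X_1,X_0)-g^\varpi(X_0,X_1)$, which vanishes by symmetry of the metric. Hence $\mathrm d\big((\mathrm d^L D)_{\restriction\Delta_M}\big)=0$.

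As an alternative for this second step, one can apply the first step to $\varpi^\dag=\mathrm d^L\mathrm d^R(D\circ\dag)$, using \eqref{Eq: duality exterior derivative} and \eqref{Eq: sast iotaast}. The direct computation above is shorter. The only delicate point is keeping track of the argument ordering in \eqref{Eq: dL and iota ast do not commute} and of the sign conventions. Since the conclusion relies only on an antisymmetrization of a symmetric tensor, sign errors cannot affect the result.
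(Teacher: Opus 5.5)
Your proof is correct and essentially the paper's: the paper also applies \eqref{Eq: dL and iota ast do not commute} and \eqref{Eq: dR and iota ast do not commute} to $\mathrm d^LD$ and $\mathrm d^RD$. It uses nilpotency and $\mathrm d^L\mathrm d^R=\mathrm d^R\mathrm d^L$ to reduce the right-hand sides to antisymmetrizations of $g^\varpi$, which vanish by symmetry. Your invocation of \cref{Prop: constraint pre-contrast function} for the $\mathrm d^R$ part is just a repackaging of \eqref{Eq: dR and iota ast do not commute}.
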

\begin{proof}
    Let $X,Y$ be vector fields on $M$. If we write both identities   \eqref{Eq: dL and iota ast do not commute}-\eqref{Eq: dR and iota ast do not commute} for the exact bi-forms $T = \mathrm{d}^L D$ and $T'=\dd^RD$, we have
      \begin{equation}
    \begin{split}
 &\dd\left((\dd^RD)_{\restriction\Delta_M}\right)(X,Y)=\left(\dd^R(\dd^LD)\right)_{\restriction\Delta_M})(Y,X)-    \left(\dd^R(\dd^LD)\right)_{\restriction\Delta_M})(X,Y), \\
 &\dd\left((\dd^LD)_{\restriction\Delta_M}\right)(X,Y)=\left(\dd^L(\dd^RD)\right)_{\restriction\Delta_M})(Y,X)-    \left(\dd^L(\dd^RD)\right)_{\restriction\Delta_M})(X,Y),
\end{split}
    \end{equation} 
    Since $\varpi$ is a contrast bi-form (that is $g^\varpi=\varpi_{\restriction\Delta_M}=(\dd^L\dd^RD)_{\restriction \Delta_M}$ is symmetric), both the left-hand sides vanish, so both forms $\left( \mathrm{d}^L D \right)_{\restriction \Delta_M}$ and $\left( \mathrm{d}^R D \right)_{\restriction \Delta_M}$ are closed. 
 Before closing this proof, we notice that one can also write  
      \begin{equation}
    \label{12.05.2}
        \left( \mathrm{d}^R D \right)_{\restriction \Delta_M} +\left( \mathrm{d}^L D \right)_{\restriction \Delta_M}=\mathrm{d} \left( D_{\restriction \Delta_M} \right)\,. 
    \end{equation} 
    It follows that, since the forms $\left( \mathrm{d}^L D \right)_{\restriction \Delta_M}$ and $\left( \mathrm{d}^R D \right)_{\restriction \Delta_M}$ differ by an exact form, one of them is closed if and only if the other is closed. 
\end{proof}
In analogy with \cref{rem:vS} , one can notice what follows.
\begin{remark}
Let $\varpi = \mathrm{d}^L \mathrm{d}^R D\in \Omega^{1,1}_{\Delta_M}(M\mid M)$ be a bi-exact contrast bi-form on a smooth manifold $M$. Define a $(0,0)$-bi-form  $\overline{D}$ as follows,
\begin{equation}
    \overline{D} = D - D_{\restriction \Delta_M} \boxtimes 1 - J_R \big( 1 \boxtimes (\mathrm{d}^R D)_{\restriction \Delta_M} \big)\,,
\end{equation}
where $J_R$ is any statistical right homotopy operator. Notice that $\overline{D}$ is independent of the choice of $J_R$, since $(\mathrm{d}^R D)_{\restriction \Delta_M}$ is closed. Moreover, from the homotopy relations for $J_R$, one sees that  
\begin{equation}
\label{12.05.1}
\mathrm{d}^R \overline{D} = \mathrm{d}^R D - 1 \boxtimes (\mathrm{d}^R D)_{\restriction \Delta_M}
\end{equation}
and therefore 
\begin{equation}
\mathrm{d}^L \mathrm{d}^R \overline{D} = \mathrm{d}^L \mathrm{d}^R D\,:
\end{equation}
the potentials $D$ and $\overline{D}$ induce the same statistical structures. Since, for any bi-form $\varpi$, the bi-form $J_R\varpi$ vanishes along $\Delta_M$ (see the definition   \cref{Def: statistical homotopy operators}  of statistical homotopy operators), one has  $\overline{D}_{\restriction \Delta_M} = 0$. From the previous identity \eqref{12.05.1}, one has $(\mathrm{d}^R \overline{D})_{\restriction\Delta_M}=0$. Since the potential  $\overline D$  induces the same statistical structure as $D$ does, from \eqref{12.05.2} one can write $\left( \mathrm{d}^R \overline D \right)_{\restriction \Delta_M}+\left( \mathrm{d}^L \overline D \right)_{\restriction \Delta_M}=\mathrm{d} \left( \overline D_{\restriction \Delta_M} \right)$: together with $\overline D_{\restriction\Delta_M}=0$, this reads $(\dd^L\overline D)_{\restriction\Delta_M}=0$. Recollecting our terms, we have 
\begin{equation}
\overline{D}_{\restriction \Delta_M} = 0\,, \qquad (\mathrm{d}^L \overline{D})_{\restriction \Delta_M} = (\mathrm{d}^R \overline{D})_{\restriction \Delta_M} = 0\,,
\end{equation}
i.e., $\overline{D}$ vanishes to first order along $\Delta_M$.

Before closing this remark, we notice that the potential $D'=D-D_{\restriction\Delta_M}\boxtimes 1$  does induce the same statistical structure $\varpi=\dd^L\dd^RD'$, but such potential $D'$ vanishes on $\Delta_M$ \underline{only} to order zero.
 \end{remark}

If $\varpi$ is a contrast bi-form that induces  a statistical manifold, then $\varpi$ is not generally bi-exact. However, the following theorem shows that bi-exactness holds up to statistical equivalence.
\begin{theorem}\label{Thm: from pre-contrast to contrast functions}
    Let $\varpi$ be a contrast bi-form on a manifold $M$ such that $(M, g^\varpi, \nabla^\varpi)$ is a statistical manifold. Then there exists $D \in \Omega_{\Delta_M}^{0,0}(M \mid M)$ such that $D$ vanishes to first order along $\Delta_M$ and $\varpi$ is statistically equivalent to $\mathrm{d}^L \mathrm{d}^R D$.
\end{theorem}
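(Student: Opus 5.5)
The plan is to iterate the construction of \cref{Thm: from contrast bi-form to pre-contrast}, first on the left and then on the right. We work with statistical homotopy operators $J_L=J_L^{\nabla}$ and $J_R=J_R^{\nabla}$ for some affine connection $\nabla$; they exist by \cref{Thm: stat homotopy op exists}.

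Since $\nabla^\varpi$ is torsion-free, \cref{Thm: from contrast bi-form to pre-contrast} gives $S=J_L\varpi\in\Omega^{0,1}_{\Delta_M}(M\mid M)$ with $S_{\restriction\Delta_M}=0$ and $\varpi$ statistically equivalent to $\varpi_1=\mathrm d^LS$. By \eqref{Eq: duality bi-forms affine connections}, statistical equivalence also identifies the dual connections, so $\nabla^{(\varpi_1^\dag)}=(\nabla^\varpi)^\dag$ is torsion-free. \cref{Thm: torsion bi-form}(b) then gives $(\mathrm d^R\varpi_1)_{\restriction\Delta_M}=0$. Since $\mathrm d^L$ and $\mathrm d^R$ commute, this means $(\mathrm d^L\mathrm d^RS)_{\restriction\Delta_M}=0$.

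Next decompose $S$ on the right. Since $q=1\ge1$, \cref{Thm: right projectors} gives $S=\mathrm d^RD_0+\kappa$ with $D_0=J_RS$ and $\kappa=J_R\mathrm d^RS$. Applying $\mathrm d^L$ and using commutativity,
\[
\mathrm d^LS=\mathrm d^L\mathrm d^RD_0+\mathrm d^L J_R\mathrm d^RS .
\]
I want the last term to vanish to first order along $\Delta_M$. This is the main obstacle: we only know that $\mathrm d^L\mathrm d^RS$ vanishes on the diagonal, not that $\mathrm d^RS$ does, and $\mathrm d^L$ lowers the vanishing order.

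My first attempt is to show that $J_R$ commutes with $\mathrm d^L$. This looks plausible from the explicit formula \eqref{Eq: JR explicit}: $J_R$ only moves the right point along $\gamma_{n\leftarrow m}$, but this geodesic depends on $m$, so the commutation holds only up to terms involving the $m$-derivative of the interpolation. An alternative that avoids this is to work at the level of $(1,1)$-bi-forms. Set $\tau=\mathrm d^LS$; its right-antiexact part is $\mathcal A_R^{1,1}\tau=J_R\mathrm d^R\tau$. Because $\mathrm d^R\tau$ vanishes along $\Delta_M$, \cref{Prop: further properties JNabla} shows that this part vanishes to first order. Hence $\tau$ is statistically equivalent to its right-exact part $\mathrm d^RJ_R\tau$.

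It then remains to show that $J_R\tau=J_R\mathrm d^LS$ is left-exact up to first order. Writing $J_R\mathrm d^LS=\mathrm d^LJ_RS+[J_R,\mathrm d^L]S$, I would estimate the commutator using \cref{Thm: stat homotopy op exists}(b) and the fact that $S_{\restriction\Delta_M}=0$. The goal is to show that the commutator contributes only terms vanishing to order one, so that $\mathrm d^R$ of it still vanishes on $\Delta_M$ together with its left Lie derivatives. If the commutator estimate fails, the fallback is to take the left-exact part of $\mathrm d^RJ_R\tau$ once more and repeat the argument of \cref{Thm: from contrast bi-form to pre-contrast}. Since the left torsion is still zero, this yields a bi-form $\mathrm d^L J_L\mathrm d^RJ_R\tau=\mathrm d^L\mathrm d^R(-J_LJ_R\tau)$, using anticommutation of the $\Phi$-level operators only up to sign. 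This gives a candidate $D_1=\pm J_LJ_R\tau$ with $\varpi$ statistically equivalent to $\mathrm d^L\mathrm d^RD_1$.

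Finally, normalize $D_1$ exactly as in the remark following \cref{Prop: constraint contrast function}:
\[
D=D_1-(D_1)_{\restriction\Delta_M}\boxtimes 1-J_R\big(1\boxtimes(\mathrm d^RD_1)_{\restriction\Delta_M}\big).
\]
This is legitimate because \cref{Prop: constraint contrast function} shows that $(\mathrm d^RD_1)_{\restriction\Delta_M}$ is closed. The normalization leaves $\mathrm d^L\mathrm d^RD$ unchanged, and $D$ vanishes to first order along $\Delta_M$.
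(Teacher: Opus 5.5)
Your setup matches the paper's: $S=J_L\varpi$ with $S_{\restriction\Delta_M}=0$, the right decomposition $S=\mathrm d^RD_0+\kappa$ with $\kappa=J_R\mathrm d^RS$, and the correct observation that $\mathrm d^R\varpi_1=\mathrm d^L\mathrm d^RS$ vanishes along $\Delta_M$. The gap is that you never show $\mathrm d^L\kappa$ vanishes to first order. The estimate of $[J_R,\mathrm d^L]S$ is only announced. The fallback rests on $\mathrm d^LJ_L\mathrm d^RJ_R\tau=\mathrm d^L\mathrm d^R(-J_LJ_R\tau)$, i.e.\ on $J_L$ (anti)commuting with $\mathrm d^R$, and that is false as stated. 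The reason is the one you already spotted for $J_R$ and $\mathrm d^L$: the geodesic $\gamma_{m\leftarrow n}$ in \eqref{Eq: JL explicit} depends on $n$. Moreover, $J^\nabla$ mixes bidegrees (\cref{Lemma: bidegrees}), so the anticommutation of $\mathrm d_{\mathfrak L}$ and $\mathrm d_{\mathfrak R}$ does not transfer to $J_L$. Already on $\mathbb R^d$ with the Euclidean connection, $J_L^\nabla(\mathrm dq^i\boxtimes 1)=x^i-y^i$. Its right derivative is $-1\boxtimes\mathrm dq^i$, while $\mathrm d^R(\mathrm dq^i\boxtimes 1)=0$. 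So your candidate $D_1$ is not shown to be statistically equivalent to $\varpi$. Closing that would need exactly the vanishing-order estimate of the commutator that you left undone, and the final normalization has nothing established to act on.

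The missing observation is that $\mathrm d^RS$ does vanish along $\Delta_M$, contrary to your diagnosis, and in fact to first order.
\begin{itemize}
\item Since $\varpi_1=\mathrm d^LS$ is a contrast bi-form and $S_{\restriction\Delta_M}=0$, \cref{Prop: constraint pre-contrast function} gives $(\mathrm d^RS)_{\restriction\Delta_M}=\mathrm d(S_{\restriction\Delta_M})=0$.
\item The $p=0$ case of \eqref{Eq: homotopy formula bi-forms L}, applied to the $(0,2)$-bi-form $\mathrm d^RS$, then gives $\mathrm d^RS=J_L\mathrm d^L\mathrm d^RS=J_L\mathrm d^R\varpi_1$. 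So $\mathrm d^RS$ is left-antiexact.
\item Because $\mathrm d^R\varpi_1$ vanishes along $\Delta_M$ (your own observation), $\mathrm d^RS$ vanishes to first order.
\item Hence $\kappa=J_RJ_L\mathrm d^R\varpi_1$ vanishes to second order and $\mathrm d^L\kappa$ to first order. Therefore $\varpi_1$, and with it $\varpi$, is statistically equivalent to $\mathrm d^L\mathrm d^RD_0$.
\item Finally, $D_0=J_RS$ already vanishes to first order because $S$ vanishes along $\Delta_M$.
\end{itemize}
No commutator estimate and no renormalization are needed; this is exactly the paper's argument.
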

\begin{proof}
    Let $J_L$ be a system of statistical left homotopy operators and let $J_R$ be a system of statistical right homotopy operators. By the previous \cref{Thm: from contrast bi-form to pre-contrast} on left-exact contrast bi-forms,  since the connection $\nabla^\varpi$ has vanishing torsion one has that  $\varpi$ is statistically equivalent to
    \begin{equation}\label{Eq: left exact part}
        \varpi_0 = \mathrm{d}^L S, \qquad S = J_L \varpi\,
    \end{equation}
with $S\in\Omega_{\Delta_M}^{0,1}(M\mid M)$. We can   decompose  $S$ into its right-exact and right-antiexact components with respect to $J_R$:
    \begin{equation}\label{Eq: decomposition bi-form contrast}
        S = \mathrm{d}^R D + \Sigma\,, \qquad\mathrm{with}\qquad 
        \begin{cases}
            D = J_R S\,, \\ 
            \Sigma = \mathcal A^{0,1}_RS=J_R^{0,2}\dd^RS\,.
        \end{cases}
    \end{equation}
    The action of the left exterior derivative gives
    \begin{equation}
        \varpi_0 = \mathrm{d}^L \mathrm{d}^R D + \mathrm{d}^L \Sigma\,.
    \end{equation}
Since $\varpi$ and $\varpi_0$ are statistically equivalent, our aim is to prove that $\varpi_0$ is statistically equivalent to $\dd^L\dd^RD$. We analyse the difference term $\dd^L\Sigma=\varpi_0-\dd^L\dd^RD$. 

We notice that, from the left-homotopy formula \eqref{Eq: homotopy formula bi-forms L}, we can write 
\begin{equation}
\label{12.05.4}
\mathrm{d}^R S = 1 \boxtimes \left( \mathrm{d}^R S \right)_{\restriction \Delta_M} + J_L \mathrm{d}^L\left( \mathrm{d}^R S \right) 
        = J_L \mathrm{d}^L \left( \mathrm{d}^R S \right)\,,
\end{equation}
where the last relation comes from the freedom (see \eqref{Eq: constraint pre-contrast function}) to select a bi-form $S$ in \eqref{Eq: left exact part} which vanish on $\Delta_M$. We have then that $\dd^RS$ is $J_L$-left-antiexact. This allows to write the following chain of equalities
  \begin{equation}
\label{12.05.5}
\Sigma=J_R(\dd^RS)=J_RJ_L(\dd^L\dd^RS)=J_RJ_L(\dd^R\dd^LS)=J_RJ_L(\dd^R\varpi_0).
\end{equation} 
Since $\varpi_0$ is statistically equivalent to $\varpi$, from   \cref{th:equi}  one has that the bi-form
$\varpi_0 - \varpi$ vanishes to first order along $\Delta_M$ and its right exterior derivative $\mathrm{d}^R (\varpi_0 - \varpi)$ vanishes along $\Delta_M$.  Moreover, since the connection $(\nabla^\varpi)^\dagger$ has vanishing torsion,   \cref{Thm: torsion bi-form}  implies that $\mathrm{d}^R \varpi$ also vanishes along $\Delta_M$. Therefore, the bi-form
    \begin{equation}
        \mathrm{d}^R \varpi_0 = \mathrm{d}^R (\varpi_0 - \varpi) + \mathrm{d}^R \varpi
    \end{equation}
    vanishes along $\Delta_M$. From the expression $\Sigma = J_R J_L \mathrm{d}^R \varpi_0$ and the defining properties of statistical left and right homotopy operators, we conclude that $\Sigma$ vanishes to second order along $\Delta_M$. Consequently, $\mathrm{d}^L \Sigma$ vanishes to first order. This shows that $\varpi_0$ is statistically equivalent to $\mathrm{d}^L \mathrm{d}^R D$, completing the proof.
\end{proof}

\begin{remark}\label{Remark: contrast function}
    Let $D$ be the local $(0,0)$-bi-form constructed along the lines of the proof of   \cref{Thm: from pre-contrast to contrast functions}  for a statistical manifold $(M, g^\varpi, \nabla^\varpi)$ out of a bi-exact contrast bi-form $\varpi$ on $M$.
    \begin{enumerate}[(1)]
        \item Proceeding as in \cref{Remark: pre-contrast}, one can show that choosing another statistical left homotopy system $J_L'$ and another statistical right homotopy system $J_R'$ leads to a potential $D'=J_R'J_L'\varpi$ which turns out to induce the same statistical structure, namely the bi-forms  $\mathrm d^L\mathrm d^RD$ and $\mathrm d^L\mathrm d^RD'$ are statistically equivalent. If $\mathrm d^L\varpi=0$ and $\mathrm d^R\varpi=0$, then $D=D'$ on  the intersection of their domains.
        \item Let $\nabla_1,\nabla_2$ be affine connections on $M$, consider  the statistical left homotopy system $J_L^{\nabla_1}$ and the statistical right homotopy system $J_R^{\nabla_2}$. Then, by \eqref{Eq: JLJR formula mixed}
\begin{equation}\label{Eq: explicit contrast mixed}
    D(m, n) = \iint_{[0,1]^2}
    \varpi_{(a_r(t),b(r))} \left( \dot a_r(t), \dot b(r) \right)\,\mathrm{d}t \, \mathrm{d}r\,,
\end{equation}
where
\begin{equation}
    b(r)=\gamma^{\nabla_2}_{n\leftarrow m}(r)\,, \qquad 
    a_r(t)=\gamma^{\nabla_1}_{m \leftarrow b(r)}(t)\,.
\end{equation}
Here $(m,n)$ belong to a neighbourhood which is both strongly $\nabla_1$-convex and strongly $\nabla_2$-convex, contained in the domain of $\varpi$. If $\nabla_1=\nabla_2=\nabla$, then by \eqref{Eq: JLJR formula}
        \begin{equation}\label{Eq: explicit contrast}
            D(m, n) = \iint_{[0,1]^2} r \, \varpi_{(\gamma_{m \leftarrow n}(1-r+t\,r), \gamma_{n \leftarrow m}(r))} \left( \dot{\gamma}_{m \leftarrow n}(1-r+t\,r), \dot{\gamma}_{n \leftarrow m}(r) \right) \mathrm{d}t \, \mathrm{d}r\,,
        \end{equation}
        where $(m, n)$ belong to a strongly $\nabla$-convex neighbourhood contained in the domain of $\varpi$.
        \item The real-valued function $D$ is a contrast function in the sense of Eguchi. Moreover, the algorithm of Eguchi for extracting the metric and connection from $D$ (cf.~\eqref{Eq: metric contrast E} and \eqref{Eq: connection contrast E}) coincides with that derived from $\mathrm{d}^L \mathrm{d}^R D$ (cf.~\eqref{Eq: induced metric} and \eqref{Eq: induced connection}). Hence, the bi-form setting allows to  describe contrast functions as a particular case of bi-forms, namely  contrast functions are realized as bi-exact contrast bi-forms. 
    \end{enumerate}
\end{remark}
In analogy to \cref{Prop: further considerations pre-contrast}, we analyze the statistical structure  induced by a bi-exact component  of a contrast bi-form.
\begin{proposition}\label{Prop: further consideration contrast}
    Let $\varpi=\mathrm d^LS$ be a left-exact contrast bi-form with $S\in \Omega^{0,1}_{\Delta_M}(M\mid M)$ vanishing along $\Delta_M$, and let $J_R$ be a system of statistical right homotopy operators. 
\begin{enumerate}[(a)]
\item The term  $\varpi_D=\mathrm d^L\mathrm d^RD$, with $D=J_RS$, is a contrast bi-form, and 
    \begin{equation}
        g^{\varpi_D}=g^\varpi\,.
    \end{equation}
\item    
The difference tensor $C=\nabla^{\varpi}-\nabla^{\varpi_D}$ is implicitly defined by
    \begin{equation}\label{Eq: new affine connection contrast}
        g^\varpi\left(C(Z,X),Y\right)=\left(\mathcal L_{Z\mid }(\mathrm d^L\mathcal A_R^{0,1}S)\right)_{\restriction \Delta_M}(X,Y)\,,
    \end{equation}
for $X,Y,Z\in\mathfrak{X}(M)$, with  $\mathcal A_L^{0,1}S=J_R\mathrm d^RS$. The bi-forms $\varpi$ and $\varpi_D$ are statistically equivalent if and only if $(M,g^\varpi,\nabla^\varpi)$ is a statistical manifold.
\end{enumerate}
\end{proposition}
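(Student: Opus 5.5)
The plan mirrors the proof of \cref{Prop: further considerations pre-contrast}, now using the right homotopy decomposition of the potential $S$. By \cref{Thm: right projectors} applied to $S\in\Omega^{0,1}_{\Delta_M}(M\mid M)$ (right degree $1\geq1$), we write
\begin{equation}
S=\mathrm d^RJ_RS+J_R\mathrm d^RS=\mathrm d^RD+\mathcal A_R^{0,1}S\,,
\end{equation}
and, applying $\mathrm d^L$ and using that $\mathrm d^L$ and $\mathrm d^R$ commute,
\begin{equation}
\varpi=\mathrm d^LS=\varpi_D+\mathrm d^L\mathcal A_R^{0,1}S\,.
\end{equation}

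For (a), we must show that $\mathrm d^L\mathcal A_R^{0,1}S$ vanishes along $\Delta_M$. Since $S$ vanishes along $\Delta_M$, the projection formula gives $\mathcal A_R^{0,1}S=J_R\mathrm d^RS$. By \cref{Prop: constraint pre-contrast function} together with $S_{\restriction\Delta_M}=0$, we get $(\mathrm d^RS)_{\restriction\Delta_M}=0$ (as in \cref{rem:vS}), so $\mathrm d^RS$ vanishes to order $0$. Because $J_R$ is statistical, it raises the vanishing order (\cref{Prop: further properties JNabla} and the subsequent discussion), hence $\mathcal A_R^{0,1}S$ vanishes to first order along $\Delta_M$. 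Its left derivative $\mathrm d^L\mathcal A_R^{0,1}S$ then vanishes along $\Delta_M$ by the remark on Frölicher--Nijenhuis derivatives. Consequently $(\varpi_D)_{\restriction\Delta_M}=\varpi_{\restriction\Delta_M}$, which gives $g^{\varpi_D}=g^\varpi$, and $\varpi_D$ is a contrast bi-form.

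For the formula in (b), we subtract the defining relations \eqref{Eq: induced connection} for $\varpi$ and $\varpi_D$. Since the metrics coincide, the $g([Z,X],Y)$ terms cancel, leaving
\begin{equation}
g^\varpi(C(Z,X),Y)=\bigl(\mathcal L_{Z\mid}(\varpi-\varpi_D)\bigr)_{\restriction\Delta_M}(X,Y)\,,
\end{equation}
which is \eqref{Eq: new affine connection contrast}. Note that the symbol $\mathcal A_L^{0,1}$ in the statement should be read as $\mathcal A_R^{0,1}$.

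The main obstacle is the equivalence in (b). For the direct implication, we assume $\varpi$ is statistical, so $(\mathrm d^R\varpi)_{\restriction\Delta_M}=0$ by \cref{Thm: torsion bi-form}. We would then reuse the chain \eqref{12.05.4}--\eqref{12.05.5}. Since $S_{\restriction\Delta_M}=0$, the left homotopy formula gives $\mathrm d^RS=J_L\mathrm d^L\mathrm d^RS$, and therefore
\begin{equation}
\mathcal A_R^{0,1}S=J_RJ_L\mathrm d^R\varpi\,.
\end{equation}
Here $J_L$ is a statistical left system, which exists by \cref{Thm: stat homotopy op exists}. Two statistical homotopy operators raise the vanishing order twice, so $\mathcal A_R^{0,1}S$ vanishes to second order and $\mathrm d^L\mathcal A_R^{0,1}S$ vanishes to first order. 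By \cref{th:equi}, $\varpi$ and $\varpi_D$ are then statistically equivalent. The delicate point is checking that the two-step order-raising argument applies to bi-forms defined only near $\Delta_M$; it is the same as in the proof of \cref{Thm: from pre-contrast to contrast functions}, so I expect it to go through. For the converse, statistical equivalence gives $\nabla^\varpi=\nabla^{\varpi_D}$. Since $\varpi_D$ is bi-exact, both $\nabla^{\varpi_D}$ and its conjugate are torsion-free by \cref{Thm: torsion bi-form}, and because $g^\varpi=g^{\varpi_D}$ the same holds for $\nabla^\varpi$ and $(\nabla^\varpi)^\dag$. Hence $(M,g^\varpi,\nabla^\varpi)$ is a statistical manifold.
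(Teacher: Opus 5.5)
Your argument follows the paper's proof step by step: the splitting $S=\mathrm d^RD+\mathcal A_R^{0,1}S$, subtracting the two instances of \eqref{Eq: induced connection}, the identity $\mathcal A_R^{0,1}S=J_RJ_L\mathrm d^R\varpi$ taken from the proof of \cref{Thm: from pre-contrast to contrast functions}, and torsion-freeness of the bi-exact $\varpi_D$ for the converse. You are also right that $\mathcal A_L^{0,1}$ should read $\mathcal A_R^{0,1}$. The genuine gap, which the paper's own proof shares, is the sentence ``because $J_R$ is statistical, it raises the vanishing order''. By \cref{Def: statistical homotopy operators}, a statistical system only guarantees that $J_R\varpi$ vanishes along $\Delta_M$ (order $0$). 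Raising the order by one is proved in \cref{Prop: further properties JNabla} and \cref{Thm: stat homotopy op exists} only for the connection-induced systems $J^\nabla_L$, $J^\nabla_R$, so your citation does not cover the arbitrary $J_R$ of the statement. Part (a) needs $\mathcal A_R^{0,1}S$ to vanish to first order, and the forward direction of (b) needs second order, so both rest on this step. The local-domain issue you flag is not where the difficulty lies.

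The gap is real: for a general statistical system the statement fails. On $M=\mathbb R^2$, with $x=\pi_L^\ast q$, $y=\pi_R^\ast q$ and $\nabla$ Euclidean, keep $J_R^\nabla$ in every bi-degree except
\[
\tilde J_R^{0,1}=J_R^\nabla+\Lambda\,\mathrm d^R\,,\qquad \tilde J_R^{0,2}=J_R^\nabla-\mathrm d^R\Lambda\,,\qquad \Lambda\big(\beta_{12}\,1\boxtimes(\mathrm dq^1\wedge\mathrm dq^2)\big)=\tfrac14\,\lvert y-x\rvert^2\,\pi_L^\ast\iota^\ast\big(\partial_{y^1}\beta_{12}\big)\,.
\]
This $\tilde J_R$ is a statistical system:
\begin{itemize}
\item the correction terms cancel in the homotopy identities because $\mathrm d^R\circ\mathrm d^R=0$;
\item nilpotency holds because $\Omega^{0,3}=0$ and $(\Lambda\beta)_{\restriction\Delta_M}=0$ give $\tilde J_R^{0,1}\tilde J_R^{0,2}\beta=\Lambda\mathrm d^RJ_R^\nabla\beta-J_R^\nabla\mathrm d^R\Lambda\beta=\Lambda\beta-\Lambda\beta=0$;
\item all outputs vanish along $\Delta_M$.
\end{itemize}
Now take $S=(x^1-y^1)\,1\boxtimes\mathrm dq^1+\big(x^2-y^2+(y^1-x^1)^2\big)\,1\boxtimes\mathrm dq^2$. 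Then $S_{\restriction\Delta_M}=0$ and $g^\varpi=\delta$, while $\Lambda\mathrm d^RS=\tfrac12\lvert y-x\rvert^2$. Hence $D=\tilde J_RS=J_R^\nabla S+\tfrac12\lvert y-x\rvert^2$ and $g^{\varpi_D}=g^{\mathrm d^L\mathrm d^RJ_R^\nabla S}-\delta=\delta-\delta=0$. Indeed $\tilde{\mathcal A}_R^{0,1}S=J_R^\nabla\mathrm d^RS-(y^k-x^k)\,1\boxtimes\mathrm dq^k$ vanishes only to order $0$.

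Your argument, and the paper's, becomes complete once $J_R$ is taken to be $J_R^\nabla$ for some connection, or once ``raises the order of vanishing by one'' is made part of the definition of a statistical system.
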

\begin{proof}
    Let $\Sigma=\mathcal A_R^{0,1}S=J_R\mathrm d^RS$, and consider the decomposition \eqref{Eq: decomposition bi-form contrast}
    \begin{equation}
        S=\mathrm d^RD+\Sigma\,.
    \end{equation}
The action of the left exterior derivative gives
    \begin{equation}
        \varpi=\varpi_D+\mathrm d^L\Sigma\,.
    \end{equation}
    Since $S$ vanishes along $\Delta_M$, the bi-form $\mathrm d^RS$ vanishes along $\Delta_M$ by   \cref{Prop: constraint pre-contrast function} . By the defining property of statistical right homotopy operators, $\Sigma$ vanishes to first order along $\Delta_M$. Consequently, $\mathrm d^L\Sigma$ vanishes along $\Delta_M$, and so it follows that $\varpi_D$ and $\varpi$ share the same restriction to $\Delta_M$, hence they induce the same metric. In particular, $\varpi_D$ is again a contrast bi-form.

    To compute the difference tensor $C$, let $X,Y,Z\in \mathfrak X(M)$. From \eqref{Eq: induced connection}, one has
    \begin{equation}
    \begin{split}
        g^\varpi\left(\nabla^\varpi_Z X, Y\right)&=
        \left(\mathcal L_{Z\mid}\varpi\right)_{\restriction \Delta_M}\left(X,Y\right),
        \\
        g^{\varpi_D}\left(\nabla^{\varpi_S}_Z X, Y\right)
        &=
        \left(\mathcal L_{Z\mid}\varpi_D\right)_{\restriction \Delta_M}\left(X,Y\right)\,.
        \end{split}
    \end{equation}
Since $g^\varpi=g^{\varpi_D}$, comparing the two expressions reads
      \begin{equation}
        g^\varpi\left(C(Z,X),Y\right)
        =g^\varpi\left(\nabla^\varpi_Z X, Y\right)-g^{\varpi_S}\left(\nabla^{\varpi_S}_Z X, Y\right)
        =\left(\mathcal L_{Z\mid}(\mathrm d^L\Sigma)\right)_{\restriction \Delta_M}\left(X,Y\right)\,.
    \end{equation} 
When $(M,g^\varpi, \nabla^\varpi)$ is a statistical manifold one has that $\mathrm d^L\Sigma$ vanishes to first order along $\Delta_M$ by the proof of   \cref{Thm: from pre-contrast to contrast functions} , which gives that $\left(\mathcal L_{Z\mid}(\mathrm d^L\Sigma)\right)_{\restriction \Delta_M}(X,Y)=0$ for any $X,Y,Z$ on $M$. Since $g^\varpi$ is non-degenerate, it yields that $\nabla^\varpi=\nabla^{\varpi_S}$, which means that $\varpi$ is statistically equivalent to  $\varpi_S$. 

Conversely, if $\varpi$ and $\varpi_D$ are statistically equivalent, then the induced connections of $\varpi$ coincide with the one given by $\varpi_D$, which are torsion-free by \cref{Thm: torsion bi-form}. Therefore $(M,g^\varpi,\nabla^\varpi)$ is a statistical manifold.
\end{proof}

\section{A class of Lauritzen manifolds}\label{Sec: dually curvaturefree LM}
Within the class of Lauritzen manifolds, i.e. those manifolds $M$ equipped with a pseudo-Riemannian metric tensor $g$ and an affine connection $\nabla$, we consider those whose connection has a vanishing curvature, i.e. $R^\nabla=R^{(\nabla^\dag)}=0$.
We analyse contrast bi-forms for this family of statistical structures, generalising the notion of partially flat SMAT \cite{H-M-2019} and including as a special case the dually flat statistical manifolds of information geometry \cite{Amari-1985, Amari-2016}.

\subsection{Dually curvature-free Lauritzen manifolds}
Let $(M,g,\nabla)$ be a Lauritzen manifold whose affine connection has vanishing curvature, i.e. $R^\nabla=0$. Then by \cite[Lemma 7.8]{Lee-2018}, for any $c\in M$, there exists a local frame $(U,\{E_j\}_{j=1}^d)$ of vector fields, with $U\subseteq M$ an open neighbourhood of $c$, such that
\begin{equation}
    \nabla E_j = 0 \,.
\end{equation} 
 Moreover, any other local frame $(U',\{E_j'\}_{j=1}^d)$ of this type is related to the former by a constant function $A\colon U\cap U'\to \operatorname{GL}(\mathbb R^d)$ with
  \begin{equation}\label{Eq: parallel frames}
    E_j' = A_j^i\, E_i \,.
\end{equation}
Let $(U,\{\varepsilon^j\}_{j=1}^d)$ be the dual coframe, so that $\varepsilon^j(E_k)=\delta^j_k$. Defining $F_j=(\varepsilon^j)^\sharp$, one sees that $(U,\{F_j\}_{j=1}^d)$ is another local frame on $U\subseteq M$ such that 
\begin{equation}
    \nabla^\dag F_j = 0 \,.
\end{equation}
Moreover, one has
\begin{align}
    \Tor^\nabla(E_i,E_j)&=-[E_i,E_j]\,, \qquad i,j\in \{1,\dots,d\}\,,\\
    \Tor^{(\nabla^\dag)}(F_a,F_b)&=-[F_a,F_b]\,, \qquad a,b\in \{1,\dots,d\}\,,
\end{align}
showing that $(M,g,\nabla)$ is a Lauritzen manifold which may be torsion-full. We refer to such a structure as a \underline{dually curvature-free Lauritzen manifold}. Following standard nomenclature in the literature:
\begin{itemize}
    \item a dually curvature-free Lauritzen manifold that is also a SMAT is called a \underline{partially flat SMAT} \cite{H-M-2019};
    \item a dually curvature-free Lauritzen manifold that is also a statistical manifold is called a \underline{dually flat statistical manifold}   \cite{A-T-2002}.
\end{itemize}
\begin{remark}
We notice  that teleparallel Lauritzen manifolds are a particular subclass of dually curvature-free Lauritzen manifolds \cite{GSI, C-DC-I-M-2023}. Given a parallelisable manifold $M$, we say that a (curvature-free) affine connection $\nabla$ is \underline{teleparallel} if and only if there is global frame $\mathcal B=\{\varepsilon^i\}_{i=1}^d$ of $\T^\ast M$ such that
\begin{equation}
    \nabla^\ast \varepsilon^i = 0 \,.
\end{equation}
We also write $\nabla=\nabla^{\mathcal B}$. Affine connections of this type are also called \emph{pseudo-Weitzenböck affine connections} \cite{Z-K-2019}. 
\end{remark}
\begin{remark}\label{Remark: torsion and topological/algebraic obstruction}
It is well-known that the existence of dually flat statistical structures is subject to topological obstructions. We limit ourselves to recall that  compact manifolds with finite fundamental group do not admit flat affine connections \cite{A-T-2002, A-T-2003}: when the condition of vanishing torsion is relaxed while retaining vanishing curvature, these obstructions disappear (see \cref{Subsec: Lie group} for a class of examples). 
\end{remark}

\subsubsection{A contrast bi-form for dually curvature-free Lauritzen manifolds} 
Let $(M,g,\nabla)$ be a dually curvature-free Lauritzen manifold. Let $\mathcal A$ be an open cover of $M$ such that, for every $U\in\mathcal A$, there exists a local $\nabla$-parallel frame $\{\varepsilon^i\}_{i=1}^d$ of $1$-forms on $U$. For each $U\in\mathcal A$, let $\{\alpha^i\}_{i=1}^d$ be the dual coframe to the $g$-gradient vector fields $\{F_j=(\varepsilon^j)^\sharp\}_{j=1}^d$. Define the local bi-form  
  \begin{equation}\label{Eq: solution bi-form locally}
    \varpi_{\restriction U\times U}=\sum_{i=1}^d\varepsilon^i\boxtimes \alpha^i \,.
\end{equation} 
We first notice that, by the transformation property \eqref{Eq: parallel frames}, this local expression is independent of the choice of parallel frame: for any $U'\in \mathcal A$ with $U\cap U'\ne \emptyset$, we have:
  \begin{equation}
    \left(\varpi_{\restriction U'\times U'}\right)_{\restriction (U\cap U')\times(U\cap U')}
        = \left(\varpi_{\restriction U\times U}\right)_{\restriction (U\cap U')\times(U\cap U')} \,.
\end{equation} 
Thus, the above local definitions patch together to yield a well-defined local bi-form $\varpi$ on the open neighbourhood $\mathscr U_{\mathcal A}=\bigcup_{U\in\mathcal A}(U\times U)$ of the diagonal submanifold $\Delta_M$ in $M\times M$. The germ of $\varpi$ along $\Delta_M$ does not depend on the chosen cover $\mathcal A$. Moreover, since one has $\alpha^i=g_{ij}\varepsilon^j$ for $g_{ij}=g(E_i,E_j)$, one computes that 
  \begin{equation}
    \varpi_{i\mid j}
        =\pi_R^\ast g_{ij} \,.
\end{equation} 
From 
  \begin{align}
    \varpi_{\restriction\Delta_M}
        &= \iota^\ast \varpi_{i\mid j}\, \varepsilon^i\otimes \varepsilon^j \,, \\[4pt]
    \left(\mathcal L_{E_k^L}\varpi\right)_{\restriction\Delta_M}
        &= \left(\iota^\ast\mathcal L_{E_k^L}\varpi_{i\mid j}\right)\, 
           \varepsilon^i\otimes \varepsilon^j \,, 
           \qquad k\in \{1,\dots,d\} \,.
\end{align} 
one sees that $g^\varpi=g$ and $\nabla^\varpi=\nabla$: the local bi-form $\varpi$ defined above is a contrast bi-form for the dually curvature-free Lauritzen manifold $(M,g,\nabla)$.  
We will refer to this contrast bi-form as the \underline{solution bi-form}\footnote{Upon calling such element $\varpi$ as a solution bi-form, we implicitly refer to a more general problem -- which we aim to address in a future paper -- of analysing how to have a contrast bi-form for \emph{any} Lauritzen manifold.} for the given dually curvature-free Lauritzen manifold $(M,g,\nabla)$. It enjoys several remarkable properties that we aim to analyze in the remainder of this section.

We begin by noticing that the dual bi-form $\varpi^\dag$ to the solution bi-form $\varpi$ for $(M,g,\nabla)$ is the solution bi-form of $(M,g,\nabla^\dag)$. Indeed, if $(V,\{F_j\}_{j=1}^d)$ is a local $\nabla^\dag$-parallel frame on $M$, the local frame of $g$-gradient vector fields $(V,\{E_i\}_{i=1}^d)$ of a dual coframe $(V,\{\alpha^j\}_{j=1}^d)$ is $\nabla$-parallel. Hence the solution bi-form on $V\times V$ reads:
\begin{equation}\label{Eq: solution bi-form locally dual}
    \varpi'=\sum_{i=1}^d \alpha^i\boxtimes \varepsilon^i\,,
\end{equation} 
where $(V,\{\varepsilon^i\}_{i=1}^d)$ is the dual coframe of $(V,\{E_i\}_{i=1}^d)$. The bi-form $\varpi'$ clearly is the dual bi-form of the bi-form \eqref{Eq: solution bi-form locally} .

The following result refines \cref{Thm: torsion bi-form} for the solution contrast bi-form \eqref{Eq: solution bi-form locally}. For a general contrast bi-form $\varpi$, the torsion-free condition for $\nabla^\varpi$ is equivalent to $(\dd^L\varpi)_{\restriction\Delta_M}=0$, while the torsion-free condition for $\nabla^{(\varpi^\dag)}$ is equivalent to $(\dd^R\varpi)_{\restriction\Delta_M}=0$. In the curvature-free case, these diagonal conditions strengthen to closedness properties of the solution bi-form, as the next proposition shows.

\begin{proposition}\label{Prop: canonical contrast and precontrast}
Let $\varpi$ be the solution bi-form associated to a dually curvature-free Lauritzen manifold $(M,g,\nabla)$
 \begin{enumerate}[(a)]
    \item 
    The manifold $(M,g,\nabla)$ is partially flat, that is, $(\mathrm d^L\varpi)_{\restriction \Delta_M}=0$, if and only if $\mathrm d^L\varpi=0$. In this case, the germ along $\Delta_M$ of a bi-form $S\in \Omega^{0,1}_{\Delta_M}(M\mid M)$ such that $S_{\restriction \Delta_M}=0$ and $\varpi=\mathrm d^LS$ is unique. In a strongly $\nabla$-convex neighbourhood $\mathscr U$ contained in the domain of definition of $\varpi$, it can be written as \begin{equation}\label{Eq: canonical pre-contrast}
        S(\mid Y)(m,n)
        = g_n\!\left(\dot\gamma_{m\leftarrow n}(0),\, Y_n\right)\,,
    \end{equation}
    for all $Y\in \mathfrak X(M)$ and $(m,n)\in \mathscr U$. Moreover, we have:
    \begin{equation}\label{Eq: S and torsion of dual}
        \mathrm d^RS(\mid Y_0,Y_1)(m,n)=g_n\left(\dot \gamma_{m\leftarrow n}(0),\left(\Tor^{(\nabla^\dag)}(Y_0,Y_1)\right)_n\right)\,,
    \end{equation}
    for all vector fields $Y_0,Y_1$ on $M$, and $(m,n)\in \mathscr U$.
    \item 
    The manifold $(M,g,\nabla)$ is dually flat, that is, $(\mathrm d^L\varpi)_{\restriction \Delta_M}=(\mathrm d^R\varpi)_{\restriction\Delta_M}=0$, if and only if $\mathrm d^L\varpi=0$ and $\mathrm d^R\varpi=0$. In this case, the germ along $\Delta_M$ of a bi-form $D\in \Omega^{0,0}_{\Delta_M}(M\mid M)$ such that $D_{\restriction \Delta_M}=0$, 
    $(\mathrm dD)_{\restriction \Delta_M}=0$, $S=\mathrm d^RD$,
    and $\varpi=\mathrm d^L\mathrm d^R D$ is unique. Moreover, there exists a strongly $\nabla$-convex neighbourhood $\mathscr U$ contained in the domain of definition of $\varpi$ on which $D$ is represented by
    \begin{equation}\label{Eq: canonical contrast}
        D(m,n)
        = -\int_0^1 r\, \lVert\dot\gamma_{n \leftarrow m}(r)\rVert_g^2 \, \mathrm{d}r\,.
    \end{equation}
\end{enumerate}
\end{proposition}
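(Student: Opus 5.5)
The plan is to work in the local parallel frames used to define the solution bi-form, $\varpi_{\restriction U\times U}=\sum_i\varepsilon^i\boxtimes\alpha^i$, with $\nabla^\ast\varepsilon^i=0$, $\alpha^i=g_{ij}\varepsilon^j$ and $(\alpha^i)$ the coframe dual to the $\nabla^\dag$-parallel frame $(F_j)$. By \cref{Thm: moretti} I fix a strongly $\nabla$-convex covering refining $\mathcal A$. Then every geodesic segment $\gamma_{m\leftarrow n}$ with $(m,n)\in\mathscr U$ lies in a single $U$ carrying such frames.

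\emph{The characterisations.} Since $\mathrm d^L$ only acts on the left factor, $\mathrm d^L\varpi=\sum_i \mathrm d\varepsilon^i\boxtimes\alpha^i$. Using $\nabla^\ast\varepsilon^i=0$ and the formula $\Tor^\nabla(E_a,E_b)=-[E_a,E_b]$, one gets $\mathrm d\varepsilon^i(E_a,E_b)=-\varepsilon^i([E_a,E_b])=\varepsilon^i(\Tor^\nabla(E_a,E_b))$. The coefficients of $\mathrm d^L\varpi$ in the basis $\varepsilon^a\wedge\varepsilon^b\boxtimes\alpha^i$ are therefore functions of the left point only. Hence they vanish on $\Delta_M$ iff they vanish identically, i.e.\ iff $\Tor^\nabla=0$; this is consistent with \cref{Thm: torsion bi-form}. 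Symmetrically, $\mathrm d^R\varpi=\sum_i\varepsilon^i\boxtimes\mathrm d\alpha^i$ with $\mathrm d\alpha^i(F_a,F_b)=\alpha^i(\Tor^{(\nabla^\dag)}(F_a,F_b))$, which depends only on the right point. This gives the equivalences in (a) and (b).

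\emph{The potential $S$ in (a).} Uniqueness of the germ follows from \cref{Remark: pre-contrast}(1): two potentials vanishing on $\Delta_M$ differ by a left-closed $(0,1)$-bi-form, which the homotopy formula \eqref{Eq: homotopy formula bi-forms L} identifies with $1\boxtimes(\cdot)_{\restriction\Delta_M}=0$. For existence I take $S=J^\nabla_L\varpi$ and use \eqref{Eq: explicit pre-contrast}:
\[
S(\mid Y)(m,n)=\sum_i\int_0^1\varepsilon^i(\dot\gamma_{m\leftarrow n}(t))\,\mathrm dt\;\alpha^i_n(Y_n).
\]
Since $\varepsilon^i$ is $\nabla^\ast$-parallel and $\dot\gamma$ is $\nabla$-parallel, the integrand is constant in $t$. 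Together with $\sum_i\varepsilon^i\otimes\alpha^i=g$, this gives \eqref{Eq: canonical pre-contrast}.

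\emph{The formula \eqref{Eq: S and torsion of dual}.} I set $\varphi^i(m,n)=\varepsilon^i_n(\dot\gamma_{m\leftarrow n}(0))$, so that $S=\sum_i\varphi^i\,(1\boxtimes\alpha^i)$. From $\mathrm d^LS=\varpi$ one gets $\mathrm d^L\varphi^i=\varepsilon^i\boxtimes 1$. Hence $\mathrm d^L(\mathrm d^R\varphi^i)=0$, and the left homotopy formula gives $\mathrm d^R\varphi^i=1\boxtimes\beta^i$ with $\beta^i=(\mathrm d^R\varphi^i)_{\restriction\Delta_M}$. Because $\varphi^i$ vanishes on $\Delta_M$, \eqref{Eq: dR and iota ast do not commute} (or $\mathrm d(\varphi^i_{\restriction\Delta_M})=0$) yields $\beta^i=-\varepsilon^i$. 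Then
\[
\mathrm d^RS=-\sum_i 1\boxtimes(\varepsilon^i\wedge\alpha^i)+\sum_i\varphi^i\,(1\boxtimes\mathrm d\alpha^i).
\]
The first sum vanishes by the symmetry of $g_{ij}$, and the torsion formula for $\mathrm d\alpha^i$ recombines the second into $g_n(\dot\gamma_{m\leftarrow n}(0),\Tor^{(\nabla^\dag)}(Y_0,Y_1))$.

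\emph{The potential $D$ in (b).} When $\Tor^{(\nabla^\dag)}=0$, \eqref{Eq: S and torsion of dual} gives $\mathrm d^RS=0$. Then \eqref{Eq: homotopy formula bi-forms R} with $S_{\restriction\Delta_M}=0$ gives $S=\mathrm d^RD$ for $D=J^\nabla_RS$, and $D$ vanishes on $\Delta_M$. For the first-order vanishing I use \eqref{12.05.2}: $(\mathrm d^RD)_{\restriction\Delta_M}=S_{\restriction\Delta_M}=0$, hence $(\mathrm d^LD)_{\restriction\Delta_M}=0$. For uniqueness, two such $D$ differ by a function $h$ with $\mathrm d^Rh=0$, because both have $\mathrm d^RD=S$. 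The right homotopy formula then gives $h=h_{\restriction\Delta_M}\boxtimes1=0$.

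For the explicit formula I apply \eqref{Eq: JR explicit}: $D(m,n)=\int_0^1 S(\mid\dot b(r))(m,b(r))\,\mathrm dr$ with $b=\gamma_{n\leftarrow m}$. By affine reparametrisation, $\gamma_{m\leftarrow b(r)}(s)=\gamma_{n\leftarrow m}(r(1-s))$, so $\dot\gamma_{m\leftarrow b(r)}(0)=-r\,\dot\gamma_{n\leftarrow m}(r)$. Substituting into \eqref{Eq: canonical pre-contrast} gives \eqref{Eq: canonical contrast}.

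The step I expect to be the main obstacle is the bookkeeping in the derivation of \eqref{Eq: S and torsion of dual}. Showing that $\mathrm d^R\varphi^i$ is purely right requires applying the left homotopy formula to $\mathrm d^R\varphi^i$ on a neighbourhood where all the frames and geodesics live simultaneously. I would handle this by choosing the strongly convex covering subordinate to $\mathcal A$ from the outset.
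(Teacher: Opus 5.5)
Your proposal is correct and has the same overall architecture as the paper's proof. Both work in local parallel frames, use $\mathrm d^L\varpi=\sum_i\mathrm d\varepsilon^i\boxtimes\alpha^i$ and $\mathrm d^R\varpi=\sum_i\varepsilon^i\boxtimes\mathrm d\alpha^i$ with the torsion identities, take $S=J^\nabla_L\varpi$ and collapse it by parallelism of $\varepsilon^i$, and take $D=J^\nabla_RS$. Several sub-steps are handled differently, as follows.

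For \eqref{Eq: S and torsion of dual}, the paper uses that the closed forms $\varepsilon^j$ are exact on the convex set $C$, writes $\varepsilon^j=\mathrm d\ell^j$, obtains $S=\sum_j(\pi_L^\ast\ell^j-\pi_R^\ast\ell^j)\,(1\boxtimes\alpha^j)$, and differentiates. You never choose primitives. Your identity $\mathrm d^L\varphi^i=\varepsilon^i\boxtimes1$ says that $\varphi^i(\cdot,n)$ is itself a primitive of $\varepsilon^i$ vanishing at $n$, so the exactness the paper invokes is supplied by the homotopy operator. Indeed your $\varphi^i$ coincides with the paper's $\pi_L^\ast\ell^i-\pi_R^\ast\ell^i$. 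The remaining step, that $\mathrm d^R\varphi^i$ is purely right, needs only that $\mathrm d^L$-closed coefficients are constant on the connected left slices $C\times\{n\}$. This settles the localisation issue you flag without invoking homotopy operators on bi-forms that are defined only on $C\times C$. The paper's version is more explicit, since it exhibits $S$ as a difference of affine coordinates; yours stays entirely inside the bi-form calculus.

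For \eqref{Eq: canonical contrast}, the paper evaluates the double integral \eqref{Eq: explicit contrast} for $J^\nabla_RJ^\nabla_L\varpi$. You instead feed the closed form of $S$ into the single integral \eqref{Eq: JR explicit} via $\dot\gamma_{m\leftarrow b(r)}(0)=-r\,\dot\gamma_{n\leftarrow m}(r)$, which is shorter.

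You also derive $\mathrm d^RS=0$ from \eqref{Eq: S and torsion of dual} and prove both uniqueness statements directly from the homotopy formulas. You get the diagonal-versus-global equivalences from the fact that the frame coefficients of $\mathrm d^L\varpi$ (resp.\ $\mathrm d^R\varpi$) depend only on the left (resp.\ right) point. Where the paper leans on \cref{Thm: torsion bi-form}, \cref{Remark: pre-contrast} and \cref{Remark: contrast function}, your argument is more self-contained.

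One minor remark: the condition $S_{\restriction\Delta_M}=0$ plays no role in obtaining $S=\mathrm d^RJ^\nabla_RS$. In bi-degree $(0,1)$ the right homotopy formula gives this from $\mathrm d^RS=0$ alone.
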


\begin{proof}
Let $\mathcal A$ be the cover by domains of local $\nabla$-parallel coframes upon which the solution bi-form $\varpi$ is defined. For each $U\in\mathcal A$, choose a local frame $\{\varepsilon^i\}_{i=1}^d$ of $\nabla$-parallel $1$-forms on $U$, with dual frame $\{E_i\}_{i=1}^d$, and let $\{\alpha^i\}_{i=1}^d$ denote the dual coframe of the $g$-gradient vector fields $\{F_j=(\varepsilon^j)^\sharp\}_{j=1}^d$. On $U\times U$ one has
\begin{equation}
    \mathrm d^L\varpi=\sum_{i=1}^d\mathrm d\varepsilon^i\boxtimes \alpha^i\,, \qquad
    \mathrm d^R\varpi=\sum_{i=1}^d\varepsilon^i\boxtimes \mathrm d\alpha^i\,.
\end{equation}
Moreover, for all vector fields $X,Y$ on $U$,
\begin{equation}\label{Eq: torsion and d}
   \mathrm d\varepsilon^i(X,Y)=\varepsilon^i\left(\Tor^\nabla(X,Y)\right),
   \qquad
   \mathrm d\alpha^i(X,Y)=\alpha^i\left(\Tor^{(\nabla^\dag)}(X,Y)\right).
\end{equation}
 \begin{enumerate}[(a)]
    \item 
    If $\mathrm d^L\varpi=0$, then $\nabla^\varpi=\nabla$ is torsion-free (cf.~\cref{Thm: torsion bi-form}), and so $(M,g,\nabla)$ is partially flat. 
     Conversely, assume that $(M,g,\nabla)$ is partially flat. Then $\nabla$ is torsion-free, and so \eqref{Eq: torsion and d} gives $\mathrm d\varepsilon^i=0$ on every $U\in\mathcal A$. Hence $\mathrm d^L\varpi=0$ on $\mathscr U_{\mathcal A}$.

    Using \cref{Thm: moretti}, we refine $\mathcal A$ to a strongly $\nabla$-convex covering $\mathcal C$ so that $\mathscr U=\mathscr U_{\mathcal C}$ is contained in $\mathscr U_{\mathcal A}$. There exists a unique germ along $\Delta_M$ of a bi-form $S\in \Omega^{0,1}_{\Delta_M}(M\mid M)$ such that $\varpi=\mathrm d^LS$ and $S_{\restriction\Delta_M}=0$, given on $\mathscr U$ by $S=J_L^\nabla \varpi$ (cf. \cref{Remark: pre-contrast}). 

    To compute $S$, fix any $C\in \mathcal C$, choose $U_C\in\mathcal A$ with $C\subseteq U_C$, and restrict the above frames from $U_C$ to $C$. For each $(m,n)\in C\times C$ and $Y\in \mathfrak X(M)$ we have, by \eqref{Eq: explicit pre-contrast}
    \begin{equation}
        \left(J^{\nabla}_L\varpi\right)(\mid Y)(m,n)
        = \int_0^1 \sum_{i=1}^d\varepsilon^i_{\gamma_{m\leftarrow n}(t)}(\dot \gamma_{m\leftarrow n}(t))\, 
        \alpha^i_n(Y_n)\,\mathrm dt\,.
    \end{equation}
    Since any local $\nabla$-parallel $1$-form is constant along $\nabla$-geodesics within its domain, we have
    \begin{equation}\label{Eq: geodesic curvature-free (polished)}
        \varepsilon^i_{\gamma_{m \leftarrow n}(t)}\!\left(\dot \gamma_{m \leftarrow n}(t)\right)
        =\varepsilon^i_n(\dot \gamma_{m\leftarrow n}(0))\,, \qquad i\in \{1,\dots,d\}\,.
    \end{equation}
    Hence, using $g=\sum_{i=1}^d \varepsilon^i\otimes \alpha^i$, we get
    \begin{equation}
        \left(J^\nabla_L\varpi\right)(\mid Y)(m,n)
        = g_n\!\left(\dot \gamma_{m\leftarrow n}(0),\,Y_n\right)\,,
    \end{equation}
    thus proving \eqref{Eq: canonical pre-contrast}.

  To prove \eqref{Eq: S and torsion of dual}, we apply both sides to the local frame $\{F_j\}_{j=1}^d$. Since $\nabla$ is torsion-free, it follows from \eqref{Eq: torsion and d} that each $\varepsilon^j$ is an exact $1$-form on $C$, and we denote by $\ell^j\colon C\to \mathbb R$ a smooth function such that $\varepsilon^j=\mathrm d\ell^j$. Since each $F_j$ is the $g$-gradient of $\varepsilon^j=\mathrm d\ell^j$, we have on $C\times C$:
\begin{equation}
    S(\mid F_j)=\pi_L^\ast\ell^j-\pi_R^\ast\ell^j\,, \qquad j\in \{1,\dots,d\}\,.
\end{equation}
Therefore:
\begin{equation}
    S=\sum_{j=1}^d\left(\pi_L^\ast \ell^j-\pi_R^\ast \ell^j\right)\, \left(1 \boxtimes \alpha^j\right)\,.
\end{equation}
Applying $\mathrm d^R$, we obtain:
\begin{align}
    \mathrm d^RS&=-\sum_{j=1}^d 1\boxtimes\left(\mathrm d\ell^j\wedge \alpha^j\right)
    +\sum_{j=1}^d\left(\pi_L^\ast \ell^j-\pi_R^\ast \ell^j\right)\,\left( 1\boxtimes \mathrm d\alpha^j\right)\,.
\end{align}
Since $\mathrm d\ell^j=\varepsilon^j$, the first term vanishes, since the term
\begin{equation}
    \sum_{j=1}^d \varepsilon^j\wedge \alpha^j=0\,
\end{equation}
is the alternating component of the metric tensor $g=\sum_{j=1}^d \varepsilon^j\otimes \alpha^j$. Therefore, we have 
\begin{equation}
    \mathrm d^RS=\sum_{j=1}^d\left(\pi_L^\ast \ell^j-\pi_R^\ast \ell^j\right)\,\left( 1\boxtimes \mathrm d\alpha^j\right)\,.
\end{equation}
Using again \eqref{Eq: torsion and d}, restricted to $C$, we have
\begin{equation}
    \mathrm d\alpha^j(Y_0,Y_1)=\alpha^j\left(\Tor^{(\nabla^\dag)}(Y_0,Y_1)\right)\,, \qquad Y_0,Y_1\in \mathfrak X(C)\,.
\end{equation} 
Inserting this identity into the previous expression for $\mathrm d^RS$ yields \eqref{Eq: S and torsion of dual} on $C$. Since $C$ is an arbitrary element of $\mathcal C$, the conclusion follows.
    \item 
    If $\mathrm d^L\varpi=0$ and $\mathrm d^R\varpi=0$, then $(M,g,\nabla)$ is a statistical manifold (by \cref{Thm: torsion bi-form}), and therefore dually flat. Conversely, assume that $(M,g,\nabla)$ is a dually flat statistical manifold. Then both $\nabla$ and $\nabla^\dag$ are torsion-free, and so \eqref{Eq: torsion and d} gives $\mathrm d\varepsilon^i=0$ and $\mathrm d\alpha^i=0$ on every $U\in\mathcal A$. Hence $\mathrm d^L\varpi=0$ and $\mathrm d^R\varpi=0$ on $\mathscr U_{\mathcal A}$.

    On the neighbourhood $\mathscr U=\mathscr U_{\mathcal C}$ constructed as above, there exists a unique germ along $\Delta_M$ of a bi-form $D\in \Omega^{0,0}_{\Delta_M}(M\mid M)$ such that $D_{\restriction \Delta_M}=0$, $(\mathrm dD)_{\restriction \Delta_M}=0$, $S=\mathrm d^RD$, and $\varpi=\mathrm d^L\mathrm d^R D$. It is represented on $\mathscr U$ by
    \begin{equation}
        D=J_R^\nabla J_L^\nabla\varpi=J_R^\nabla S
    \end{equation} 
     (cf. \cref{Remark: contrast function}). For each $C\in \mathcal C$ and $(m,n)\in C\times C$, by \eqref{Eq: explicit contrast},
    \begin{equation}
        D(m,n)
        = \iint_{[0,1]^2} r\, \sum_{i=1}^d
        \varepsilon^i_{\gamma_{m \leftarrow n}(1-r+t\,r)}\!\left( \dot \gamma_{m \leftarrow n}(1-r+t\,r)\right)
        \alpha^i_{\gamma_{n \leftarrow m}(r)}\!\left(\dot \gamma_{n \leftarrow m}(r)\right)
        \, \mathrm dt\,\mathrm dr\,.
    \end{equation}
    Since each $\varepsilon^i$ is $\nabla$-parallel,
    \begin{equation}
        \varepsilon^i_{\gamma_{m \leftarrow n}(1-r+t\,r)}\!\left( \dot \gamma_{m \leftarrow n}(1-r+t\,r)\right)
        =\varepsilon^i_{\gamma_{m \leftarrow n}(1-r)}\!\left( \dot \gamma_{m \leftarrow n}(1-r)\right)
        =-\varepsilon^i_{\gamma_{n \leftarrow m}(r)}\!\left( \dot \gamma_{n \leftarrow m}(r)\right)\,,
    \end{equation}
    where the last equality follows from
    $\gamma_{m\leftarrow n}(1-r)=\gamma_{n\leftarrow m}(r)$ and
    $\dot \gamma_{m\leftarrow n}(1-r)=-\dot \gamma_{n\leftarrow m}(r)$. Recalling that $g=\sum_{i=1}^d \varepsilon^i\otimes \alpha^i$, we have 
      \begin{equation}
       D(m,n)
        = -\iint_{[0,1]^2} r\,\lVert \dot \gamma_{n\leftarrow m}(r)\rVert_g^2\,\mathrm dt\,\mathrm dr
        = -\int_0^1 r\,\lVert \dot \gamma_{n\leftarrow m}(r)\rVert_g^2\,\mathrm dr\,.
    \end{equation} 
 \end{enumerate}
\end{proof}

\begin{remark}
Notice  that the fiberwise linear expression of~\eqref{Eq: canonical pre-contrast} recovers the canonical pre-contrast function introduced by Henmi and Matsuzoe in the partially flat setting~\cite{H-M-2019}. Likewise, up to our sign convention (cf.~\cref{Remark: sign convention}), the expression~\eqref{Eq: canonical contrast} coincides with the canonical divergence of Ay and Amari~\cite{A-A-2015}.

 In this regard, the solution bi-form provides a natural extension of these canonical statistical potentials to the general dually curvature-free setting. We also remark that the theory of strongly convex neighbourhoods ensures a well-defined domain containing the entire diagonal submanifold for all such potentials, therefore globalising an analysis where such potentials are  only locally defined on squares of convex subsets.
 \end{remark}

Finally, we provide an alternative expression for the solution bi-form, 
which is particularly convenient when the parallel transport is explicitly known. 
\begin{proposition}\label{Prop: canonical bi-form parallel transport}
Let $\varpi$ be the solution bi-form associated to a dually curvature-free 
Lauritzen manifold $(M,g,\nabla)$. There exists a strongly $\nabla$-convex neighbourhood $\mathscr U$ contained in the domain of definition of $\varpi$ such that, for any pair of vector fields $X,Y$ on $M$ and every $(m,n)\in\mathscr U$, one has
\begin{equation}\label{Eq: canonical bi-form parallel transport}
    \varpi(X\mid Y)(m,n)=g_n\!\left(P^\nabla_{n\leftarrow m} X_m, Y_n\right)\,,
\end{equation}
where $P^\nabla_{n\leftarrow m}\colon \T_mM\to \T_nM$ denotes the $\nabla$-parallel transport along the unique $\nabla$-geodesic joining $m$ to $n$ within $\mathscr U$.
\end{proposition}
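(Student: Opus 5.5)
The plan is to work locally on squares $C\times C$ of a strongly $\nabla$-convex covering and to show that both sides of \eqref{Eq: canonical bi-form parallel transport} agree there. First, following the proof of \cref{Prop: canonical contrast and precontrast}, use \cref{Thm: moretti} to refine the cover $\mathcal A$ of parallel-frame domains to a strongly $\nabla$-convex covering $\mathcal C$. Set $\mathscr U=\mathscr U_{\mathcal C}\subseteq\mathscr U_{\mathcal A}$. For $(m,n)\in\mathscr U$, choose $C\in\mathcal C$ with $m,n\in C$ and $U_C\in\mathcal A$ with $C\subseteq U_C$. The geodesic $\gamma_{n\leftarrow m}$ then lies in $C\subseteq U_C$, so the $\nabla$-parallel frame $\{E_i\}$, its dual coframe $\{\varepsilon^i\}$, the gradients $F_j=(\varepsilon^j)^\sharp$ and the dual coframe $\{\alpha^i\}$ are all defined along the curve. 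This is the key point, and the main thing to check: the parallel transport used in the statement is along a curve contained in a single frame domain.

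Next, expand $X_m=\varepsilon^i_m(X_m)\,(E_i)_m$. Since each $E_i$ is $\nabla$-parallel on $U_C$, it is parallel along $\gamma_{n\leftarrow m}$, so $P^\nabla_{n\leftarrow m}(E_i)_m=(E_i)_n$. Linearity of parallel transport then gives
\[
P^\nabla_{n\leftarrow m}X_m=\varepsilon^i_m(X_m)\,(E_i)_n .
\]
Curvature-freeness makes this independent of the curve; here we only need the curve inside $U_C$.

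Then compute the right-hand side:
\[
g_n\bigl(P^\nabla_{n\leftarrow m}X_m,Y_n\bigr)=\varepsilon^i_m(X_m)\,g_n\bigl((E_i)_n,Y_n\bigr).
\]
We need $g(E_i,\cdot)=\alpha^i$. From $\alpha^i=g_{ij}\varepsilon^j$ with $g_{ij}=g(E_i,E_j)$, evaluating on $E_k$ gives $\alpha^i(E_k)=g_{ik}=g(E_i,E_k)$, so $E_i^\flat=\alpha^i$. Therefore the expression equals
\[
\sum_i\varepsilon^i_m(X_m)\,\alpha^i_n(Y_n)=\Bigl(\sum_i\varepsilon^i\boxtimes\alpha^i\Bigr)(X\mid Y)(m,n),
\]
which is exactly $\varpi(X\mid Y)(m,n)$ by \eqref{Eq: solution bi-form locally}. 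This establishes \eqref{Eq: canonical bi-form parallel transport} on $C\times C$.

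Finally, since $C$ was arbitrary, the identity holds on all of $\mathscr U$. Independence of the choice of $C$ follows from the uniqueness of the geodesic segment, which was discussed after the definition of strongly convex neighbourhoods, together with the frame-independence of $\varpi$ established via \eqref{Eq: parallel frames}. The only genuine obstacle is the domain issue from the first step, namely ensuring the geodesic stays inside a parallel-frame domain. The refinement $\mathcal C\prec\mathcal A$ handles it.
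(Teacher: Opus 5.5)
Your proof is correct and follows essentially the same route as the paper. Both refine the parallel-frame cover $\mathcal A$ to a strongly $\nabla$-convex covering $\mathcal C$ with $\mathscr U_{\mathcal C}\subseteq\mathscr U_{\mathcal A}$, and both expand $X_m$ in the $\nabla$-parallel frame so that $P^\nabla_{n\leftarrow m}(E_i)_m=(E_i)_n$. Both then match the result with $\sum_i\varepsilon^i\boxtimes\alpha^i$. Your explicit check that $E_i^\flat=\alpha^i$ is exactly what the paper encodes as $g=\sum_j\varepsilon^j\otimes\alpha^j$.
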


\begin{proof}
Let $\mathcal A$ be the cover by domains of local $\nabla$-parallel coframes upon which the solution bi-form $\varpi$ is defined. Using \cref{Thm: moretti}, we refine $\mathcal A$ to a strongly $\nabla$-convex covering $\mathcal C$ so that $\mathscr U=\mathscr U_{\mathcal C}$ is contained in $\mathscr U_{\mathcal A}$.
Let $X,Y\in\mathfrak X(M)$ and $(m,n)\in\mathscr U$. Choose $C\in\mathcal C$ such that $m,n\in C$, and choose $U_C\in\mathcal A$ such that $C\subseteq U_C$. Let $\{\varepsilon^i\}_{i=1}^d$ be a local $\nabla$-parallel coframe on $U_C$, restricted to $C$, with dual frame $\{E_j\}_{j=1}^d$, and let $\{\alpha^j\}_{j=1}^d$ denote the dual coframe of the $g$-gradient vector fields associated to $\{\varepsilon^i\}_{i=1}^d$. Since each $E_j$ is $\nabla$-parallel on $C$, we have
\begin{equation}
    P^\nabla_{n \leftarrow m} X_m
    = X^i(m)\, (E_i)_n\,
\end{equation} 
where $X_{\restriction C}=X^iE_i$. Accordingly, from $g=\sum_{j=1}^d \varepsilon^j\otimes \alpha^j$, we write
  \begin{align}
    \varpi(X\mid Y)(m,n)
        &= \sum_{i,j,k=1}^d X^j(m)\,\varepsilon^i(E_j)(n) Y^k(n)\, \alpha^i(E_k)(n)\\
        &=g_n\!\left(X^i(m)\, (E_i)_n,\, Y^j(n)\, (E_j)_n\right)= g_n\!\left(P^\nabla_{n \leftarrow m} X_m,\, Y_n\right)\,.
\end{align} 
Here we also decomposed $Y_{\restriction C}=Y^jE_j$. Since $C$ is arbitrary, the claim is proved.
\end{proof}
We close our analysis by noticing  that, if $(M,\nabla)$ is a \emph{space of absolute parallelism} (that is, the $\nabla$-parallel transport is independent of the chosen curve \cite{Postnikov-2001}) then the solution bi-form can be extended to the whole Cartesian square $M\times M$. Nevertheless, its associated potentials are in general only locally defined in a neighbourhood of the diagonal submanifold.

We find it now interesting to  present two examples of dually curvature-free Lauritzen manifolds, the manifold of faithful states on a finite-dimensional $C^\ast$-algebra and semisimple Lie group with left and right Cartan connections. For each case, we derive the corresponding solution bi-form and study its specific properties.

\subsection{A first example: the manifold of faithful states on a $C^\ast$-algebra}\label{Sec: examples}
Classical probability measures and quantum states can be modeled within a common $C^\ast$-algebraic framework that we now briefly recall \cite{G-R-2006-02,G-R-2006,K2014a,K2016,N2018,N2021a,CDJS2024,CIJM2019,CJS2020a,CJS2023}.

\subsubsection{Preliminaries on $C^\ast$-algebras} 
Recall that a complex Banach algebra $(\mathsf A,+,\cdot,\|\cdot\|)$ is a $C^\ast$-algebra when there is an anti-linear involution $*\colon \mathsf{A}\rightarrow\mathsf{A}$ such that $\|x\,x^{*}\|=\|x\|\,\|x^{*}\|$.  
Typical examples of $C^\ast$-algebras are continuous functions on compact Hausdorff spaces, and bounded linear operators on Hilbert spaces.

An element $a$ in the $C^\ast$-algebra $\mathsf{A}$ can be characterized as follows:
\begin{itemize}
    \item $a$ is \emph{self-adjoint} if $a^\ast=a$; we denote by $\mathsf{A}_{\sa}$ the set of self-adjoint elements;
    \item $a$ is \emph{positive} if there exists $c\in \mathsf A$ such that $a=c^\ast c$; we denote by $\mathsf{A}_+$ the set of positive elements.
    \item if $\mathsf{A}$ has an identity element $\mathbb{I}$, $a$ is \emph{invertible} if there exists $b\in\mathsf{A}$ such that $ab=ba=\mathbb{I}$; we denote by $\GL(\mathsf A)$ the set of invertible elements.
\end{itemize}
We observe that $\mathsf{A}_{\sa}$ is a real vector space that is closed in $\mathsf A$, while $\mathsf{A}_+$ is a closed convex cone in $\mathsf A$, and $\GL(\mathsf A)$ is an open subset of $\mathsf A$ that carries a  Banach--Lie group structure \cite{U1985}. We also set
\begin{equation}
   \mathsf{A}_{++}=\GL(\mathsf A)\cap \mathsf{A}_+\,,
\end{equation}
which is open in $\mathsf{A}_{\sa}$ with respect to the relative topology.

A \emph{state}  on a $C^\ast$-algebra $\mathsf A$ is a complex-linear functional $\xi\in \mathsf A^\ast$ such that:
\begin{itemize}
    \item $\xi$ is \emph{positive}, that is, $\xi(a^\ast a)\ge 0$ for each $a\in \mathsf A$;
    \item $\xi$ is \emph{normalised}, that is, $\xi(\mathbf{1}_{\mathsf A})=1$.
\end{itemize}
We denote by $\mathcal S(\mathsf A)$ the collection of states on $\mathsf A$. Further, a state $\xi$ on $\mathsf A$:
\begin{itemize}
    \item is \emph{faithful} if $\xi(a^\ast a)>0$ for each $a\in \mathsf A\setminus\{0\}$; 
    \item is \emph{tracial} if for any $a,b\in \mathsf A$ it holds $\tau(ab)=\tau(ba)$.
\end{itemize}
We denote by $\mathcal S_{\mathrm f}(\mathsf A)$ the collection of faithful states on $\mathsf A$, and by $\mathcal T(\mathsf A)$ the set of tracial states on $\mathsf A$. Notice that $\mathcal T(\mathsf A)$ is not empty if the algebra $\mathsf A$ is finite dimensional, while  tracial states may not exist for a general $C^*$-algebra. 

For finite-dimensional $C^{\ast}$-algebras, the Wedderburn–Artin theorem implies there is a $C^\ast$-isomorphism
  \begin{equation}
\Phi\colon \mathsf A\longrightarrow \bigoplus_{j=0}^k\mathsf A_j\,,
\end{equation} 
where each $\mathsf A_j$ is a $C^\ast$-algebra which is in turn $C^\ast$-isomorphic to the $C^\ast$-algebra $\mathcal{B}(\mathcal{H}_{j})$ of bounded operators on a suitable Hilbert space $\mathcal H_j$. Given any vector $\left(\lambda_0,\dots,\lambda_k\right)\in \mathbb R^{1+k}_{\ge0}$ (i.e. $\lambda_j\geq 0$), such that $\sum_{j=0}^k \lambda_j=1$, one proves that 
\begin{equation}\label{Eq: tau decomposition}
\tau(a)=\sum_{j=0}^k \lambda^j\,\overline{\Tr}\left(\Phi_j(a)\right)\,.
\end{equation}
is a tracial state on $\mathsf A$, where $\overline{\Tr}$ denotes the normalized trace on $\mathcal B(\mathcal H_j)$, and $\Phi_j\colon \mathsf A\to \mathsf A_j$ is the composition of $\Phi$ with the projection onto the $j$-th factor. Any tracial state on $\mathsf A$ is obtained in this way, and it is faithful if and only if each $\lambda_j$ is nonnegative.

\subsubsection{The geometry of $\mathcal {S}_{\mathrm f}(\mathsf A)$}
From now on we assume $\mathsf A$ to be finite-dimensional: faithful states always exist, and we can represent states by density operators inside the algebra, as we now describe.

Fix a faithful tracial state $\tau$ on $\mathsf A$. The bilinear product $\langle a|b\rangle_{\tau}=\tau(a^{*}\,b)$ endows $\mathsf{A}$ with a Hilbert space structure so that Riesz theorem allows the identification of $\mathsf{A}$ with $\mathsf{A}^{*}$. 
In particular, for any  faithful state $\rho$ on $\mathsf A$ there exists a unique element $\hat\rho\in \mathsf A_{++}$ such that, for any $a\in\mathsf A$, it is 
\begin{equation}
\rho(a)=\tau(a\,\hat\rho)\,.
\end{equation}
We call $\hat\rho$ the \underline{density operator} of $\rho$ with respect to $\tau$. Since $\rho$ is normalised, $\tau(\hat\rho)=1$, and we obtain the bijective correspondence
\begin{equation}\label{Eq: bijective correspondence}
\mathcal S_{\mathrm f}(\mathsf A)\cong\mathsf A_{++}^\tau=\{a\in\mathsf A_{++}\mid \tau(a)=1\}\,.
\end{equation}
The space $\mathsf A_{\sa}$ is a finite-dimensional real vector space, and the subset $\mathsf A_{++}\subset\mathsf A_{\sa}$ is open, hence a smooth manifold. The set $\mathsf A_{++}^\tau$ is the intersection of $\mathsf A_{++}$ with the affine hyperplane $\{a\in\mathsf A_{\sa}\mid \tau(a)=1\}$, therefore it is a codimension-one submanifold embedded into  $\mathsf A_{\sa}$.
Tangent spaces of $\mathsf A_{\sa}$ are canonically identified with $\mathsf A_{\sa}$ itself. For $c\in\mathsf A_{\sa}$, denote by $\widetilde X_c$ the complete vector field on $\mathsf A_{\sa}$ whose flow is given by
\begin{equation}
\phi^{\widetilde X_c}_t(\hat\rho)=\hat\rho+t\,c\,, \qquad (t,\hat \rho)\in \mathbb R\times \mathsf A_{\sa}\,.
\end{equation}
The family $\{\widetilde X_c\}_{c\in\mathsf A_{\sa}}$ generates  the module $\mathfrak X(\mathsf A_{\sa})$, and any basis of $\mathsf A_{\sa}$ determines a global frame. Moreover, for any $\hat\rho\in\mathsf A_{++}$, the correspondence 
  \begin{equation}\label{Eq:iso-sa}
c\in\mathsf A_{\sa}\longmapsto (\widetilde X_c)_{\hat\rho}\in \T_{\hat\rho}\mathsf A_{\sa}
\end{equation} 
is a linear isomorphism.

Given $c\in\mathsf A_{\sa}$, the vector field $\widetilde X_c$ is tangent to $\mathsf A_{++}^\tau$ if and only if $\tau(c)=0$. In this case there exists a unique vector field $X_c$ on $\mathsf A_{++}^\tau$ which is $\mathrm{j}$-related to $\widetilde X_c$, where   $\mathrm{j}\colon \mathsf A_{++}^\tau\hookrightarrow\mathsf A_{\sa}$ denotes the canonical inclusion. Set
\begin{equation}
\mathsf A_{\sa}^0=\ker\tau\cap\mathsf A_{\sa}\,.
\end{equation}
Then $\{X_c\}_{c\in\mathsf A_{\sa}^0}$ generates $\mathfrak X(\mathsf A_{++}^\tau)$, and any basis of $\mathsf A_{\sa}^0$ determines a global frame. The isomorphism \eqref{Eq:iso-sa} restricts to
  \begin{equation}
\mathsf A_{\sa}^0\longrightarrow \T_{\hat\rho}\mathsf A_{++}^\tau\,.
\end{equation} 
We call its inverse the $\tau$-\underline{mixture representation} at $\hat\rho$. For $v\in\T_{\hat\rho}\mathsf A_{++}^\tau$, we denote by $v^\tau\in\mathsf A_{\sa}^0$ the corresponding element. For a vector field $Z$ on $\mathsf A_{++}^\tau$, define $Z^\tau\colon \mathsf A_{++}\to \mathsf A_{\sa}^0$ as 
\begin{equation}
Z^\tau(\hat\rho)=(Z_{\hat\rho})^\tau\,.
\end{equation}

\begin{remark}
The manifold structure on $\mathcal S_{\mathrm f}(\mathsf A)$ induced by the bijection \eqref{Eq: bijective correspondence} with $\mathsf A_{++}^\tau$ does not depend on $\tau$. If $\tau'$ is another faithful tracial state, the manifolds $\mathsf A_{++}^\tau$ and $\mathsf A_{++}^{\tau'}$ are diffeomorphic. An explicit diffeomorphism $\Phi_{\tau'\leftarrow\tau}\colon \mathsf A_{++}^\tau\to\mathsf A_{++}^{\tau'}$ is given, for any $\rho\in\mathsf S_f(\mathsf A)$, by the linear map
\begin{equation}
\Phi_{\tau'\leftarrow\tau}(\hat\rho)=h^{\frac{1}{2}}\,\hat\rho\,h^{\frac{1}{2}}\,,
\end{equation}
where $h\in\mathsf A_{++}$ is the density operator of $\tau$ with respect to $\tau'$, namely
\begin{equation}
    \tau(\hat \rho)=\tau'(h\,\hat\rho)\,, \qquad \rho \in \mathcal S_{\mathrm f}(\mathsf A)\,.
\end{equation}
\end{remark}

The subset $\mathsf A_{++}^\tau$ is convex in $\mathsf A_{\sa}$ and therefore inherits the affine structure of the ambient space $\mathsf A_{\sa}$, which we call the $\tau$-\underline{mixture connection} and denote by $\nabla^{m,\tau}$. This connection is characterised by the teleparallelity condition
\begin{equation}
\nabla^{m,\tau}_{X_c}X_d=0\,, \qquad c,d\in\mathsf A_{\sa}^0\,.
\end{equation}
The connection $\nabla^{m,\tau}$ is both torsion-free and curvature-free as a consequence of the commutation relation
\begin{equation}
    [X_c,X_d]=0\,, \qquad c,d\in \mathsf A_{\sa}^0\,.
\end{equation}
For $\rho,\sigma\in\mathcal S_{\mathrm f}(\mathsf A)$, the unique $\nabla^{m,\tau}$-geodesic joining $\hat\rho$ to  $\hat\sigma$ is the affine segment
\begin{equation}
\gamma_{\hat\sigma\leftarrow\hat\rho}(t)=\hat\rho+t\,(\hat\sigma-\hat\rho)\,,
\end{equation}
and the $\nabla^{m,\tau}$-parallel transport preserves the $\tau$-mixture representation,  in the sense that for any $X\in\mathfrak X(\mathsf A_{++}^\tau)$ one has
\begin{equation}\label{Eq:parallel-transport-mixture}
\left(P^{\nabla^{m,\tau}}_{\hat\sigma\leftarrow\hat\rho}X_{\hat\rho}\right)^\tau=X^\tau(\hat\rho)\,.
\end{equation}
Therefore, $(\mathsf A_{++}^\tau,\nabla^{m,\tau})$ is a space of absolute parallelism.

Let us now consider the existence of Riemannian metric tensors on $\mathsf A_{++}^\tau$ that generalize the Fisher--Rao metric on the interior of the classical simplex \cite{Rao-1945} and the quantum monotone metrics on the manifold of faithful quantum states \cite{MC1991,Petz-1996}. Consider an   \emph{operator monotone function} $f\colon \mathbb R_{>0}\to\mathbb R$  which satisfies  the symmetry condition
\begin{equation}
t\,f(t^{-1})=f(t)\,, \qquad t>0\,,
\end{equation}
and is normalized by the condition $f(1)=1$ \cite{OST2005}. For each $\rho\in\mathcal S_{\mathrm f}(\mathsf A)$ and $X,Y\in\mathfrak X(\mathsf A_{++}^\tau)$, set
\begin{equation}
g^{f,\tau}(X,Y)(\hat\rho)=\tau\!\left(X^\tau(\hat\rho)\,m_f(L_{\hat\rho},R_{\hat\rho})\,Y^\tau(\hat\rho)\right)\,,
\end{equation}
where $L_{\hat\rho},R_{\hat\rho}\colon\mathsf{A}\rightarrow\mathsf{A}$ are, respectively, the left and right multiplication operators by $\hat\rho$, and $m_f$ is the \emph{Morozova--Chentsov function}
 \begin{equation}
m_f(t,s)=\frac{1}{s\,f\!\left(\frac{t}{s}\right)}\,.
\end{equation}
For each $X,Y\in \mathfrak X(\mathsf A_{++}^\tau)$, the map $g^{f,\tau}(X,Y)$ is a smooth function on $\mathsf A_{++}^\tau$: this comes  by the smoothness of the maps $X^\tau$ and $Y^\tau$, the smooth dependence of $L_{\hat\rho}$ and $R_{\hat\rho}$ on $\hat\rho$, the smoothness of multiplication and inversion in $\GL(\mathsf A)$, and the smoothness of the functional calculus on $C^\ast$-algebras \cite{B-Z-2006}. Since for any $h\in C^\infty(\mathsf A_{++}^\tau)$ and $Z\in \mathfrak X(\mathsf A_{++}^\tau)$ one has
\begin{equation}
(h\,Z)^\tau=h\,Z^\tau\,,
\end{equation}
it follows that $g^{f,\tau}$ is a smooth $2$-covariant tensor. Symmetry and nondegeneracy follow by arguments analogous to those in \cite[Theorem 7]{Petz-1996}.

We recover the following cases:
\begin{itemize}

\item When $\mathsf A$ is the $C^\ast$-algebra of bounded operators on a finite-dimensional Hilbert space and $\tau=\overline{\Tr}$ is the normalized trace, one obtains the quantum monotone metric tensors $g^f$ \cite{MC1991,Petz-1996}. In this case,
\begin{equation}
g^f\left(X,Y\right)(\hat\rho)=\overline{\Tr}\left(X(\hat\rho)\,m_f\left(L_{\hat\rho},R_{\hat\rho}\right)\,Y(\hat \rho)\right)\,.
\end{equation}
Here $X(\hat \rho)$ and $Y(\hat \rho)$ denote the $\overline{\Tr}$-mixture representations at $\hat \rho$ of the tangent vectors determined by the vector fields $X$ and $Y$ on $\mathsf A_{++}^{\overline{\Tr}}$, respectively.

\item When $\mathsf A=\mathbb C^{1+d}$ and $\tau=\overline{\tau}$ is the arithmetic mean, the construction yields the Fisher--Rao metric tensor $g^{\FR}$ on the open interior of the scaled $d$-dimensional simplex \cite{C1981a,G-R-2006, G-R-2006-02}. In this case,
  \begin{equation}
g^{\FR}(X,Y)(\hat\rho)=\sum_{j=0}^d \frac{X^j(\hat\rho)\,Y^j(\hat \rho)}{\hat \rho^j}\,.
\end{equation} 
Here $X^j(\hat \rho)$ and $Y^j(\hat \rho)$ denote the $j$-th component of the $\overline{\tau}$-mixture representation at $\hat \rho$ of the tangent vectors determined by the vector fields $X$ and $Y$ on $\mathsf A_{++}^{\overline{\tau}}$, respectively.

\end{itemize}
Therefore, the family $g^{f,\tau}$ provides a unified framework encompassing both the classical Fisher--Rao metric and the quantum monotone metrics.

\subsubsection{Solution bi-form}
Since the $\tau$-mixture affine connection is flat, the triple
\begin{equation}\label{Eq: tau-Lauritzen} 
\left(\mathsf{A}_{++}^\tau, g^{f,\tau},\nabla^{m,\tau}\right)\, 
\end{equation} 
is a partially flat SMAT. We compute its solution bi-form $\varpi^{f,\tau}$, and integrate it, namely determine a potential for it. 

By \cref{Prop: canonical bi-form parallel transport}, and recalling \eqref{Eq:parallel-transport-mixture}, we obtain, for each $X,Y\in \mathfrak X(\mathsf{A}_{++}^\tau)$ and $\rho,\sigma\in \mathcal S_{\mathrm f}(\mathsf A)$:
\begin{equation}\label{eqn:Cstar-bi-form}
\varpi^{f,\tau}(X\mid Y)(\hat \rho,\hat \sigma)=\tau\left(X^\tau(\hat \rho)\, m_f(L_{\hat \sigma},R_{\hat\sigma})\,Y^\tau(\hat \sigma)\right)\,.
\end{equation} 
Since $\nabla^{m,\tau}$ is torsion-free, the solution bi-form is left-exact (cf. \cref{Prop: canonical contrast and precontrast}), and we write 
\begin{equation}\label{Eq: global left exactness} 
\varpi^{f,\tau}=\mathrm d^L S^{f,\tau}\,, 
\end{equation}
where $S^{f,\tau}$ is the $(0,1)$-bi-form given by
\begin{equation} 
S^{f,\tau}(\mid Y)(\hat \rho,\hat\sigma)=\tau\left((\hat \rho-\hat \sigma)\, m_f(L_{\hat \sigma},R_{\hat \sigma})\, Y^\tau(\hat \sigma)\right)\,. 
\end{equation}
The $g^{f,\tau}$-conjugate affine connection is not necessarily torsion-free \cite{N1995b,C-DC-I-M-2023}. As a consequence, the canonical contrast function (cf. \eqref{Eq: canonical contrast}) given by
\begin{equation}\label{Eq: canonical contrast function AA}
    D^{f,\tau}(\hat \rho,\hat \sigma)=-\int_0^1 r\, \|\dot\gamma_{\hat \sigma\leftarrow \hat \rho}(r)\|_{g^{f,\tau}}^2\,\mathrm dr\,
\end{equation}
 does not, in general, reproduce the original Lauritzen manifold \eqref{Eq: tau-Lauritzen} (cf. \cref{Prop: further consideration contrast}). Indeed, $D^{f,\tau}$ induces the same metric of $\varpi^{f,\tau}$, but a possibly different affine connection:
\begin{equation}
    \left(A_{++}^\tau, g^{f,\tau}, \nabla^{D^{f,\tau}}\right)\,.
\end{equation}
As discussed in \cref{Prop: further consideration contrast}, the difference tensor $C=\nabla^{m,\tau}-\nabla^{D^{f,\tau}}$ is implicitly defined by the condition
  \begin{equation}
   g^{f,\tau}\left(C(Z,X),Y\right)
    =
    \left(\mathcal L_{Z\mid }(\mathrm d^L\Sigma^{f,\tau})\right)_{\restriction \Delta_M}(X,Y)\,,
\end{equation} 
where the term 
\begin{equation}
    \Sigma^{f,\tau}
    =
    S^{f,\tau}-\mathrm d^R D^{f,\tau}
\end{equation}
is the $J_R^{\nabla^{m,\tau}}$-right-antiexact part of $S^{f,\tau}$. Moreover, $\nabla^{m,\tau}$ and $\nabla^{D^{f,\tau}}$ coincide if and only if the partially flat SMAT \eqref{Eq: tau-Lauritzen} is a dually flat statistical manifold. 

An important example in which the Lauritzen manifolds induced by $D^{f,\tau}$ and $S^{f,\tau}$ coincide is obtained by selecting the Bogoliubov--Kubo--Mori quantum monotone function
\begin{equation}
    f^{\operatorname{BKM}}(t)=\frac{t-1}{\log t}\,.
\end{equation}
Following an argument analogous to \cite{F-M-A-2019}, one proves the following expression for the canonical contrast function $D^{\operatorname{BKM}}$
\begin{equation}\label{Eq: tau-VnU}
    D^{\operatorname{BKM},\tau}(\hat \rho,\hat \sigma)=-\tau\left(\hat \rho\, (\log \hat \rho -\log\hat \sigma)\right)\,,
\end{equation}
and that it generates the dually flat statistical manifold
\begin{equation}
    \left(\mathsf A_{++}^{\tau}, g^{\operatorname{BKM},\tau},\nabla^{m,\tau}\right)\,.
\end{equation}
Note that contrast function \eqref{Eq: tau-VnU} encompasses both the classical and the quantum relative entropies as particular instances (up to our sign convention \cref{Remark: sign convention}).
\begin{itemize}
    \item If one selects $\mathsf A=\mathbb C^{1+d}$ and  $\tau$ as the arithmetic mean $\overline{\tau}$, then $D^{\operatorname{BKM},\overline{\tau}}$ provides the opposite of the \emph{Kullback--Leibler relative entropy} \cite{A-A-2015, F-M-A-2019}
    \begin{equation}
        \operatorname{KL}(\hat \rho,\hat \sigma)=\sum_{j=0}^d \hat \rho^j \log \frac{\hat \rho^j}{\hat \sigma^j}\,;
    \end{equation}
    \item If one selects $\mathsf A$ as the $C^\ast$-algebra of bounded operators on a complex Hilbert  $\mathcal H$ space and $\tau$ as the normalized trace $\overline{\Tr}$, then $D^{\operatorname{BKM},\tau}$ provides the opposite of the   \emph{von Neumann--Umegaki relative entropy}  \cite{F-M-A-2019,Jencova-2001}:
    \begin{equation}
        \operatorname{VnU}(\hat \rho,\hat \sigma)=\Tr\left(\hat \rho\,  (\log \hat \rho -\log\hat \sigma)\right)\,.
    \end{equation}
\end{itemize}

In the torsion-full setting, the canonical contrast function \eqref{Eq: canonical contrast function AA} has been obtained in some concrete examples  when $\mathsf A=\mathcal B(\mathcal H)$, even if not directly from the integral form.

\begin{equation}
f^{\operatorname{BH}}(t)=\frac{1+t}{2}\,,
\end{equation}
the canonical contrast function \eqref{Eq: canonical contrast function AA} is given by
\begin{equation}\label{eqn:BH-divergence}
D^{\operatorname{BH}}(\hat \rho,\hat \sigma)=-2\,\operatorname{Tr}\left(\hat\sigma\,\log\left(\hat\rho^{-1/2}\,\left(\hat\rho^{1/2}\,\hat\sigma\,\hat\rho^{1/2}\right)^{1/2}\,\hat\rho^{-1/2}\right)\right)\,,
\end{equation}
which is the opposite of the divergence function found in \cite{Braunstein-1996} and \cite[Example 6.2]{Jencova-2001} along a different path. 
For the quantum monotone function 
\begin{equation}
f^{\operatorname{RLD}}(t)=\frac{2\,t}{1+t}\,
\end{equation}
associated to the metric of the right logarithmic derivative, the canonical contrast function \eqref{Eq: canonical contrast function AA} takes the form:
\begin{equation}\label{eqn:RLD-divergence}
D^{\operatorname{RLD}}(\hat \rho,\hat \sigma)=-\operatorname{Tr}\left(\hat\sigma\,\log\left(\hat\sigma^{1/2}\hat\rho^{-1}\hat\sigma^{1/2}\right)\right)\,,
\end{equation}
which is the opposite of the divergence function obtained in \cite[Example 6.3]{Jencova-2001}, again following a different approach. 
It is worth stressing how both divergence functions in \eqref{eqn:BH-divergence} and \eqref{eqn:RLD-divergence} do not allow to recover the partially flat SMAT structure of the underlying model since the torsion is non-vanishing, while the left-exact bi-form in equation \eqref{eqn:Cstar-bi-form} does.

\subsection{A second example: the left and right Cartan connections on semisimple Lie groups}\label{Subsec: Lie group}
Any Lie group is a space of absolute parallelism with respect to both the left and the right Cartan connections~\cite{CartanSchouten}.
Moreover, semisimple Lie groups admit a canonical pseudo-Riemannian metric,
the Cartan--Killing form, which makes the corresponding dually curvature-free Lauritzen structure left-invariant.

Let $G$ be a Lie group with Lie algebra $\mathfrak g$, identified with the space of
left-invariant vector fields on $G$. An affine connection $\nabla$ on $G$ is said to be
\emph{left-invariant} if and only if $(L_z)^\ast\nabla=\nabla$ for every $z\in G$,
where $L_z\colon G\to G$ denotes the left translation, $L_z(x)=z\cdot x$.
Each left-invariant  connection induces an $\mathbb R$-bilinear map
$A^\nabla\colon\mathfrak g\times\mathfrak g\to\mathfrak g$ defined by
\begin{equation}
    A^\nabla(X,Y)=\nabla_XY\,.
\end{equation}
Conversely, any $\mathbb R$-bilinear map $\mathfrak g\times\mathfrak g\to\mathfrak g$
uniquely determines a left-invariant affine connection on $G$.

In terms of  the Lie bracket on $\mathfrak g$, we introduce the one-parameter family
of left-invariant affine connections $(\nabla^\lambda)_{\lambda\in\mathbb R}$ given by
\begin{equation}
    A^\lambda(X,Y)=\lambda\,[X,Y]\,,\qquad X,Y\in\mathfrak g\,,
\end{equation}
for which 
\begin{align}
    \Tor^\lambda(X,Y)&=(2\lambda-1)\,[X,Y]\,,\label{Eq: torsion invariant}\\
    R^\lambda(X,Y)Z&=\lambda(\lambda-1)[[X,Y],Z]\,.\label{Eq: curvature invariant}
\end{align}
This shows that  $\nabla^+=\nabla^0$ and $\nabla^-=\nabla^1$ are curvature-free affine connections,
and torsion-free if and only if $G$ is abelian. They are known respectively as the \emph{left} and \emph{right Cartan connections} since  $(G,\nabla^+)$ and $(G,\nabla^-)$ are curvature-free (cf. \eqref{Eq: curvature invariant}) and, for every $(x,y)\in G\times G$, it is 
\begin{align}
    P^{\nabla^+}_{y\leftarrow x}&=(L_{y\cdot x^{-1}})_x'\,,\label{Eq: + transport}\\
    P^{\nabla^-}_{y\leftarrow x}&=(R_{y\cdot x^{-1}})_x'\,\label{Eq: - transport}
\end{align}
(here, for $z\in G$, $R_z\colon G\to G$ denotes the right translation $R_z(y)=y\cdot z$).
\begin{lemma}\label{Lemma: semisimple Lie groups}
Let $g$ be a left-invariant pseudo-Riemannian metric on a Lie group $G$, that is,
\begin{equation}\label{Eq: left invariant metric}
    (L_z)^\ast g=g\,,\qquad z\in G\,.
\end{equation}
One has
\begin{enumerate}[(a)]
    \item the connection $\nabla^+$ is $g$-self-conjugate;
    \item the connection $\nabla^-$ is $g$-self-conjugate if and only if $g$ is right-invariant, i.e.,
    \begin{equation}
        (R_z)^\ast g=g\,,\qquad z\in G\,;
    \end{equation}
    \item the term $\overline\nabla=\frac{\nabla^++\nabla^-}{2}$ gives the Levi-Civita connection of $g$ if and only if $g$ is bi-invariant.
\end{enumerate}
\end{lemma}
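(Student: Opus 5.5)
The plan is to test the compatibility condition \eqref{Eq: compatibility condition} on left-invariant vector fields. Since $g$ is left-invariant, $g(X,Y)$ is a constant function for $X,Y\in\mathfrak g$. Both sides of \eqref{Eq: compatibility condition} are tensorial in $Z$, and a connection is determined by its values on a global frame. Hence $\nabla$ is $g$-self-conjugate if and only if $\mathcal L_Z(g(X,Y))=g(\nabla_ZX,Y)+g(X,\nabla_ZY)$ holds for all $X,Y,Z\in\mathfrak g$. The left-hand side vanishes, so for a left-invariant connection with bilinear map $A^\nabla$ the condition reduces to the algebraic identity $g(A^\nabla(Z,X),Y)+g(X,A^\nabla(Z,Y))=0$ on $\mathfrak g$, i.e. each $A^\nabla(Z,\cdot)$ is $g_e$-skew.

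\emph{Step (a).} For $\nabla^+=\nabla^0$ we have $A^0=0$, so the identity holds trivially and $\nabla^+$ is self-conjugate.

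\emph{Step (b).} For $\nabla^-=\nabla^1$ we have $A^1(Z,X)=[Z,X]$, so self-conjugacy is equivalent to $g([Z,X],Y)+g(X,[Z,Y])=0$ for all $X,Y,Z\in\mathfrak g$, i.e. $\ad$-invariance of $g_e$. It remains to show that $\ad$-invariance of a left-invariant metric is equivalent to right-invariance. Right-invariance of a left-invariant $g$ is equivalent to $\Ad_z$-invariance of $g_e$ for all $z$, since $R_z=L_z\circ C_{z^{-1}}$ with $C$ conjugation. $\Ad$-invariance implies $\ad$-invariance by differentiating along $z=\exp(tZ)$. The converse, $\ad$-invariance implying $\Ad$-invariance, is the main obstacle, because it needs $G$ connected. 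The paper assumes manifolds connected throughout, and under that assumption $G$ is generated by $\exp(\mathfrak g)$ and $\Ad_{\exp Z}=e^{\ad Z}$. Then $\ad$-skewness gives $\Ad_{\exp Z}$-invariance, and hence invariance under the whole group.

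\emph{Step (c).} $\overline\nabla=\nabla^{1/2}$ has torsion $(2\cdot\tfrac12-1)[X,Y]=0$ by \eqref{Eq: torsion invariant}, so it is torsion-free. By uniqueness of the Levi-Civita connection, $\overline\nabla=\nabla^g$ iff $\overline\nabla$ is $g$-self-conjugate. With $A^{1/2}=\tfrac12[\cdot,\cdot]$, self-conjugacy is again exactly $\ad$-invariance of $g_e$. By the argument of (b) this is equivalent to right-invariance, which together with the assumed left-invariance is bi-invariance.
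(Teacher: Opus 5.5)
Your proof is correct and follows the paper's route. You reduce self-conjugacy to the skew-symmetry condition on left-invariant fields, note that it is trivial for $\nabla^+$ and amounts to $\ad$-invariance of $g_e$ for $\nabla^-$, and use torsion-freeness of $\overline\nabla=\nabla^{1/2}$ plus uniqueness of the Levi-Civita connection for (c). The differences are minor: in (c) the paper uses the affine dependence of $\nabla^\dag$ on $\nabla$ instead of computing $A^{1/2}$ directly, and it only recalls that $\ad$-invariance is equivalent to bi-invariance, whereas you prove it via $\Ad_{\exp Z}=e^{\ad Z}$ and correctly flag that this needs the paper's standing connectedness assumption.
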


\begin{proof}
Let $X,Y,Z\in\mathfrak g$. Since $g(X,Y)$ is constant, the compatibility
condition \eqref{Eq: compatibility condition} for a left-invariant affine connection $\nabla$ reduces to
\begin{equation}
    0=g(\nabla_ZX,Y)+g(X,\nabla^\dag_ZY)\,.
\end{equation}
\begin{enumerate}[(a)]
\item If $\nabla=\nabla^+$, we immediately have $(\nabla^+)^\dag=\nabla^+$.
\item Recall that $g$ is bi-invariant if and only if
\begin{equation}
    g([Z,X],Y)+g(X,[Z,Y])=0\,:
\end{equation}
this gives that  $g$ is bi-invariant if and only if when $\nabla^-$ is $g$-self-conjugate.
\item The term $\overline\nabla=\nabla^{1/2}$ is a torsion-free affine connection on $G$ (cf. \eqref{Eq: torsion invariant}). In order to coincide with the Levi-Civita connection of $G$, the connection $\overline\nabla$ needs to be $g$-compatible, i.e. $g$-self-conjugate. Since $\nabla^+$ is so, $\overline\nabla$ is $g$-self-conjugate if and only if $\nabla^-$ is such, and this happens precisely when $g$ is bi-invariant.
\end{enumerate}

\end{proof}
A Lie group is semisimple if and only if the Killing--Cartan form $\kappa$
is non-degenerate. For all $X,Y\in\mathfrak g$, $\kappa$ is defined by
\begin{equation}
    \kappa(X,Y)=\Tr(\ad_X\circ\ad_Y)\,,
\end{equation}
where $\Tr$ is the trace in the real vector space of $\mathbb R$-linear endomorphisms of the real vector space $\mathfrak g$, and $\ad$ is the adjoint representation of $\mathfrak g$ on $\mathfrak g$. One proves that  $\kappa$ is symmetric and bi-invariant, hence a bi-invariant pseudo--Riemannian metric on $G$. 
Since $G$ is a non-trivial semisimple, it is non-abelian, and therefore the connections $\nabla^\pm$
are not torsion-free. Thus, for a nontrivial semisimple Lie group $G$, the triples
\begin{equation}\label{Eq: semisimple Lie groups Lauritzen manifold}
    (G,\kappa,\nabla^\pm)
\end{equation}
are dually curvature-free Lauritzen manifolds which are not partially flat SMATs. 
\begin{remark}
    The triples \eqref{Eq: semisimple Lie groups Lauritzen manifold} are nontrivial instances of \emph{Cartan--Riemannian manifolds}, that is, triples $(M,g,\nabla)$ where $(M,g)$ is a pseudo--Riemannian manifold, and $\nabla$ is a $g$-self-conjugate affine connection on $M$ \cite{Agricola2006}.
\end{remark}
\begin{remark}
Semisimple Lie groups do not admit flat left-invariant affine connections \cite{M-O-1979}. In analogy with \cref{Remark: torsion and topological/algebraic obstruction}, this obstruction disappears if one allows non-zero torsion: left-invariant curvature-free connections then exist.
\end{remark}
\subsubsection{Solution bi-forms}
Let $\varpi^\pm$ denote the solution bi-forms of $(G,\kappa,\nabla^\pm)$.
Since $(G,\nabla^\pm)$ are spaces of absolute parallelism, we can extend
$\varpi^\pm$ to globally defined $(1,1)$-bi-forms using \cref{Prop: canonical bi-form parallel transport}. 

Let us compute $\varpi^+$.
Recall that for every $Z\in\mathfrak g$ and $z\in G$ we have
\begin{equation}\label{Eq: left invariant v.f.s}
    Z_z=(L_z)_e'Z_e\,.
\end{equation}
Hence, for $X,Y\in\mathfrak g$ and $(x,y)\in G\times G$, if we recall the definition \eqref{Eq: canonical bi-form parallel transport}, the properties of the tangent map on left invariant vector fields \eqref{Eq: left invariant v.f.s} together with the properties of left invariance of the metric tensor \eqref{Eq: left invariant metric}, we have the following chain of equalities
  \begin{equation}
\begin{split}
    \varpi^+(X\mid Y)(x,y)=
    \kappa_y\!\left(P^{\nabla^+}_{y\leftarrow x}X_x,Y_y\right)&=
    \kappa_y\!\left((L_{y\cdot x^{-1}})_x'X_x,Y_y\right)\\
    &= \kappa_y\!\left((L_y)_e'(L_{x^{-1}})_x'X_x,(L_y)_e'Y_e\right)=
    \kappa_e\!\left((L_{x^{-1}})_x'X_x,Y_e\right)
    =\kappa_e(X_e,Y_e)\,.
\end{split}
\end{equation} 
Thus, by left-invariance of $\kappa$, we write
\begin{equation}\label{Eq: solution bi-form +}
    \varpi^+(X\mid Y)=\kappa(X,Y)\,.
\end{equation}
Let us now compute $\varpi^-$.
We claim that
\begin{equation}\label{Eq: solution bi-form -}
    \varpi^-(X\mid Y)(x,y)=\kappa(\Ad_{y^{-1}\cdot x}X,Y)\,,
\end{equation}
or, equivalently
\begin{equation}
    \varpi^-(X\mid Y)(x,y)=\kappa(\Ad_{x}X,\Ad_y Y)\,,
\end{equation}
where $\Ad$ denotes the adjoint representation of $G$ on $\mathfrak g$.
Recall that for $z\in G$ and $Z\in\mathfrak g$,
$\Ad_zZ$ is the left-invariant vector field on $G$ satisfying
\begin{equation}
    (\Ad_zZ)_e=(C_z)_e'Z_e\,,\qquad C_z=L_z\circ R_{z^{-1}}\,.
\end{equation}
For $X,Y\in\mathfrak g$ and $(x,y)\in G\times G$, we can write, in analogy to what we have shown before:
\begin{equation}
\begin{split}
    \varpi^-(X\mid Y)(x,y)&=
    \kappa_y\!\left(P^{\nabla^-}_{y\leftarrow x}X_x,Y_y\right)\\ &=
    \kappa_y\!\left((R_{y\cdot x^{-1}})_x'X_x,Y_y\right)\\
    &=\kappa_y\!\left((R_{x^{-1}})_e'(R_y)_x'X_x,(L_y)_e'Y_e\right)\\ &=\kappa_e\!\left((L_{y^{-1}})_y'(R_{x^{-1}})_y'(R_y)_x'X_x,Y_e\right)=
    \kappa_e\!\left((L_{y^{-1}})_y'(R_{x^{-1}})_y'(R_y)_x'(L_x)_e'X_e,Y_e\right)\,.
\end{split}
\end{equation} 
From 
\begin{equation}
    L_{y^{-1}}\circ R_{x^{-1}}\circ R_y\circ L_x
    =C_{y^{-1}\cdot x}\,,
\end{equation}
we have
\begin{equation}
    \varpi^-(X\mid Y)(x,y)
    =\kappa_e\!\left((\Ad_{y^{-1}\cdot x}X)_e,Y_e\right)\,,
\end{equation}
and by left-invariance of $\kappa$, formula~\eqref{Eq: solution bi-form -} follows.

\begin{proposition}
Let $\varpi^\pm$ be the solution bi-forms on a semisimple Lie group $G$. One has:
\begin{enumerate}[(a)]
    \item the bi-form $\varpi^+$ is left-invariant, i.e. for any $(x,y), (z_1,z_2)\in G\times G$ it is
    \begin{equation}
        \varpi^+(z_1\!\cdot\!x,z_2\!\cdot\!y)=\varpi^+(x,y)\,.
    \end{equation}
    Moreover, $\varpi^+$ is the unique left-invariant contrast bi-form integrating $(G,\kappa,\nabla^+)$;
    \item the bi-form $\varpi^-$ is diagonally left-invariant, i.e. for any $x,y,z\in G$:
    \begin{equation}
        \varpi^-(z\!\cdot\!x,z\!\cdot\!y)=\varpi^-(x,y)\,.
    \end{equation}
\end{enumerate}
\end{proposition}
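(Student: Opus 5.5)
The plan is to reduce everything to statements about the component functions of $\varpi^\pm$ in the global left-invariant frame of $\mathfrak g$. I will read invariance of a $(1,1)$-bi-form under a pair of diffeomorphisms $(\phi_1,\phi_2)$ of $G$ as the identity $(\phi_1\times\phi_2)^\ast\varpi=\varpi$. Here the pullback acts on the left slot through $\phi_1'$ and on the right slot through $\phi_2'$, i.e.
\[
((\phi_1\times\phi_2)^\ast\varpi)_{(x,y)}(u\mid v)=\varpi_{(\phi_1(x),\phi_2(y))}\bigl((\phi_1)'_xu\mid(\phi_2)'_yv\bigr).
\]
Every $X\in\mathfrak g$ is $L_z$-related to itself by \eqref{Eq: left invariant v.f.s}. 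Hence, evaluating on $X,Y\in\mathfrak g$, invariance under $(L_{z_1},L_{z_2})$ is equivalent to the identity $\varpi(X\mid Y)(z_1x,z_2y)=\varpi(X\mid Y)(x,y)$ for all $X,Y\in\mathfrak g$. This suffices because $\mathfrak g$ is a global frame and the pairing is left and right tensorial \eqref{Eq: tensorial property of bi-form}.

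For the invariance in (a), I would use \eqref{Eq: solution bi-form +}. The components $\varpi^+(X\mid Y)=\kappa(X,Y)$ are constant on $G\times G$, because $\kappa$ is left-invariant and hence $\kappa(X,Y)$ is constant for $X,Y\in\mathfrak g$. Constant components are trivially invariant under $(x,y)\mapsto(z_1x,z_2y)$.

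For (b), I would use \eqref{Eq: solution bi-form -}. Since $(zy)^{-1}(zx)=y^{-1}x$, we get
\[
\varpi^-(X\mid Y)(zx,zy)=\kappa(\Ad_{y^{-1}x}X,Y)=\varpi^-(X\mid Y)(x,y),
\]
which is diagonal left-invariance.

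For the uniqueness claim in (a), let $\varpi$ be any left-invariant contrast bi-form whose induced Lauritzen structure is $(G,\kappa,\nabla^+)$. For $X,Y\in\mathfrak g$, the function $\varpi(X\mid Y)$ on $G\times G$ is invariant under the transitive action of $G\times G$ by left translations in each factor, so it is a constant $B(X,Y)$. This defines a bilinear form $B$ on $\mathfrak g$.

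I would then restrict to the diagonal. By Definition \ref{Def: diagonal restriction}, $g^\varpi(X,Y)=\iota^\ast(\varpi(X\mid Y))=B(X,Y)$, and the requirement $g^\varpi=\kappa$ forces $B=\kappa$ on $\mathfrak g$. By tensoriality, $\varpi=\varpi^+$.

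As a consistency check (not strictly needed), I would verify that such $\varpi$ indeed induces $\nabla^+$. By \eqref{Eq: induced connection} and the definition of $\mathcal L_{Z\mid}$, for $X,Y,Z\in\mathfrak g$ we have $(\mathcal L_{Z\mid}\varpi)(X\mid Y)=\mathcal L_{Z^L}(B(X,Y))-\varpi([Z,X]\mid Y)=-\kappa([Z,X],Y)$. Therefore $g^\varpi(\nabla^\varpi_ZX,Y)=0$, i.e. $\nabla^\varpi_ZX=0=A^0(Z,X)$, which is $\nabla^+$. This observation strengthens uniqueness: $\varpi^+$ is already the unique left-invariant contrast bi-form with induced metric $\kappa$.

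The main obstacle is not computational. It lies in fixing the meaning of ``left-invariant bi-form'' and justifying the reduction to frame components. One must check that pulling back by $(L_{z_1},L_{z_2})$ commutes with pairing against left-invariant fields. This holds because $X^L$ is $(L_{z_1}\times L_{z_2})$-related to itself, and the pullback is compatible with $\Phi^{1,1}$ and \eqref{Eq: pairing with forms}. One must also check that a function on $G\times G$ invariant under $(x,y)\mapsto(z_1x,z_2y)$ is constant, which follows from transitivity. Once these two points are settled, the rest reduces to the computations above.
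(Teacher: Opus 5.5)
Your proof is correct and follows the same argument as the paper. Left-invariance of $\varpi^+$ comes from the constancy of its components on $\mathfrak g$, and diagonal left-invariance of $\varpi^-$ from $(zy)^{-1}(zx)=y^{-1}x$. Uniqueness comes from observing that a left-invariant bi-form has constant components, which the diagonal restriction forces to equal $\kappa$; $\nabla^\varpi=\nabla^+$ then follows automatically, as the paper also notes. Your explicit reading of invariance as $(L_{z_1}\times L_{z_2})^\ast\varpi=\varpi$, and its reduction to frame components, usefully spells out what the paper leaves implicit.
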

\begin{proof}
\begin{enumerate}[(a)] 
\item The left-invariance of $\varpi^+$ follows from the fact that the pairing of $\varpi^+$ with left-invariant vector fields is constant. 
If $\varpi$ is a left-invariant contrast bi-form such that $g^\varpi=\kappa$ and $\nabla^\varpi=\nabla^+$, then $\varpi(X\mid Y)$ is constant for each $X,Y\in \mathfrak g$. This implies that $\nabla^\varpi=\nabla^+$ (cf. \eqref{Eq: induced connection}) and that $\varpi(X\mid Y)=\varpi(X\mid Y)(e,e)=g^\varpi(X\mid Y)(e)=\kappa(X,Y)$.
\item  The diagonal left-invariance of $\varpi^-$ follows from $(z\cdot y)^{-1}\cdot (z\cdot x)=y^{-1}\cdot x$. 
\end{enumerate}
\end{proof}

\begin{example}
Let $\mathsf A$ be a finite-dimensional $C^\ast$-algebra with $n=\dim_{\mathbb C}\mathsf A$, and denote by $\GL(\mathsf A)$ the group of invertible elements of $\mathsf A$. Since $\GL(\mathsf A)$ is an open subset of $\mathsf A$, its Lie algebra $\mathfrak{gl}(\mathsf A)$ identifies with the realification of $\mathsf A$. More precisely, for each $c\in \mathsf A$, the flow of the associated left-invariant vector field $Z_c$ is 
\begin{equation}
    \phi^{Z_c}(a,t)=a\,\exp(t\,c)\,, \qquad (a,t)\in \GL(\mathsf A)\times \mathbb R\,.
\end{equation}
One proves that this correspondence yields an isomorphism of real Lie algebras, that is
\begin{equation}
   [Z_c,Z_d]=Z_{[c,d]}\,, \qquad c,d\in \mathsf A\,.
\end{equation}
We consider the derived subgroup
\begin{equation}
    \SL(\mathsf A)=[\GL(\mathsf A),\GL(\mathsf A)]\,.
\end{equation}
By the structure theorem for finite-dimensional $C^\ast$-algebras, there exists a $C^\ast$-isomorphism:
\begin{equation}\label{Eq: iso finite dim}
    \mathsf A \cong \bigoplus_{j=0}^r \mathcal B(\mathcal H_j)\,,
\end{equation}
where $\mathcal H_j$ are complex Hilbert spaces of dimension $n_j=\dim_{\mathbb C}\mathcal H_j$. It follows that
\begin{equation}
    \SL(\mathsf A)\cong \prod_{j=0}^r \SL(\mathcal H_j)\,,
\end{equation}
and in particular $\SL(\mathsf A)$ is a semisimple Lie group. Accordingly, the isomorphism $\mathfrak{gl}(\mathsf A)\cong \mathsf A$ restricts to
\begin{equation}
    \mathfrak{sl}(\mathsf A)\cong [\mathsf A,\mathsf A]\,.
\end{equation}

On $\mathfrak{sl}(\mathsf A)$, the Cartan--Killing form $\kappa$ is given by
\begin{equation}
    \kappa(Z_c,Z_d)=4\,n\,\Re\,\tau(c\,d)\,, \qquad c,d\in [\mathsf A,\mathsf A]\,,
\end{equation}
where $\tau$ is the tracial state on $\mathsf A$,  defined by
\begin{equation}
    \tau(a)=\frac{1}{n}\,\sum_{j=0}^r n_j\,\Tr(a_j)\,, \qquad a\in \mathsf A\,,
\end{equation}
and $(a_0,\dots,a_r)$ denotes the image of $a$ under the isomorphism \eqref{Eq: iso finite dim}. 

Since for each $a\in \SL(\mathsf A)$ the adjoint action $\Ad_a$ corresponds to conjugation on $\mathsf A$ by $a$, the solution bi-forms \eqref{Eq: solution bi-form +} and \eqref{Eq: solution bi-form -} can be written as 
\begin{align}
    \varpi^+(Z_c,Z_d)(a,b) &= 4n\,\tau(c\,d)\,,\\
    \varpi^-(Z_c,Z_d)(a,b) &= 4n\,\tau\bigl(b^{-1}\,a\,c\,a^{-1}\,b\,d\bigr),
\end{align}
for all $a,b\in \SL(\mathsf A)$ and $c,d\in [\mathsf A,\mathsf A]$.
\end{example}

\section*{Conclusion}
Upon analysing the notion of bi-form, in this paper we have given a geometric description  for statistical potentials on Lauritzen manifolds. Such a description is based on a suitable bicomplex, namely a  left-right Cartan calculus for these forms, and on corresponding  statistical left and right homotopy operators. Within this setting, one sees  that left-exact and bi-exact contrast bi-forms correspond to pre-contrast and contrast functions, respectively. In the last section we have shown that contrast bi-forms exist  for dually curvature-free Lauritzen manifolds. This bi-form recovers known canonical potentials in the partially flat and dually flat cases. 

We think that several interesting questions arise from this analysis.  We select two among them as a conclusion to this paper. 

\begin{itemize}
\item Contrast functions are usually interpreted as distinguishability measures between two points of a statistical manifold. For contrast bi-forms the situation appears less clear, since their evaluation also involves tangent directions. What is the statistical interpretation of tangent vectors to a Lauritzen manifold, and how is this related to the torsion tensors of the conjugate affine connections?

\item In the dually curvature-free case, a representative of the germ of the solution bi-form reads as
\begin{equation}
    \varpi(X\mid Y)(m,n)=g_n\left(P^\nabla_{n\leftarrow m}X_m,Y_n\right)\,.
\end{equation}
This formula involves only the metric tensor $g$ and the parallel transport of the affine connection $\nabla$ along its geodesics. Hence, it can be considered also for a general Lauritzen manifold, regardless of the curvature assumptions. Does this bi-form generate the original Lauritzen structure $(M,g,\nabla)$? If this is not true in general, how is the obstruction related to the curvature of the connection?
\end{itemize}


\subsection*{Acknowledgments}
This work has been supported by the Madrid Government through the project \textbf{TEC-2024/COM-84 QUITEMAD-CM} and by COST (European Cooperation in Science and Technology) through the COST Action CaLISTA CA21109.

We acknowledge financial support from Next Generation EU through the project 2022XZSAFN – PRIN2022 CUP: E53D23005970006.

We thank our Institutions, the Universidad Carlos III de Madrid,  the Universit\`a Federico II di Napoli, the Scuola Superiore Meridionale di Napoli, as well as Indam and INFN, through Gnsaga and the initiative GeoSymQFT and Quantum,  for the financial support of the research visits we had during the period this work has been developed.

\addcontentsline{toc}{section}{References}
{\footnotesize
	\printbibliography[title=References]
}
\end{document}